 \newcommand{\purple}[1]{\textcolor{black}{#1}}
\newcommand{\blue}[1]{\textcolor{black}{#1}} 
\newcommand{\newch}[1]{\textcolor{black}{#1}} 
 \newcommand{\oldch}[1]{\textcolor{black}{#1}} 
\numberwithin{equation}{section}
\newcommand\bR{{\mathbb{R}}}
\newcommand\bZ{{\mathbb Z}}
\newcommand\bV{{\mathbb V}}
\newcommand\bH{{\mathbb{H}}}
\newcommand{\bP}{\mathbb{P}}
\newcommand{\bfb}{\mathbf{b}}
\newcommand{\bfu}{\mathbf{u}}
\newcommand\SI{{\mathbb{S}}}
\newcommand\Bd{{\rm bd}}
\newcommand\clo{{\rm Cl}}
\newcommand\bdd{{\mathbf{d}}}
\newcommand\ra{\rightarrow}
\newcommand\emp{\emptyset}
\newcommand\eps{\epsilon}
\newcommand\Aff{{\mathbf{Aff}}}
\newcommand\Idd{{\rm I}}
\newcommand\bu{{\mathbf{u}}}
\newcommand\Isom{{\mathbf{Isom}}}
\newcommand\SL{{\mathsf{SL}}}
\newcommand\SO{{\mathsf{SO}}}
\newcommand\PGL{{\mathsf{PGL}}}
\newcommand\GL{{\mathsf{GL}}}
\DeclareMathOperator{\rank}{rank}
\newcommand\Ss{{\mathbb{S}}}
\newcommand\bbD{{\mathbb{D}}}
\newcommand{\deltxt}[1]{}
\newcommand{\KFAR}[1]{}
\newcommand\Lspace{\mathsf E}
\newcommand\Uu{\mathsf U} 
\newcommand\SOto{{\mathsf{SO}}(2,1)}
\newcommand\bnu{{\mathbf \nu}}
\newcommand\Ker{{\mathrm{Ker}}}
\newcommand\PSO{{\mathsf{PSO}}}
\newcommand\Sf{{\mathsf{S}}}
\newcommand\Bs{{\mathsf{B}}}
\newcommand\Us{{\mathsf{U}}}
\newcommand{\vv}{\mathbf{v}}
\newcommand{\vw}{\mathbf{w}}
\newcommand{\va}{\mathbf{a}}
\newcommand{\vb}{\mathbf{b}}
\newcommand{\vc}{\mathbf{c}}
\newcommand{\vp}{\mathbf{p}}
\newcommand{\vx}{\mathbf{x}}
\newcommand{\vu}{\mathbf{u}}
\newcommand{\vy}{\mathbf{y}}
\newcommand{\vi}{\mathbf{i}}
\newcommand{\vj}{\mathbf{j}}
\newcommand{\vk}{\mathbf{k}}
\newcommand{\CH}{\mathcal{CH}}
\newcommand{\tbV}{\widetilde{\mathbf{V}}}
\newcommand{\bg}{\mathsf{g}}
\newcommand{\llrrparen}[1]{
	\left(\mkern-4mu\left(#1\right)\mkern-4mu\right)}
\newcommand{\llrrV}[1]{
	\left|\mkern-2mu\left|#1\right|\mkern-2mu\right|}
\newcommand{\rf}{\mathrm{fiber}}
\newcommand{\Paren}[1]{
	\left(#1\right)}
	\newtheorem{theorem}{Theorem}[section]
\newtheorem{proposition}[theorem]{Proposition}
\newtheorem{lemma}[theorem]{Lemma}
\newtheorem{corollary}[theorem]{Corollary}
\theoremstyle{definition}
\newtheorem{definition}{Definition}[section]
\newtheorem{criterion}{Criterion}[section]
\theoremstyle{remark}
\newtheorem{remark}{Remark}[section]
\begin{document}



\title{Tameness of Margulis space-times with parabolics}


\author{Suhyoung Choi} 
\address{Department of Mathematical Sciences, KAIST,
		305-701, Daejeon, Republic of Korea}
\email{schoi@math.kaist.ac.kr}
\thanks{Choi was supported by the Mid-career Researcher Program through 
the NRF grant NRF-2013R1A1A2056698 funded by the MEST.} 

\author{Todd Drumm}
\address{} 

\author{William Goldman} 
\address{Department of Mathematics, University of Maryland, 
	20742-4015, College Park MD, USA.} 
\email{wmg@math.umd.edu}
\thanks{Goldman was partially supported by the NSF Grant DMS-1709791.}

\dedicatory{Dedicated to the memory of Todd Drumm} 

%

%




%


\begin{abstract} 
	Let $\Lspace$ be a flat Lorentzian space of signature $(2, 1)$. 
	A Margulis space-time  is a noncompact complete Lorentz flat $3$-manifold $\Lspace/\Gamma$ with a free holonomy group $\Gamma$ of rank $\bg, \bg \geq 2$. 
	We consider the case when $\Gamma$ contains a parabolic element.
\purple{We obtain a characterization of proper $\Gamma$-actions in terms of Margulis and Drumm-Charette invariants.} 
	We show that $\Lspace/\Gamma$ is homeomorphic to the interior of a compact handlebody of genus $\bg$ generalizing our earlier result. 
	Also, we obtain a bordification of the Margulis space-time with parabolics 
	by adding a real projective surface at infinity 
	giving us a compactification as a manifold 
	relative to parabolic end neighborhoods.  
	Our method is to estimate the translational
	parts of the affine transformation group and use some $3$-manifold topology. 

\end{abstract} 
	
	

\dedicatory{Dedicated to the memory of Todd Drumm.}



\keywords{geometric structures, flat Lorentz space-time, Margulis space-time, $3$-manifolds}

\subjclass[MSC 2020]{57M50,  83A99}


\date{\today}

\maketitle

\section{Introduction}
Let $\Isom^{+}(\Lspace)$ denote the group of orientation-preserving Lorentzian isometries on 
the oriented flat Lorentzian space $\Lspace$
of the signature $(2, 1)$. 
Here, we have an exact sequence 
\[1 \ra \bR^{2, 1} \ra \Isom^{+}(\Lspace) \stackrel{\mathcal{L}}{\longrightarrow} \SO(2, 1) \ra 1\]
where $\mathcal{L}$ is the homomorphism taking the linear parts of
the isometries. 
A {\em parabolic} of $\Isom^{+}(\Lspace)$ is an element whose linear part is a parabolic element of $\SO(2, 1)$.

A discrete affine group $\Gamma$ acting properly on $\Lspace$ is either solvable or is free of rank $\geq 2$.
(See Goldman-Labourie \cite{GL12}.)  While we will assume that $\Gamma$ is a free group of rank $\geq 2$,
we say that $\Gamma$ is a {\em proper affine free group of rank $\geq 2$}. 

We will often require $\mathcal{L}(\Gamma) \subset \SO(2, 1)^o$ for the subgroup $\SO(2, 1)^o$ of 
$\SO(2, 1)$ acting on the positive cone. Here, 
$\mathcal{L}(\Gamma)$ acts properly discontinuously and freely on 
a hyperbolic plane $\bH^2$ formed by positive rays in the cone. 
We say that  
$\Gamma$ is 
a \hypertarget{term-pad}{{\em proper affine hyperbolic group of rank $\bg$ with linear parts in $\SO(2,1)^o$}}
\begin{itemize}
\item if it acts properly discontinuously faithfully and freely on $\Lspace$, and 
\item $\mathcal{L}(\Gamma)$ is a free group of rank $\bg, \bg\geq 2$ in 
$\SO(2, 1)^o$,
acting freely and discretely on $\bH^{2}$. 
\end{itemize} 
It will be sufficient to prove tameness in this case. 


A \hypertarget{term-rps}{{\em real projective structure}} on a manifold 
is given by a maximal atlas of charts to $\bR P^n, n\geq 1,$ 
with transition maps in $\PGL(n+1, \bR)$. 
A {\em real projective manifold} is a manifold with a real projective structure.

\begin{theorem} \label{thm:main} 
Suppose that $\Gamma$ is a proper affine free group of rank $\bg, \bg \geq 2$, with parabolics and linear parts in $\SO(2,1)^o$. 
Then 
\begin{itemize}
\item $\Lspace/\Gamma$ is diffeomorphic to the interior of a compact handlebody of genus $\bg$. 
\item Moreover, it is the interior of a real projective $3$-manifold $M$ 
with boundary equal to a totally geodesic real projective surface, and $M$ deformation retracts 
to a compact handlebody obtained by removing a union of finitely 
many end-neighborhoods homeomorphic to solid tori. 
\end{itemize}
\end{theorem}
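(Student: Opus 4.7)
The plan is to reduce the parabolic case to the convex-cocompact case established in our earlier work by equivariantly excising solid-torus neighborhoods of each parabolic cusp. The Fuchsian linearization $\mathcal{L}(\Gamma) \subset \SO(2,1)^o$ acts freely and properly on $\bH^2$ with finite-area quotient surface $\bH^2/\mathcal{L}(\Gamma)$ having finitely many cusps, each corresponding to a conjugacy class of maximal parabolic cyclic subgroup $\langle\gamma_i\rangle \subset \Gamma$ for $i=1,\ldots,k$. For every such $\gamma_i$, we seek a $\langle\gamma_i\rangle$-invariant open region $C_i \subset \Lspace$ --- an ``affine horoball'' modeled on the preimage of a horoball in $\bH^2$ under the appropriate projection --- such that $C_i/\langle\gamma_i\rangle$ is an open solid torus and distinct $\Gamma$-translates of the $C_j$'s are pairwise disjoint.

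\textbf{Translational estimates, the heart of the argument.} As the abstract anticipates, the main technical ingredient is quantitative control of the translational parts of affine elements. For a parabolic $\gamma$ with linear part $\mathcal{L}(\gamma)$ fixing a unique null direction $\vv \in \bR^{2,1}$, properness forces the translational part $u(\gamma)$ to lie in the orthogonal complement of $\vv$ (a standard Margulis-invariant computation). Combining this with the asymptotic behavior of the Margulis invariant along sequences of conjugates approaching the parabolic fixed point in the Fuchsian picture on $\bH^2$, I would show that for every $g \in \Gamma$ outside the maximal parabolic stabilizer of $C_i$, the image $g \cdot C_i$ stays uniformly outside $C_i$ once the horoball depth is chosen small enough. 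The argument is delicate because translational parts of conjugates $g\gamma g^{-1}$ grow or shrink at non-uniform rates, so all the cusp depths must be tuned simultaneously.

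\textbf{Gluing to build the handlebody and the projective boundary.} Once the equivariant family $\{C_i\}$ is constructed, the complement $\Omega := \Lspace \setminus \Gamma \cdot \bigsqcup_i C_i$ carries a proper $\Gamma$-action whose quotient $\Omega/\Gamma$ has only non-parabolic ends (after regarding the $\partial C_i/\langle\gamma_i\rangle$ as torus boundary components). Applying our earlier convex-cocompact tameness theorem identifies $\Omega/\Gamma$ with a compact handlebody of genus $\bg$ with $k$ such torus boundary pieces; regluing the open solid tori $C_i/\langle\gamma_i\rangle$ then shows $\Lspace/\Gamma$ is diffeomorphic to the interior of a compact handlebody of genus $\bg$. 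For the projective bordification, we embed $\Lspace \subset \RP^3$ with ideal $\RP^2$; on this ideal plane $\Gamma$ acts via $\mathcal{L}$, and the complement of the convex hull of the Fuchsian limit set (with parabolic fixed points suitably glued in) admits a proper $\Gamma$-action with quotient a real projective surface $\Sigma$. Attaching $\Sigma$ asymptotically to $\Lspace/\Gamma$ yields the real projective $3$-manifold $M$ with totally geodesic boundary, and the deformation retraction onto the compact handlebody is obtained by collapsing the solid-torus cusp neighborhoods.

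\textbf{The main obstacle.} The principal difficulty is the construction of the cusp regions $C_i$ with the required equivariant disjointness. A naive preimage-of-horoball construction fails: unlike the purely hyperbolic ($\SO(2,1)$-)case, the translational parts do not decay uniformly in the transverse Lorentzian direction along sequences tending to a cusp, and the interplay between linear growth in $\bR^{2,1}$ and horoball geometry is subtle. A careful analysis of the Margulis invariant of conjugates $g\gamma_i g^{-1}$ as $g$ leaves $\langle\gamma_i\rangle$, combined with the Goldman--Labourie properness criterion, should furnish the estimates needed to close the argument.
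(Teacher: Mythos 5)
Your overall strategy — excise equivariant solid-torus cusp regions, reduce to the convex-cocompact case, apply the earlier tameness theorem to the complement, and reglue — is \emph{not} the route the paper takes, and as formulated it contains a genuine gap. The earlier tameness theorems (Choi--Goldman, Danciger--Gu\'eritaud--Kassel) apply to the full quotient $\Lspace/\Gamma$ when $\mathcal{L}(\Gamma)$ is convex-cocompact; they say nothing about a $\Gamma$-invariant open subset $\Omega \subset \Lspace$ when $\Gamma$ still contains parabolics. After removing the cusp regions, the deck group of $\Omega$ is still the original $\Gamma$ with parabolics, and there is no black box available that tells you $\Omega/\Gamma$ is a compact handlebody-with-tori-boundary. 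Establishing precisely that is the hard content of the theorem, and it requires new work: the paper does it by constructing a fundamental polytope in $\Lspace$ bounded by finitely many crooked-circle disks, verifying via the translational-part estimates (Theorem~\ref{thm:unifest} and Corollary~\ref{cor:conv1}) that this polytope meets the exhaustion $\tilde M_{(J)}$ in a compact set, and then invoking a version of the Poincar\'e polyhedron theorem (Proposition~\ref{prop:Poincare}) adapted to noncompact polytopes with compact exhaustion. Your ``reglue'' step presupposes exactly what must be proved.

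There is also a concrete technical error in the translational estimates you sketch. You write that for a parabolic $\gamma$ fixing the null direction $\vv$, ``properness forces the translational part $u(\gamma)$ to lie in the orthogonal complement of $\vv$.'' This is backwards. The Charette--Drumm invariant of a parabolic $\gamma$ is, up to normalization, the pairing $\Bs(u(\gamma),\vv)$ with the fixed null eigenvector, and Lemma~\ref{lem:pex} shows that $\langle\gamma\rangle$ acts properly on $\Lspace$ if and only if this invariant is \emph{nonzero}, i.e.\ if and only if $u(\gamma)$ is \emph{not} Lorentz-orthogonal to $\vv$. If $u(\gamma)\perp\vv$, the cyclic group has a fixed parabolic cylinder that forces non-properness. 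Any estimate built on the opposite sign will not close. Finally, you correctly identify that a naive preimage-of-horoball construction fails, but the remedy is not merely ``tuning cusp depths'': the paper replaces horoball preimages with parabolic ruled surfaces (Appendix~\ref{app:A}), surfaces ruled by timelike lines invariant under the one-parameter parabolic group, and the disjointness of their translates is established only after a fundamental domain is already in hand (Proposition~\ref{prop:Pdisj}), not before. In the paper's order of logic, the cusp regions are the \emph{last} piece to be made disjoint, whereas your proposal requires them as the \emph{first} step.
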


These real projective surfaces are from the paper of Goldman \cite{G87}. 
The second item is the so-called {\em relative compactification}. 

For all cases of Margulis space-times, we have
\begin{corollary} \label{cor:main2} 
Let $\Gamma$ be a proper affine free group  of rank $\geq 2$ with parabolics. Then 
$\Lspace/\Gamma$ is diffeomorphic to the interior of a compact handlebody of genus $\bg$. 
Moreover, it is the interior of a real projective $3$-manifold $M$ with boundary equal to a totally geodesic real projective surface, 
and $M$ deformation-retracts to 
a compact handlebody obtained by removing a union of finitely many end neighborhoods homeomorphic to
solid tori.
\end{corollary}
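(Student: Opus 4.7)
The plan is to derive Corollary \ref{cor:main2} from Theorem \ref{thm:main} by passing to the index-at-most-$2$ subgroup of $\Gamma$ whose linear parts lie in $\SO(2,1)^o$, and then descending the tameness and the relative compactification by a short $3$-manifold topology argument.

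First I set $\Gamma_0 := \mathcal L^{-1}(\SO(2,1)^o) \cap \Gamma$. Since $[\SO(2,1):\SO(2,1)^o] = 2$, we have $[\Gamma:\Gamma_0] \in \{1,2\}$; when the index is $1$ the corollary is exactly Theorem \ref{thm:main}, so assume the index is $2$. Every parabolic element of $\SO(2,1)$ is unipotent, hence lies in $\SO(2,1)^o$, so every parabolic element of $\Gamma$ already belongs to $\Gamma_0$ and in particular $\Gamma_0$ still has parabolics. The subgroup $\Gamma_0$ retains a proper, discrete, free action on $\Lspace$ with linear parts in $\SO(2,1)^o$, and by the Schreier index formula it is free of rank $2\bg - 1$. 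Theorem \ref{thm:main} applied to $\Gamma_0$ therefore produces a compact real projective $3$-manifold $\overline M_0$ containing $M_0 := \Lspace/\Gamma_0$ as its interior, with totally geodesic real projective boundary, such that removing finitely many open solid-torus cusp neighborhoods leaves a compact handlebody of genus $2\bg-1$.

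Next, the non-trivial coset $\Gamma \setminus \Gamma_0$ acts on $M_0$ as a free, orientation-preserving, real projective involution $\tau$. Because the bordification of Theorem \ref{thm:main} is canonically determined by the affine/real projective data (Goldman's totally geodesic surface at infinity together with the cusp neighborhoods determined by the maximal parabolic cyclic subgroups), $\tau$ extends to a real projective involution $\overline\tau$ of $\overline M_0$. The main technical step I anticipate is verifying that $\overline\tau$ acts freely on the boundary surface and on the cusp end neighborhoods: on the totally geodesic boundary this reduces to the freeness of the full linear action of $\mathcal L(\Gamma)$ on Goldman's model, while on each cusp it is a case analysis of the normalizer in $\Gamma$ of the parabolic cyclic subgroup, using that every orientation-preserving free involution of a solid torus has a solid torus as quotient.

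Finally I set $\overline M := \overline M_0/\langle \overline\tau \rangle$. Then $\overline M$ is a compact orientable $3$-manifold (orientability holding because $\Gamma \subset \Isom^+(\Lspace)$) with non-empty boundary the descended totally geodesic surface, and with $\pi_1(\overline M) = \Gamma$ free of rank $\bg$. The equivariant sphere theorem applied to the free $\mathbb Z/2$-action on the irreducible handlebody $\overline M_0$ shows that $\overline M$ is irreducible; the classical characterization of handlebodies as the compact orientable irreducible $3$-manifolds with non-empty boundary and free fundamental group then identifies $\overline M$ as a handlebody, and the Schreier formula pins its genus at $\bg$. The descended cusp solid tori give the desired solid-torus end neighborhoods whose removal yields a compact handlebody of genus $\bg$ to which $\Lspace/\Gamma$ deformation retracts, completing the argument. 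The essential difficulty is thus the freeness verification for $\overline\tau$ on the projective bordification; once that is in hand, the topological identification of the quotient is standard.
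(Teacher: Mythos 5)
Your proposal follows the same overall strategy as the paper: pass to the index-two subgroup $\Gamma_0 = \Gamma \cap \mathcal{L}^{-1}(\SO(2,1)^o)$, apply Theorem \ref{thm:main}, and descend through the resulting $\bZ/2$ action. Both you and the paper correctly identify the essential point: one must verify that the nontrivial deck transformation of the double cover $M_0 \to M$ extends to a \emph{free} involution of the relative compactification, not just of the interior. The paper makes this concrete in its proof of Corollary \ref{cor:main2} by producing an explicit $\phi \in \Gamma \setminus \Gamma_0$ that swaps $\Ss_+$ and $\Ss_-$, showing it acts on the bordifying surface $\tilde\Sigma$, and then proving the key Lemma that $\phi$ has no fixed point in the set $\hat{\mathcal{P}}$ of $\Gamma_0$-orbit classes of parabolic fixed points (the argument rules out an element whose square would be a pure Lorentzian translation, impossible in an affine deformation). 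This freeness on cusps is precisely what you flag as the ``main technical step.''

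Where you part company with the paper is in the packaging of the descent. The paper stays concrete: having established the Lemma, it \emph{re-chooses} the parabolic regions $P'$ to be $\Gamma$-invariant rather than merely $\Gamma_0$-invariant (replacing each class $\hat P_{m+i}$ by $\phi(\hat P_i)$), verifies these are still a disjoint union of parabolic regions via Proposition \ref{prop:Pdisj}, and then applies Hempel Theorem 5.2 directly to $N' = ((\Lspace \cup \tilde\Sigma)\setminus P')/\Gamma$. You instead quotient as abstract manifolds, $\overline{M} = \overline{M}_0/\langle\overline\tau\rangle$, and invoke the equivariant sphere theorem plus the classification of compact orientable irreducible $3$-manifolds with free $\pi_1$ and nonempty boundary. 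Both end at the same Hempel-type characterization, so the extra machinery you bring in is not strictly needed, but the argument works. One caution: your phrase ``canonically determined'' overstates the rigidity of the bordification. The surface $\tilde\Sigma$ \emph{is} canonical once $\Gamma_0$ is fixed, but the parabolic regions $P'$ involve genuine choices (the function $f_i$, the ``sufficiently far'' parameter) that are a priori only $\Gamma_0$-equivariant; making them $\Gamma$-equivariant is a separate step that the paper carries out explicitly. So your extension $\overline\tau$ exists not because the bordification is canonical but because those choices can be made $\Gamma$-equivariantly, which is exactly what the paper's Lemma and the re-choice of $P'$ accomplish. If you fill your sketch in with that argument (and with the paper's normalizer analysis to rule out $\phi$-fixed parabolic classes), the proof closes.
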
 

\purple{
We denote by $\Ss$ the sphere of directions in $\Lspace$, by $\Ss_{+}$ the space of directions of 
positive time-like directions and by $\Ss_{-}$ the space of directions of negative time-like directions. We will consider $\Ss_+$ as the projectivization of
$\Ss_+ \cup \Ss_-$. 
Then  the quotient space of $\Ss_+$ under $\Gamma$ is a complete hyperbolic surface $\Sf$. 
Let $\mathcal{P}_{\pi_1(\Sf)}$ denote
 the set of parabolic  elements  and the identity element of $\pi_{1}(\Sf)$. 
We denote by $l_{\Ss_+}(g)$ the length of the shortest closed geodesic in $\Ss_+/\Gamma$ corresponding to the element $g \in \Gamma$. 
By Theorem 4.1 of Charette-Drumm \cite{CD05} generalizing the Margulis opposite sign lemma \cite{Margulis84}, 
we will need the following criterion in this paper for our group $\Gamma$.
\begin{criterion} \label{cr:positive}
Let $\Gamma$ be an isometry group acting on $\Lspace$, and let $\alpha(g) \in \bR$ for 
$g\in \Gamma$  denote the Margulis invariant of $g$. 
$\Gamma$ satisfies the following conditions: 
\begin{itemize} 
\item $\alpha(\gamma)> 0$ for every $\gamma \in \pi_{1}(\Sf)\setminus \mathcal{P}_{\pi_1(\Sf)}$,
\item every $\gamma$, $\gamma \in  \mathcal{P}_{\pi_1(\Sf)} \setminus \{\Idd\}$, 
has the positive Charette-Drumm invariant, and 
\item $\alpha(g) \geq c_{\Sf \setminus E} 
l_{\Ss_+}(g)$ for every $g$ realized as a closed geodesic 
in $\Sf \setminus E$ for the union $E$ of mutually disjoint cusp neighborhoods
for a positive constant $c_{\Sf \setminus E}$ depending on $\Sf \setminus E$. 
\end{itemize} 
\end{criterion} 
Of course, we can assume the negativity also
since the change of the orientation of $\Lspace$ changes the signs of 
Margulis invariants and Charette Drumm invariants by \cite{Drumm92} and  \cite{CD05}.
} 


\purple{
\begin{proposition} \label{prop:forward} 
Suppose that $\Gamma$ acts properly on $\Lspace$. Then 
Criterion \ref{cr:positive} holds up to changing the orientation of $\Lspace$. 
\end{proposition} 
\begin{proof} 
This is proved by Theorem 4.1 of \cite{CD05} and Lemma \ref{lem:outsidecusp}. 
\end{proof} 
} 

\newch{Let $\Uu S$ denote the inverse image of the 
projection $\Uu \Sigma \ra \Sigma$ for the the subset $S\subset \Sigma$ 
and the unit tangent bundle $\Uu \Sigma$ of 
a hyperbolic surface $\Sigma$.
Let $\Uu \Lspace$ denote the bundle of unit space-like vectors over 
$\Lspace$.
}
\purple{
\begin{lemma}\label{lem:outsidecusp} 
		Suppose that $\Gamma$ acts properly on $\Lspace$. 
		Let $E'$ be the union of cusp neighborhoods in an $\eps$-thin part of $\Sf$. 
		Then there exists a constant $c^{(1.4)}_{\Sf \setminus E'}$ in
$(0, 1)$ depending on $E'$ such that 
		for any closed curve $g$ realized as a closed geodesic in $\Sf\setminus E'$
		\[    c^{(1.4)}_{\Sf \setminus E'}  l_{\Ss_+}(g) \leq \alpha(g) \leq \frac{1}{c^{(1.4)}_{\Sf \setminus E'}}  l_{\Ss_+ }(g). \]	
\end{lemma}
\begin{proof} 
		Consider the geodesic currents supported in a compact 
		set $\Uu \Sf\setminus \Uu E'$. Then the argument of Goldman-Labourie \cite{GL12} applies 
		to this collection.  We have a conjugacy homeomorphism from 
		the set of geodesic currents on $\Uu \Sf\setminus \Uu E'$ with
		a compact set of \newch{neutral} geodesic currents on $\Us \Lspace/\Gamma$.  \newch{The length of each of these currents gives us the Margulis invariant}. 
\end{proof} 
} 

\purple{
We prove the following characterization of a proper action of $\Gamma$ in terms of 
Margulis and Charette-Drumm invariants.
\begin{theorem} \label{thm:equivalence} 
An affine finitely generated  free group $\Gamma$
of rank $\geq 2$ acts properly discontinuously on
$\Lspace$ if and only if Criterion \ref{cr:positive} holds up to a change of 
the orientation of $\Lspace$. 
\end{theorem} 
The forward part is Proposition \ref{prop:forward}. 
The converse follows from the main result Theorem \ref{thm:unifest} of Section \ref{sec:orbit}. The proof is given at the end of Section \ref{sub:accumulate} 
}

We mention that the tameness of geometrically finite hyperbolic manifolds was first shown by 
 Marden \cite{Marden74} and later by Thurston \cite{Thurston84p}.
 (See Epstein-Marden \cite{EM06}.)
 Let $\bH^3$ denote the hyperbolic $3$-space. 
We take the convex hull $\CH(\Lambda)$ in $\bH^3$ 
of the limit set $\Lambda$ of the Kleinian group $\Gamma$,
 and there is a deformation retraction of 
$\bH^{3}/\Gamma$ to the compact or finite volume 
$\CH(\Lambda)/\Gamma$ having a thick and thin decomposition.  The paper here follows some of Marden's ideas. 
(See also Beardon-Maskit \cite{BM}.)

Also, the approaches here are using thick and thin decomposition ideas of hyperbolic manifolds as suggested by Canary. 
However, we cannot find a canonical type of decomposition yet and artificially construct the \hyperlink{term-pbr}{parabolic regions}. 
Only canonically defined regions in analogy to Margulis thin parts in the hyperbolic manifold theory 
is the regions bounded by parabolic cylinders. (See Section \ref{sub:paraact}).



\oldch{
Note that the tameness of Margulis space-times without parabolics was shown
by Choi and Goldman \cite{CG17} and Danciger, Gu\'eritaud, and Kassel \cite{DGK16}. Danciger, Gu\'eritaud, and Kassel have also announced a proof \cite{DGKp} for the tameness of Margulis space-times with parabolics, extending \cite{DGK162}. In addition, they give a proof \cite{DGKp} of the crooked plane conjecture in this setting, extending their proof in the setting without parabolics from \cite{DGK162}. 
Their methods, based on the deformation theory of hyperbolic surfaces, seem very different than those of the present paper.  
}

Differently from them, we directly obtain $3$-dimensional compactification 
relative to parabolic regions. 
We estimate by integrals the asymptotics of 
translation vectors of the affine holonomies. 
This is done by using the differential 
form version of the cocycles and estimating with geodesic flows on the vector bundles over
the unit tangent bundle of the hyperbolic surface, 
the uniform Anosov nature of the flow \eqref{eqn:PhitC}, and 
\newch{the estimation} of 
the cusp contributions in Appendix \ref{app:1form}.
(See also Goldman-Labourie \cite{GL12}.)
In the cusp neighborhoods, we replace the $1$-form with the standard cusp $1$-form
and use this to estimate the growth of the cocycles. 
We use the exponential decreasing of a component of 
the differential form \purple{along the geodesic flows}. 
Then we use estimates of the integration of the standard cusp $1$-forms
\newch{ in Section \ref{sub:transvector}}.

Using this and the $3$-manifold theory, we show that properly embedded disks
and parabolic regions in $\Lspace$ meet the inverse images of 
compact submanifolds in the Margulis space-time in compact subsets
and find fundamental domains.


Since there are many proper affine actions of discrete groups not based on
Lie algebraic situations as in \cite{DGK16}, \cite{DGK162}, \cite{DGK163},
and \cite{DGKp},  
we hope that our method can generalize to these spaces with \newch{parabolics} 
providing many points of view.
(See Smilga \cite{Smilga2016p},  \cite{Smilga2018}, and \cite{Smilga2016} for example.)



The paper has three parts: the first two sections \ref{sec:prelim} and \ref{sec:CD} are 
preliminary. Appendices \ref{app:A} and \ref{app:1form} are only dependent on 
these two sections. Then the main argument parts follow: 
Section \ref{sec:orbit} discusses the geometry of the proper 
affine action, and Section \ref{sec:MP} discusses the topology of the quotient space. 

In Section \ref{sec:prelim}, we review some projective geometry of Margulis space-times, the hyperbolic geometry of surfaces, 
Hausdorff convergences, and the Poincar\'e polyhedron theorem. 

In Section \ref{sec:CD}, we first review the proper action of parabolic elements on the Lorentz space $\bR^{2, 1}$.
We analyze the corresponding Lie algebra and vector fields. We introduce a canonical parabolic coordinate system of 
$\bR^{2, 1}$. 
In Section \ref{sub:propaffine}, we generalize the theory of Margulis invariants by Goldman, Labourie, and Margulis \cite{GLM09} and Ghosh and Treib \cite{GT}
to groups with parabolics. That is, we introduce Charette-Drumm invariants which generalize the Margulis invariants for parabolic elements. 
In Section \ref{subsec:parabolic}, we will study the \hyperlink{term-pbr}{parabolic regions} and their ruled boundary components.


In Section \ref{sec:orbit}, we will study the limit sets. We show that any sequence of the translation vectors
of elements of $\Gamma$, i.e., cocycle elements, 
will accumulate in terms of directions only to $\Ss_{0}:= \Ss\setminus \Ss_{+}\setminus \Ss_{-}$.
In key result Corollary \ref{cor:conv1}, we will prove that the limit points of a sequence 
of images of a compact set in $\bR^{2, 1}$ under elements of $\Gamma$ are in $\Ss_{0}$.  \purple{We will also prove the converse part of 
the equivalence of the properness of the action 
and Criterion \ref{cr:positive}, i.e., Theorem \ref{thm:equivalence}. }

In Section \ref{sec:MP}, we will find the fundamental domain for $M$ bounded by a finite union of properly embedded 
smooth surfaces showing that
$M$ is tame. We prove our main results 
Theorem \ref{thm:main} and Corollary \ref{cor:main2} here. 
We make use of parabolic regions bounded by parabolic ruled surfaces. 
We avoid using almost crooked planes as in \cite{CG17}.
Instead, we are using disks that are partially ruled in parabolic regions
to understand the intersections with parabolic regions. 
We will outline this major section in the beginning.

In Appendix \ref{app:A}, we will prove facts about the parabolic regions. 

In Appendix \ref{app:1form}, we will show how to modify $1$-forms representing homology classes.  
We give estimates of some needed integrals here. 

%


\subsection*{Acknowledgements} 
We thank Virginie Charette, Jeffrey Danciger, Michael Kapovich, and Fanny Kassel for helpful comments. 
We thank Richard Canary for the idea to pursue the proof here similar to ones of Marden \cite{Marden74} 
and Thurston \cite{Thurston84p} in the Kleinian group theory where they separate the parabolic regions.
We thank the MSRI for the hospitality where this work was partially
carried out during the program ``Dynamics on Moduli Spaces of Geometric Structures'' in 2015.
Also, we initiated the work during the conference ``Exotic Geometric Structures'' 
at the ICERM, Brown University, on September 16-20, 2013.

We do apologize for the length of the article. We felt that the shortening might confuse the readers since we use many ideas in a novel way. 
Also, dividing the paper seemed a bit unethical and to be a disservice to the mathematical community. 

During the preparation of this manuscript, our coauthor Todd Drumm tragically passed away.
Todd pioneered the field by developing the geometric approach to Margulis's breakthough discovery  \cite{Margulis83} and \cite{Margulis84} 
of proper affine actions of nonabelian free groups. We miss him dearly and dedicate this work to his lasting memory.
%



\section{Preliminary} \label{sec:prelim}

We will state some necessary facts here, mostly from the paper \cite{CG17}. 
Let $\Lspace$ denote the oriented flat Lorentzian space-time given 
as an affine space with a bilinear inner-product given by 
\[ \Bs(\vx, \vx) := x_1^2 + x_2^2 - x_3^2, \, \vx = (x_{1}, x_{2}, x_{3}).\] 
A {\em Lorentzian norm} $\llrrV{\vx}$ is given as $\Bs(\vx, \vx)^{\frac{1}{2}}$ where 
$(-1)^{\frac{1}{2}} = i$. 
We will fix a standard orientation on $\Lspace$ and the associated vector space in this paper. 
Hence, $\Lspace$ denote an oriented Lorentz space-time. 

A {\em Margulis space-time} is a manifold of the form $\Lspace/\Gamma$ where 
$\Gamma$ is a proper affine free subgroup of $\Isom(\Lspace)$ of rank 
$\mathbf{g}, \mathbf{g} \geq 2$. 
Elements of $\PSO(2, 1)$ are hyperbolic, parabolic, or elliptic. 
An element of $\Isom(\Lspace)$ is said to be {\em hyperbolic, parabolic, or elliptic }
if its linear part is so.


The topological boundary $\Bd_X A$ of a subset $A$ in another topological space 
$X$ is given as $\clo(A)$ with the set of interior points of $A$ removed. 
We denote by manifold boundary $\partial A$ and the interior $A^o$ of 
a manifold {$A$}  as usual. 
We define the manifold boundary $\partial A := \clo(A)\setminus A^o$ 
for any $i$-dimensional manifold $A$
with $i$-dimensional manifold closure $\clo(A)$, $i=1,2,3$, in a topological space $X$.  

\subsection{The projective geometry of the Margulis space-time}

Let $V$ be a vector space. 
Define $\bP(V)$ as $V\setminus\{0\}/\sim$ where $\vx \sim \vy$ iff $\vx = s \vy$ for $s \in \bR\setminus\{0\}$. 
Denote by $\PGL(V)$ the group of automorphisms induced by $\GL(V)$ on $\bP(V)$. 

Define the projective sphere 
$\SI(V):=V\setminus\{0\}/\sim_+ \hbox{ where } \vx \sim_+ \vy \hbox{ iff } \vx = s\vy \hbox{ for } s \in \bR_+$.
There is a double cover $\SI(V) \ra \bP(V)$ with the deck transformation group 
generated by the antipodal map \hypertarget{term-A}{$\mathcal{A}:\SI(V) \ra \SI(V)$}. 
We will denote by $\llrrparen{\vv} $ the equivalence 
class of $\vv$. 
Let $a_{-}=\mathcal{A}(a)$ denote the antipodal point of $a$. 
Also, given a set $A \subset \SI(V)$, we define 
$A_{-} = \mathcal{A}(A)$. 
Let $\SL_\pm(V)$ denote the group of linear maps of determinant $\pm 1$. 
$\SL_\pm(V)$ acts on $\SI(V)$ effectively and transitively. 

We embed $\Lspace$ as an \hypertarget{term-hemi}{ open hemisphere} 
	in $\SI(\bR^4)$
by sending \[ (x_1, x_2, x_3) \hbox{ to }\,\llrrparen{ 1, x_1, x_2, x_3}\,
\hbox{ for } x_1, x_2, x_3 \in \bR.\]  
The boundary of $\Lspace$ is a great sphere $\Ss$ given by $x_0 =0$. 
The rays of the  positive cone end in an open disk $\Ss_+ \subset \Ss$, and 
the rays of the negative cone end in an open disk $\Ss_- \subset \Ss$
where $\mathcal{A}(\Ss_{\pm}) = \Ss_{\mp}$.
The closure of $\Lspace$ is a $3$-hemisphere \hypertarget{not-H}{$\mathcal{H}$} bounded by
$\Ss$.

The group $\Isom^{+}(\Lspace)$ of orientation-preserving 
isometries acts on $\Lspace$ as a group of affine transformations 
and hence extends to a group $\SL_\pm(\bR^{4})$ of projective automorphisms of $\SI(\bR^{4})$. It restricts to 
the projective automorphism groups of $\mathcal H$
and of $\Ss$ and $\Ss_\pm$ respectively. 






 \subsection{Thin parts of hyperbolic surfaces} \label{sub:thin} 
 As a subgroup of $\SL_\pm(\bR^3) \subset \SL_\pm(\bR^{4})$, the Lorentz group 
 $\SO(2,1)$ acts on $\Ss_+ \cup \Ss_-$ where $\SO(2,1)^o$ is the subgroup acting on $\Ss_+$
 and is an index two subgroup.  
 The space $\Ss_+\cup \Ss_-$ carries a $\SO(2,1)$-invariant hyperbolic metric, 
 and $\SO(2, 1)^o$ acting on $\Ss_+$ forms a Beltrami-Klein model of the hyperbolic plane. 
 We denote the complete Beltrami-Klein metric by $d_{\Ss_+}$. 
 
 Given a nonelementary discrete subgroup $\Gamma$ of $\SO(2, 1)^o$ acting freely on $\Ss_+$,
 we obtain a complete orientable hyperbolic surface $\Sf := \Ss_+/\Gamma$
 with the covering map $p_{\Sf}: \Ss_+\ra \Sf$. 
An {\em end neighborhood} of a manifold $M$ is 
 a component $U$ of the complement of a compact subset of $M$
 that has a noncompact closure $\clo(U)$.

Let $\eps > 0$ be the Margulis constant.
Recall that the {\em {\rm (}$\epsilon$-{\rm )}thin part} of $\Sf$ is the set of points through which
essential loops \newch{with lengths} $< \eps$ pass. 
The thin part is a union of open annuli. 
For a parabolic element, there is an embedded annulus that is a component of 
the thin part. It is a component of $\Sf\setminus c$ for a simple closed curve $c$, and 
a horodisk $H$ in the hyperbolic plane covers it. Here, 
$H/\langle g \rangle$ is isometric to the end-neighborhood
for a parabolic isometry $g$ acting on $H$. 
This end-neighborhood is called a {\em cusp neighborhood}. 
For $\epsilon > 0$, 
a parabolic  ($\epsilon$-)end-neighborhood is 
a component of the $\epsilon$-thin part of $\Sf$ that is an end-neighborhood. 

We choose a union $E$ of disjoint open cusp-neighborhoods in $\Sf$ in an $\eps$-thin part of $\Sf$
and its inverse image $\mathscr{H}$ in $\Ss_{+}$ which 
is a union of mutually disjoint horodisks.

\subsubsection{\purple{Divergence functions}} \label{subsub:div} 

\begin{definition}\label{defn:curvdist} 
\newch{ 
Let $\tilde g:I \ra \Sf$ be an arclength-parameterized geodesic and 
let $g: I \ra \Sf$ be a freely homotopic arc 
which is a  closed arc whenever $\tilde g$ is closed. 
Suppose that there exists a continuous map $A:I \times \bR \ra \Sf$ so that 
\begin{itemize} 
\item $A(t, 0) = \tilde g(t)$ for each $t\in I$, 
\item Define $A_t(s) := A(t, s)$ for each $t\in I, s\in \bR$. Then 
$A_t$ is an arclength-parameterized geodesic perpendicular to $\tilde g$ at 
$\tilde g(t)$ for each $t\in I$, and 
\item $A(t, s_t) = g(t)$ for some $s_t$ for each $t\in I$. 
\end{itemize} 
Then we say that we can {\em project $g$ to $\tilde g$}
by the {\em perpendicular family of geodesics} $A_t$.
If $|s_t| < \eps$ for all $t$, then we say that $g$ is at 
a {\em $d_{\Ss_+}$-distance} $< \eps$. 
The correspondence $g(t) \ra \tilde g(t)$ for $t \in I$ 
to be called  the {\em perpendicular projection},
and the geodesic \newch{segment} between $g(t)$ to $\tilde g(t)$ for each $t$
is called the {\em perpendicular projection path}
and its length $s_t$ the {\em perpendicular distance} at $t$.    
}
\end{definition}
Of course, the family of perpendicular geodesics may not be uniquely determined,
but we make choices. 

We call the $f$ defined as below the {\em divergence function} from $g_1$ to $g_2$.

\begin{lemma}\label{lem:divergence} 
\newch{Let $g_1(t)$ and $g_2(t)$, $t\in [0, l]$, 
 denote the parameterization of geodesics $g_1$ and $g_2$ where $g_1$ is arclength parameterized. 
Suppose that we can project $g_2$ to $g_1$ by a perpendicular family of geodesics $A_t$. We orient these by the forward directions.}  
\begin{itemize} 
\item 	\newch{We orient $A_t$  so that the frame of its tangent vector and
that of $g_1$ is positively oriented at $A_t(0)= g(t)$ for each $t\in  I$.
Define $f(t)$ to be the oriented path length on $A_t$ from $g_1(t)$ to $g_2(t)$. }
\item 	\newch{Let $e_+:= f(l)$ and $e_-:=f(0)$.} 
\item 	Let $\alpha_+$ and $\alpha_-$ denote 
$\pi/2$ minus the respective angles 
at the forward endpoint $v_+$ and 
the starting endpoint $v_-$ of $g_2$ 
	made by $A_0$ and $A_l$ and $g_2$, respectively. 
\end{itemize} 
\newch{Assume $l \geq 1$. 
	Then the following hold\/{\em :}}
	\begin{itemize} 
		\item[{\em (i)}] 
If $|f(0)|, |f(l)| \leq C$, then $|f(t)| < C$ for $ 0 < t < l$.  
Furthermore, $|f|$ has at most onel minimum. 
	\item[{\em (ii)}] The 
 integral of $\newch{|f(t)|}$ over $[0, l]$ is less than $\newch{2|f(0)|+ 2|f(l)|}$. 
	\item[{\em (iii)}] $\newch{\sum_{i=2}^{m-1} |f(t_i)| \leq  2|f(t_1)|+ 2|f(t_m)|}$ if $t_1, \dots, t_m$, 
$t_i \leq t_{i+1}$ for each $i=1, \dots, m-1$, $m \geq 4$,
satisfies $|t_{i+1} - t_i| \geq 1$. 
	\item[{\em (iv)}] 
For the family of functions $l \geq 1$, 
	$F_l: \newch{\bR^2 \ra \bR^2}$ sending $(\alpha_+, \alpha_-)$ to 
	$(e_+, e_-)$ for  each $l \geq 1$ is $3.3$ times a function
decreasing the max norm provided $\newch{|}\alpha_\pm\newch{|} \leq  1/20$.  
\end{itemize} 	
	\end{lemma} 
\begin{proof} 
(i)	
\newch{
We can show by \cite{geodesicII}: 
\begin{multline} 
f(t)= g(y(t)) \hbox{ for } \\
g(y):= \frac{1}{2} (\log (1+y)) -\frac{1}{2} \log(1-y)) \hbox{ and } y(t)= \pm  \frac{\pm c_- s_+ \sinh(t) + c_+ s_- \sinh(l - t)}{c_-c_+ \sinh(l) }  
\end{multline} 
where $c_i = \cosh(|e_i|), s_i= \sinh(|e_i|)$, $i=-,+$. 
Notices that open geodesics become disjoint 
if only one of the endpoints is changed. 
We may assume that $e_-$ and $e_+$ are positive
since old $|g(y(t))|$ is bounded above by the new $|g(y(t))|$ when we change 
all signs to be positive.
We need to consider the case when the signs 
are $+$ without loss of generality. 
}
\newch{
Now $g$ has the expression as a Taylor series of $y$ with only odd powers: 
\[ g(y)=  y + \frac{y^3}{3} + \frac{y^5}{5} + \dots. \]
We see that $y$ as a function of $t$ can have exactly one interior minimum with only non-negative values
or else it is strictly decreasing with some negative values. 
Since this property holds for the odd powers of $y$ with the identical 
interior minimum point and zeros,  our result follows for $e_-, e_+ \geq 0$.
For other cases, we use hyperbolic trigonometry. 
}

(ii) 
%
\newch{
For (ii) and (iii), we can still look at $y(t)$ with positive coefficients only since 
we are seeking the upper bounds. 
We denote by $\tilde y$ the expression obtained 
from $y$ by respectively replacing terms $\sinh(t)$ and $\sinh(l-t)$ by strictly larger
$\frac{1}{2}\exp(t)$ and $\frac{1}{2}\exp(l-t)$ for $0 \leq t \leq l$. 
That is, 
\[\tilde y(t) : = \frac{c_- s_+ e^t + c_+ s_- e^{l - t}}{2 c_-c_+ \sinh(l) }. \]
Now, 
\[\tilde y(l) = \frac{e^l \tanh |e_+| + \tanh |e_-|}{2 \sinh l} 
\hbox{ and } 
\tilde y(0) =  \frac{\tanh |e_+|  + e^l \tanh |e_-|}{2 \sinh l} 
\]
Using $ \tanh(x)< x$ for $x> 0$, and the fact that 
$1/(2\sinh(l)) < 0.5$ 
and $e^l/(2 \sinh(l)) < 1.2$ for $l \geq 1$ while they from strictly decreasing functions of $l$, 
we can show 
\begin{equation}\label{eqn:tildey}
\tilde y(l) < (1.2)|e_+| + (0.5) |e_-|
\hbox{ and }\tilde y(0) < (0. 5)|e_+| + (1.2) |e_-|.
\end{equation} 
By hyperbolic right triangle rules, we can show 
$|e_+|, |e_-| < 0.26$ provided $|\alpha_\pm| < 0.2$ for $l \geq 1$
by considering the contrapositive and the worst cases
since it is again enough to consider the case $e_+, e_- \geq 0$.
Hence $\tilde y(l), \tilde y(0) < 0.5$ and $\tilde y(t) < 0.5$ by the convexity of $\tilde y$.
}

\newch{
Since 
$g$ is strictly increasing, and $0 < y(t) < \tilde y(t)$ for $t> 0$, 
we obtain 
\[\int_o^l |g(y(t))| dt \leq 
\int_0^l |g(\tilde y(t))| dt\] 
provided $0< \tilde y(t) < 1$. 
Since 
the Taylor series becomes a sum of terms that are 
postive number times $\exp(ml + nt)$ for $m, n\in \bZ$, 
we obtain by a term-by-term argument
\[  \int_o^l |g(y(t))| dt  \leq 
\int_0^l |g(\tilde y(t))| dt \leq |g(\tilde y(l))|+|g(\tilde y(0))|.\]
Since $g(x) < 1.1 x $ for $0< x < 0.5$ by the convexity of $g$, 
\eqref{eqn:tildey} implies
\[ 
 |g(\tilde y(l)|+|g(\tilde y(0))| < 2(e_- + e_+) = 2|f(0)| + 2|f(l)|
\] 
}
%
%
%
%
%

(iii) $\sum_{i=2}^{m-1} |f(t_i)|$ is smaller than the integral of $|f|$ over $t_1$ to $t_m$
since we can break up $|f|$ into parts as above and use the step functions dominated by $|f|$. 
\newch{(We may skip an interval containing the unique minimal point.) 
Hence, the sum is smaller than the twice of the sum of $|f(t_1)|$ and $|f(t_m)|$ by (ii). }

(iv) 
Here again, we can look only at the cases when $e_+, e_-\geq 0$
and $\alpha_- \leq 0, \alpha_+ \geq 0$: Replacing the segments at $v_+, v_-$ 
with ones with positive $e_+, e_-$, we can show by hyperbolic geometry that
the max norm of old $(\alpha_+, \alpha_-)$  is greater or
equal to that of new one while $(e_+, e_-)$ does not change.
In \cite{Vanalysis},  we compute the map 
$[0, 1) \times (-1, 0] \ra \bR_+ \times \bR_+$ 
which sends 
\[(x_-, x_+) = (\cos(\pi/2+\alpha_-), \cos(\pi/2+\alpha_+))
=(-\sin(\alpha_-), -\sin(\alpha_+)) \mapsto (e_-, e_+).\]
We computed \newch{by analytic continuation} 
\begin{multline} \label{eqn:e1e2}
e_-= \log \left({\frac{(x_- \coth(l)+x_+
   \textrm{csch}(l))}{{\sqrt{1-x_-^2}}}} +\sqrt{1+\frac{(x_- \coth(l)+x_+ \text{csch}(l))^2}{1-x_-^2}}\right) \\ 
e_+ = \log \left({\frac{(x_+ \coth (l)+{x_-}
   \text{csch}(l))}{\sqrt{1-x_+^2}}}+\sqrt{1+\frac{(x_+ \coth
   (l)+x_- \text{csch}(l))^2}{1-x_+^2}}\right),
\end{multline} 
\newch{where there is a symmetry switching $(e_-, x_-, x_+)$ with $(e_+, x_+, x_-)$,
and we modified the computations in \cite{Vanalysis}
to obtain an analytic continuation when $x_+, x_-$ are very small. }
We use the series 
\begin{multline} 
\log(y + \sqrt{y^2+1}) = \log\left(\sqrt{y^2+1}\right) + \log\left(1 + \frac{y}{\sqrt{1+y^2}}\right)= \\
\frac{1}{2}\log(1+y^2) + \left(\sum_{n=1}^\infty \frac{(-1)^{n+1}}{n}\left(\frac{y}{\sqrt{y^2+1}}\right)^n\right),
\end{multline} 
which is always absolutely convergent. 
\newch{
We may plug into this
\[y = \frac{x_- \coth(l)+x_+ \textrm{csch}(l))}{\sqrt{1-x_-^2}} \hbox{ and }  
\frac{x_+\coth (l)+{x_-} \text{csch}(l)}{\sqrt{1-x_+^2}}, 
\] 
to obtain $e_-$ and $e_+$ respectively in \eqref{eqn:e1e2}. 
}
\newch{
Since $|x_+|, |x_-| < 1/\sqrt{2}$,
$|e_-|$ and $|e_+|$ respectively are bounded above by 
\begin{multline}
\frac{1}{2} \log (1+ 2 v^2) + \left(  \sum_{n=1}^{\infty}  \frac{(-1)^{n+1}}{n } 
(\sqrt{2}v)^n \right)  = \frac{1}{2} \log (1+ 2 v^2)  + \log(1 + \sqrt{2}v)
 \\
\hbox{ for } 
v = (|x_-| \coth(l)+ |x_+| \textrm{csch}(l)) \hbox{ and } 
(|x_+| \coth (l)+{|x_-|} \text{csch}(l)). 
\end{multline} 
}
By the Taylor analysis to order $1$ \newch{ and the Lagrange form of the error}, the function is 
smaller than \newch{$\frac{3}{2} v $ for $v < 1/8$. 
(See \cite{Calculus}.)}
Since $\coth(1) \leq 1.32$ and $\mathrm{csch}(1) \leq 0.86$,
it follows that 
$(x_-, x_+) \mapsto (e_-, e_+)$ is
\newch{ $\frac{3}{2}(1.32+ 0.86)$ times a norm-nonincreasing function} in terms of max norms provided $\max\{|x_-|, |x_+|\} <  \frac{1}{8\times 2.18}$. 
\newch{Since $x \ra \sin(x)$ is a strictly convex for $0\leq  x< 1/(8 \times 2.18)$, 
we take angles to satisfy $|\alpha_-|, |\alpha_+| \leq 0.05 < \arcsin\left( \frac{1}{8\times 2.18}\right)$.
Then since $\arcsin(\alpha)<1.00056 \alpha$, $0 \leq \alpha  < 0.05$, 
we are done.
(See \cite{Vanalysis}.)}
	
	\end{proof}

A {\em broken geodesic} is a path consisting of 
parameterized geodesics except for 
isolated sets of points. 
For a broken geodesic, a {\em vertex} is a nonsmooth point of it. 
A {\em turning angle} at a vertex is the angle that the tangent vector
the ending geodesic and one for the starting geodesic makes 
at the vertex. 
Since we are on an oriented surface $\Sf$, we can say that 
the path can turn right or left at the vertex. 
The left-turning angle will be considered positive, 
and the right-turning  angle will be considered negative.  

\begin{lemma} \label{lem:perturb} 
\newch{Let $g$ be a closed curve in $\Sf$ consisting of geodesic segments.}
Suppose that $g$ is not parabolic. 
Suppose that the turning angles at vertices are within 
$(- \delta,  \delta)$. 
Assume that $\delta < 1/40. $ 
\newch{For the closed geodesic $\tilde g$ freely homotopic to $g$,
suppose that each geodesic segment of $g$ has a projected image with 
the length at least $1$. 
Then $\tilde g$ has  
an arclength parameterization $\tilde g(t)$ with following properties{\em :} }
\begin{itemize} 
\item There is a corresponding perpendicular parametrization $g(t)$ of $g$ 
so that $d_{\Ss_+}(g(t), \tilde g(t))  \leq \eps $ for 
$0 < \eps \leq 6.6\delta$. 
\item Let $\zeta$ be a bounded $1$-form  defined on 
a compact subset $K$.  Let $C_K$ denote the maximum value of the norm of 
$\zeta$.  Let $\alpha$ be a union of mutually disjoint geodesic subarcs in
a gedoesic subarc in $g$, going into $K$, corresponding to 
a union $\tilde \alpha$ of subarcs in $\tilde g$ where every perpendicular 
geodesic path between them \newch{is} also going into $K$.
Then the absolute value of 
the difference of respective integrals of $\zeta$ on $\alpha$ and $\tilde \alpha$ is less than  $4C_K\eps$.
\end{itemize} 
\end{lemma} 
\begin{proof} 
\newch{Let $\tilde g:I \ra \Sf$ denote the closed geodesic. }
	We draw the perpendicular lines at points of $\tilde g$ passing through broken points of $g$. 
A vertex $g(t_0)$ is {\em good} is the geodesic segments ending there has angles in $(\pi/2-2\delta, \pi/2+2\delta)$ 
with the perpendicular line to $\tilde g$ at $\tilde g(t_0)$.  
\newch{A geodesic segment $e$ is {\em good at $v$} 
if it satisfies the condition for $e$ for that side.}
We let $f: I \ra \bR$ be a function given by 
sending $t $ to the perpendicular distance if 
$g(t)$ is in the right side of $\tilde g$ and to 
$(-1)$ times that if $g(t)$ is in the left side. 

\newch{We prove by induction on the number of vertices. }
If a vertex of $g$ corresponds to  a  local maximum or the local minimum of the perpendicular distance function, then it is a good vertex
\newch{since the turning angles are within $(-\delta, \delta)$. }
\newch{
Since $g$ is closed, there are at least two good vertices.
For a broken geodesic, 
a local maximum of $|f|$ cannot occur in the interior point of a segment
by hyperbolic geometry, 
but a local minimum of $|f|$ can occur. }

\newch{
We  consider a maximal subarc $m$ in $g$ with no good interior vertex  and
$f$ is either increasing or decreasing. 
Assume that the number of geodesic segments in $m$ is $\geq 2$.
Let $v$ be the vertex with the maximal $|f|$-value on $m$. 
Here, $v$ is good since $m$ is maximal. 
Suppose that the end vertex $v'$  of the first geodesic segment $e$ in $m$ next to $v$
has the same sign of the corresponding $f$-values. 
Then $e$ is good at $v$ and $v'$ by elementary hyperbolic 
geometry using the hyperbolic right triangle 
with vertices $v$ and $v'$ and 
the right angle on the perpedicular line to $\tilde g$ passing $v$. 
Then the perpendicular distance function  to $e$ is given by above Lemma \ref{lem:divergence} and hence $f$-values of $e$ are in $(-6.6\delta, 6.6\delta)$. 
Hence, so is $m$ since we have the deceasing or increasing function where 
$v$ has the maximum $|f|$-value. 
}

\newch{
Suppose that $f(v)$ and $f(v')$ have different signs. $v'$ is not a local minimum 
or a local maximum. 
Now consider  $m'$ given by $m$ 
with the edge $e^o$ and $v$ removed. 
Then the $f$-values have the same signs on $m'$ and the maximal $|f|$-value occurs 
at the other end which must be a good vertex also. 
Now the above applies and $f$-values on $m'$ are in $(-6.6\delta, 6.6\delta)$. 
For $e$, we use the hyperbolic triangle with the vertex $v$ and the two vertices 
that are perpendicular projections $v_1$ and $v'_1$ 
of $v$ and $v'$ on $\tilde g$ respectively. 
Let $e'$ be the edge opposite $v_1$. 
Now $e'$ is good at $v$ and $v'$ since the angle sum of the triangle must be $< \pi$.
Lemma \ref{lem:divergence} shows that 
$f(v) \in (-6.6\delta, 6.6\delta)$. 
Since $f$ on $e$ is strictly decreasing or increasing, we have the result for $e$.
}

\newch{
We do these processes of estimation for such maximal subarcs.  
A segment $e$ with a local minimum of $|f|$ in its interior can occur after the process ends. The vertices of $e$ can be a vertex of such maximal subarcs or 
a good vertex. 
We need to work with quadrilateral
obtained by projecting $e$ to $\tilde g$ and the corresponding sides. 
We can use a reflection by the geodesic containing the shortest segment 
between $e$ and its projection to $\tilde g$ and compare. 
We can show that 
either both angles at $v$ and $v'$ satisfy the premises of Lemma \ref{lem:divergence} 
or  $|f|$-values are both less than $6.6\delta$. 
Since the perpendicular distance functions have good vertices or a segment with 
an interior local minimum of $|f|$ in between. 
}

\newch{Suppose that the number of segments in $m$ is $1$. 
Take $v$ as above. 
If both endpoints are good with respect to the segment, then we are done
by Lemma \ref{lem:divergence}. 
Otherwise, we can extend this segment at the other end point which is not good. 
If $|f|$ becomes zero, then we can use as above 
the right triangle with the hypothenuse obtained by 
extending the segment until $|f|$ becomes zero.
If not, then there is a local minimum point where we can directly use 
Lemma \ref{lem:divergence}. 
}

\newch{
The last item follows by using the divergence function. 
We obtain the bounds by (ii) of Lemma \ref{lem:divergence}.
}
	\end{proof}

\begin{lemma}\label{lem:vertical}
Let $l$ be a maximal geodesic in a horodisk $B$ in  the upper half-space model
given by $y > 1$. 
Suppose that the difference of the $x$-coordinates of the endpoints is $t$. 
Then the angle $\theta$ that $l$ makes with the vertical line
satisfies $\theta(t) = \pi/2 - \arctan(t/2)$.
Also, $t \mapsto t \theta(t)$ is a strictly increasing function for $t \in (0, \infty)$. 
$t \theta(t) < 2$, and the limit is $2$ as $t \ra \infty$. 
	\end{lemma} 
\begin{proof} 
	The lemma follows from elementary geometry since the geodesics are circles
perpendicular to $y=0$ \newch{in the upper half-space model}. 
(See \cite{Calculus}.)
	\end{proof}

\subsection{Hausdorff limits} \label{sub:Hlimit}
The projective sphere $\SI^3$ is a compact metric space, 
and has a natural standard metric \hypertarget{term-bdd}{$\bdd$}.  
For a compact set $A \subset \SI^3$, we define 
\[\bdd(x, A) = \inf\{\bdd(x, y) | y \in A \}.\] 
We define the $\eps$-$\bdd$-neighborhood 
$N_{\bdd, \eps}(A):= \{x| \bdd(x, A) < \eps \}$ for a point or 
a compact set $A$. 
We define the \hypertarget{term-hm}{{\em Hausdorff distance}} between two compact sets $A$ and $B$
as follows: 
\[ \bdd_{H}(A, B) = \inf\{\delta| \delta> 0, B \subset N_{\bdd, \delta}(A), A \subset N_{\bdd, \delta}(B) \}. \]

A sequence $\{A_{i}\}$ of compact sets 
{\em converges} to a compact subset $A$ if $\{\bdd_{H}(A_{i}, A)\} \ra 0$. 
The limit $A$ is characterized as follows if it exists: 
\[ A := \{a\in \SI^3|\, a \hbox{ is a limit point of some sequence } \{a_i| a_i \in A_i\} \}.   \]
See Proposition E.12 of \cite{BP92} for proof of this and Proposition \ref{prop:BP} 
since the Chabauty topology for compact spaces is the Hausdorff topology. 
(See Munkres \cite{Munkres75} also.)

\begin{proposition}[Benedetti-Petronio \cite{BP92}] \label{prop:BP} 
	A sequence $\{A_i\}$ of compact sets converges to $A$ in the Hausdorff topology if and only if 
	both of the following hold\/{\em :}
	\begin{itemize} 
		\item If there is a sequence $\{x_{i_{j}}\}$, $x_{i_j} \in A_{i_j}$, where $x_{i_j} \ra x$ for 
		$i_j\ra \infty$, then $x \in A$. 
		\item If $x \in A$, then there exists a sequence $\{x_{i}\}$, $x_i \in A_i$, such that $\{x_i\} \ra x$. 
		\end{itemize} 
\end{proposition}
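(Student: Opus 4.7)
The plan is to prove the two implications separately, using compactness of $\SI^3$ throughout so that sequences have convergent subsequences and the Hausdorff distance is achieved by nearest points.

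For the forward direction, assume $\bdd_H(A_i, A) \to 0$ and set $\delta_i := \bdd_H(A_i, A)$. For the first bullet, suppose $x_{i_j} \in A_{i_j}$ and $x_{i_j} \to x$. Since $A_{i_j} \subset N_{\bdd,\delta_{i_j}}(A)$, we have $\bdd(x_{i_j}, A) \leq \delta_{i_j} \to 0$. Passing to the limit and using that $A$ is closed (being compact), we conclude $x \in A$. For the second bullet, fix $x \in A$. Since $A \subset N_{\bdd,\delta_i}(A_i)$, compactness of $A_i$ lets us choose $x_i \in A_i$ realizing $\bdd(x, A_i) \leq \delta_i$. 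Then $x_i \to x$.

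For the converse, I would argue by contradiction: if $\bdd_H(A_i, A) \not\to 0$, then after passing to a subsequence (still indexed by $i$) there is $\eps > 0$ with $\bdd_H(A_i, A) \geq \eps$. By definition of $\bdd_H$ this splits into two cases: either (a) there exist points $x_i \in A_i$ with $\bdd(x_i, A) \geq \eps$, or (b) there exist points $y_i \in A$ with $\bdd(y_i, A_i) \geq \eps$, and at least one occurs along a further subsequence. In case (a), compactness of $\SI^3$ gives a convergent subsequence $x_{i_k} \to x$; the first bullet forces $x \in A$, yet $\bdd(x_{i_k}, A) \geq \eps$ passes to the limit as $\bdd(x, A) \geq \eps$, a contradiction. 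In case (b), compactness of $A$ gives $y_{i_k} \to y \in A$; the second bullet supplies $z_i \in A_i$ with $z_i \to y$, so $\bdd(y_{i_k}, A_{i_k}) \leq \bdd(y_{i_k}, y) + \bdd(y, z_{i_k}) \to 0$, contradicting $\bdd(y_{i_k}, A_{i_k}) \geq \eps$.

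There is no real obstacle here beyond bookkeeping; the only point that requires a little care is that the Hausdorff inequality $\bdd_H(A_i, A) \geq \eps$ must be translated into an honest sequence of witness points, which is where we use compactness of each $A_i$ and of $A$ to realize the infimum defining $\bdd(\cdot, \cdot)$. Everything else is standard metric-space reasoning in the compact ambient space $\SI^3$, so the argument is essentially the one given in Benedetti--Petronio \cite{BP92} and can be cited directly if desired.
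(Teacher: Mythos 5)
Your proof is correct. A minor point of rigor: in the forward direction the inf in the definition of $\bdd_H$ gives a priori only $\bdd(x_{i_j}, A) \leq \delta_{i_j} + \eps'$ for every $\eps' > 0$, which still yields $\bdd(x_{i_j}, A) \leq \delta_{i_j}$, so the conclusion stands; and in the converse you should make explicit (as you gesture at) that compactness of $A_i$ and $A$ is what converts $\bdd_H(A_i, A) \geq \eps$ into honest witness points $x_i$ or $y_i$ realizing the suprema, followed by a pigeonhole/subsequence step so a single case (a) or (b) holds along the subsequence. Note that the paper itself does not supply a proof of this proposition: it cites Proposition E.12 of Benedetti--Petronio \cite{BP92} and remarks that the Chabauty topology on compact sets coincides with the Hausdorff topology. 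Your argument is the standard self-contained one, consistent with what that reference proves, so there is nothing substantive to compare against the paper's (nonexistent) proof.
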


Immediately we obtain
\begin{corollary}\label{cor:geoc} 
	Suppose that a sequence $g_i$ of projective automorphisms of $\SI^3$
	converges to a projective automorphism $g$, 
	and $\{K_i\} \ra K$ for a sequence $K_i$ of compact sets. 
	Then $\{g_i(K_i)\}  \ra g(K)$.
\end{corollary}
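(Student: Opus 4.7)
The plan is to verify directly the two criteria of Proposition \ref{prop:BP} characterizing Hausdorff convergence, applied to the sequence $H_i(K_i)$ and the candidate limit $H(K)$. The only input beyond Proposition \ref{prop:BP} itself is the joint continuity of the action map $(g, x) \mapsto g(x)$ on $\PGL_\pm(\bR^4) \times \SI^3$, which is a standard property of projective automorphisms of the compact sphere $\SI^3$; in particular $H_i \to H$ implies that $H_i$ converges to $H$ uniformly on $\SI^3$.

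For the first criterion, I would take an arbitrary convergent subsequence $y_{i_j} \to y$ with $y_{i_j} \in H_{i_j}(K_{i_j})$, and write $y_{i_j} = H_{i_j}(x_{i_j})$ for some $x_{i_j} \in K_{i_j}$. Compactness of $\SI^3$ lets me pass to a further subsequence so that $x_{i_{j_k}} \to x$ for some $x \in \SI^3$. Applying the first bullet of Proposition \ref{prop:BP} to the convergence $K_i \to K$ gives $x \in K$, and joint continuity gives $H_{i_{j_k}}(x_{i_{j_k}}) \to H(x)$; since the original sequence converges to $y$, we deduce $y = H(x) \in H(K)$.

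For the second criterion, given $y \in H(K)$, I would write $y = H(x)$ with $x \in K$, use the second bullet of Proposition \ref{prop:BP} to produce $x_i \in K_i$ with $x_i \to x$, and conclude from joint continuity that $H_i(x_i) \to H(x) = y$, with $H_i(x_i) \in H_i(K_i)$ for every $i$. Together the two criteria yield $H_i(K_i) \to H(K)$ in the Hausdorff topology.

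There is essentially no obstacle here beyond bookkeeping; the only point that deserves a brief remark is the uniform convergence $H_i \to H$ on $\SI^3$, which is automatic since $\SI^3$ is compact and the action of $\SL_\pm(\bR^4)$ on it is continuous in both variables.
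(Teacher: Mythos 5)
Your proof is correct and follows exactly the route the paper intends: the paper offers no argument beyond the word "immediately" after Proposition \ref{prop:BP}, and your verification of the two Benedetti--Petronio criteria, using joint continuity of the $\SL_\pm(\bR^4)$-action on the compact sphere $\SI^3$, is the natural way to fill that in.
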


For example, a sequence of closed hemispheres will have a subsequence converging to 
a closed hemisphere.

\subsection{ The Poincar\'e polyhedron theorem} \label{sub:poincare} 

\begin{definition} \label{defn:matching} 
Let $\tilde N$ be an oriented manifold with empty or nonempty boundary on which a free group $\Gamma$ acts properly and freely. 
	Let $\mathcal{S}$ be a finite generating set $\{\gamma_{1}, \dots, \gamma_{2{\mathbf{g}}}\}$ in $\Gamma$
	with $\gamma_{i+\mathbf{g}} = \gamma_{i}^{-1}$ for indices in $\bZ/2{\mathbf{g}}\bZ$. 
	The collection of codimension-one submanifolds 
	$A_{1}, \dots, A_{2{\mathbf{g}}}$ 
	satisfying the following properties is called a {\em matching collection of sets} under $\mathcal{S}$:
	\begin{itemize}
	\item $\tilde N$ is a union of two submanifolds $\tilde N_{1}$ and 
	$\tilde N \setminus \tilde N_1^o$ 
	with $A_{1}\cup \dots \cup A_{\mathbf{g}} \subset \Bd_{\tilde N} \tilde N_{1}$ 
	for $i \in \bZ/2{\mathbf{g}}\bZ$. 
	\item $A_{i}$ is oriented by the boundary orientation from $\tilde N_{1}$. 
		\item $\gamma_{i}(A_{i}) = A_{i+{\mathbf{g}}}$ for $i\in \bZ/2{\mathbf{g}}\bZ$, 
		\item $ \gamma_{k}(A_{l})\cap A_{m} = \emp$ for $(k,l,m) \ne (i, i, i+{\mathbf{g}})$, and
		\item $\gamma_{i}$ is orientation-preserving for each $i \in \bZ/2{\mathbf{g}}\bZ$ and is orientation-reversing for $A_i$ and $A_{i+\mathbf{g}}$. 
	\end{itemize} 
	\end{definition} 

The following is a version of the Poincar\'e polyhedron theorem.
We generalize Theorem 4.14 of Epstein-Petronio \cite{EP94}.
Here, we drop their distance lower-bound conditions, without which 
we can easily find counter-examples. 
However, we replace the condition with exhaustion by compact submanifolds where 
the lower-bounds hold. Thus, we give a proof.
But we did not fully generalize the theorem by allowing 
sides of codimension $\geq 2$. 
\begin{proposition}[Poincar\'e] \label{prop:Poincare}
	Let $N$ be a connected manifold with empty or nonempty boundary
	covered by a manifold $\tilde N$ with a free deck transformation group $\Gamma_N$. 
	\begin{itemize} 
	\item Let $F$ be a connected codimension-zero submanifold with 
	boundary in $\tilde N$ that is 
	a union of mutually-disjoint, codimension-one, properly-embedded, 
	two-sided submanifolds
	 $A_1, \dots, A_{2{\mathbf{g}}}$ with boundary orientation. 
	\item Let $N_i \subset N$, $i=1, 2, \dots$, be an exhausting sequence of 
	compact submanifolds of $N$
	where $N_i \subset N_{i+1}$ for $i=1, 2, \dots$, 
	and the inverse image $\tilde N_i$ of $N_i$ 
	in $\tilde N$ is connected. 
	\item Let $\mathcal{S}$ be a finite generating subset of $\Gamma_N$ and 
	$\{A_1, \dots, A_{2{\mathbf{g}}}\}$ is matched under $\mathcal{S}$.
	\item $F \cap \tilde N_i$ is compact, and $F \cap \tilde N_i \cap A_j \ne \emp$ 
	for each $i$ and $j$. 
	\end{itemize} 
	Then $F$ is a fundamental domain of $\tilde N$ under $\Gamma_N$. 
\end{proposition}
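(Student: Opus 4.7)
The plan is to construct an auxiliary manifold $\tilde M$ obtained by gluing $\Gamma_N$-indexed copies of $F$ along matched sides, exhibit a natural projection $\pi \colon \tilde M \to \tilde N$, and prove it is a homeomorphism; equivariance of $\pi$ under $\Gamma_N$ will then identify $F$ with a fundamental domain. Explicitly, I set $\tilde M := (\Gamma_N \times F)/{\sim}$, where $(\gamma, x) \sim (\gamma \gamma_i^{-1}, \gamma_i(x))$ whenever $x \in A_i$. The orientation hypothesis---each $\gamma_i$ preserves the ambient orientation of $\tilde N$ while reversing the boundary orientation of $A_i$---translates to a reversal of the transverse direction, so the two local half-spaces of $F$ near $A_i$ and of $\gamma_i^{-1} F$ near its image fit together into a single manifold chart across the interior of $A_i$. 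The disjointness of the $A_j$'s, together with $\gamma_k(A_l) \cap A_m = \emptyset$ for $(k,l,m) \ne (i, i, i+\mathbf{g})$, rules out any further identifications, so $\tilde M$ is a topological manifold of the correct dimension, carrying a free $\Gamma_N$-action by left multiplication on the first factor.

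Next, $\pi([\gamma, x]) := \gamma(x)$ is continuous, $\Gamma_N$-equivariant, and, by the chart description, a local homeomorphism. Its image $\Gamma_N \cdot F$ is nonempty and open. It is also closed: if $z_n = \gamma_n y_n \to z$ with $y_n \in F$, then $z$ lies in some $\tilde N_i$ and eventually $z_n \in \tilde N_i$, so $y_n = \gamma_n^{-1} z_n \in F \cap \gamma_n^{-1} \tilde N_i = F \cap \tilde N_i$, which is compact by hypothesis; after passing to a subsequence $y_n \to y \in F$, proper discontinuity forces $\{\gamma_n\}$ to be eventually constant, and $z = \gamma y \in \Gamma_N \cdot F$. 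Connectedness of $\tilde N$ then gives surjectivity. The same compactness mechanism yields path lifting: a path $\alpha \colon [0,1] \to \tilde N$ has image in some $\tilde N_i$, the preimage $\pi^{-1}(\tilde N_i)$ is a union of $\Gamma_N$-copies of $F \cap \tilde N_i$ with boundary gluings only among finitely many translates accumulating on any compact set, so partial lifts are confined to a compact subset of $\tilde M$, converge, and extend. Thus $\pi$ is a covering map, and $\tilde M$ is connected because $\Gamma_N$ is generated by $\mathcal{S}$ and each identification in $\mathcal{S}$ glues two copies of the connected set $F$.

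The main obstacle is then to show $\pi$ has degree one---equivalently, $\gamma F^o \cap F^o = \emptyset$ for $\gamma \ne 1$. In the intended application $\tilde N = \Lspace \cong \bR^3$ is simply connected, so any connected covering of $\tilde N$ is a homeomorphism and this step is immediate. In the stated generality, a second sheet would produce in $\tilde M$ distinct preimages $(1, y)$ and $(\gamma, \gamma^{-1}(y))$ over a common interior point $y \in F^o$. To exclude this, I would descend to the covering $\bar\pi \colon F/{\sim} \to N$ of the same degree, then use the exhaustion: by the hypotheses that $\tilde N_i$ is connected and $F \cap \tilde N_i \cap A_j \ne \emptyset$ for every $j$, the restriction $(F \cap \tilde N_i)/{\sim} \to N_i$ has every generator of $\Gamma_N$ already realized by an explicit side pairing, and every identification in this compact piece is accounted for by the matching data. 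An extra sheet would therefore produce a symmetry of the compact piece beyond the $\Gamma_N$-action, contradicting the matching conditions $\gamma_k(A_l) \cap A_m = \emptyset$ outside the permitted indices. Once $\pi$ is a homeomorphism, the inclusion $F \hookrightarrow \Gamma_N \times F \to \tilde M \xrightarrow{\pi} \tilde N$ exhibits $F$ as a fundamental domain for $\Gamma_N$.
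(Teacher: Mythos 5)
Your construction of $\tilde M$ is the same object as the paper's $X'$, and the overall strategy (build a model space by gluing translates of $F$, exhibit a local homeomorphism to $\tilde N$, and upgrade it to a homeomorphism) is identical to the paper's. Two of your steps need attention, one of them seriously.

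The minor issue is in the covering-space step. You assert that $\pi$ is a covering by appealing to path lifting, confining partial lifts to compact sets, but the crucial fact doing the work is not spelled out. The paper establishes it explicitly: because $F \cap \tilde N_i$ is compact, the pathwise distance between distinct pieces $A_j \cap \tilde N_i$ and $A_k \cap \tilde N_i$ ($j \neq k$) is bounded below by a $\delta_i > 0$, so each point of the glued space has a $\delta_i$-ball contained in at most two translates of $F_i$ and the path metric on the glued compact piece $X'_i$ is complete. This is precisely what replaces the global lower-bound hypothesis of Epstein--Petronio that the proposition deliberately drops, and it is what makes the map proper (hence a covering on each compact piece). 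Your sketch gestures at local finiteness of the translates but does not isolate the $\delta_i$ bound, and you should.

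The serious gap is the degree-one step. You correctly note that when $\tilde N$ is simply connected (the actual application) the connected covering $\pi$ is automatically trivial. But your fallback argument for the general statement does not work: you claim that a second sheet would ``produce a symmetry of the compact piece beyond the $\Gamma_N$-action, contradicting the matching conditions $\gamma_k(A_l) \cap A_m = \emptyset$.'' Those matching conditions are the \emph{input} that defines the gluing relation $\sim$ on $\Gamma_N \times F$; they are already fully encoded in $\tilde M$ and say nothing about whether $\pi \colon \tilde M \to \tilde N$ can wrap around more than once. A nontrivial deck transformation of $\pi$ would permute the $\Gamma_N$-translates of $F$ inside $\tilde M$ without touching the combinatorial side-pairings at all, so no contradiction with the matching data arises. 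What is actually needed is a covering-space comparison: both $X'_i \to N_i$ and $\tilde N_i \to N_i$ are regular covers of the connected compact piece $N_i$, the map $X'_i \to \tilde N_i$ is $\Gamma_N$-equivariant, and both have deck group $\Gamma_N$; one then concludes the intermediate covering is trivial (the cleanest justification using that $\Gamma_N$ is a finitely generated free group, hence Hopfian, so the induced surjection $\Gamma_N \twoheadrightarrow \Gamma_N$ on deck groups is an isomorphism, forcing the kernel subgroup to vanish). This is the content of the paper's sentence ``$\tilde N_i$ and $X'_i$ are covers of $N_i$ with the identical deck transformation groups; we conclude $X'_i \to \tilde N_i$ is a homeomorphism.'' Your argument would need to be replaced by this one.
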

\begin{proof} 
	We define $X' := \bigsqcup_{\gamma \in \Gamma_N} \gamma(F)/\sim$ 
	where we introduce an equivalence relation $\sim$ on
	$\bigsqcup_{\gamma \in \Gamma} \gamma(F)$
	given by 
	\begin{equation}
	x \in \gamma_{1}(F) \sim y \in \gamma_{2}(F) \hbox{ iff }  
	\begin{cases} 
	\, x =y \hbox{ and } \gamma_{1}\gamma_{2}^{-1}\in \mathcal{S}  
	\hbox{, or else } \nonumber \\
	\, x = y  \hbox{ and } \gamma_{1} = \gamma_{2}. 
	\end{cases}
	\end{equation}
	Thus, 
	\[X':=\bigsqcup_{\gamma \in \Gamma} \gamma(F)/\sim\] 
	is an open manifold	immersing into $N$. 
	We give a complete Riemannian metric on $N$ where each $\partial N_i$ is strictly convex.
	This induces a $\Gamma$-invariant Riemannian path-metric on $X'$ 
	and one on $F$. 
	
	Let $F_i = \tilde N_i \cap F$, a compact submanifold 
	bounded by $A_j \cap \tilde N_i$ for $j=1, \dots, 2{\mathbf{g}}$ by a generic perturbation of $N_i$ by small amounts. 
		We define $X_i' := \bigsqcup_{\gamma \in \Gamma_N} \gamma(F_i)/\sim$ 
	where we introduce an equivalence relation $\sim$ on
	$\bigsqcup_{\gamma \in \Gamma} \gamma(F_i)$
	given by 
	\begin{equation}
	x \in \gamma_{1}(F_i) \sim y \in \gamma_{2}(F_i) \hbox{ iff }  
	\begin{cases} 
	\, x =y \hbox{ and } \gamma_{1}\gamma_{2}^{-1}\in \mathcal{S}  
	\hbox{, or else } \nonumber \\
	\, x = y  \hbox{ and } \gamma_{1} = \gamma_{2}. 
	\end{cases}
	\end{equation}
	
	We restrict the above Riemannian metric to $X'_i$ as a submanifold of 
	$X'$ and obtain a $\Gamma$-invariant path metric $d_i$. 
	We claim that $d_i$ is metrically complete:
	Since $F\cap \tilde N_i$ is compact by the premise, 
	$A_j \cap \tilde N_i$ is a compact subset. 
	For every point in $x \in A_j \cap \tilde N_{i}$, the pathwise $d_i$-distance in $\tilde N_{i}$ to 
	$A_k \cap \tilde N_{i}$, $k \ne j$ is bounded below by a positive number $\delta_i$.
	Hence, each point of $X'_i$ has a normal $d_i$-ball $B'_i$ of fixed radius $\delta_{i}$
	in the union of at most two images of $F$ mapping isometric 
	to a $\delta_{i}$-$d_i$-ball $B_i$ in $N_i$. 
 Thus, given any Cauchy sequence $x_i$ in $X'_i$, 
 suppose that 
 \[d_i(x_k, x_l) < \delta_{i}/3 \hbox{ for } l, k > L \hbox{ for some } L.\] 
 Then $d_i(x_j, x_{L+1}) < \delta_{i}/3$ for $j > L$. 
 Since the ball of radius $\delta_{i}/3$ is in a union of two compact 
 sets,  $x_i$ converges to a point of the $\delta_{i}$-$d_i$-ball with center 
 $x_{L+1}$. 
	Hence, $X'_i$ has a metrically complete path-metric $d_i$. 
	
	There is a natural local isometry $X'_i \ra \tilde N_i$ given by 
	sending $\gamma(F_i)$ to $\gamma(F_i)$ for each $\gamma$. 
	Since $\{\gamma(F_i)|\gamma \in \Gamma \}$
	is a locally finite collection of 
	compact sets in $\tilde N_i$, the map is proper. 
The image in $\tilde N_i$ is open since each $\delta_i$-ball 
is in the image of at most two sets of the form $\gamma(F_i)$. 
Since $\tilde N_i$ is connected, the openness and closedness 
show that $X'_i \ra \tilde N_i$ is surjective. 
	Therefore, $X'_i \ra N_i$ is a covering map 
	being a proper local homeomorphism. 
	Now, 
	$\tilde N_i$ and $X'_i$ are covers of $N_i$ 
	with the identical deck transformation groups. We conclude
	$ X'_i \ra \tilde N_i $
	is a homeomorphism. 
	
	There is a natural embedding $X'_i \ra X'$. We identify $X'_i$ with its image. 
	We may identify $X'$ with $\bigsqcup_{i=1}^{\infty} X'_i$. 
	Since $\tilde N = \bigcup_{i=1}^{\infty} \tilde N_i$ holds, 
	$X' \ra \tilde N$ is a homeomorphism, and $F$ is the fundamental domain. 
\end{proof}


\section{Margulis invariants and Charette-Drumm invariants} \label{sec:CD}

We will first discuss parabolic group action in Section \ref{sub:parabolic} and 
then discuss Charette-Drumm invariant ensuring their proper action 
in Section \ref{sub:propaffine}. In Section \ref{subsec:parabolic}, we will 
introduce the parabolic ruled surfaces in $\Lspace$ and the region bounded by them. 
We will also provide two transversal foliations on the regions.

\subsection{Parabolic action} \label{sub:parabolic}


\subsubsection{Understanding parabolic actions} 


Let $V$ be a Lorentzian vector space of $\dim_{\bR} V = 3$ with the inner product 
$\Bs$. 
A linear endomorphism $N: V\ra V$ is a {\em skew-adjoint endomorphism} of $V$
 if 
 \[ \Bs(N\vx, \vy) = - \Bs(\vx, N \vy).\]

\begin{lemma}\label{lem:lem1} 
Suppose that $N$  is a skew-adjoint endomorphism of $V$  and $\vx \in V$.  Then
$\Bs(N \vx, \vx) = 0$.
\end{lemma}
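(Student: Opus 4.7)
The plan is to use the defining property of skew-adjointness together with the symmetry of the Lorentzian form $\Bs$. Applying the skew-adjoint condition with $\vy = \vx$, I get
\[
\Bs(N\vx, \vx) = -\Bs(\vx, N\vx).
\]
On the other hand, $\Bs$ is symmetric (it is the bilinear form $\vx \mapsto x_1^2 + x_2^2 - x_3^2$, which is a symmetric bilinear form), so $\Bs(\vx, N\vx) = \Bs(N\vx, \vx)$. Substituting back gives $\Bs(N\vx, \vx) = -\Bs(N\vx, \vx)$, i.e., $2\Bs(N\vx, \vx) = 0$, and since we are working over $\bR$ this forces $\Bs(N\vx, \vx) = 0$.

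There is no real obstacle here; the statement is a one-line consequence of combining the antisymmetry of the operator with the symmetry of the form. The only thing to flag explicitly is the symmetry of $\Bs$, which is immediate from the coordinate expression $\Bs(\vx, \vx) = x_1^2 + x_2^2 - x_3^2$ polarized to the associated symmetric bilinear form. The lemma will then be invoked later to analyze the null structure associated with parabolic generators, where $N$ will be (a representative of) the nilpotent part of the Lie algebra element generating a parabolic one-parameter subgroup.
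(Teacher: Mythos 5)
Your proof is correct and follows the same argument as the paper: apply the skew-adjoint identity with $\vy = \vx$, use the symmetry of $\Bs$ to rewrite $\Bs(\vx, N\vx)$ as $\Bs(N\vx, \vx)$, and conclude that the quantity equals its own negative. The paper states this in one line; you have merely made the symmetry step and the division by $2$ explicit.
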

\begin{proof}
$\Bs(N \vx, \vx) = -\Bs(\vx, N \vx)  = -\Bs(N \vx, \vx)$ by symmetry.
Thus we obtain $\Bs(N \vx, \vx)  = 0$ as claimed. 
\end{proof}

\begin{lemma}\label{lem:lem2}
Suppose that $N$  is a nonzero  nilpotent  skew-adjoint  endomorphism.   Then
$\rank(N ) = 2$.
\end{lemma}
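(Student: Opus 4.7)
The plan is to argue by elimination: since $V$ is three-dimensional and $N$ is nilpotent, $\rank(N)\in\{0,1,2\}$, and $\rank(N)=0$ is excluded by hypothesis. So the task reduces to ruling out the case $\rank(N)=1$, which I would do by combining nilpotency with the skew-adjoint identity of Lemma 3.1 to violate the nondegeneracy of $\Bs$.

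First, suppose for contradiction that $\rank(N)=1$. Pick a nonzero $\vv$ spanning $\ima(N)$. Nilpotency forces $\ima(N)\subset \ker(N)$: indeed $N\vv\in\ima(N)=\bR\vv$, so $N\vv=c\vv$ for some $c\in\bR$, and iterating $N^k\vv=c^k\vv$ would contradict $N^k=0$ unless $c=0$. So $N\vv=0$. Since $\vv\in\ima(N)$, choose $\vx\in V$ with $N\vx=\vv$.

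Now Lemma 3.1 gives $\Bs(N\vx,\vx)=0$, i.e.\ $\Bs(\vv,\vx)=0$. For any $\vy\in V$ write $N\vy=c(\vy)\vv$ (possible since $\ima(N)=\bR\vv$). Using skew-adjointness,
\[
\Bs(\vv,\vy)\;=\;\Bs(N\vx,\vy)\;=\;-\Bs(\vx,N\vy)\;=\;-c(\vy)\,\Bs(\vx,\vv)\;=\;0,
\]
since $\Bs(\vx,\vv)=\Bs(\vv,\vx)=0$ by the preceding sentence. Thus $\vv$ is $\Bs$-orthogonal to all of $V$, contradicting the nondegeneracy of the Lorentzian form. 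Therefore $\rank(N)=1$ is impossible, leaving $\rank(N)=2$.

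The only subtle step is the derivation $\ima(N)\subset\ker(N)$, which I would expect to be the main (mild) obstacle if one forgot to exploit nilpotency; everything else is a short chain of applications of skew-adjointness and Lemma 3.1. No hyperbolic or projective machinery is needed, and the proof uses only the three-dimensionality of $V$ through the fact that a rank-$2$ nilpotent in dimension $3$ is the only remaining possibility.
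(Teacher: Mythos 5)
Your proof is correct and follows essentially the same route as the paper's: both reduce to excluding $\rank(N)=1$, observe that nilpotency then forces $N^2=0$ (equivalently $\ima(N)\subset\ker(N)$), and derive a nonzero vector in $\ima(N)$ that is $\Bs$-orthogonal to all of $V$, contradicting nondegeneracy. Your version is marginally cleaner in that you show $\Bs(\vv,\vy)=0$ for arbitrary $\vy$ directly via $N\vy=c(\vy)\vv$ rather than building an explicit basis $\{\vv,N\vv,\vw\}$ and checking orthogonality term by term, and your eigenvalue argument for $N^2=0$ is a nicer phrasing of the paper's dichotomy on $N(V)\cap\Ker(N)$, but these are stylistic differences, not a genuinely different proof.
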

\begin{proof}  Since $N$ is nilpotent,  it is non-invertible and so $\rank(N ) < 3$.
We have $\rank(N ) > 0$.
Assume $\rank(N ) = 1$. Then $\dim \Ker(N ) = 3 - 1 = 2$. Since $\dim(V ) = 3$, one of the following holds:
$ N (V )  \cap \Ker(N ) = \{0\}$, or
 $ N (V )  \subset \Ker(N )$.
If $N (V ) \cap \Ker(N ) = \{0\}$, 
then the restriction  of $N$  to $N (V )$ is nonzero, contradicting nilpotency.  Thus, $N (V ) \subset \Ker(N )$, 
that  is, $N^{2}  = 0$.
Then there exists $\vv \in V$ with $N \vv \ne 0$. Since $N^{2} \vv = 0$, the set 
$\{\vv, N \vv\}$ is linearly independent.  Complete $\{N \vv\}$ to a basis $\{N \vv, \vw\}$ of $\mathrm{Ker}(N )$. The set 
$\{\vv, N \vv, \vw\}$ is a basis for $V$.

\begin{itemize} 
\item Lemma \ref{lem:lem1} implies $\Bs(N \vv, \vv) = 0$.
\item $N^2  = 0$ implies $\Bs(N \vv, N \vv) = -\Bs(N^2 \vv, \vv) = 0$.
\item $\Bs(N \vv, \vw) = -\Bs(\vv, N \vw) = 0$ since $N \vw = 0$.
\end{itemize} 
Thus, $N \vv$ is a nonzero vector orthogonal to all of $V$, contradicting nondegeneracy.
Hence, $\rank(N ) = 2$ as claimed. 	

\end{proof}

\begin{lemma}\label{lem:lem3}  $N^{2}  \ne 0$.
\end{lemma}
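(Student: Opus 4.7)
The plan is to derive the claim directly from Lemma \ref{lem:lem2}, which already provides the decisive rank information. Since $N$ is nonzero and nilpotent, Lemma \ref{lem:lem2} gives $\rank(N) = 2$, so by rank–nullity
\[ \dim \Ker(N) = \dim V - \rank(N) = 3 - 2 = 1. \]
The proof is then by contradiction: assuming $N^{2} = 0$ forces $N(V) \subset \Ker(N)$, and comparing dimensions gives $2 \leq 1$, which is absurd. Hence $N^{2} \neq 0$.

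The only step requiring care is making sure that invoking Lemma \ref{lem:lem2} is legitimate, i.e.\ that $N$ is not only nilpotent but nonzero; this is part of the hypothesis. There is no real obstacle here — unlike Lemma \ref{lem:lem2}, where the skew-adjointness was essential to rule out $\rank(N) = 1$, the present lemma is a purely linear-algebraic consequence once the rank is pinned down to $2$.

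As a sanity check, one may also observe that the conclusion matches the expected Jordan normal form: a nonzero nilpotent skew-adjoint operator on a three-dimensional Lorentzian space must be conjugate to a single $3 \times 3$ Jordan block with eigenvalue $0$, so $N^{2} \neq 0$ while $N^{3} = 0$. This structural picture is not needed for the proof but explains why the short argument above is sharp, and it foreshadows the canonical parabolic coordinate system of $\bR^{2,1}$ that the paper will subsequently introduce.
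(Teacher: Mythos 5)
Your proof is correct and follows essentially the same route as the paper's: both invoke Lemma \ref{lem:lem2} to get $\rank(N)=2$ and $\dim\Ker(N)=1$, then observe that $N^{2}=0$ would force $N(V)\subset\Ker(N)$, a dimension contradiction.
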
 
\begin{proof}  Lemma \ref{lem:lem2} implies that  $\dim\Ker(N ) = 1$ and $\dim N(V) = 2$. If $N^{2}  = 0$, then
$N (V ) \subset \Ker(N )$, a contradiction.
\end{proof} 


\begin{lemma}\label{lem:lem4} 
$ N (V ) = \Ker(N^{2})$ and $N^{2}(V ) = \Ker(N)$.
\end{lemma}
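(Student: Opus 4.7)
The plan is to derive both equalities from elementary linear algebra together with the dimension data already established by Lemmas \ref{lem:lem2} and \ref{lem:lem3}. The key input is that any nilpotent endomorphism of a space of dimension $3$ satisfies $N^{3}=0$ (its minimal polynomial divides $x^{3}$), which lets us compare ascending kernels with descending images.

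First I would record the dimensions: by Lemma \ref{lem:lem2}, $\rank(N)=2$, so the rank--nullity theorem gives $\dim\Ker(N)=1$ and $\dim N(V)=2$. I would then show $\dim N^{2}(V)=1$. The inequality $\dim N^{2}(V)\le 1$ follows because $N^{2}(V)\subseteq N(V)$, and equality $\dim N^{2}(V)=2$ would force $N$ to restrict to an isomorphism of $N(V)$, contradicting nilpotency. Combined with Lemma \ref{lem:lem3}, which says $N^{2}\neq 0$, this gives $\dim N^{2}(V)=1$, and another application of rank--nullity gives $\dim\Ker(N^{2})=2$.

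Next I would verify the two inclusions using $N^{3}=0$. For $N^{2}(V)\subseteq\Ker(N)$: any $y\in N^{2}(V)$ has the form $y=N^{2}w$, and $Ny=N^{3}w=0$. For $N(V)\subseteq\Ker(N^{2})$: if $y=Nw$ then $N^{2}y=N^{3}w=0$. Since in each case the two subspaces have the same dimension ($1$ and $2$ respectively), each inclusion is an equality, proving $N^{2}(V)=\Ker(N)$ and $N(V)=\Ker(N^{2})$.

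There is no serious obstacle; the only delicate point is justifying $N^{3}=0$, which I would handle either by citing the standard fact about nilpotent operators or by noting directly that the image chain $V\supsetneq N(V)\supseteq N^{2}(V)\supseteq N^{3}(V)$ has strictly decreasing dimensions until it reaches $0$ (the dimension cannot stabilize at a nonzero value, since $N$ restricted to a stable subspace on which it is surjective would be invertible there).
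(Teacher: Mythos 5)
Your proof is correct and follows essentially the same route as the paper: both arguments use $N^{3}=0$ to obtain the inclusions $N(V)\subseteq\Ker(N^{2})$ and $N^{2}(V)\subseteq\Ker(N)$, then invoke the dimension data from Lemmas \ref{lem:lem2} and \ref{lem:lem3} to see that each inclusion is between subspaces of equal dimension and hence is an equality. The paper phrases the dimension count as ``the two invariant flags are maximal,'' whereas you spell out the rank--nullity computation explicitly, but the substance is identical.
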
 
\begin{proof} 
$\dim(V ) = 3$ and the nilpotency implies $N^3  = 0$. By Lemma \ref{lem:lem3}, the invariant flag
\begin{equation} \label{eqn:eqn1} 
V \supset N (V ) \supset N^{2} (V ) \supset \{0\} 
\end{equation}  
is maximal;  that  is,
$\dim V  /N (V )  = \dim N (V )/N^{2}(V )  = 1.$
Now, $N^{3}   = 0$ implies that  $N (V )  \subset \Ker(N^{2} )$ and  $N^{2}(V )  \subset \Ker(N )$.
Hence, the invariant flag
\begin{equation}\label{eqn:eqn2} 
V \supset \Ker(N^{2}) \supset \Ker(N) \supset \{0\}
\end{equation} 
is maximal.   
It follows that  the flags \eqref{eqn:eqn1} and \eqref{eqn:eqn2} are equal, as claimed. 
\end{proof} 

\begin{lemma}\label{lem:lem5}
$\Ker(N )$ is null.
\end{lemma}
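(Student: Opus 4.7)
The plan is to exploit the structural description in Lemma \ref{lem:lem4}, namely that $\Ker(N) = N^{2}(V)$, so every element of $\Ker(N)$ is a ``double image'' under $N$ of some vector in $V$. Combined with the fact that $N^{3}=0$ (which already appeared in the proof of Lemma \ref{lem:lem4}, since the invariant flag in a $3$-dimensional space forces $N^{3}=0$), the skew-adjointness of $N$ will immediately kill the relevant inner product.

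Concretely, first I would pick a nonzero $\vu \in \Ker(N)$; since $\dim \Ker(N) = 1$ it suffices to prove $\Bs(\vu,\vu) = 0$ for this single $\vu$. By Lemma \ref{lem:lem4}, $\Ker(N) = N^{2}(V)$, so I can write $\vu = N^{2}\vw$ for some $\vw \in V$. Then I would apply skew-adjointness once to pull one factor of $N$ across:
\[
\Bs(\vu,\vu) \;=\; \Bs(N^{2}\vw,\, N^{2}\vw) \;=\; -\,\Bs(N\vw,\, N^{3}\vw).
\]
Since $N^{3} = 0$, the right-hand side vanishes, and I get $\Bs(\vu,\vu) = 0$, which is exactly the statement that $\Ker(N)$ is a null line.

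The only thing to justify carefully is the vanishing $N^{3} = 0$. This is not an extra assumption: Lemma \ref{lem:lem2} gave $\rank(N) = 2$, hence $\dim \Ker(N) = 1$; combined with $\dim V = 3$ and the fact that in the invariant flag \eqref{eqn:eqn1} each successive quotient has dimension $1$, one gets $N^{3}(V) = \{0\}$. Thus no further input is needed beyond what the previous four lemmas already establish.

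There is essentially no obstacle here; the argument is a two-line application of skew-adjointness once the flag structure from Lemma \ref{lem:lem4} is in hand. The only subtlety worth flagging is that one should not try to prove nullity by evaluating $\Bs(N\vx,\vx)$ directly (which by Lemma \ref{lem:lem1} is zero for \emph{every} $\vx$, and hence carries no information about $\Ker(N)$ specifically); instead, the key is to represent vectors of $\Ker(N)$ as $N^{2}\vw$ and to move one $N$ across the pairing so that $N^{3}\vw = 0$ does the work.
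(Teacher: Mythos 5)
Your argument is correct and follows the same route as the paper: identify $\Ker(N)=N^{2}(V)$ via Lemma \ref{lem:lem4}, move powers of $N$ across the pairing by skew-adjointness, and let nilpotency annihilate the result. The only (cosmetic) difference is that you transfer a single $N$ and invoke $N^{3}=0$, whereas the paper transfers twice and invokes $N^{4}=0$; both are immediate from $\dim V = 3$.
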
 
\begin{proof}  
Lemma \ref{lem:lem4} implies $\Ker(N ) = N^{2}(V )$. Since $N$  is skew-adjoint and $N^{4}  = 0$,
\[\Bs(N^{2}(V ), N^{2}(V ))  \subset \Bs(N^{3} (V ),N(V))   = \{0\} \]   
as desired. 
\end{proof} 

\begin{lemma}\label{lem:lem6}  
$\Ker(N ) = N (V )^{\perp}$ and $N (V ) = \Ker(N )^{\perp}$. 
\end{lemma}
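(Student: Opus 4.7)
My plan is to exploit skew-adjointness directly to get one inclusion, then use a dimension count from nondegeneracy of $\Bs$ to upgrade it to equality. The first equality will then give the second by taking orthogonal complements.

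First I would verify the inclusion $\Ker(N)\subset N(V)^{\perp}$. Given $\vx\in\Ker(N)$ and any $N\vy\in N(V)$, skew-adjointness gives
\[\Bs(\vx,N\vy)=-\Bs(N\vx,\vy)=0,\]
so $\vx\in N(V)^{\perp}$. Since $\Bs$ is nondegenerate on $V$, we have $\dim N(V)^{\perp}=\dim V-\dim N(V)=3-2=1$ by Lemma \ref{lem:lem2}, which matches $\dim\Ker(N)=1$ from Lemma \ref{lem:lem4}. Hence the inclusion is an equality: $\Ker(N)=N(V)^{\perp}$.

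For the second equality I would simply take orthogonal complements and use the standard fact that $W^{\perp\perp}=W$ in a nondegenerate bilinear space. Applying this to the first equality yields
\[\Ker(N)^{\perp}=\bigl(N(V)^{\perp}\bigr)^{\perp}=N(V),\]
as desired.

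The only subtle point worth mentioning is that $W^{\perp\perp}=W$ relies on the nondegeneracy of $\Bs$ on all of $V$, not on its restriction to $W$; this is important here because $\Ker(N)$ is null by Lemma \ref{lem:lem5}, so $\Bs$ does degenerate on it. Since there is no genuine obstacle, the whole proof should fit in a few lines.
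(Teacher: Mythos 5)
Your proof is correct and follows essentially the same route as the paper: skew-adjointness gives the inclusion $\Ker(N)\subset N(V)^{\perp}$, and nondegeneracy of $\Bs$ plus the dimension count $\dim N(V)^{\perp}=3-\rank(N)=1=\dim\Ker(N)$ upgrades it to equality. The only cosmetic difference is that you obtain the second identity by applying $W^{\perp\perp}=W$, whereas the paper runs the parallel inclusion $N(V)\subset\Ker(N)^{\perp}$ and counts dimensions a second time; both are just reorganizations of the same nondegeneracy-plus-dimension argument, and your remark that $\Bs$ must be nondegenerate on all of $V$ (not on the null subspace $\Ker(N)$) is a worthwhile caveat.
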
 
\begin{proof} 
$\Bs(N (V ), \Ker(N ))  = \Bs(V, N(\Ker(N )))  = \{0\}$
so that  $\Ker(N ) \subset N (V )^{\perp}$ and 
$N (V ) \subset \Ker(N )^{\perp}$.  
Since $\Ker(N )$ and $N (V )^{\perp}$ 
each have dimension $1$, and $N (V )$ and $\Ker(N )^{\perp}$ each have dimension $2$, the lemma
follows.	
\end{proof} 

We find a canonical generator  for the line $\Ker(N )$ given $N$, together  with a time-orientation.

\begin{lemma}\label{lem:lem7} 
 There exists unique $\vc \in \Ker(N )$ such that\/{\rm :}
 \begin{itemize}
\item $\vc \ne 0$ is a causal null-vector.
\item $\vc = N (\vb)$ for a unit-space-like $\vb \in V$  {\rm (}that  is, $\Bs(\vb, \vb) = 1${\rm ).}
\end{itemize} 
Furthermore, the following hold\/{\rm :} 
\begin{itemize}
\item  $\vb$ is unique up to addition of $\lambda\vc$, $\lambda \in \bR -\{0\}$.
\item We can choose the unique null vector $\va$ so that $N(\va) = \vb$.
\item  $\Bs(\va, \vb)=0=\Bs(\vb, \vc), \Bs(\va, \vc) = -1$. 
\item $\va, \vb, \vc$ form a basis. 
\item The Lorentz metric has an expression $g: = dy^{2} - 2  dx dz $
with respect to the coordinate system given by $\va, \vb, \vc$. 
\end{itemize} 
\end{lemma}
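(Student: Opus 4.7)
The plan is to construct the basis $(\va, \vb, \vc)$ in stages, starting from the $1$-dimensional null line $\Ker(N)$ and at each stage using the remaining freedom to pin down the next vector. Everything reduces to skew-adjointness plus one explicit computation in signature $(2,1)$.

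First I would produce $\vb$ and $\vc$. Lemmas \ref{lem:lem4}--\ref{lem:lem6} give $N(V) = \Ker(N)^\perp$ as a $2$-dimensional subspace whose radical is the null line $\Ker(N)$. Pick an orthonormal basis $\be_1, \be_2, \be_3$ with $\Bs(\be_1,\be_1) = \Bs(\be_2,\be_2) = 1 = -\Bs(\be_3, \be_3)$ and, after a Lorentz rotation, assume a generator of $\Ker(N)$ is $\be_1 + \be_3$. Then $(\be_1+\be_3)^\perp = \langle \be_1+\be_3, \be_2\rangle$ carries the positive semi-definite form $y^2$ with radical exactly $\langle \be_1 + \be_3\rangle$, so $N(V) \setminus \Ker(N)$ contains unit space-like vectors. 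Choose any such $\vb_0$ and set $\vc_0 := N(\vb_0) \in \Ker(N)$; since $\vb_0 \notin \Ker(N)$, $\vc_0 \ne 0$. The given time-orientation singles out a future-pointing ray in $\Ker(N)$; replacing $\vb_0$ by $-\vb_0$ if necessary, I set $\vb := \pm \vb_0$ and $\vc := N(\vb)$ with $\vc$ causal null. For uniqueness, any other unit space-like $\vb'\in N(V)$ decomposes as $\vb' = \alpha\vb + \vk$ with $\vk \in \Ker(N)$; using $\Bs(\vb,\vk) = 0 = \Bs(\vk,\vk)$, the relation $\Bs(\vb',\vb') = 1$ forces $\alpha^2 = 1$, hence $N(\vb') = \pm\vc$. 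The causality condition selects $+\vc$, simultaneously proving uniqueness of $\vc$ and uniqueness of $\vb$ modulo $\vc$.

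Next, for $\va$: since $\vb \in N(V)$, pick any $\va_0$ with $N(\va_0) = \vb$, and note that the entire preimage is $\va_0 + \bR\vc$. Skew-adjointness yields the key identity
\[ \Bs(\va_0, \vc) = \Bs(\va_0, N(\vb)) = -\Bs(N(\va_0), \vb) = -\Bs(\vb, \vb) = -1, \]
so $\Bs(\va_0 + t\vc, \va_0 + t\vc) = \Bs(\va_0,\va_0) - 2t$ is linear in $t$ with unique zero at $t = \Bs(\va_0,\va_0)/2$; this fixes a unique null $\va$ with $N(\va) = \vb$. For the final assertion I compute all inner products: Lemma \ref{lem:lem1} gives $\Bs(\va,\vb) = \Bs(\va, N(\va)) = 0$ and $\Bs(\vb,\vc) = \Bs(\vb, N(\vb)) = 0$; Lemma \ref{lem:lem5} gives $\Bs(\vc,\vc) = 0$; and by construction $\Bs(\vb,\vb) = 1$, $\Bs(\va,\va) = 0$, $\Bs(\va,\vc) = \Bs(\va_0, \vc) = -1$. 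In coordinates $\vv = x\va + y\vb + z\vc$, this Gram matrix gives $\Bs(\vv,\vv) = y^2 - 2xz$, i.e., $g = dy^2 - 2\,dx\,dz$.

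The only non-routine ingredient is the positive semi-definiteness of $\Bs$ on $N(V)$; this is where the signature $(2,1)$ hypothesis enters essentially, whereas the rest uses only the structural lemmas \ref{lem:lem1}--\ref{lem:lem6} together with skew-adjointness. I expect that normal-form step to be the main, though modest, obstacle.
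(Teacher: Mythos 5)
Your proof is correct and takes essentially the same route as the paper: both establish that $\Bs$ restricted to $N(V)$ is positive semi-definite with radical $\Ker(N)$ (the paper by factoring $\Bs|N(V)$ through the one-dimensional quotient $N(V)/\Ker(N)$, you by choosing coordinates with the null generator equal to $\be_1+\be_3$), then pick $\vb$ and $\vc=N(\vb)$, use the time-orientation to fix the sign, and solve $\Bs(\va_0+t\vc,\va_0+t\vc)=\Bs(\va_0,\va_0)-2t=0$ for the unique null $\va$. Your version supplies explicit computations where the paper is terse (e.g., the uniqueness of $\vb$ mod $\vc$ via the coefficient $\alpha^2=1$, and the linearity in $t$ behind ``meets the null cone at a unique point''), but the underlying argument is identical.
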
 
\begin{proof}  
Lemma \ref{lem:lem4} implies that  $N$ defines an isomorphism (of one-dimensional vector spaces)
\begin{equation} \label{eqn:barN}
\bar N: N (V )/\Ker(N )  \ra   N^{2}(V )  = \Ker(N ).
\end{equation}

Now, 
$B| N(V) \times N(V)$ is factored to the maps
\begin{align*} 
& N(V) \times N(V) \ra N(V)/N(V)^\perp \times N(V)/N(V)^\perp \hbox{ and } \\
& \hat B:  N(V)/N(V)^\perp \times N(V)/N(V)^\perp  \ra \bR. 
\end{align*} 
Lemma  \ref{lem:lem6}  implies that  
the second map is 
\[\hat B: N(V)/\Ker N \times N(V)/\Ker N \ra \bR.\] 
Since $N(V)/\Ker N$ is a one-dimensional vector space, 
the quadratic map $\hat B$ is a square of an isomorphism 
$N(V)/\Ker N \ra \bR$. 
Hence, the restriction  to 
$N (V)$ of the quadratic  form $\vu \ra \Bs(\vu, \vu)$ 
is the square of an isomorphism 
$N (V )/\Ker(N ) \ra \bR$ composed with the quotient map
$N (V ) \ra N (V )/\Ker(N ).$

Recall $\dim \Ker(N)=1$. 
Since $\bar N$ is injective, 
the set of unit-space-like vectors in $N (V )$ is the union of two cosets of $\Ker(N )$, mapped  by $N$  
to two nonzero vectors in $\Ker(N)$. 
By Lemma \ref{lem:lem5}, the image is null. 
The image is a causal vector in $\Ker(N )$ or a non-causal vector in $\Ker(N )$. 
Take the causal one to be $\vc$. Since the image has only two vectors, 
$\vc$ is the unique one.

By \eqref{eqn:barN},
$\vb$ can be chosen to be any {in $N(V)$} in the coset of $\Ker(N)$, and hence $\vb$ can be changed 
to $\vb + c_{0} \vc$ since $\vc$ generates $\Ker(N)$. 

By Lemma \ref{lem:lem1}, $\Bs(\vb,\vc)=\Bs(N(\vc), \vc) = 0$.     

The subspace $N^{-1}(\vb)$ is a line since $\dim \Ker(N) = 1$
and is parallel to a null space and does not pass $0$ since $\vb \ne 0$. 
Hence, it meets a null cone at {the} unique point. Call this $\va$. 
By Lemma \ref{lem:lem1}, $\Bs(\va, \vb)=0$.          

Finally, \[\Bs(\va, \vc) = \Bs(\va, N^{2}(\va)) = -\Bs(N(\va), N(\va)) = -\Bs(\vb, \vb)= -1.\] 
The last statement follows by $\Bs$-values which also implies the independence. 
\end{proof} 

\begin{definition}\label{defn:adoptF} 
	Let $N$ be a nilpotent skew adjoint endomorphism. 
We will call the frame $\va, \vb, \vc$ satisfying the above properties: 
\begin{itemize} 
\item $\vb = N(\va), \vc = N(\vb)$.
\item $\va, \vc$ are null and $\vb$ is of unit space-like.  
\item $\Bs(\va, \vb)=0=\Bs(\vb, \vc), \Bs(\va, \vc) = -1$. 
\end{itemize}
the {\em adopted frame} of $N$.  
We will say that $N$ is {\em accordant} if the adopted frame has the standard orientation. 
\end{definition}
Corollary \ref{cor:parac} shows that associated with $N$, there is a one-parameter family of frames. 
However, we remark that the orientation of $\{\va, \vb, \vc\}$ is determined by $N$ as we can see 
from exchanging $N$ with $-N$ has the orientation-reversing effect. 



\begin{corollary} \label{cor:parac}
	Let $N$ be a nilpotent skew adjoint endomorphism. Then
the Lorentzian vectors $\va, \vb, \vc$ 
 satisfying the property that 
\begin{itemize} 
\item  $\Bs(\va, \vb)=0=\Bs(\vb, \vc), \Bs(\va, \vc) = -1$, 
\item $ \vc = N(\vb), \vb = N(\va)$, and 
\item $\vb$ is a unit space-like vector, $\vc \in \Ker N$ is {causally null},  and $\va$ is null
\end{itemize}
are determined up to changes 
$\vb \ra \vb + c_0 \vc, \va \ra \va + c_0 \vb + \frac{c_0^2}{2} \vc$ with respect to the  
a  skew-symmetric nilpotent endomorphism $N$ \oldch{and} $\Bs: V \times V \ra \bR$.  
Furthermore, the adopted frame for $N$ is determined only up to these changes and translations. 
\end{corollary} 
\begin{proof} 
By Lemma \ref{lem:lem7}, we can only change  
$\vb \mapsto \vb + c_{0}\vc, \va \mapsto \va + c_{0}\vb + d_{0}\vc$. 
Since $\Bs( \va + c_0 \vb + d_0 \vc,  \vb + c_{0}\vc) = -c_{0} + c_0=0$, and
\[\Bs(\va + c_{0}\vb + d_{0}\vc, \va + c_{0}\vb + d_{0}\vc)= c_{0}^{2} - 2 d_{0} =0,\] 
this is proved. 
\end{proof} 

\KFAR{
For example, suppose that
\[N = 
\left(
\begin{array}{ccc}
0  & 1  & 1  \\
-1  & 0  & 0  \\
1  & 0  & 0   
\end{array}
\right).
\]
is a  nilpotent  element  in $\mathrm{so}(2, 1)$ with  the  standard (diagonal) inner product.  Then we may choose
 \[\vc = 
\left(
\begin{array}{c}
 0      \\
  -1     \\
   1  
\end{array}
\right), 
\vb = 
\left(
\begin{array}{c}
 1     \\
  -z     \\
   z  
\end{array}
\right),
\va = 
\left(
\begin{array}{c}
z     \\
 y     \\
1-y  
\end{array}
\right)
\]
where $\vc=N(\vb), \vb= N(\va)$, 
and $\vc$ is the desired vector.
We can choose $\va$ to be so that $N(\va)= \vb$.
We choose $\va$ to be null by setting 
$y = \frac{1}{2}(1- z^2)$. 
The choice is canonical except for the choice of $z$. 
Here, the basis adapted  to the invariant flag is:
\begin{equation} \label{eqn:invflag}
\left(
\begin{array}{c}
 z     \\
  y     \\
   1-y  
\end{array}
\right)  \stackrel{N}{\longrightarrow}
\left(
\begin{array}{c}
 1    \\
  -z     \\
   z  
\end{array}
\right) \stackrel{N}{\longrightarrow}
\left(
\begin{array}{c}
 0    \\
  -1     \\
   1  
\end{array}
\right) \stackrel{N}{\longrightarrow}
\left(
\begin{array}{c}
 0    \\
  0  \\
  0 
\end{array}
\right).
\end{equation} 

}

\subsubsection{The action of the parabolic transformations} \label{sub:paraact} 

We represent an affine transformation with the formula
 $\vx \mapsto A \vx + \vw$, 
$\vx \in \bR^{2, 1}$ by 
the matrix 
\[
\left(
\begin{array}{cc}
 A   & \vw  \\
 0   & 1  
\end{array}
\right).
\]

{Let $N$ be an accordant  nilpotent element of the Lie algebra of $\SO(2, 1)$:} 
Let us use the frame $\vc, \vb, \va$ on $\Lspace$ obtained 
by Corollary \ref{cor:parac}
as the vectors parallel to $x$-, $y$-, and $z$-axes respectively. 
Then the bilinear form $\Bs$ takes the matrix form
\begin{equation}\label{eqn:Bform}  
 \left(
\begin{array}{ccc}
	0 & 0 & -1 \\
	0 & 1 & 0 \\
	-1 & 0 & 0
\end{array}
\right).
\end{equation}

Let $\gamma$ be a parabolic transformation $\Lspace \ra \Lspace$.
{Then it must be of the form 
\begin{equation}\label{eqn:Phit0}
	\Phi(t) := \exp t
	\left(
	\begin{array}{cc}
		N & \vec{v}_\gamma \\
		0 & 0 
	\end{array}
	\right) \hbox{ for an accordant nilpotent skew adjoint element } N. 
	\end{equation} }
{Using the frame given by Corollary \ref{cor:parac}
and shifting the origin by translation by $(t, v_1, v_2), t \in \bR$ when $\vec{v}_\gamma$ can be written 
as $(v_1, v_2, \mu)$ with respect to the frame,} 
we obtain an affine coordinate system so that $\gamma$ lies in a one-parameter group 
\begin{equation}\label{eqn:Phit}
 \Phi(t) := \exp t
\left(
\begin{array}{cccc}
0  & 1  & 0  &0\\
0  & 0  & 1  & 0\\
0  & 0  &  0 & \mu \\
0 &  0 & 0 & 0 
\end{array}
\right)
= 
\left(
\begin{array}{cccc}
 1 & t  & t^{2}/2 & \mu t^{3}/6  \\
0   & 1  & t  & \mu t^{2}/2  \\
0  &  0  & 1 & \mu t \\ 
0 & 0 & 0 & 1   
\end{array}
\right)
\end{equation} 
for $\mu \in \bR$
where
$\Phi(t): \Lspace \ra \Lspace$ is generated by 
a vector field 
\[ \phi:= y \partial_{x} + z \partial_{y} + \mu \partial_{z} \hbox{ where } 
\Bs(\phi, \phi) = z^{2} - 2 \mu y. \]


{For a parabolic element $\gamma$ and  $t \in \bR$, 
	we define $\gamma^t := \exp(t \eta)$ where $\gamma = \exp(\eta)$ for a unique Lie algebra element $\eta$ of  $\Isom^{+}(\Lspace)$.} 

\begin{definition} \label{defn:parac} 
	For any parabolic element $\gamma$, the coordinate system where it can be written in the form \eqref{eqn:Phit} with 
	the	adopted frame for accordant nilpotent $N$ where $\gamma = \exp(tN), t \in \bR$ is  
		called a {\em parabolic coordinate system adopted to $\gamma$.}
		Furthermore, $\gamma$ is called {\em accordant} if $t > 0$.
\end{definition}

\begin{proposition}\label{prop:paracoord} 
	Any parabolic element $\gamma$ has a parabolic coordinate system. 
	All other parabolic coordinate system for $\gamma$ is obtained by changing it by 
   a $2$-dimensional parameter family of isometries generated
	by the $1$-parameter family of translations along unique eigen-direction and the frame change given in Corollary \ref{cor:parac}. 
\end{proposition}

\begin{proof} 
	The existence of the coordinate frame is already given.
The fact that the $2$-dimensional family of isometries preserves the form \eqref{eqn:Phit} is already shown
in Corollary \ref{cor:parac} and near \eqref{eqn:Phit}. 
Also, from near \eqref{eqn:Phit} we obtain the translations must be the one-parameter ones along the unique eigen-direction.
\end{proof}

This one-parameter subgroup $\{\Phi(t), t \in \bR\}$ leaves invariant the two polynomials
\begin{align*} 
F_{2}(x, y, z) &:= z^{2}  - 2 \mu y \\
F_{3}(x, y, z)     &:= z^{3}  - 3\mu yz + 3 \mu^{2} x, 
\end{align*}
and the diffeomorphism $F (x, y, z) := (F_{3}(x, y, z), F_{2}(x, y, z), z)$ satisfies
\begin{equation} \label{eqn:Phicoor} 
F \circ \Phi(t) \circ F^{-1}  :
\left(x, y, z\right) \ra \left(x, y, z+\mu t \right)
\end{equation} 
All the orbits  are twisted cubic curves.  In particular, every 
cyclic parabolic group leaves invariant no line and no plane for $\mu \ne 0$.
\begin{figure}[h]

\begin{center}
\includegraphics[width=11cm]{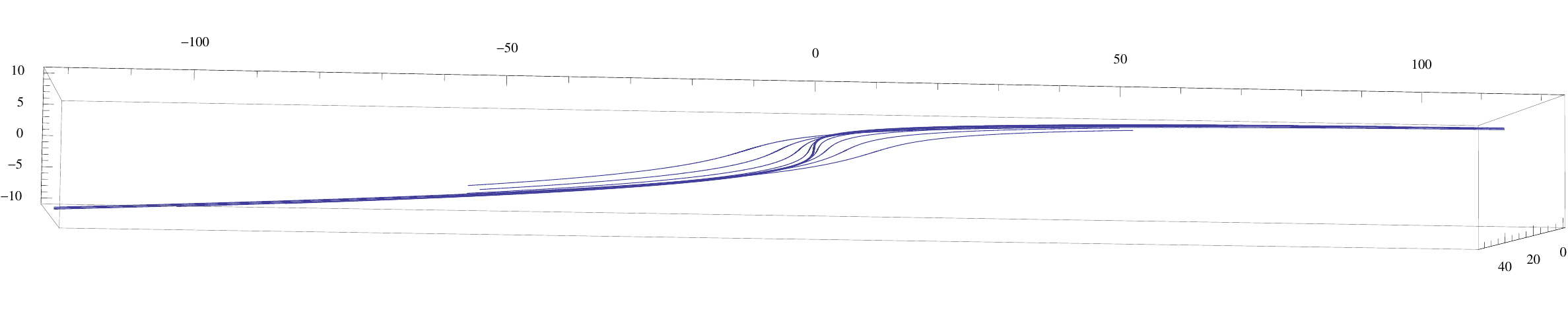}
\end{center} 
\caption{A number of orbits drawn horizontally.} 
\label{fig:cubicc}
\end{figure}

Now, $Q:= F_{2}$ is {the} unique quadratic 
$\phi$-invariant function on $\Lspace$ up to adding constants and scalar multiplications. 
If $Q(p) < 0$ for $p\in \Lspace$, then the trajectory $\Phi(t)(p)$ is time-like. 
If $Q(p) > 0$, then $\Phi(t)(p)$ 
is space-like. In addition, if $Q(p)=0$, then $\Phi(t)(p)$ is a null-curve. 
The region $Q < k$ is defined canonically for $\gamma$ for $k \in \bR$. 
($k$ can be negative.)  
The region is a parabolic cylinder in the parabolic coordinate system of $\gamma$. 
We will call this a \hypertarget{term-pc}{{\em parabolic cylinder}} for $\gamma$.


\KFAR{
We have $\gamma = \Phi(t_{1})$ for some $t_{1}>0$. 
\begin{equation} \label{eqn:Sp}
S'_{p} := \Phi(t_{1})(p) - p = 
\left(
\begin{array}{c}
 t_{1}y + (t_{1}^{2}/2)z + \mu t_{1}^{3}/6   \\
  t_{1}z + \mu t_{1}^{2}/2   \\
  \mu t_{1}   
\end{array}
\right)
\end{equation}
and $\Bs(S'_{p}, S'_{p}) = t^{2}(Q(p) - \mu^{2}t^{2}/12)$. 
} 


\begin{remark} \label{rem:canonical} 
	The expression \eqref{eqn:Phit} can change by conjugation by a dilatation so that $\mu {=\pm1}$.
	However, a dilatation is not a Lorentz isometry. 
%
\end{remark}


\begin{definition}\label{defn:semicircle}
A \hypertarget{term-sct}{{\em semicircle  tangent }} to $\partial \Ss_{+}$ at $p \in \partial \Ss_{-}$ is the closure of 
a component {of} $S\setminus \{p, p_{-}\}$ of the great circle $S$ tangent to $\partial \Ss_{+}$ at $p$
which does not meet $\Ss_{+}$. 
An \hypertarget{term-gs}{{\em accordant great segment}} $\zeta_{p}$ to {$\partial \Ss_{+}$} is an open semicircle tangent to $\partial \Ss_{+}$ 
starting from $x$ in the direction of the orientation of $\partial \Ss_{+}$. 
(See Section 3.4 of \cite{CG17}.)
\end{definition}
We may refer to them as being
{\em positively oriented} since 
we need to alter the construction when we change the orientation.

\begin{remark}\label{rem:accors}  
	In the parabolic coordinate system of $\Lspace$ for a parabolic $\gamma$, 
	 $\Ss_+$ is given by $\llrrparen{x, y, z, 0}$ in $\Ss$ with $y^2- 2xz < 0$ with $x > 0$. 
	Then 
	\[\overline{\llrrparen{1, 0, 0, 0}\llrrparen{0, 1, 0, 0}} 
	\cup \overline{\llrrparen{0, 1, 0, 0}\llrrparen{-1, 0, 0, 0}}\]
	easily shown to be the accordant great segment 
	$\clo(\zeta_{\llrrparen{1, 0, 0, 0}})$
	to the boundary of $\Ss_+$ with the induced orientation. 
\end{remark} 


	For the following if $\gamma$ is not accordant, we need to use $\gamma^{-1}$.

\begin{proposition} \label{prop:orbit} 
Let $\gamma$ be  {accordant parabolic transformation.}
	We use the parabolic coordinate system of $\gamma$ so that $\gamma$ is of the form \eqref{eqn:Phit} with $\mu > 0$.
Then the following hold\,{\rm :} 
\begin{itemize} 
\item $\langle \gamma \rangle$ acts properly on $\Lspace$. 
\item 
The orbit $\{ \gamma^{n}(p)\}$, $p\in\Lspace$, converges to 
{the} unique fixed point $x_\gamma$ in $\partial \Ss_+$
as $n \ra \infty$ 
and converges to its antipode $x_{\gamma -} \in \partial \Ss_-$ as $n \ra -\infty$. 
\item The orbit lies on the \hyperlink{term-pc}{parabolic cylinder} 
\[P_{p} := \{x \in \Lspace| Q(x) = Q(p)\},\] 
where $\gamma$ acts on. 
\item The set of lines in $\Lspace$ parallel to the vector $\vx_\gamma$ in 
the direction of $x_\gamma$ 
foliates each parabolic cylinder and gives us equivalence classes. 
$P_{p}/\sim$ can be identified with a real line $\bR$. The action of $\gamma$ on $P_{p}/\sim$ corresponds to 
a translation action on $\bR$. 
\item $P_{p}$ can be compactified to a compact subspace in $\SI^{3}$
homeomorphic to a $2$-sphere by adding 
the great segment 
$\clo(\zeta_{\mathbf{x}_\gamma})$
accordant to $\partial \Ss_+$. 
\end{itemize} 
\end{proposition}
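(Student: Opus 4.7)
\medskip

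\noindent\textbf{Proof proposal.} The plan is to reduce every claim to an explicit calculation in the parabolic coordinate system of $\gamma$, exploiting the matrix form \eqref{eqn:Phit} and the straightening diffeomorphism $F$ from \eqref{eqn:Phicoor}. After conjugating, we may write $\gamma=\Phi(t_1)$ for some $t_1>0$, and in the new coordinates provided by $F$ the element $\gamma$ becomes the translation $(x,y,z)\mapsto(x,y,z+\mu t_1)$. Since $F$ is a diffeomorphism of $\Lspace$ and translations of $\bR^3$ act properly and freely on $\bR^3$, this immediately yields the first bullet: $\langle\gamma\rangle$ acts properly on $\Lspace$.

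For the second bullet I would compute $\Phi(nt_1)(p)$ directly from \eqref{eqn:Phit} for a fixed $p=(x_0,y_0,z_0)$. The $x$-coordinate grows like $\mu (nt_1)^3/6$, the $y$-coordinate like $\mu (nt_1)^2/2$, and the $z$-coordinate like $\mu n t_1$. In the embedding into $\SI^3$ as a \hyperlink{term-hemi}{open hemisphere}, after normalizing by the dominant cubic factor, the projective direction converges to $\llrrparen{1,0,0,0}$ in the coordinate convention of Remark \ref{rem:accors}, which is a point of $\partial\Ss_+$ (it satisfies $y^2-2xz=0$). Replacing $n$ by $-n$ flips the sign of the leading term only in the $x$- and $z$-coordinates, producing the antipodal limit $\llrrparen{-1,0,0,0}\in\partial\Ss_-$. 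Since $\vx_\gamma$ is the direction of $\vc$, this is exactly the canonical generator produced by Lemma \ref{lem:lem7}. The third bullet is immediate: $Q=F_2$ is $\phi$-invariant, hence $\Phi(t)$-invariant, so the whole orbit lies on the level set $P_p$.

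For the fourth bullet I would observe that in parabolic coordinates $P_p$ is the graph $y=(z^2-Q(p))/(2\mu)$, hence parameterized by $(x,z)\in\bR^2$. The lines parallel to $\vx_\gamma=\vc$ are precisely the $x$-lines $\{(x,y_0,z_0):x\in\bR\}$; these are tangent to $P_p$ whenever $z_0^2-2\mu y_0=Q(p)$, and they foliate $P_p$. Quotienting by this foliation identifies $P_p/\!\sim$ with the $z$-axis $\bR$, and \eqref{eqn:Phit} shows that $\gamma$ descends to $z\mapsto z+\mu t_1$, a translation.

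The last bullet, which I expect to be the main technical point, is the identification of the frontier of $P_p$ in $\SI^3$ with $\clo(\zeta_{\vx_\gamma})$ and of the resulting compactification with $S^2$. I would take a sequence $(x_n,y_n,z_n)\in P_p$ escaping to infinity and sort by the dominant behavior: if $|x_n|\to\infty$ while $z_n$ stays bounded, the limit in $\SI^3$ is one of the two endpoints $\llrrparen{\pm 1,0,0,0}$; if $|z_n|\to\infty$ with $x_n/z_n^2\to t\in\bR$, the relation $y_n=(z_n^2-Q(p))/(2\mu)$ forces the limit to be $\llrrparen{2\mu t,1,0,0}$; if $|x_n/z_n^2|\to\infty$, we again recover $\llrrparen{\pm 1,0,0,0}$. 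The locus of such limits is exactly the semicircle $\{\llrrparen{\cos\theta,\sin\theta,0,0}:\theta\in[0,\pi]\}$, which by Remark \ref{rem:accors} is $\clo(\zeta_{\vx_\gamma})$. Topologically, the closure of $P_p\cong\bR^2$ in $\SI^3$ is obtained from the disk compactification of $\bR^2$ by collapsing the boundary circle via the orientation-reversing fold $\phi\leftrightarrow 2\pi-\phi$ (upper and lower halves of large circles in the $(x,z)$-plane converge to the same arc because $z$ and $-z$ give the same $y$). This fold identification is precisely the operation that produces $S^2$ from a disk, which I would verify by exhibiting an explicit homeomorphism (e.g.\ mapping the disk onto a hemisphere with the boundary collapsed onto an equatorial arc). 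This completes the proposition.
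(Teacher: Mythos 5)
Your treatment of the first four items is correct and essentially matches the paper's (terse) argument; conjugating by the straightening diffeomorphism $F$ so that $\gamma$ becomes the translation $(x,y,z)\mapsto(x,y,z+\mu t_1)$ is a clean way to get properness, equivalent to the paper's observation that the cubic term $\mu t^3/6$ dominates, and the remaining items are read off from the invariance of $Q=F_2$ just as in the paper.

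The last item is where there is a genuine gap. Your identification of the frontier set as $\clo(\zeta_{\mathbf{x}_\gamma})$ is right. But the topological step --- that $\clo(P_p)$ is ``obtained from the disk compactification of $\bR^2$ by collapsing the boundary circle via the fold $\phi\leftrightarrow 2\pi-\phi$'' --- is false, because the inclusion $\bR^2\cong P_p\hookrightarrow \SI^3$, $(x,z)\mapsto \llrrparen{x,(z^2-Q(p))/(2\mu),z,1}$, does \emph{not} extend continuously to the closed disk. The limit depends on the \emph{rate} of divergence and not just on the direction. Concretely, the sequences $(n,\sqrt{n})$ and $(n,n^{1/4})$ in the $(x,z)$-plane both converge to the direction $\phi=0$ in the visual boundary, yet in $\clo(P_p)$ the first converges to $\llrrparen{2\mu,1,0,0}$ (since $x_n/z_n^2\to 1$) and the second to $\llrrparen{1,0,0,0}$ (since $x_n/z_n^2\to\infty$). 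In the other direction, every ray with $\sin\phi\neq 0$ converges to the single point $\llrrparen{0,1,0,0}$, so under the radial map the entire open boundary semicircle would be crushed to one point, which is far more than the fold. So $\clo(P_p)$ is not a quotient of the visual compactification, even though both happen to be homeomorphic to $S^2$; ``it is $S^2$'' is not established by the proposed factorization.

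The paper's proof supplies the missing topological input by using the $\gamma$-invariant ruling of $P_p$ by $x$-lines: each $l_t:=g^t(l)$ compactifies to an arc $\clo(l_t)$ with the two fixed endpoints $\llrrparen{\pm 1,0,0,0}$, these arcs are pairwise disjoint in $\Lspace$ and meet only at the poles, and $\clo(l_t)\to\clo(\zeta_{\mathbf{x}_\gamma})$ geometrically as $t\to\pm\infty$. Thus $\clo(P_p)$ is swept out by a circle's worth of arcs (parameterized by the one-point compactification of $\bR$) sharing the same two poles --- a meridian foliation of $S^2$. You should replace the disk-compactification step by this (or an equivalent) argument.
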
 
\begin{proof} 
%
We have $x_{\gamma}$ equal to $\llrrparen{1, 0, 0, 0}$ in this coordinate 
system. 	
The properness follows since  $\mu t^{3}/6$ dominates all other terms. The second item follows since $F_{2}$ is an invariant. 
Since $F_2$ is $\Phi_t$-invariant, $\gamma$ acts on the parabolic cylinder 
determined by $F_2$. The third item follows by projecting to the $z$-value. 
The fourth item is straightforward from the third item. 

Let $H_{0}$ be a great sphere given by $x=0$ in $\SI^{3}$. 
For each line $l$ in the parabolic cylinder, ${\{\gamma^{t}(\clo(l)) \cap H_{0}, t \in \bR\}}$
is a parabola compactified by a single point
$\llrrparen{0, 1, 0, 0}$ {as we can see using \eqref{eqn:Phit}.}
Let $H_{+}$ be the upper hemisphere bounded by $H_{0}$ and $H_{-}$ the lower hemisphere. 
We have geometric convergence\newch{:}
\begin{align}
\{{\gamma^{t}}(\clo(l)) \cap H_{+}  \}&\ra 
\overline{\llrrparen{1, 0, 0, 0}\llrrparen{0, 1, 0, 0}} \hbox{ as } 
t \ra \infty \hbox{ or } t \ra -\infty\newch{,}  \label{eqn:segment1}\\
\{{\gamma^{t}}(\clo(l)) \cap H_{-}\} &\ra 
\overline{\llrrparen{-1, 0, 0, 0}\llrrparen{0, 1, 0, 0}} \hbox{ as } t \ra \infty \hbox{ or } t \ra -\infty. 
 \label{eqn:segment2}
\end{align} 
Hence, by Remark \ref{rem:accors},  
\[\{{\gamma^{t}}(\clo(l))\} \ra \clo(\zeta_{\llrrparen{1, 0, 0, 0}})
\hbox{ as } t \ra \infty \hbox{ or } t \ra -\infty.\]
For any sequence of points $x_i$ on $P_p$, 
$x_i \in {\gamma^{t_i}}(\clo(l))$ for some $t_i \in \bR$. 
If $|t_i|$ is bounded, $\{x_i\}$ can accumulate only on $P_p$. 
If $|t_i|$ is unbounded, then $\{x_i\}$ can accumulate to 
$\clo(\zeta_{x_\gamma})$ by the above paragraph.
The final part follows. 
\end{proof}

\subsection{Proper affine deformations and Margulis and Charette-Drumm invariants} \label{sub:propaffine}

Let $\Sf$ be a complete orientable hyperbolic surface with $\chi(\Sf) < 0$ and possibly some cusps. 
Let $h:\pi_{1}(\Sf) \ra \SO(2, 1)^{o}$ be 
a discrete irreducible faithful representation. 
Now, the image is allowed to have parabolic elements. 
Each nonparabolic element $\gamma$ of $\pi_{1}(\Sf)\setminus\{\Idd\}$ is represented 
by {the} unique closed geodesic in $\Sf:=\Ss_{+}/h(\pi_{1}(\Sf))$ and hence 
is hyperbolic. 
Let $\Gamma$ be a proper affine deformation of $h(\pi_{1}(\Sf))$.
For nonparabolic $\gamma \in \Gamma\setminus\{\Idd\}$, 
we define 
\begin{itemize} 
\item $\vx_{+}(\gamma)$ as an eigenvector of $\mathcal{L}(\gamma)$ in the causally null directions with {the} eigenvalue $> 1$, 
\item $\vx_{-}(\gamma)$ as one of $\mathcal{L}(\gamma)$ with {the} eigenvalue $< 1$, and 
\item 
$\vx_0(\gamma)$ as a space-like positive eigenvector of $\mathcal{L}(\gamma)$ of {the} eigenvalue $1$
which is given by 
\[\vx_0(\gamma) = \frac{\vx_-(\gamma)\times \vx_+(\gamma)}{\llrrV{\vx_-(\gamma)\times \vx_+(\gamma)}}.\]
\end{itemize} 
Here, $\times$ is the Lorentzian cross-product, and
$\vx_+(\gamma)$ and $\vx_-(\gamma)$ are well-defined up to choices of sizes; 
however, $\vx_0(\gamma)$ is well-defined since it has a unit Lorentz norm. 
They define the Margulis invariant 
\begin{equation}\label{eqn:alpha}
\alpha(\gamma) = \Bs(\gamma(x) -x , \vx_0(\gamma)), x\in \Lspace 
\end{equation}
where the value is independent of the choice of $x$. 




In general, 
an {\em affine deformation} of a homomorphism $h:\pi_1(\Sf) \ra \SO(2, 1)$
is a homomorphism $h_{\bfb}:\pi_1(\Sf) \ra \Isom^{+}(\Lspace)$ given by 
$ h_{\bfb}(g)(x) = h(g)x + \bfb(g) $
for a cocycle $\bfb: \pi_1(\Sf) \ra \bR^{2, 1}$
in $Z^1(\pi_1(\Sf), \bR^{2, 1}_{h})$.  
The vector space of coboundary is denoted by $B^1(\pi_1(\Sf), \bR^{2, 1}_{h})$. 
As usual, we define 
\[H^1(\pi_1(\Sf), \bR^{2, 1}_{h}):= \frac{Z^1(\pi_1(\Sf), 
	\bR^{2, 1}_{h})}{B^1(\pi_1(\Sf), \bR^{2, 1}_{h})}. \]



Let $[\bfu]$ be the class of a cocycle in $H^{1}(\pi_{1}(\Sf), \bR^{2,1}_{h})$
with $\bfu \in Z^{1}(\pi_{1}(\Sf), \bR^{2,1}_{h}).$
Let $h_{\bfu}$ denote the affine deformation of $h$ according to a cocycle $\bfu$ in $[\bfu]$, 
and let $\Gamma_{\bfu}$ be the affine deformation $h_{\bfu}(\pi_{1}(\Sf))$. 
There is a function $\alpha_{\bfu}: \pi_{1}(\Sf)\setminus \mathcal{P}_{\pi_1(\Sf)}\ra \bR$ with the following properties:
\begin{itemize}
\item $\alpha_{\bfu}(\gamma^{n}) = |n| \alpha_{\bfu}(\gamma), n \in \bZ.$
\item $\alpha_{\bfu}(\gamma) = 0$ if and only if $h_{\bfu}(\gamma) $ fixes a point. 
\item The function $\alpha_{\bfu}$ depends linearly on $\bfu$. 
\item If $h_{\bfu}(\pi_{1}(\Sf))$ acts properly and freely on $\Lspace$, then 
$|\alpha_{\bfu}(\gamma)|$ is the Lorentz length of the unique space-like 
closed geodesic in 
$\Lspace/h_{\bfu}(\pi_{1}(\Sf))$ corresponding to $\gamma$. 
(See Goldman-Labourie-Margulis \cite{GLM09}.)
\end{itemize}

Charette and Drumm generalized the Margulis invariants for parabolic elements in \cite{CD05},
where the values are given only as ``positive'' or ``negative''.
Let $g\in \Gamma$ be a parabolic or hyperbolic element of an affine deformation of a linear group in $\SO(2, 1)^o$. 

\begin{definition} \label{defn:CD}
	An eigenvector $\vv$ of eigenvalue $1$ of a linear hyperbolic or parabolic transformation 
	$g$ is said to be {\em positive} relative to $g$ if 
	$\{\vv, \vx, \mathcal{L}(g)\vx\}$ is positively oriented when $\vx$ is any null or time-like vector which is not an eigenvector of $g$. 
\end{definition} 

It is easy to verify $\vv$ is positive with respect to $g$ if and only if $-\vv$ is positive with respect to $g^{-1}$. 
Let $F(\mathcal{L}(g))$ be the oriented one-dimensional space of eigenvectors of $\mathcal{L}(g)$ of eigenvalue $1$. 
Define $\tilde \alpha(\gamma): F( \mathcal{L}(\gamma)) \ra \bR $ by 
\[\tilde \alpha(\gamma)(\cdot) = \Bs(\gamma(x) - x, \cdot) \] where
$x \in \Lspace$ is any chosen point. 
Drumm \cite{Drumm92} also shows
\begin{equation} \label{eqn:alphacd} 
\alpha(\gamma) = \tilde \alpha(\gamma)(\vx^0(\mathcal{L}(\gamma)))
\end{equation} 
provided $\gamma$ is hyperbolic. 

By Definition \ref{defn:CD}, 
components of $F( \mathcal{L}(\gamma)) \setminus\{0\}$ have well-defined signs.
We say that the \hypertarget{term-CD}{{\em Charette-Drumm invariant}} $cd(\gamma) >0$ if 
$\tilde \alpha(\gamma)$ is positive on positive eigenvectors in
$F( \mathcal{L}(\gamma)) \setminus\{0\}$. 

Also, we note $cd(\gamma)> 0$ iff $cd(\gamma^{-1})>0 $. 

\begin{lemma}[Charette-Drumm \cite{CD05}]  \label{lem:CD}
	Let $\gamma \in \Gamma$ be a parabolic or hyperbolic element. 
	\begin{itemize} 
		\item $\tilde \alpha(\gamma)=  \Bs(\gamma(x) - x, \cdot)$ is independent of the choice of $x$. 
		\item $\tilde \alpha(\gamma) = 0$ if and only if $\gamma$ has a fixed point in $\Lspace$. 
		\item For any $\eta \in \Aff(\Lspace)$, $\tilde \alpha(\eta \gamma \eta^{-1})(\eta(\vv)) = \tilde \alpha(\gamma)(\vv)$ for 
		$\vv \in F(\mathcal{L}(\gamma))$. 
		\item For any $n \in \bZ$, $\vv \in F(\mathcal{L}(\gamma))$, $\tilde \alpha(\gamma^n)(\vv) = |n| \tilde \alpha(\gamma)(\vv)$. 
	\end{itemize} 
\end{lemma}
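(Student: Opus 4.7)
The plan is to prove the four items in order, each reducing to straightforward linear algebra once we record two structural facts about $\mathcal{L}(\gamma) \in \SO(2,1)^{o}$. Since $\mathcal{L}(\gamma)$ preserves $\Bs$, for any $u \in \Lspace$ and any $v \in F(\mathcal{L}(\gamma))$ we have
\[
\Bs((\mathcal{L}(\gamma) - \Idd) u, v) = \Bs(u, \mathcal{L}(\gamma)^{-1} v) - \Bs(u, v) = 0,
\]
so $\ima(\mathcal{L}(\gamma) - \Idd) \subseteq F(\mathcal{L}(\gamma))^{\perp}$. A direct rank count on the normal form of $\mathcal{L}(\gamma)$ (eigenvalues $\lambda, 1, \lambda^{-1}$ in the hyperbolic case, and a single Jordan block in the parabolic case by the analysis of Section \ref{sub:parabolic}) shows $\rank(\mathcal{L}(\gamma) - \Idd) = 2$, matching $\dim F(\mathcal{L}(\gamma))^{\perp} = 2$, so the inclusion is actually an equality.

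For part (1), if $x, y \in \Lspace$ then
\[
(\gamma(x) - x) - (\gamma(y) - y) = (\mathcal{L}(\gamma) - \Idd)(x - y) \in F(\mathcal{L}(\gamma))^{\perp},
\]
so pairing with $v \in F(\mathcal{L}(\gamma))$ kills the difference. For part (2), if $\gamma$ fixes some $x_0$ then $\tilde\alpha(\gamma) \equiv 0$ trivially. Conversely, if $\tilde\alpha(\gamma) = 0$, then $\gamma(x) - x$ is $\Bs$-orthogonal to all of $F(\mathcal{L}(\gamma))$, hence lies in $\ima(\mathcal{L}(\gamma) - \Idd)$ by the equality above; writing $\gamma(x) - x = (\mathcal{L}(\gamma) - \Idd) y$, one checks directly that $\gamma(x - y) = \mathcal{L}(\gamma)(x - y) + (\gamma(0)) - \mathcal{L}(\gamma)(0) \cdot 0 = x - y$, so $\gamma$ fixes $x - y$.

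Parts (3) and (4) are then quick consequences. For (3), set $y = \eta^{-1}(x)$; since $\eta$ is affine,
\[
\eta \gamma \eta^{-1}(x) - x = \eta(\gamma(y)) - \eta(y) = \mathcal{L}(\eta)(\gamma(y) - y),
\]
and since $\mathcal{L}(\eta)$ preserves $\Bs$ (interpreting $\eta \in \Aff(\Lspace)$ as a Lorentz affine transformation), pairing with $\mathcal{L}(\eta)(v)$ returns $\Bs(\gamma(y) - y, v) = \tilde\alpha(\gamma)(v)$. For (4) with $n > 0$, telescope $\gamma^{n}(x) - x = \sum_{k=0}^{n-1} (\gamma^{k+1}(x) - \gamma^{k}(x))$, observe $F(\mathcal{L}(\gamma)) = F(\mathcal{L}(\gamma^{n}))$ as lines, pair with $v \in F(\mathcal{L}(\gamma))$, and invoke part (1) summand-wise to get $n \tilde\alpha(\gamma)(v)$. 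For $n < 0$, the corresponding telescoping applied to $\gamma^{-1}$ combined with the orientation reversal between $F(\mathcal{L}(\gamma))$ and $F(\mathcal{L}(\gamma^{-1}))$ recorded just before the lemma turns the raw sign $n$ into $|n|$. The only point that needs real care is exactly this orientation flip in the negative-$n$ case; without it the formula would read $n \tilde\alpha(\gamma)(v)$ rather than $|n|\tilde\alpha(\gamma)(v)$, and interpreting the sign convention on the oriented line $F(\mathcal{L}(\gamma^n))$ correctly is the main (mild) obstacle.
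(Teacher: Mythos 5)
The paper does not prove this lemma; it is cited directly from Charette--Drumm \cite{CD05}, so there is no in-paper argument to compare against. On the merits, your treatment of items (1)--(3) is correct. The identity $\ima(\mathcal{L}(\gamma)-\Idd) = F(\mathcal{L}(\gamma))^{\perp}$ via the rank count is a clean way to get both (1) and the converse direction of (2), and your observation that (3) requires $\mathcal{L}(\eta)$ to preserve $\Bs$ (so $\eta$ must actually be a Lorentz affine map, not a general element of $\Aff(\Lspace)$) is exactly right and worth flagging explicitly as a hypothesis.

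Item (4), however, is not resolved by your argument. If $\vv \in F(\mathcal{L}(\gamma))$ is held fixed, the telescoping for $n>0$ gives $\tilde\alpha(\gamma^{n})(\vv)=n\,\tilde\alpha(\gamma)(\vv)$, and for $n<0$, writing $y=\gamma^{n}(x)$ so that $x = \gamma^{-n}(y)$ gives
\[
\tilde\alpha(\gamma^{n})(\vv) = \Bs\bigl(\gamma^{n}(x)-x,\vv\bigr) = -\Bs\bigl(\gamma^{-n}(y)-y,\vv\bigr) = n\,\tilde\alpha(\gamma)(\vv),
\]
so linearity yields $n$, not $|n|$, on the fixed vector $\vv$ regardless of any orientation data. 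The orientation reversal of $F(\mathcal{L}(\gamma^{-1}))$ does not change the value of a bilinear pairing on a given vector; it only tells you which of $\pm\vv$ is the \emph{positive} generator. The $|n|$ form of the identity therefore holds only under the implicit convention that one evaluates each $\tilde\alpha(\gamma^{n})$ on the \emph{positively oriented} representative of $F(\mathcal{L}(\gamma^{n}))$, which is $\vv$ for $n>0$ and $-\vv$ for $n<0$ by the orientation-flip remark just before the lemma. You correctly sense that this is where the care is needed, but saying the orientation reversal ``turns $n$ into $|n|$'' elides the actual mechanism; either make the substitution $\vv \mapsto -\vv$ for $n<0$ explicit, or record the literal linear identity $\tilde\alpha(\gamma^{n})(\vv)=n\,\tilde\alpha(\gamma)(\vv)$ and derive the $|n|$ statement as the sign-corrected reformulation. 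As written, a reader cannot tell whether you have actually justified the stated formula or only noticed that something is off.
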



In the \hyperlink{term-pcs}{parabolic coordinate system} of $\gamma$, 
we obtain 
\begin{equation}\label{eqn:alphap} 
\tilde \alpha(\gamma)(x, 0, 0) = - \mu t x  
\end{equation}
for $\mu, t$ {given} for $\gamma$ as in 
\eqref{eqn:Phit} 
in Section \ref{sub:parabolic}. 

\begin{lemma} \label{lem:pex} 
	Let $\gamma$ be defined by \eqref{eqn:Phit} for \oldch{$t > 0$}
	in the  \oldch{accordant} parabolic coordinate system for $\gamma$. 
	Then the following holds: 
	\begin{itemize} 
	\item $\mu  > 0$ if and only if
	$\gamma$ has a positive Charette-Drumm invariant.
	\item $\mu  < 0$ if and only if
	$\gamma$ has a negative Charette-Drumm invariant.
	\item $\mu \ne 0$ if and only if 
	$\langle \gamma \rangle$ acts properly on $\Lspace$. 
	\end{itemize} 
\end{lemma}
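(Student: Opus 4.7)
My plan is to handle the three cases $\mu > 0$, $\mu < 0$, and $\mu = 0$ directly from the normal form \eqref{eqn:Phit}. The properness assertion for $\mu > 0$ is already Proposition \ref{prop:orbit}; for $\mu < 0$ the identical asymptotic estimate applies, since the dominant translational term $\mu t^{3}/6$ drives any orbit to infinity as $|t|\to\infty$ regardless of the sign of $\mu$ (alternatively, one can reduce to $\mu>0$ by applying the argument to $\gamma^{-1}$ or by conjugating by a suitable isometry). Conversely, when $\mu = 0$, the last column of $\Phi(t)$ in \eqref{eqn:Phit} reduces to $(0,0,0,1)^{T}$, so every $\Phi(t)$ fixes the origin $0 \in \Lspace$. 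The stabilizer of $0$ in $\langle \gamma \rangle$ is then all of $\langle \gamma \rangle$, which is infinite, and the action cannot be proper. This handles the third bullet.

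For the first two bullets, the eigenline $F(\mathcal{L}(\gamma))$ is $\bR \vc$ in the parabolic frame, and \eqref{eqn:alphap} gives $\tilde\alpha(\gamma)(v\vc) = -2\mu t v$, which is a nontrivial linear functional on $F(\mathcal{L}(\gamma))$ precisely when $\mu \neq 0$. To determine which half-line of $F(\mathcal{L}(\gamma))$ is positive in the sense of Definition \ref{defn:CD}, I would test with $\vx := \va$: this is null by Lemma \ref{lem:lem7}, and since $\mathcal{L}(\gamma)\va = (t^{2}/2,\,t,\,1)$ is not a scalar multiple of $\va$ when $t\neq 0$, it is not an eigenvector of $\gamma$. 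Computing the parabolic-frame determinant of the triple $\{v\vc, \va, \mathcal{L}(\gamma)\va\}$ yields $-vt$; comparing this to the fixed orientation on $\Lspace$ singles out a unique half-line of $F(\mathcal{L}(\gamma))$, say spanned by $\vv_{+} = \varepsilon\vc$ for a sign $\varepsilon\in\{\pm 1\}$ determined by the orientation convention. Substituting into \eqref{eqn:alphap} gives $\tilde\alpha(\gamma)(\vv_{+}) = -2\mu t\,\varepsilon$, whose sign (for $t>0$) equals the sign of $\mu$ up to the constant $-\varepsilon$, so positive Charette--Drumm invariant corresponds to $\mu>0$ and negative to $\mu<0$.

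The main obstacle is bookkeeping rather than conceptual: one must carefully reconcile the orientation fixed on $\Lspace$, the ordered parabolic frame $(\vc,\vb,\va)$ supplied by Corollary \ref{cor:parac}, and the positivity convention of Definition \ref{defn:CD}, so that the sign $\varepsilon$ above is pinned down unambiguously. Once this is done, each bullet reduces to a short computation using only \eqref{eqn:Phit}, \eqref{eqn:alphap}, and the existing properness argument of Proposition \ref{prop:orbit}.
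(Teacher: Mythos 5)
Your proof is correct and follows essentially the same path as the paper's: the paper tests Definition \ref{defn:CD} with the time-like vector $\vx = (a,0,c)$ ($ac>0$, $a>0$) rather than your null $\va$, computes the triple's determinant $-c^2 t<0$ in the parabolic frame, and declares it negatively oriented, which is exactly what pins down your sign $\varepsilon$ to $-1$; the conclusion then follows from \eqref{eqn:alphap} as in your write-up. Your direct treatment of $\mu=0$ (the translation column of \eqref{eqn:Phit} vanishes, so the origin is fixed) is a slightly cleaner route to the third bullet than the paper's appeal to Lemma \ref{lem:CD} plus orientation reversal.
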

\begin{proof} 
\oldch{	We prove the first item: 
	Choose	$\vx = (a, 0, c)$ with $ac > 0, a > 0$ 
	so that $\vx$ is a {causal} time-like vector. 
	Then $\{\vi, \vx, \mathcal{L}(\gamma)\vx\}$ 
	is a negatively 
	oriented frame, and $\vi$ is the negative null eigenvector of 
	$\mathcal{L}(\gamma)$ by Definition \ref{defn:CD}. 
	By \eqref{eqn:alphap}, the first item follows. 
The second item follows by the contrapositive of the first item. 
The final part follows by Proposition \ref{prop:orbit} and Lemma \ref{lem:CD}
and reversing the orientation of $\Lspace$. 
} 
	
\end{proof}

\subsection{Parabolic region and two transversal foliations on them} \label{subsec:parabolic}

\begin{figure}[h]
	
	\begin{center}
		\includegraphics[height=5cm]{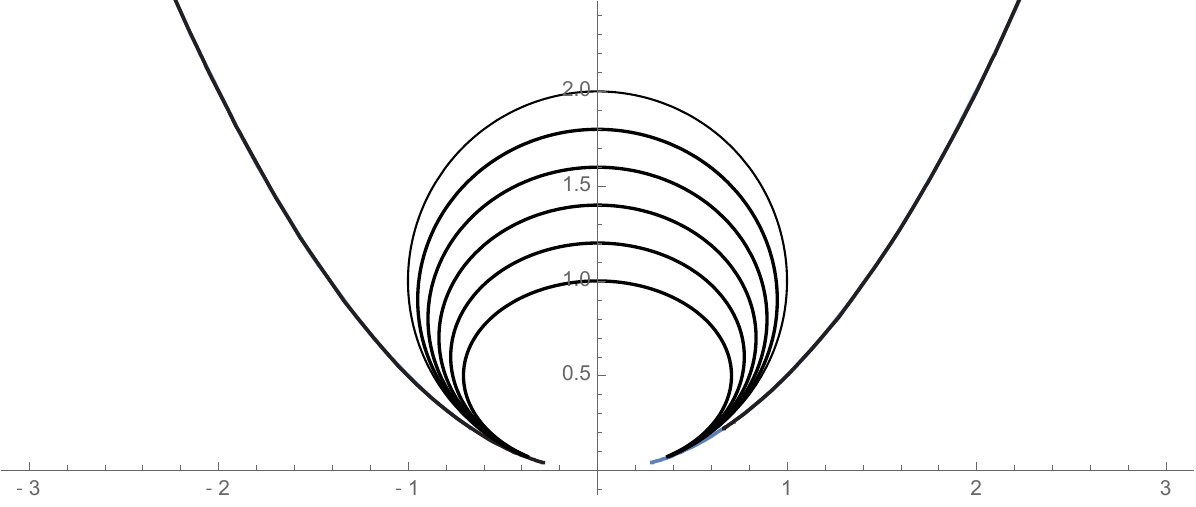}
	\end{center} 
	\caption{ This shows the projective action of a one-dimensional parabolic group 
		on $\Ss_{+}$ with boundary represented as a parabola.
		We use the affine patch where $x>0$ in the coordinate system.
		We normalize the homogeneous coordinates by setting $x =1$. 
		The parabola $2z = y^2$ describes the boundary of 
		$\Ss_+$ given by $y^2 < 2 xz$. 
		See Remark \ref{rem:accors}. } 
	\label{fig:parabolic}
\end{figure}

\subsubsection{Parabolic regions} 

Let $g$ be a parabolic element with the expression \eqref{eqn:Phit} for $t > 0$
under the {parabolic} coordinate system of Section \ref{sub:paraact}. 
Assume that the \hyperlink{term-CD}{Charette-Drumm invariant} of $g$ is positive. That is, $\mu > 0$ by Lemma \ref{lem:pex}. 
Recall from Section \ref{sub:paraact} that 
\[F_{2}(x, y, z) =z^{2} - 2 \mu y  \hbox{ and } F_{3}(x, y, z)     = z^{3}  - 3\mu yz + 3 \mu^{2} x \] 
are invariants of $g^{t}$.  
Recall that  $\Phi(t): {\mathbf{E}} \ra {\mathbf{E}}$ is generated by 
a vector field 
\[ \phi:= y \partial_{x} + z \partial_{y} + \mu \partial_{z} \]
with the square of the Lorentzian norm $\llrrV{\phi}^{2} = z^{2} - 2 \mu y$. 

The equation $F_{2}(x, y, z) = T$ gives us a parabolic cylinder $P_T$ in the $x$-direction 
with the parabola in the $yz$-plane. 
The vector field $\phi$ satisfies
\[\phi(x, y_{0}, 0) = (y_{0}, 0, \mu) \hbox{ for all } x \hbox{ and } T = -2 \mu y_{0}.\]

Since we are looking for a $g^{t}$-invariant ruled surface, 
we take a line $l$ tangent to $P_T$ in the direction of $\vx=(a, 0, c)$ starting at $(0, y_{0}, 0)$. 
Since $\llrrparen{ \vx}\,\, \in \Ss_{+}$ by \newch{the} premise, 
we obtain $2ac > 0$ with $a > 0, c> 0$ under the parabolic coordinate system 
with the quadratic form \eqref{eqn:Bform}.  
(See Figure \ref{fig:parabolic}.)

We define $\Psi(t, s) = g^{t}(l(s))$ so  that 
\[l(s) = (0, y_{0}, 0) + s(a, 0, c)= (sa, y_{0}, sc), \phi(l(s)) = (y_{0}, sc, \mu).\] 
Thus, $\phi$ is never parallel to $(a, 0, c)$ unless $s=0$. 
We choose $(a, 0, c)$, $c \ne 0$, not parallel to $(y_{0}, 0, \mu)$, i.e., 
\[ \frac{a}{c} \ne \frac{y_{0}}{\mu}.\] 
Then $\phi| l$ is never parallel to the tangent vectors to $l$. 
Since $Dg^t(\phi) = \phi$, $\phi$ is never parallel to tangent vectors 
to $g^t(l)$, it follows that $\Psi$ is an immersion in $\Lspace$.

Let $\mathcal{H}_{s_0, \kappa_{1}, \kappa_{2}}$ be the space of compact segments $u$ passing $\Lspace$ with 
the following property: 
\begin{itemize} 
	\item $u$ has an antipodal pair of endpoints in $\Ss_+$ 
	and in the antipodal set $\Ss_{-}$ and
	\item $u\cap \Lspace$ is equivalent under $g^t$ for some $t$ to a line $l(s)$ given by 
	\begin{multline} \label{eqn:ls} 
		l(s) = (sa, y_0, sc) \hbox{ for } y_0\geq s_0, a, c> 0, \frac{\kappa_{1}a}{c} \leq \frac{y_0}{\mu} \leq \frac{\kappa_{2}a}{c}, \hbox{ and }  \\
		a^2+ c^2 =1  \hbox{ for some pair } 0 < \kappa_{1}\leq \kappa_{2} < 1
		\hbox{ and } s_0 > 0. 
	\end{multline} 
\end{itemize} 
This space has a metric 
coming from the Hausdorff metric $\bdd_H$.

{We will prove the following in Appendix \ref{app:A}.} 

\begin{theorem}\label{thm:ruled} 
	Let $g$, $\mathcal{L}(g) \in \SO(2, 1)^{o}$, be {an accordant} parabolic element acting properly on $\Lspace$ 
	{with the positive Charette-Drumm invariant}.
	Let $l$ be a line in $\mathcal{H}_{s_0, \kappa_1, \kappa_2}$
	for the \hyperlink{term-pcs}{parabolic coordinate system} for $g$.
	Then   
	\begin{itemize}
		\item For each time-like line $l$ in the ruling of $S$, 
		\[\{g^{t}(\clo(l))\} \ra \clo(\zeta_{{\mathbf{x}_{\infty}}}) \hbox{ as } t \ra \infty
		\hbox{ and }  t \ra -\infty\]
		geometrically.
		\item For any \hyperlink{term-bdd}{$\eps$-$\bdd$}-neighborhood $N$ of $\clo(\zeta_{{\mathbf{x}_{\infty}}}) \subset \Ss$, 
		we can find such a ruled surface $S$ in $N \cap \Lspace$. 
		\item there exists a $\{g^{t}, t \in \bR\}$-invariant surface $S$ ruled by time-like lines containing $l^o$ 
		properly embedded in $\Lspace$ with boundary 
		\[ \clo(S) \setminus S = \{g^{t}({\mathbf{x}})| t \in \bR\} \cup \{g^{t}({\mathbf{x}}_{-})| t \in \bR\} \cup \clo(\zeta_{{\mathbf{x}}_{\infty}})\] 
		for a point ${\mathbf{x}} \in \Ss_{+}$, and ${\mathbf{x}}_{\infty}$ is 
		a parabolic fixed point of $g$ in $\partial \Ss_{+}$
		respectively. 
		{Furthermore, there exists a domain $R$ homeomorphic to a $3$-cell in $\Lspace$ whose topological boundary in the hemisphere $\mathcal{H}$ equals $\clo(S)$.
			Also, $R/\langle g \rangle$ is homeomorphic  to a solid torus.}
	\end{itemize} 
\end{theorem}

\begin{definition} \label{defn:para} 
	In Theorem \ref{thm:ruled},  
	the surface denoted  by $S$ is called a {\em parabolic ruled surface}. 
	(Compare with parabolic cylinders in Section \ref{sub:paraact}.)
	The open region $R$ in $\Lspace$ bounded by a parabolic ruled surface 
	is called the \hypertarget{term-pbr}{{\em parabolic region}}.
	The generator of the parabolic group acting on a parabolic ruled surface 
	fixes a point $p \in \partial \Ss_{+}$.
	
	An immersed image $S/\langle g \rangle$ 
	of the surfaces in a manifold $\Lspace/\Gamma$ is also 
	called a {\em parabolic ruled surface}. 
	The embedded image $R/\langle g \rangle$ of $R$ in a manifold $\Lspace/\Gamma$ is called a
	{\em parabolic region}. 
	
	
	We can choose the  parabolic surface and the parabolic regions so that they are in 
	the $\eps$-$\bdd$-neighborhood $N$ of $\bigcup_{x\in a}{\clo(\zeta_{x})} \subset \Ss$ by the last item of Theorem \ref{thm:ruled}. 
	Then we call the parabolic region \hypertarget{term-far}{{\em $\frac{1}{\eps}$-far away}} from the compact parts. The isometrically embedded images of such surfaces in 
	$\Lspace/\Gamma$ or $\Lspace$ are described in the same manner.

\end{definition}

\subsubsection{Two transversal foliations} 

\begin{figure}[h!] 
	
	%
	
	\includegraphics[height=8.0cm, clip, trim={3.5cm 0.0cm 4.5cm 0.0cm}]{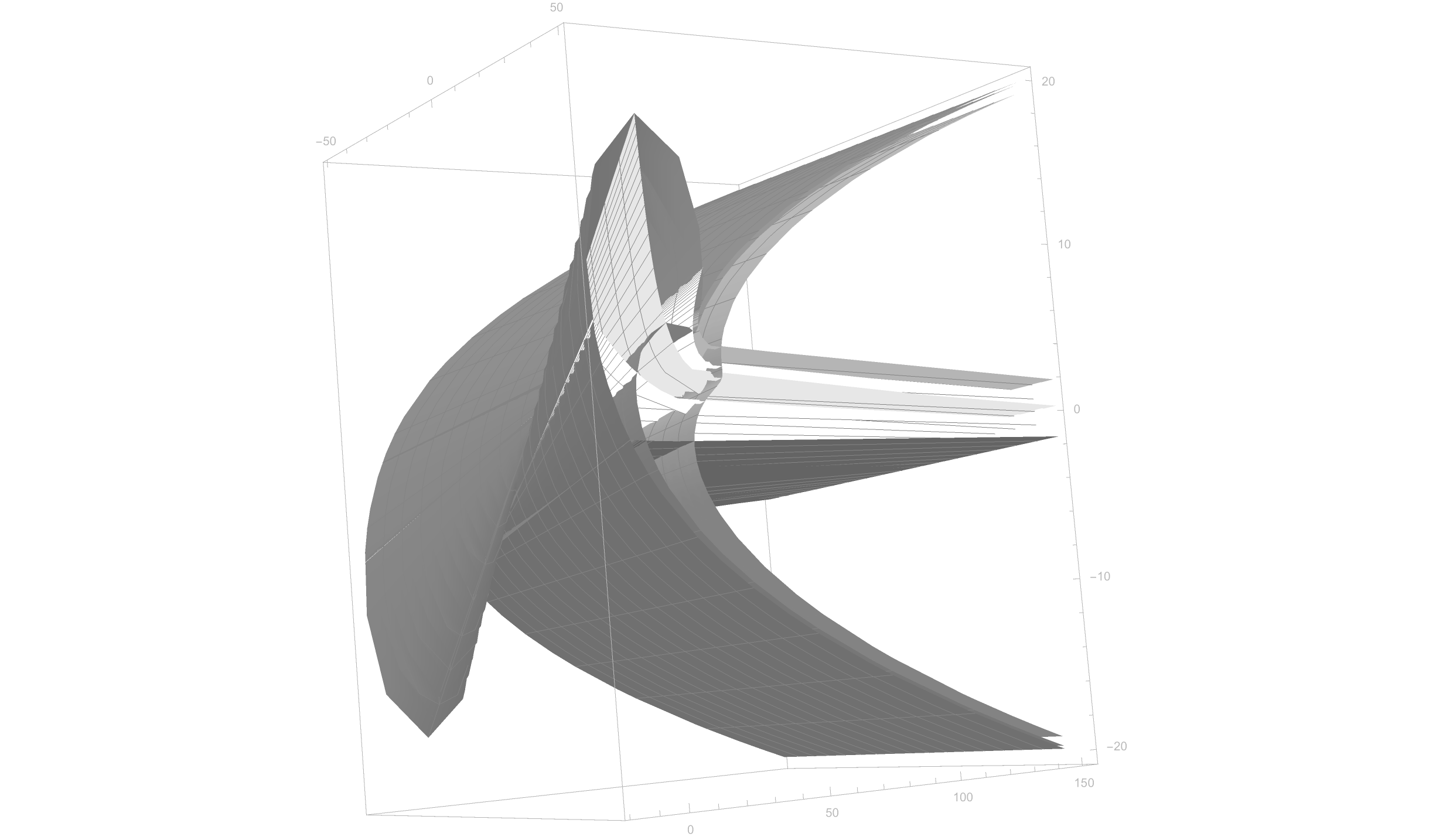}
	
	\caption{ Three darker leaves of foliation $\mathcal{S}_{f, r_{0}}$ and \newch{five}  transversal light-gray leaves of 
		$\mathcal{D}_{f, r_{0}}$ where 
		$f(\rho) = \frac{3}{4}\frac{r}{\sqrt{1-r^{2}}}$ and $\mu=1$.  See \cite{FoliationsV2}.}
	
\end{figure}

Assume \[0 < \kappa_{1}\leq \kappa_{2} < 1.\]
Let $f:(0, 1) \ra \bR$ be a strictly increasing smooth function satisfying 
\[\kappa_{1} \mu \frac{r}{\sqrt{1-r^{2}}}  \leq f(\rho) \leq \kappa_{2}\mu \frac{r}{\sqrt{1-r^{2}}}.\]
Let $\mathcal{H}_{f}$ be the space of compact segments $u$ passing $\Lspace$ with 
the following property: 
\begin{itemize} 
	\item $u$ has an antipodal pair of endpoints in $\Ss_+$ 
	and in  $\Ss_{-}$, 
	\item $u\cap \Lspace$ is equivalent under $g^t$ for some $t$ to a line $l(s)$ given by 
	$l_{f, r}(s) = (sa, y_{f}(\rho), sc), s\in \bR$, where  
	\[ y_{f}(\rho):=  f(\rho), a = r, c = \sqrt{1-r^{2}},  r \in (0, 1).\] 
\end{itemize} 
For fixed $r \in (0, 1 )$, 
let $S_{f, r}$ denote the parabolic ruled surface given by 
\[\bigcup_{t, s \in \bR}g^{t}(l_{f, r}(s)).\] 
Define $D_{f, r_0, t}$ for fixed $t \in \bR$ denote the surface 
\[ \bigcup_{ s \in \bR, r \in [r_{0}, 1)}g^{t}(l_{f, r}(s)).\]

{We will prove the following in Appendix \ref{app:A}.} 

\begin{theorem} \label{thm:Sr} 
	Let $r_{0}\in (0, 1)$. Then the following hold\/{\rm :} 
	\begin{itemize} 
		\item The surfaces $S_{f, r}$ for $r \in [r_{0}, 1)$ are properly embedded 
		leaves of a foliation $\tilde{\mathcal S}_{f, r_{0}}$ of the region $R_{f, r_{0}}$, closed in $\Lspace$, bounded by $S_{f, r_{0}}$
		where $g^{t}$ acts on. 
		\item $\{D_{f, r_0, t}, t \in \bR\}$ is the set of properly embedded 
		leaves of a foliation $\tilde{\mathcal{D}}_{f, r_{0}}$  of $R_{f, r_{0}}$ by disks
		meeting $S_{f, r}$ for each $r$, $r_{0}< r < 1,$ transversally. 
		\begin{itemize} 
			\item  $g^{{t_{0}}}(D_{f, r_0, t}) = D_{f, r_0, t+t_{0}}$. 
			\item $D_{f, r_0, t'} \cap D_{f, r_0, t} = \emp$ for $t, t', t\ne t'$. 
			\item $\clo(D_{f, r_0, t}) \cap \Ss_{+}$ is given as a geodesic ending at the parabolic fixed point of $g$. 
		\end{itemize} 
	\end{itemize} 
\end{theorem}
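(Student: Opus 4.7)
The plan is to pass to coordinates in which $g^{t}$ acts trivially, where the foliation structure becomes transparent. Recall from \eqref{eqn:Phicoor} that $F:(x,y,z)\mapsto(F_{3},F_{2},z)$ is a diffeomorphism conjugating the flow $\{g^{t}\}$ to the translation $z\mapsto z+\mu t$. First I would verify that each $l_{f,r}$, $r\in[r_{0},1)$, lies in $\mathcal{H}_{s_{0},\kappa_{1},\kappa_{2}}$ with $s_{0}:=f(r_{0})$, so Theorem \ref{thm:ruled} immediately produces $S_{f,r}$ as a properly embedded $g^{t}$-invariant surface.

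Define $\Phi:[r_{0},1)\times\bR\times\bR\to\Lspace$ by $\Phi(r,s,t)=g^{t}(l_{f,r}(s))$ and substitute $\sigma=s\sqrt{1-r^{2}}$ and $\tau=\sigma+\mu t$. A direct calculation gives
\[
F\circ\Phi(r,\sigma,\tau)=\Bigl(\sigma\bigl[\sigma^{2}+g(r)\bigr],\ \sigma^{2}-2\mu f(r),\ \tau\Bigr),\qquad g(r):=\tfrac{3\mu^{2}r}{\sqrt{1-r^{2}}}-3\mu f(r),
\]
so the third coordinate is independent of $(r,\sigma)$ and injectivity of $\Phi$ reduces to injectivity of the planar map $\Psi:(r,\sigma)\mapsto\bigl(\sigma[\sigma^{2}+g(r)],\ \sigma^{2}-2\mu f(r)\bigr)$. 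The hypotheses on $f$ yield $g(r)>0$, hence $\sigma^{2}+g(r)>0$, and the sign of $F_{3}$ equals that of $\sigma$. Fixing $F_{2}^{\ast}\ge-2\mu f(r_{0})$ and the branch $\sigma(r)=\sqrt{F_{2}^{\ast}+2\mu f(r)}$, I would differentiate $F_{3}(r)=\sigma(r)[\sigma(r)^{2}+g(r)]$: with $U:=F_{2}^{\ast}+2\mu f(r)$,
\[
\frac{dF_{3}}{dr}=\frac{1}{\sqrt{U}}\Bigl[\,U\bigl(3\mu f'(r)+g'(r)\bigr)+\mu f'(r)\,g(r)\Bigr].
\]
The crucial cancellation $3\mu f'(r)+g'(r)=3\mu^{2}/(1-r^{2})^{3/2}$, coming from $g'(r)=3\mu^{2}/(1-r^{2})^{3/2}-3\mu f'(r)$, makes the bracket strictly positive, so $r\mapsto F_{3}$ is strictly increasing on each branch, giving injectivity of $\Psi$ and hence of $\Phi$.

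Next I would verify that $\Phi$ is an immersion by computing the Jacobian determinant at $t=0$, which equals $f'(r)[\mu r-\sqrt{1-r^{2}}\,f(r)]+s^{2}$; this is positive by the upper bound $f(r)\le\kappa_{2}\mu r/\sqrt{1-r^{2}}$ with $\kappa_{2}<1$. Properness follows because if $r_{n}\to 1$ then $f(r_{n})$ and $g(r_{n})$ both tend to $+\infty$ (using the upper bound on $f$), while boundedness of $(F_{2},F_{3},z)$ would force $\sigma_{n}$ bounded and hence $|F_{3}|=|\sigma_{n}|(\sigma_{n}^{2}+g(r_{n}))\to\infty$, a contradiction; the cases $|s_{n}|\to\infty$ or $|t_{n}|\to\infty$ are ruled out directly using $\sigma$ and $\tau=z$. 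Invariance of domain then promotes the injective proper immersion $\Phi$ to a diffeomorphism onto an open subset of $\Lspace$, which by construction is precisely the region $R_{f,r_{0}}$ bounded by $S_{f,r_{0}}$.

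Finally, in the $(r,\sigma,\tau)$-parametrization the slices $\{r=\mathrm{const}\}$ and $\{\tau=\mathrm{const}\}$ are exactly the leaves $S_{f,r}$ and $D_{f,r_{0},t}$, producing the two foliations, and transversality is the linear independence of $\partial_{r}\Phi,\partial_{s}\Phi,\partial_{t}\Phi$. The identities $g^{t_{0}}(D_{f,r_{0},t})=D_{f,r_{0},t+t_{0}}$ and pairwise disjointness of the $D_{f,r_{0},t}$ are immediate from the parametrization. For the boundary description, the endpoint in $\Ss_{+}$ of $l_{f,r}(s)$ as $s\to\infty$ is $\llrrparen{r,0,\sqrt{1-r^{2}}}$, and these points trace, as $r$ varies in $[r_{0},1]$, a segment of the line $y=0$ in the affine chart $x=1$ on $\Ss_{+}$---a Beltrami--Klein geodesic ending at the parabolic fixed point $\llrrparen{1,0,0,0}$ at $r=1$. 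I expect the main obstacle to be locating the cancellation in $dF_{3}/dr$; once the identity $3\mu f'+g'=3\mu^{2}/(1-r^{2})^{3/2}$ is spotted, the remaining steps are largely bookkeeping.
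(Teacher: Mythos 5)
Your proof takes a genuinely different and in several ways cleaner route than the paper's. The paper establishes the two foliations geometrically: it computes the triple product $(u_{l_{f,r}},Y_f,\phi)$, solves an ODE for the transversal field $Y_f$ to prove the leaves $S_{f,r}$ are pairwise disjoint, and leans on the compactness statement Proposition \ref{prop:HR+} both for closedness of the foliated set in $R_{f,r_0}$ and for properness of the $D$-leaves. You instead conjugate by $F=(F_3,F_2,z)$, where $g^t$ becomes the translation $z\mapsto z+\mu t$, and make the map $\Phi$ entirely explicit; the injectivity via $dF_3/dr>0$ — hinging on the cancellation $3\mu f'+g' = 3\mu^2(1-r^2)^{-3/2}$ — and the Jacobian computation (which at $t=0$ matches \eqref{eqn:triple}) are both correct, and your route avoids Proposition \ref{prop:HR+} altogether once $\Phi$ is shown to be a proper embedding. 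This is a legitimate improvement in self-containedness.

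There is, however, a concrete slip in the final paragraph that must be corrected before the conclusions can be read off. Since $\tau=\sigma+\mu t$, the leaf $D_{f,r_0,t_0}$ is the slice $\{t=t_0\}$, i.e.\ $\{\tau-\sigma=\mu t_0\}$ in your $(r,\sigma,\tau)$-coordinates; it is \emph{not} the level set $\{\tau=\mathrm{const}\}$, whose $F$-preimage is the affine plane $\{z=\mathrm{const}\}$ in $\Lspace$. Checking $t=0$ makes this plain: on $D_{f,r_0,0}$ one has $\tau=\sigma$, which is not constant. The needed conclusions do still follow — the slices $\{t=\mathrm{const}\}$ foliate $[r_0,1)\times\bR^2$, are permuted by $g^{t_0}$ via $t\mapsto t+t_0$, and are transversal to $\{r=\mathrm{const}\}$ exactly because $\partial_r\Phi,\partial_s\Phi,\partial_t\Phi$ are independent — but the identification should be stated as $\{\tau-\sigma=\mathrm{const}\}$. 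Two minor points: in the properness argument, boundedness of $F_2$ with $f(r_n)\to\infty$ forces $\sigma_n^2=F_2+2\mu f(r_n)$ to be \emph{unbounded} (not bounded, as written), which is what then makes $|F_3|=|\sigma_n|(\sigma_n^2+g(r_n))\to\infty$; and $f(r_n)\to\infty$ follows from the \emph{lower} bound on $f$, while $g(r_n)\to\infty$ uses the upper bound.
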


\begin{remark}\label{rem:Sr} 
	The quotient $R_{f, r_0}/\langle g \rangle$ is foliated by the foliation 
	$\mathcal{S}_{f, r_0}$ induced by $\tilde{\mathcal{S}}_{f, r_0}$ 
	and $\mathcal{D}_{f, r_0}$ induced by $\tilde{\mathcal{D}}_{f, r_0}$. 
	The leaves of $\mathcal{S}_{f, r_0}$ are annuli of the form 
	$S_{f, r}/\langle g \rangle $ and the leaves of $\mathcal{D}_{f,r_0}$ 
	are the embedded images of $D_{f, r_0, t}$ for $t \in \bR$. 
	The embedded image of $R_{f, r_0}/\langle g \rangle$ in 
	$\Lspace/\Gamma$ are foliated by induced foliations 
	to be denoted by the same names. 	
\end{remark} 



\section{Orbits of proper affine deformations and translation vectors} \label{sec:orbit}

We now come to the most important section of this paper. 
\purple{
In this section, we assume $\mathcal{L}(\Gamma) \subset \SO(2,1)^{o}$
and work with Criterion \ref{cr:positive} only without assuming the 
properness of the $\Gamma$-action. }
In Sections \ref{sub:mar} and \ref{sub:netralized}, 
we will {present the objects of our discussion}. In Section \ref{sub:Anosov}, 
we will discuss the Anosov properties of geodesic flows extended to a flat bundle 
$\mathbf{V}$. 
In Section \ref{sub:tranv}, we will put the translation cocycle into an integral form. 
In Section \ref{sub:transvector}, we will compute the translation parts of the holonomy 
representations. Theorem \ref{thm:unifest} is the main result where we will give an outline of the proof. 
We will prove the converse 
part of Theorem \ref{thm:equivalence} at the end of Section \ref{sub:transvector}. 
{In Section \ref{sub:accumulate},  
we obtain Corollary \ref{cor:conv1} which discusses all the accumulation points of $\Gamma$.}

\subsection{Convergence sequences} \label{sub:mar} 


Let $g \in \Gamma$. 
%
Let $\lambda_{1}(g)$ denote the largest eigenvalue of $\mathcal{L}(g)$, 
which has eigenvalues $\lambda_{1}(g), 1, 1/\lambda_{1}(g)$. 
Note the relation 
\begin{equation} \label{eqn:length}
\oldch{l_{\Ss_+}(g)} = \log\left(\frac{\lambda_1(g)}{1/\lambda_1(g)}\right) = 2 \log \lambda_1(g).
\end{equation}

Recall that $\Gamma$ acts as a convergence group of a circle $\partial \Ss_{+}$. 
That is, if $g_{i}$ is a sequence of mutually distinct elements of $\Gamma$, 
then there exists a subsequence $g_{j_{i}}$ and points $a, r$ in $\partial \Ss_{+}$ so that 
\begin{itemize} 
\item as $i \ra \infty$, $\{g_{j_{i}}|\partial \Ss_{+}\setminus \{r\}\}$ uniformly converges to a constant map with value $a$ on every compact subset, and 
\item as $i \ra \infty$ $\{g_{j_{i}}^{{-1}}| \partial \Ss_{+}\setminus \{a\}\}$ uniformly converges to a constant map with value $r$ on every compact subset. 
\end{itemize}
Call $a$ the \hypertarget{term-att}{{\em attractor}} of $\{g_{j_{i}}\}$ and $r$ the \hypertarget{term-rep}{{\em repeller}} of  $\{g_{j_{i}}\}$. 
Here, $a$ may or may not equal $r$. (See \cite{ABT04} for detail.)
We call the sequence ${g_{i}}$ satisfying the above properties 
the \hypertarget{term-cfg}{{\em convergence sequence}}.

For a point $x \in \Lspace$, let $\Gamma(x)$ denote the orbit of $x$. 
We define the {\em Lorentzian limit set} $\Lambda_{\Gamma}:= 
\bigcup_{x\in \Lspace}(\clo(\Gamma(x))\setminus \Gamma(x))$. 
By the properness of the action, we obviously have: 

\begin{lemma} \label{lem:limitset} 
Let $\Gamma$ be a \hyperlink{term-pad}{{proper affine free group}}  with rank $\geq 2$.  
Then $\Lambda_{\Gamma}$ is a subset of $\Ss$.
\end{lemma}

Recall $\Ss_{0} = \Ss\setminus \Ss_{+}\setminus \Ss_{-}$. 
For each point  $x$ of $\partial \Ss_{+}$, there exists an \hyperlink{term-gs}{accordant} great segment $\zeta_{x}$
(see Definition \ref{defn:semicircle}). 
We denote by $\Pi_{+}: \Ss_{0}\ra \partial \Ss_{+}$ the map given by sending 
every point of $\clo(\zeta_{x})$ to $x$. 
 This is a fibration by Section 3.4 of \cite{CG17}.

Let $\Lambda_{\Gamma, \Ss_{+}} \subset \clo(\Ss_{+})$ be 
the limit set of the discrete faithful Fuchsian group action on $\Ss_{+}$ by $\mathcal{L}(\Gamma)$. 
(See \cite{Beardon}.)

One of our main results of the section is Corollary \ref{cor:conv1} 
also giving us: 
\begin{theorem} \label{thm:Lambda} 
Let $\Gamma$ be a proper affine free group of rank $\geq 2$ with or without parabolics. Assume $\mathcal{L}(\Gamma) \subset \SO(2,1)^o$. 
Then 
$\Lambda_{\Gamma}\subset \Pi_{+}^{-1}(\Lambda_{\Gamma, \Ss_{+}}). $
\end{theorem}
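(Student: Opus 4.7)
My plan is to deduce Theorem \ref{thm:Lambda} by combining the convergence group dynamics of $\mathcal{L}(\Gamma)$ on $\partial \Ss_+$ with the asymptotic analysis of the translation cocycle that culminates in Corollary \ref{cor:conv1}. I would begin by fixing $y \in \Lambda_\Gamma$; by definition, there exist $x \in \Lspace$ and an infinite sequence of distinct $\gamma_i \in \Gamma$ with $\gamma_i(x) \to y$ in $\SI^3$. Since $\mathcal{L}(\Gamma)$ is a nonelementary discrete subgroup of $\SO(2,1)^o$, it acts as a convergence group on $\partial \Ss_+$, so after extracting a subsequence we may assume $\mathcal{L}(\gamma_i)$ is a convergence sequence with attractor $a$ and repeller $r$, both lying in $\Lambda_{\Gamma, \Ss_+}$ by the standard characterization of the limit set of a convergence group.

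Applying Corollary \ref{cor:conv1} to the compact set $\{x\}$ gives $y \in \Ss_0$, so $\Pi_+(y)$ is well defined; it remains to show $\Pi_+(y) = a$. I would write $\gamma_i(x) = \llrrparen{1, \mathcal{L}(\gamma_i)(x) + \mathbf{b}(\gamma_i)}$ in projective coordinates and analyze the normalized direction of $\mathcal{L}(\gamma_i)(x) + \mathbf{b}(\gamma_i)$ in $\SI^2 \cong \Ss$. The linear part grows like $\lambda_1(\mathcal{L}(\gamma_i))$ along the attracting null eigendirection $\vx_+(\mathcal{L}(\gamma_i))$, which itself converges to $a$. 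Invoking the translation estimate in Theorem \ref{thm:unifest}, the translation $\mathbf{b}(\gamma_i)$ has its only $\lambda_1$-size component along $\vx_+(\mathcal{L}(\gamma_i))$, while the components transverse to the plane spanned by $\vx_+(\mathcal{L}(\gamma_i))$ and $\vx_0(\mathcal{L}(\gamma_i))$ grow strictly slower. Hence the normalized direction of the sum converges in $\Ss$ to a point on the great circle through $a$ tangent to $\partial \Ss_+$, which is exactly $\clo(\zeta_a) = \Pi_+^{-1}(a)$, giving $\Pi_+(y) = a \in \Lambda_{\Gamma, \Ss_+}$.

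The main obstacle is the translation estimate itself. Theorem \ref{thm:unifest} is to be proved by realizing the cocycle $\mathbf{b}$ as the integral of a suitable $1$-form along geodesic arcs on $\Sf$, then controlling its growth via the uniform Anosov expansion/contraction of the geodesic flow on the compact part of $\Sf$ combined with the boundedness of the cusp contributions established in Appendix \ref{app:1form}. Without this control one could not exclude a component of $\mathbf{b}(\gamma_i)$ of order $\lambda_1(\gamma_i)$ in a direction transverse to the tangent plane of the null cone at $\vx_+(\mathcal{L}(\gamma_i))$, which would drag the limit $y$ off $\clo(\zeta_a)$ to some other point of $\Ss_0$ whose $\Pi_+$-image need not lie in $\Lambda_{\Gamma, \Ss_+}$.
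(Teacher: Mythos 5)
Your overall strategy is correct and matches the paper's: Theorem~\ref{thm:Lambda} is a direct consequence of Corollary~\ref{cor:conv1}. However, your second paragraph is redundant, because you under-read the corollary in the first paragraph. Corollary~\ref{cor:conv1} does not merely yield $y\in\Ss_0$; it yields the much stronger statement that $\gamma_i(\{x\})$ accumulates only to $\clo(\zeta_{y_\infty})$, where $y_\infty$ is the limit of $\gamma_i(p)$ for any fixed $p\in\Ss_+$. Since you have already arranged for $\mathcal{L}(\gamma_i)$ to be a convergence sequence with attractor $a$, we get $\gamma_i(p)\to a$ for every $p\in\Ss_+$, so $y_\infty=a$. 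Thus $y\in\clo(\zeta_a)=\Pi_+^{-1}(\{a\})$, and $a\in\Lambda_{\Gamma,\Ss_+}$ because it is an accumulation point of the $\mathcal{L}(\Gamma)$-orbit of $p$. That already finishes the proof; there is nothing more to show.

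Your second paragraph therefore duplicates, in sketch form, the argument that already proves Corollary~\ref{cor:conv1}: decomposing $\mathcal{L}(\gamma_i)(x)+\mathbf{b}(\gamma_i)$ in the eigenframe $\vx_+,\vx_0,\vx_-$ and invoking the boundedness of the $\vx_-$-component from Theorem~\ref{thm:unifest}. Two cautions if you insist on keeping this as a standalone argument rather than quoting the corollary: (i) Theorem~\ref{thm:unifest} applies to $g\in\Gamma_{\hat K}$, i.e.\ elements whose axes hit a fixed compact set, which holds when $a\neq r$ but fails when $a=r$; Corollary~\ref{cor:conv1} handles the $a=r$ case separately by replacing $\gamma_i$ with $\gamma\gamma_i$, and your sketch omits that step. (ii) Your phrase ``grow strictly slower'' understates the second item of Theorem~\ref{thm:unifest}, which gives a uniform \emph{bound} on $\llrrV{\vb_{g_i,-}}_E$, and it is exactly this uniform bound (not mere sub-dominance) that forces the limit onto $\clo(\zeta_a)$ rather than a nearby semicircle. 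Your last paragraph correctly identifies that the real work lies in Theorem~\ref{thm:unifest} and the cusp estimates of Appendix~\ref{app:1form}.
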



\subsection{The bundles ${\mathbf E}$ over $\Uu \Sf$}\label{sub:netralized}
%
%
Let $\Uu \Ss_+$ denote the unit tangent bundle of $\Ss_+$, i.e., 
the space of direction vectors on $\Ss_+$. For any subset $A$ of $\Ss_+$, 
we let $\Uu A$ denote the inverse image of $A$ in $\Uu \Ss_+$ under 
the projection. 
%
%
The projection $\Pi_{\Sf}: \Uu \Sf \ra \Sf$ lifts to the projection
$\Pi_{\Ss_{+}}:\Uu \Ss_{+} \ra \Ss_{+}$.

Let $\Gamma := h_{u}(\pi_{1}(\Sf))$ be a \hyperlink{term-pad}{proper affine deformation free group} of rank $\geq 2$. 
We note that $\Gamma$ acts on 
$\Uu\Ss_+$ as a deck transformation group over $\Uu \Sf$. 
An element $\gamma \in \Gamma$ goes to the differential map $D\gamma: \Uu\Ss_+ \ra \Uu\Ss_+$
defined by 
\[ D\gamma(x, \mathbf{u}) = \left(\gamma(x), \left. \frac{d\gamma(\beta(t))}{d t}\right|_{t=0}\right),  x\in \Ss_+, \mathbf{u} \in \Uu_x\Ss_+ \]
where $\beta(t)$ is a unit speed geodesic with $\beta(0) = x$ and $\dot \beta(0) = \mathbf{u}$. 
Goldman-Labourie-Margulis in \cite{GLM09} 
constructed a flat affine bundle 
${\mathbf E}$ over the unit tangent bundle $\Uu \Sf$ of $\Sf$.  
They took the quotient of $ \Uu\Ss_+ \times \Lspace$  by the diagonal action given 
by \[\gamma(\vv, x) = (D\gamma(x), \gamma(\vv)), x \in \Uu\Ss_+, \vv \in \Lspace \] for a deck transformation 
$\gamma \in \Gamma$. 
The cover $\Uu \SI_+\times \Lspace$ of $\mathbf E$ is denoted by $\hat{\mathbf E}$ and is identical with $\Lspace \times \Uu\Ss_+$. We denote by 
\[{\Pi_{\Lspace}}: \hat{\mathbf E} = \Uu\Ss_+ \times \Lspace  \ra \Lspace\]
the projection.

\subsection{The Anosov property of the geodesic flow} \label{sub:Anosov} 

We denote the standard $3$-vectors by 
\[ \vi :=(1,0,0), \vj=(0,1,0), \vk=(0,0,1).   \]

 \begin{definition} \label{defn:compt}
	We say \newch{that} two positive-valued functions $f(t)$ and $g(t)$, $t \in \bR$, 
	{\em are compatible} or {\em satisfy} $f \cong g$  if there exists $C > 1$ such that 
	\[\frac{1}{C} \leq \frac{f(t)}{g(t)} \leq C \hbox{ for } t \in \bR.\] 
\end{definition}

Given $(\llrrparen{ \vx }, \bu) \in \Uu\Ss_{+}$, 
\begin{itemize}
	\item we denote by 
	$l(\llrrparen{ \vx }, \bu)\subset \Ss_{+}$ the oriented complete geodesic 
	passing through $\llrrparen{ \vx }$ in the direction of $\bu$, and
	\item we denote by $\vv_{+, (\llrrparen{ \vk }, \vj)}$ and 
	$\vv_{-, (\llrrparen{ \vk }, \vj)}$  the respective null vectors 
	$\frac{1}{\sqrt{2}} \vj + \frac{1}{\sqrt{2}}\vk$ and $\frac{-1}{\sqrt{2}}\vj + \frac{1}{\sqrt{2}}\vk$
	in the directions of the forward and backward
	endpoints of  the oriented complete geodesic $l(\llrrparen{ \vk }, \vj)\subset \Ss_{+}$.
	\item We define $\vv_{+, (\llrrparen{ \vx }, \bu)}$ and 
	$\vv_{-, (\llrrparen{ \vx }, \bu)}$ respectively as the images 
	of $\vv_{+, (\llrrparen{ \vk }, \vj)}$ and $\vv_{-, (\llrrparen{ \vk }, \vj)}$ under \oldch{ $g$ 
	for $g \in \SO(2,1)^o$  provided
	\[ g(\llrrparen{\vk}) = \llrrparen{\vx} \hbox{ and } 
	g(\vj) = \bu.\]} 
The well-definedness of these objects follows since 
	there is a \newch{one-to-one} correspondence of $\Uu \Ss_+$ with $\SO(2,1)^o$.  
\end{itemize} 

\begin{definition}\label{defn:V}
	We define $\mathbf V$ as the quotient space 
	of $ \tilde {\mathbf V} := \Uu\Ss_+  \times \bR^{2, 1} $  under the diagonal action defined by 
	\begin{equation} \label{eqn:bfV}
	\gamma(x, \vv) = ( D\gamma(x), {\mathcal L}(\gamma)(\vv)),  x \in \Uu\Ss_+, \vv \in \bR^{2, 1}, \gamma \in \Gamma.
	\end{equation} 
	
	We will also need to define $\widetilde{\mathscr{V}}:= \Ss_+ \times \bR^{2,1}$
	and the quotient bundle $\mathscr{V} := \widetilde{\mathscr{V}}/\Gamma$ 
	where the action is given by 
	\begin{equation}\label{eqn:scrV}
	\gamma(x, \vv) = ( \gamma(x), {\mathcal L}(\gamma)(\vv)),  x \in \Ss_+, \vv \in \bR^{2, 1}, \gamma \in \Gamma.
	\end{equation}
	
\end{definition}

The vector bundle $\mathbf{V}$ has a fiberwise Riemannian metric $\llrrV{\cdot}_\rf$ where $\Gamma$ acts as {an isometry group}. 
At $(\llrrparen{ \vx }, \bu) \in \Uu\Ss_{+}$ with 
$\vx \hbox{ satisfying } \Bs(\vx, \vx) = -1$, 
we give as a basis 
\begin{equation}\label{eqn:frame}
\left\{ \vv_{+ ,(\llrrparen{ \vx }, \bu) }, \vv_{-, (\llrrparen{ \vx }, \bu) }, \vv_{0, (\llrrparen{ \vx }, \bu) }:=  \frac{\vv_{-, (\llrrparen{ \vx }, \bu) }\times \vv_{+, (\llrrparen{ \vx }, \bu) }}{\llrrV{\vv_{-, (\llrrparen{ \vx }, \bu) }\times \vv_{+, (\llrrparen{ \vx }, \bu)} } }   \right\} 
\end{equation} 
for the fiber over  $\llrrparen{\vx}$
where $\times $ is the Lorentzian cross product. 
We choose the positive definite metric $\llrrV{\cdot}_\rf$ on $\tilde{\mathbf{V}}$ so that the above vector frame \newch{is} orthonormal at the 
fiber of $\tilde{\mathbf{V}}$ over $(\llrrparen{ \vx }, \bu)$. 
The metric is $\SO(2, 1)^{o}$-invariant on $\Uu\Ss_{+}$.
Thus, this induces a metric $\llrrV{\cdot}_\rf$ on $\mathbf{V}$ as well. 


Let $\tilde{\mathbf{V}}_{\omega}$ be the $1$-dimensional subbundle of $\Uu \Ss_+\times \bR^{2, 1}$ containing $\vv_{\omega, (\llrrparen{ \vx }, \bu) }$
for each $\omega$, $\omega = +, -, 0$. 
It is redundant to say 
that  $\vv_{\omega, (\llrrparen{ \vx }, \bu) }$ is a fiber over the point $\llrrparen{\vx}$ in $\Ss_+$ for each $\omega$. 

We define a so-called {\em neutral map}
\[\tilde\bnu: \Uu\Ss_+ \ra \Uu \Ss_+\times \bR^{2, 1}\]
given by  $(\llrrparen{ \vx }, \bu) \mapsto \vv_{0, (\llrrparen{ \vx }, \bu) }$.
Here, $\tilde\bnu$ is an $\SO(2,1)^o$-equivariant map. 
By action of the isometry group $\Gamma$, we obtain a
\hypertarget{term-ns}{{\em neutral section}}
\[ \bnu: \Uu \Sf \ra {\mathbf V}\]
by using the $\SOto^{o}$-equivariance of the map. 
Hence, $\tilde{\mathbf{V}}_{0}$ coincides with 
the subspace generated by the image of the neutral section $\tilde \bnu$.


For any smooth map $g: \Uu \Ss_+ \ra \Uu \Ss_+$ or $\Ss_+ \ra \Ss_+$, we denote by 
$\bbD g$ the induced automorphism $\Uu \Ss_+ \times \Lspace$
acting trivially on the $\Lspace$-factor. 

Recall from  Section 4.4 of \cite{GLM09} 
the geodesic flow 
$\Psi_t: \Uu\Ss_{+}   \ra  \Uu\Ss_{+}$ denote the geodesic flow 
on $\Uu\Ss_+$ defined by the hyperbolic metric. Let
\[\bbD\Psi_{t}: \Uu\Ss_{+} \times \bR^{2, 1}  \ra  \Uu\Ss_{+} \times \bR^{2, 1}\]
denote the Goldman-Labourie-Margulis flow. This acts trivially on the second factor and as the geodesic flow on $\Uu\Ss_{+}$. 
The bundle $\mathbf{V}$ splits into three $\Psi_{t}$-invariant line bundles
$\mathbf{V}_{+}$, $\mathbf{V}_{-}$ and $\mathbf{V}_{0}$, 
which are images of $\tilde{\mathbf{V}}_{+}$, $\tilde{\mathbf{V}}_{-}$ and $\tilde{\mathbf{V}}_{0}$.
Our choice of $\llrrV{\cdot}$ shows that $\bbD\Psi_{t}$ acts as uniform contractions in $\mathbf{V}_{+}$ as $t \ra \infty, -\infty$, 
i.e., 
\begin{align} \label{eqn:PhitC}
& \llrrV{\bbD\Psi_{t}(\vv_{+})}_\rf \cong \exp(- t)\llrrV{\vv_{+}}_\rf \hbox{ for } \vv_{+}\in \tilde{\mathbf{V}}_{+}, \nonumber \\ 
& \llrrV{\bbD\Psi_{t}(\vv_{-})}_\rf \cong \exp(t)\llrrV{\vv_{-}}_\rf \hbox{ for }  \vv_{-}\in \tilde{\mathbf{V}}_{-}, \hbox{ and } \nonumber \\
& \llrrV{\bbD\Psi_{t}(\vv_{0})}_\rf \cong \llrrV{\vv_{0}}_\rf \hbox{ for } \vv_{0}\in \tilde{\mathbf{V}}_{0}.
\end{align} 
Here, $k$ in \cite{GLM09} equals $1$ since we can explicitly compute $k$ from the framing above.
The signs are different from \cite{GLM09} because we have slightly different objects. 
The fiberwise metric on $\Uu \Ss_+$ 
is not dependent on the group $\Gamma$ itself.
See the last paragraph of Section 4.4 of \cite{GLM09}.

\begin{figure}[h]
	
			\begin{lpic}{Surface2(10.7cm,)}
		\lbl[tl]{18,219; $\vv_{-}$}
	\lbl[tl]{62, 180; $\vv_0$}

		\lbl[tl]{67,210; $\vv_{-}$}
		\lbl[tl]{100, 177; $\vv_0$}
		\lbl[tl]{102, 198; $\vv_+$}

	\lbl[tl]{120, 200; $\vv_-$}
		\lbl[tl]{142, 178; $\vv_0$}
		\lbl[tl]{150, 198; $\vv_+$}

		\lbl[tl]{185, 200; $\vv_-$}
		\lbl[tl]{201, 201; $\vv_+$}
	\lbl[tl]{208, 180; $\vv_0$}	

	\lbl[tl]{233, 199; $\vv_-$}
		\lbl[tl]{243, 180; $\vv_0$}
	\lbl[tl]{250, 208; $\vv_+$}
	
		\lbl[tl]{279, 197; $\vv_-$}
	\lbl[tl]{285, 183; $\vv_0$}		
\lbl[tl]{299, 220; $\vv_+$}

	\lbl[tl]{315, 183; $\vv_0$}
		\lbl[tl]{311, 197; $\vv_-$}
		\lbl[tl]{336, 220; $\vv_+$}

		\lbl[tl]{185, 175; $\mathscr{K}$}
		\lbl[tl]{158, 130; $p_{\Sf}(\mathscr{K})$}
	\end{lpic}
	
	\caption{The frames on $\Uu\Ss_+$ and on $\Uu \Sf$. The circles bound horodisks covering the cusp neighborhoods below. The compact set 
	$\mathscr{K}$ is some small compact set where the closed geodesics pass through. We drew only one closed geodesic.} 
	\label{fig:surface}
\end{figure}

\begin{remark}\label{rem:dualpict}
	The induced geodesic flow on $\Sf$ is denoted by $\Psi_t$ and the
	induced action on $\mathbf{V}$ by $\bbD\Psi_t$. 
	We may think of translating the picture of the flat bundle over $\Uu \Ss_{+}$ to the bundle over $\Uu \Sf$.
	As a bundle over $\Uu\Sf$, $\bbD\Psi_{t}$ contracts and expands uniformly for $\mathbf{V}_{\pm}$ with respect to $\llrrV{\cdot}_\rf$. 
	However, in the picture over $\Uu \Ss_+$, $\bbD\Psi_{t}$ is the identity between 
	fibers and objects lifted from $\mathbf{V}$ will uniformly increase or decrease exponentially with respect to any fixed Euclidean metric
	$\llrrV{\cdot}_E$ on $\tilde {\mathbf{V}}$. 
	(See Figure \ref{fig:surface}.)
	
\end{remark} 


Denote by
\[\tbV_{+}(\llrrparen{\vec x}, \bu), \tbV_{-}(\llrrparen{\vec x}, \bu), \tbV_{0}(\llrrparen{\vec x}, \bu)\] 
the fibers of $\tbV_{+}, \tbV_{-}, \tbV_{0}$ over 
$(\llrrparen{\vec x}, \bu) \in \Uu \Sf$ respectively.  
We denote by 
\begin{equation} \label{eqn:projV} 
\Pi_{\tbV_{+}}: \tbV \ra \tbV_{+}, 
\Pi_{\tbV_{-}}: \tbV \ra \tbV_{-}, \hbox{ and } \Pi_{\tbV_{0}}: \tbV \ra \tbV_{0}
\end{equation} 
the projections 
using the direct sum decomposition 
\[\tbV = \tbV_{+} \oplus \tbV_{-}\oplus \tbV_{0}.\]

\subsection{Computing translation vectors} \label{sub:tranv}



Here, we will write the cocycle in terms of an integral. 
Let $g$ be a hyperbolic element. 
Let $a_{g}$ denote the \hypertarget{term-afp}{attracting fixed point} of $g$ in $\partial \Ss_{+}$ and $r_{g}$ the \hypertarget{term-rfp}{repelling one}. 
 Let $\Sigma_{+}$ denote the surface
 \[((\Ss_{+} \cup \partial \Ss_{+})\setminus \Lambda_{\Gamma, \Ss_{+}})/\Gamma.\]
The surface $\Sf$ is the dense subset of $\Sigma_{+}$.
The $\mathscr{V}$-valued forms are differential forms 
with values in the fiber spaces of $\mathscr{V}$. (See Definition \ref{defn:V}.)
The $\widetilde{\mathscr{V}}$-valued forms on $\Ss_+$ are simply 
the $\bR^{2,1}$-valued forms on $\Ss_+$. 
However, the group $\Gamma$ acts by 
\begin{equation} \label{eqn:Action} 
\gamma^\ast(\vv\otimes dx ) =  \mathcal{L}(\gamma)^{-1}(\vv)\otimes d (x \circ \gamma) =
 \mathcal{L}(\gamma)^{-1}(\vv)\otimes \gamma^* dx, \gamma \in \Gamma. 
\end{equation}
(See Chapter 4 of Labourie \cite{LabourieB}.)

Let \hypertarget{term-em}{$\llrrV{\cdot}_{E}$} denote a Euclidean metric on $\Lspace$
by changing signs of the Lorentz metric which we fix from now on.
Let $g$ be a hyperbolic isometry. 
Let $x_g$ be a point of the geodesic $l_g$ in $\Ss_+$ {on which $g$ acts preserving an orientation direction $\vu_g$}. 
We define 
\[  \bnu_g := \vv_{0, (x_g, \vu_g)}= \tilde \bnu(x_{g}, \vu_{g}), \]
which is independent of the choice of $(x_g, \vu_g)$ on $l_g$ by 
\eqref{eqn:frame}.

\newch{Recall from Section \ref{sub:propaffine} the cocycle of $\Gamma = h_{\bfb}(\pi_1(\Sf))$ for 
the holonomy homomorphism $h_{\bfb}$:
$\bfb \in Z^{1}(\pi_{1}(\Sf), \bR^{2,1}_{h})$.}
\newch{We} write every element $g$ as $g(x) = A_{g}x + \vb_{g}$, $x \in \Lspace$.
Then the function $\bfb: \Gamma \ra \bR^{2, 1}$  given by 
\[ g \mapsto \vb_{g} \hbox{ for every } g\] 
is a cocycle representing an element of
\[H^{1}(\pi_{1}(\Sf), \bR^{2, 1}) = H^{1}(\Sf,\mathscr{V})\]
using the de Rham isomorphism. 
(See Theorem 4.2.3 of Labourie \cite{LabourieB}.)
Let $\eta$ denote the smooth $\mathscr{V}$-valued $1$-form on $\Sf$ representing 
the cocycle $\mathbf{b}$ in the de Rham sense. 

\newch{
Let $\tilde \eta: \Ss_+ \ra \bR^{2, 1}$ denote the lift of $\eta$ to $\Ss_+$. 
We can think of $\tilde \eta$, which is 
\newch{$h$}-equivariant,  
as the differential of a section $s_{\tilde \eta}: \Ss_+ \ra \Lspace$
which is $h_{\bfb}$-equivariant: 
\begin{equation} \label{eqn:seta} 
\tilde \eta = d s_{\tilde \eta} 
\end{equation} 
by  
Theorem 1.14 of \cite{GH84}
and lifting to the cover $\Ss_+\times \Lspace$. 
}



{
Recall from Section \ref{sub:thin}, the end neighborhood $E$ and its inverse image $\mathscr{H}\subset \Ss_+$. }
Let $\CH(\Lambda)$ denote the convex hull of a closed subset $\Lambda$ 
of $\partial \Ss_+$ in $\Ss_+$. The surface 
$\Sf_{C} := \CH(\Lambda_{\Gamma, \Ss_+})/\Gamma$ is a finite-volume connected  hyperbolic surface with geodesic boundary and cusp ends. The boundary of 
$\Sf_{C}$ is a union of finitely many closed geodesic boundary components, 
and each end of $\Sf_C$ is a cusp. Assume that each 
component of $E$ is a subset of $\Sf_C$ by choosing suitable 
cusp neighborhoods. 
We let $F$ to denote a compact fundamental domain of $\CH(\Lambda_{\Gamma, \Ss_+})\setminus{{\mathscr{H}}}$. 


Let $\Uu \Sf_{C}$ denote the space of unit vectors on $\Sf$ with base points at $\Sf_{C}$, 
and $\Uu \CH(\Lambda_{\Gamma, \Ss_+})$ denote one for $\CH(\Lambda_{\Gamma, \Ss_+})$. 
We can compute the cocycle $\mathbf{b}$ by the following way: 

Let $\mathscr{K}$ be a small fixed compact domain in 
$\CH(\Lambda_{\Gamma, \Ss_+})\setminus {\mathscr{H}}$ in $\Ss_+$. 
Let $\tilde \eta$ denote the lift of $\eta$ on $\Ss_+$. 
We may also assume that 
\begin{equation} \label{eqn:etaK}
\tilde \eta|\mathscr{K} \equiv 0 
\end{equation} 
by locally changing 
$\eta$ \newch{ by \eqref{eqn:seta}}. 
 We simply need to change the section to 
 a section that is a fixed parallel section on $p_{\Sf}(\mathscr{K})$.
 This can obviously be achieved by 
using a partition of unity while this does not change the cohomology class of $\eta$. 
(See Section 4 of \cite{GLM09}.)

{To simplify, we assume that $s_{\tilde \eta}$ 
at $\mathscr{K}$ takes the value of  the origin $O$.}

\begin{definition}\label{defn:GammaK}
	Let \hypertarget{term-GK}{$\Gamma_{\hat {K}}$} denote the set of hyperbolic elements $g\in\Gamma$ that
	acts on a geodesic $l_{g}$ in $\Ss_{+}$ passing a compact 
	subset $\mathscr{K} \subset \Ss_+ \setminus {\mathscr{H}}$. 
\end{definition}

We lift the discussion to $\Uu \Sf_C$ and its cover $\Uu\CH(\Lambda_{\Gamma, \Ss_+}) \subset \Uu \Ss_{+}$. 
Let $g$ be an element of $\Gamma_{\mathscr{K}}$ 
corresponding to a closed geodesic $c_{g}$. 
Let $l_g$ be the unit speed geodesic in $\Ss_{+}$ in  connecting $x_{g}\in \mathscr{K}$ to $g(x_{g})$
covering $c_{g}$ \newch{with the length $t_g$}. 
Let $\Pi_{\bR^{2,1}}: \Uu\Ss_+\times \bR^{2,1} \ra \bR^{2,1}$ denote the projection 
to the second factor. 
Then by the trivialization on $\mathscr{K}$
\begin{equation}\label{eqn:bg-pre} 
\vb_{g} =  \Pi_{\bR^{2,1}} \left(\int_{[0, t_0]} \tilde \eta\left(\frac{dl_g(t)}{dt}\right) dt
\right) 
\end{equation}
where $t_g$ is the time needed to go from $x_{g}$ to $g(x_{g})$.
(See Section 4.2.2 of Labourie \cite{LabourieB}.)
{However, we will consider the case when $x_g$ is anywhere in $\Ss_+$,} 
Since 
	\begin{multline} 
\Pi_{\bR^{2,1}} \left(\int_{[0, t_g]} \tilde \eta\left(\frac{dl_g(t)}{dt}\right) dt \right) =	g(\Pi_\Lspace \circ s_{\tilde \eta}(x_g))) -   \Pi_{\Lspace}\circ s_{\tilde \eta}(x_g)  \\
 	=  (\mathcal{L}(g)-\Idd)(\Pi_\Lspace\circ s_{\tilde \eta}(x_{g}))  + \oldch{\vb_g}, 
	\end{multline}   
we have 
\begin{equation}\label{eqn:bg} 
\vb_{g} = \Pi_{\bR^{2,1}} \left(\int_{[0, t_g]} \tilde \eta\left(\frac{dl_g(t)}{dt}\right) dt \right)
+ (\Idd - \mathcal{L}(g))({\Pi_{\Lspace} \circ} s_{\tilde \eta}(x_{g})).
\end{equation}
Thus, we obtain
\begin{multline} \label{eqn:Phi}
\vb_{g} = \Pi_{\bR^{2,1}} \Bigg(\int_{[0, t_{g}]} \bbD\Psi((x_{g}, \bu_g), t)^{-1}\left(\tilde \eta\left(\frac{d\Psi((x_{g}, \bu_g), t)}{dt}\right)\right) dt  \Bigg)\\
+ (\Idd - \mathcal{L}(g))({\Pi_{\Lspace} \circ} s_{\tilde \eta}(x_{g}))
\end{multline}
where the geodesic segment $\Psi((x_{g}, \bu_g), [0, t_{g}])$ for a unit vector $\bu_g$  at $x_g$, covers a closed curve representing $g$. 



{Using the origin $O$ of $\Lspace$, we can consider it as $\bV$ with a vector subspace $\bV_\omega$, $\omega = +, -, 0$.
Define $\Pi_{\omega, x_0}:= \Pi_{\bR^{2,1}} 
\circ \Pi_{\tbV_{\omega}, x_0}:\{x_0\} \times \Lspace \ra \bV_{\omega, x_0} \ra \bR^{2, 1}$ to denote the projection $\Pi_{\tbV_{\omega}}$ at the fiber $\Lspace$ over $x_0 \in \Uu \Ss_+$. 
Define 
\begin{equation}
\tilde \eta_{\omega}(x_0) = \Pi_{\tbV_{\omega, x_0}}(\tilde \eta(x_0)), 
\end{equation}
where $\omega = +, -, 0$.} 
Since $\Psi_{t}$ preserves the decomposition, 
$\bbD\Psi(x, t)$ commutes with these projections.

\begin{definition}\label{defn:bg-}
	Let $\mathscr{K}$ be the compact subset of $\Ss_+ \setminus {\mathscr{H}}$. 
Let $g \in \Gamma_{\mathscr{K}}$. 
We choose $x_{g}\in \mathscr{K}$ so that 
the arc $\Psi((x_{g}, \bu_g), [0, t_{g}])$ 
for a unit vector $\bu_g$ at $x_{g}$ covers a closed geodesic representing $g$
where $(g(x_{g}), Dg(\bu_g)) = \Psi((x_{g}, \bu_g), t_{g})$.
{The arc here is not necessarily in $\mathscr{K}$ of course.}  
We define invariants: 
\begin{multline}
\vb_{g, \omega} := \Pi_{\tbV_{\omega}, x_g}(\vb_{g}) = \\ 
\Pi_{\bR^{2,1}}\Bigg(\int_{[0, t_{g}]} \bbD\Psi((x_{g}, \bu_g), t)^{-1}\left(\tilde \eta_{\omega}\left(\frac{d\Psi((x_{g}, \bu_g), t)}{dt}\right)\right) dt\Bigg) \\
+ (\Idd - \mathcal{L}(g)) \left({\Pi_{\omega, x_g}}(s_{\tilde \eta}(x_{g}))\right), 
\label{eqn:b+}  
\end{multline}
where $\omega = +, -, 0$ respectively. 
The second equalities hold since $\bbD\Psi(x, t)$ {and $\mathcal{L}(g)$ 
commute} with projections $\Pi_{\tbV_{+}}, \Pi_{\tbV_{-}}$ and $\Pi_{\tbV_{0}}$. 
\end{definition} 

\begin{proposition}\label{prop:b0alpha}
	{For nonparabolic $g \in \Gamma - \{\Idd\}$,} we have 
	\begin{gather} \label{eqn:b0alpha} 
	\vb_{g, 0} = \alpha(g) \bnu_g,\\
	\llrrV{\vb_{g, 0}} = \alpha(g).
	\end{gather}
\end{proposition}
\begin{proof} 
	First, $\vb_{g, 0}$ is parallel to $\bnu_{g}$ by 
	\eqref{eqn:b+}. 
	Since $\bnu_g$ is Lorentz orthogonal to 
	the subspace spanned by $\vv_{+, (x_g, \vu_g)}$ and 
	$\vv_{-, (x_g, \vu_g)}$,
	the component $\vb_{g, 0}$ is the image 
	$\vb_g$ under the Lorentzian projection to $\bnu_g$.
	Since $\vb_{g} = g(O) - O$ for the origin $O$ by our choice of the $\mathbf{E}$-section
	near \eqref{eqn:etaK}, and $\llrrV{\bnu_g} =1$, 
	\eqref{eqn:alpha} and {Criterion \ref{cr:positive} imply} the result. 
	\end{proof}

\purple{
The norm of  a $1$-form with values in $\bV_0$ is given by the fiberwise 
norm of $\bV_0$ and the norm of hyperbolic metric for the tangent bundle of 
$\Sf$. Finally, we will need: 
\begin{definition} \label{defn:alphafactor} 
Let $K$ be a compact subset of $\Sf$, and let 
$\tilde K$ denote the inverse image of $K$ in $\Ss_+$. 
The {\em neutral factor of $\eta|K$} is given as 
the maximum norm of $\tilde \eta_{0}$ on $\Uu \tilde K$.
\end{definition}
} 



%

\subsection{Translation vectors have direction limits in $\Ss_0$} \label{sub:transvector} 

\purple{We aim to prove Theorem \ref{thm:unifest} from Section
\ref{subsub:cusp} to Section \ref{subsub:direction}. 
Section \ref{subsub:cusp} discusses the standard cusp $1$-forms
and how to integrate along geodesics to obtain the Margulis invariants. 
Important Lemma \ref{lem:largecusp} shows that long cusp geodesics can absorb many
possibly negative perturbations during the argument that we will present. 
Section \ref{subsub:summing} outlines the proof of Theorem \ref{thm:unifest}. 
In Section \ref{subsub:ratiobound}, \newch{we show
$\alpha(g_i) \ra \infty$ and $\alpha(g_i)/\llrrV{\vb_{g_i}} \ra \infty$ if 
$l_{\Ss_+}(g_i) \ra \infty$. 
We will use the fact that a sequence converges to $+\infty$ if 
we can show that a subsequence of any subsequence converges to $+\infty$. 
Hence, we will start with a subsequence and keep taking subsequences 
to obtain one that converges to $+\infty$.
}
In Section \ref{subsub:direction}, we finish the proof of the theorem on 
the limit of direction vectors. 
}


\subsubsection{Cusp forms} \label{subsub:cusp} 

A \hypertarget{term-shd}{{\em standard horodisk}} $D$ is an open disk bounded by a horocycle in $\Ss_{+}$ passing $\llrrparen{\vk}$ and ending 
at {the} unique point $\llrrparen{\vj + \vk}$. 
We denote by $\partial_h D$ the horocycle 
$\clo(D)\setminus (D\cup \{\llrrparen{\vj + \vk} \})$ for any horodisk $D$.

Let $D'$ be a horodisk in $\Ss_+$.
Let $\vp$ denote a null-vector in the direction of $p \in \clo(D') \cap \partial \Ss_{+}$.
Let us use an upper half-space model of the hyperbolic plane with the standard coordinates $x, y$
and $p$ corresponding to $\infty$. 
Then we may assume without loss of 
generality that $D'$ is given by $y > 1$.
%

	\begin{definition} \label{defn:propc}
\purple{ 
Let $g$ be an accordant parabolic transformation in $\Gamma$.
Using the parabolic coordinates, 
let $g$ be of \newch{the} form \eqref{eqn:Phit} for some $t >0$. 
Let $E'$ be a cusp neighborhood covered by $D'$ where 
$\langle g \rangle$ acts as the deck transformation group. 
On $D'$, we can find a $\mathscr{V}$-valued $1$-form
\newch{
\begin{equation} \label{eqn:cuspform} 
\mu(x^2/2,-  x, 1) d x
\end{equation} 
that is closed but not exact and is $g$-invariant by \eqref{eqn:Action}
with respect to a coordinate system adopted to $g$.
}
We call such a form on $D'$ and the induced one 
on $E'$ {\em standard cusp $1$-forms}, 
$\mu> 0$ is the {\em cusp coefficient} of $E'$.
}
(See \cite{StandardForm} to check the form and the invariance.)

	\end{definition}
Here $\mu > 0$ by Lemma \ref{lem:pex} since $t > 0$ under the 
assumption.

Let ${{\mathscr{H}}}_{j} \subset \Ss_{+}, j=1, 2, \dots $, denote the horodisks covering 
the components of $E$.  
Let $p_{j}$ denote the parabolic fixed point corresponding to ${{\mathscr{H}}}_{j}$.
Each ${{\mathscr{H}}}_{j}$ has {standard} coordinates $x_{j}, y_{j}$ from the upper half-space model of {$\Ss_+$}
where $p_{j}$ becomes $\infty$, and ${{\mathscr{H}}}_{j}$ is given by $y_{j} >1$. 


Since $\Sf$ has finitely many cusps, we can choose 
horocyclic end neighborhoods with mutually disjoint closures.
By taking even smaller ones, 
we may also assume that 
\begin{equation}\label{eqn:distH}
d_{\Ss_+}(g({{\mathscr{H}}}_i),  k({{\mathscr{H}}}_j)  ) > C^{\text{\eqref{eqn:distH}}}_E, C^{\text{\eqref{eqn:distH}}}_E \geq \newch{5/4}, 
g, k \in \Gamma, i,j=1, \dots, m_0
\end{equation} 
whenever $g({{\mathscr{H}}}_i) \ne  k({{\mathscr{H}}}_j)$
for some fixed constant $C^{\text{\eqref{eqn:distH}}}_E$ depending only on $E$.


There are only finitely many cusps in $\Sf_C$. Thus, we can choose finitely many 
cusps in each orbit class of cusps whose closures meet the fundamental domain $F$.
We may denote these by ${{\mathscr{H}}}_{1}, \dots, {{\mathscr{H}}}_{m_{0}}$ by
reordering if necessary.
We denote by $\vp_{1}, \dots, \vp_{m_{0}}$ the corresponding null vectors. 
\newch{We choose a parabolic coordinate system for each ${{\mathscr{H}}}_{j}$ 
in the $\Gamma$-equivariant manner.}

\newch{
Recall from Section \ref{sub:propaffine} the cocycle of $\Gamma = h_{\bfb}(\pi_1(\Sf))$ for 
the holonomy homomorphism $h_{\bfb}$:
$\bfb \in Z^{1}(\pi_{1}(\Sf), \bR^{2,1}_{h})$.
For each $\gamma \in \pi_1(\Sf)$, 
$\bfb(\gamma) = h_{\bfb}(\gamma)(x_0) - x_0$ for a basepoint $x_0$. 
For each peripheral element $\gamma$ in the boundary orientation, 
let $\hat \gamma$ denote the corresponding deck transformation. 
We choose an adopted parabolic coordinate system where
$h(\hat \gamma)$ is accordant.  Let $E_\gamma$ be a component of $E$ corresponding to $\gamma$. 
Let $\gamma'$ be the homotopy class in $E_\gamma$ 
of the simple closed curve $c_{\gamma'}$ bounding $E_\gamma$ with a basepoint $x_{0, \gamma}$. 
If we choose a basepoint to be the origin of the coordinate system, 
we obtain a class  $\bfu$ in 
$H^1(\langle \hat \gamma  \rangle,  \bR^{2,1}_{\langle h(\hat \gamma) \rangle})$.
Let $\tilde c_{\gamma'}$ denote the boundary horocycle 
corresponding to $\hat \gamma$. 
Using the partition of unity, 
we change the section $s_{\tilde \eta}$ associated with $\tilde \eta$ 
so that so that $s_{\tilde \eta}| \tilde c_{\gamma'}$ is the orbit of the origin of  
the one-parameter group of parabolic affine transformations containing  $h(\hat \gamma)$. 
By \eqref{eqn:seta}, new $\eta$ is obtained in $E_\gamma$. 
Since the de Rham class $[\eta^c_\mu] \in H^1(E', \mathcal{V})$ goes to 
$\bfu \in H^1(\langle \hat \gamma \rangle,  \bR^{2,1}_{\langle h(\hat \gamma) \rangle})$, 
we obtain by \newch{Propositions \ref{prop:replace} and \ref{prop:cohomology}}:
} 

\purple{  
\begin{corollary} \label{cor:replace}
Let $\Sf$, $\Gamma$, $P$, $E$, and $\gamma$ be as above. 
Then we may replace a closed $\mathscr{V}$-valued $1$-form $\eta$ on $\Sf$ with 
a cohomologous one $\eta'$ so that $\eta'|E'$ for each component $E'$ of $E$ is
a standard cusp $1$-form \newch{in a parabolic coordinate system adopted to
the accordant holonomy element following the boundary orientation}. 
\end{corollary} 
}

\purple{
We may choose the $1$-form $\eta$ representing the cohomology class so that 
$\tilde \eta$, its lift to $\Ss_+$, is a standard cusp $1$-form on ${{\mathscr{H}}}_j$.
} 
\purple{
Let $\mu_j$ denote the cusp coefficients for each $j$, $j=1, 2, \dots$, 
Since there are only finitely many cusps in $\Ss_+/\Gamma$, 
there are only finitely many values of the cusp coefficients. 
Let $\mu_{\text{min}}$ be the minimum of $\mu_1, \mu_2, \dots$, 
and let $\mu$ be the maximum of $\mu_1, \mu_2, \dots$.
}



Let \hypertarget{term-em}{$\llrrV{\cdot}_{E}$} denote a Euclidean metric on $\Lspace$ which we fix in this paper. 

\begin{lemma} \label{lem:metricbound}
	Let $\mathscr{K}$ be a compact subset of $\Ss_+ \setminus {\mathscr{H}}$.
Suppose $x\in \mathscr{K} $. 
Then 
the matrix $\mathscr{C}_{i}$ with columns 
\begin{equation}\label{eqn:vframe} 
\vv_{+, (x, \vu)},  \vv_{0, (x, \vu)}, \vv_{-, (x, \vu)} \hbox{ for every } 
\vu \in \Uu_x \Ss_+
\end{equation}
is in a compact subset of $\GL(3, \bR)$ depending only on $\mathscr{K}$. 
\end{lemma} 
\begin{proof} 
There is a uniformly bounded element of $\SO(2,1)^o$ sending 
a complete geodesic 
$\overline{\llrrparen{0,-1,1} \llrrparen{0,1,1}}$ to $l_{g_i}$
and $\llrrparen{1,0,0}$ to $\llrrparen{\bnu_{g_i}}$.
From this and the way we define the frames 
in  Section \ref{sub:Anosov}, the conclusion follows. 
\end{proof}

\newch{
	Let $g$ be a hyperbolic element. 
	We recall from \eqref{eqn:b+} and \eqref{eqn:b0alpha}, 
	\begin{multline} 
		\alpha(g) = \llrrV{\vb_{g, 0}}, 
\vb_{g, 0} = \Pi_{\tbV_{0}, x_{g}}(\vb_{g}) = \\ 
\Pi_{\bR^{2,1}}\left(\left(\int_{[0, t_{g}]} \Bs\left(\nu_{x_{g}, \bu_{g}},\tilde \eta\left(
		\frac{d\Psi((x_{g}, \bu_{g}), t)}{dt}\right)\right) dt \right) \nu_{x_{g}, \bu_{g}}\right) 
		\label{eqn:b0}  
	\end{multline} 	
	since  $(\Idd - \mathcal{L}(g)) \left({\Pi_{\tbV_0, x_{g}}}(s_{\tilde \eta}(x_{g}))\right) =0$.  
}

\newch{
	For any subinterval $\zeta$ in a cusp 
with the cusp coefficient $\mu$, 
we define 
	$\alpha(\zeta)$ to be the corresponding part of the above integral from 
	$t_{\zeta_0}$ and $t_{\zeta_1}$	 for the corresponding 
arc-length parametrizing interval  $[t_{\zeta_0}, t_{\zeta_1}]$. 
Define $R(\zeta)$ as the radius of $\zeta$ in the upper half-space model where the horocycle  is given by $y=1$. 
By Proposition \ref{prop:mintegrals}, and the compatibility \eqref{eqn:PhitC},
 we can use
	\begin{equation} \label{eqn:b0zeta} 
		\alpha(\zeta) = 
		\mu \left(\frac{\pm \sqrt{2}\sqrt{ R(\zeta)^2-1}}{R(\zeta)}  + 2  R(\zeta)\sqrt{ R(\zeta)^2-1})\right)   
	\end{equation}
}


\purple{
\begin{definition} \label{defn:r-value} 
	We define $r(\zeta) := \sqrt{R(\zeta)^2 -1}$, which equals $1/2$ times the absolute value of the difference of
	the $x$-coordinates of the endpoint of $\zeta$ in the upper half-space model where the horocycle is given by $y=1$. 
The {\em horospherical length} $h$ of a cusp neighborhood $E$ is 
the $d_{\Ss_+}$-length of $\partial E$. 
Note that if two maximal geodesics $\zeta$ and $\zeta'$ in a cusp $E$ 
have the same endpoints, then $r(\zeta)$ and $r(\zeta')$ differ by 
a half an integer times $h$. 
\end{definition}
} 

\purple{
One useful result is Theorem 4.6 of Heinze and Hof \cite{HH}, 
\begin{equation} \label{eqn:HH} 
r(\hat \zeta) = \sinh \frac{1}{2}(l_{\Ss_+}(\hat \zeta)). 
\end{equation} 
From this, we can show that the difference of $x$-coordinates of the end points of
an arc of length $l$ is $\leq 2\sinh(\frac{l}{2})$. 
} 

\newch{Heuristically, Lemma \ref{lem:largecusp} states that 
the homotopy classes of maximal geodesics in a cusp neighborhood will give
quadratic differnces in $\alpha$-values. In particular the item (ii) gives 
us the main estimations to absorb the negative contributions.} 


\purple{
\begin{lemma}[Large cusp radius]\label{lem:largecusp} 
Let $\zeta$ be a maximal geodesic in a cusp neighborhood 
$E'$ with  the standard cusp $1$-form and a cusp coefficient $\mu'$.
Let $h$ be the horospherical length of $E'$.  
There exists  a positive constant $R_{\text{c}}$, independent of $\mu'$ but dependent on 
$h$ and $C$, which is defined below 
so that for any $R_1 > R_{\text{c}}$ has the following properties\/{\em :} 
\begin{itemize} 
\item[{\em (i)}] For the set of maximal geodesics in $E'$, 
$r(\zeta') \mapsto \alpha(\zeta')$ for each $\zeta'$ in it
forms a strictly increasing positive function of $r(\zeta')$
for $r(\zeta') >  R_1$. 
\item[{\em (ii)}] 
\newch{Let $\zeta$ and $\zeta'$ be two maximal geodesics 
in $E$  with the same endpoint as $\zeta$ but 
in the different  homotopy classes with respect to endpoints.
For any constant $0 \leq \eta_0 < C $ with 
\[R - h/2< r(\zeta) <  R  < r(\zeta') \hbox{ for }R > R_1, \]
we have
\[
\alpha(\zeta') - \alpha(\zeta) -\mu' \eta_0 \geq  2C^{(4.6)}_{R_1,C} \mu' r(\zeta')^2
\] 
for a constant $C^{(4.6)}_{R_1, C}> 0$ depending only on $h, R_1$ and $C$.}  
\end{itemize} 
\end{lemma} 
}
\begin{proof} 
\purple{ 
We choose a horoball $\tilde E'$ covering $E'$.
Then we can compute $\alpha(\zeta)$ for a geodesic $\zeta$ 
by lifting $\zeta$ to $\tilde E'$.  
}
\purple{
(i) is straightforward. 
} 



\newch{
For (ii), the last term of \eqref{eqn:b0zeta} dominates the absolute values of
 other terms and 
$\mu \eta$ for sufficiently large $R_1$:
}
\newch{
Using \eqref{eqn:b0zeta}, the above term  divided by $\mu'$ is bounded below by 
\[ 
r(\zeta')^2 - r(\zeta)^2 - \eta_0 - 2\sqrt{2}.
\]
Since 
$(x-\newch{h/2})/x$ is an increasing function of $x$, 
the supremum on $x \in (R, R\newch{+}h/2)$  is $R/(R+h/2)$.
Hence, $r(\zeta) < C_R r(\zeta')$ for $C_R = R/(R+h/2)$
since the ratio $r(\zeta)/r(\zeta')$ 
is less than $C_R$ for $r(\zeta') \geq R+h/2$. 
}
\newch{
Then $\alpha(\zeta') - \alpha(\zeta) -\mu' \eta_0$ divided by $\mu'$   is bounded below by 
\begin{equation}
r(\zeta')^2( 1 - C_R^2 ) - C - 2\sqrt{2} \geq 
(1-C_R^2) \left(r(\zeta')^2  - \frac{C+2\sqrt{2}}{1-C_R^2}\right).
\end{equation}  
Let $f_R(x)$ denote the polynomial given by the right side
with $x$ replacing $r(\zeta')$.
The largest root of $f_R(x)$ is smaller than
}
\newch{
\[ \sqrt{\frac{(R+ h)(C+2\sqrt{2})}{h}}.\]
}
\newch{
Since the function $R \mapsto R$ dominates any function given by the square root of the 1st order polynomial of $R$,
there exists $R'> h$ so that for 
$R > R'$, we have 
\[
\newch{R >  \sqrt{\frac{(R+h)(C+2\sqrt{2})}{h}} }
\hbox{  which implies } f_{R}(x) > 0 \hbox{ for } x > R.
\]
Define $c:= \frac{f_{R'+1}(R'+1)}{(R'+1)^2} > 0$. 
Then 
\[f_{R'+1}(x) \geq c x^2 \hbox{ for } x \geq R'+1\] 
by
an easy calculus argument. 
We take $R_1 = R'+1$,  and 
$C^{(4.6)}_{R_1, C} = c/2$. 
We can make $R_1$ as large as we wish to since we only need $c> 0$.
}

\end{proof}

\subsubsection{Summing up the contributions} \label{subsub:summing} 


\newch{Let $\{g_i\}$ be a sequence of elements in $\Gamma_{\mathscr{K}}$.} 
We denote by $\hat l_{g_{i}}$ the lift of $l_{g_{i}}$ to $\Uu \Ss_{+}$ directed towards the attracting fixed point of $g_{i}$ in $\partial \Ss_{+}$. 


Recalling \eqref{eqn:b+}, we estimate $\vb_{g_{i}, -}(x)$. 
We give an outline of the rest of the long proof of Theorem \ref{thm:unifest}
starting from Section \ref{subsub:summing}:   
\begin{itemize}
\item[(I)] First, we estimate
the last term in the integral \eqref{eqn:b+} for $\omega =-$.   
\item[(II)] We estimate the contribution of $\eta| \Sf_C \setminus E$ of 
the integral \eqref{eqn:b+} for $\omega =-$.
\item[(III)] We estimate the contribution of the arcs in ${\mathscr{H}}$
\begin{itemize} 
	\item[(a)] We estimate the contribution of the arc when it is put into a standard position.
	\item[(b)] We obtain the relationship of the contributions to the arc in 
	the standard position and actual one by Lemma \ref{lem:etazeta}. 
	\item[(c)] We estimate the comparisons of sizes by length.
	\end{itemize}   
\item[(IV)]  \purple{ Then we sum these results to estimate the integral \eqref{eqn:b+} for $\omega =-$.}
\item[(V)] \purple{In Section \ref{subsub:ratiobound}, we show that $\alpha(g_i) \ra \infty$ and  $\alpha(g_i)/\llrrV{\vb_{g_i, -}} \ra \infty$ as $l_{\Ss_+}(g_i) \ra \infty$.}
\item[(VI)] \purple{Finally, we estimate the asymptotic direction as the last item
in Section \ref{subsub:direction}. }
\end{itemize}


Let $(x, \bu) \in \Uu \mathscr{K}$. 
The arc $\Psi((x, \bu), [0, t])$ is a geodesic passing $\Uu \mathscr{K}$. 
We choose $x_{i} \in \mathscr{K} \cap l_{g_{i}}$ for each $i$ 
and the unit vector $\bu_{i}$ at $x_{i}$ in the direction of $\hat l_{g_{i}}$. 
We let $t_{g_{i}} > 0$ be so that $\Psi((x_{i}, \bu_{i}), [0, t_{g_{i}}])\subset l_{g_{i}}$ corresponds
to the closed geodesic corresponding to $g_{i}$.  


Let $\Uu \Sf_C$ denote the unit tangent bundle over $\Sf_C$. 
\begin{itemize} 
\item We denote 
by ${{\mathscr{H}}}_{i, 1}, {{\mathscr{H}}}_{i, 2}, \dots$, the components  of 
${\mathscr{H}}$ meeting 
$\Pi_{\Ss_{+}}(\Psi(x_{i}, \vu_i), t))$ as $t$ increases. 
\item Let $\vp_{i,j} dx_{i, j}$ denote  
$\tilde \eta|{{\mathscr{H}}}_{i, j}$ where 
$\llrrparen{\vp_{i,j}}$ is the parabolic fixed point in
the boundary of ${{\mathscr{H}}}_{i, j}$.
\item Let $t_{i, j}$, $0 < t_{i, j}< t_{g_{i}},$ be the time the geodesic $\Psi((x_{i}, \bu_{i}), t)$ enters $\Uu {{\mathscr{H}}}_{i, j}$, 
and $\hat t_{i, j}$ the time it leaves $\Uu {{\mathscr{H}}}_{i, j}$ for the first time after $t_{i, j}$. 
\item We denote $I_{i, j} = [t_{i, j}, \hat t_{i, j}]$. 
\end{itemize} 

(I) We estimate 
$\llrrV{(\Idd - \mathcal{L}(g_i))({\Pi_{-, x_g}}\circ s_{\tilde \eta}(x_i))}_E$
for $g \in \Gamma_{\mathscr{K}}$ { from \eqref{eqn:b+}}\/\newch{:}
The matrix of $\mathcal{L}(g_i)$ with the basis 
$\vv_{+, (x_i, \vu_i)},  \vv_{0, (x_i, \vu_i)}, \vv_{-, (x_i, \vu_i)}$
is a diagonal matrix with 
entries 
\[\lambda_{1}(g_i), 1, 1/\lambda_{1}(g_i).\] 
Hence, the above is given by 
\begin{equation} \label{eqn:lastterm}
\llrrV{\left(1-\frac{1}{\lambda_{1}(g_i)}\right)({\Pi_{-, x_g}}\circ s_\eta(x_i))}_E < C_{\mathscr{K}}
\end{equation}
where we have a uniform constant $C_{\mathscr{K}}$ depending only on $\mathscr{K}$ 
by Lemma \ref{lem:metricbound} and \eqref{eqn:length}
since $\lambda_1({g_i}) > 1$ and $\llrrV{s_{\tilde \eta}}_E|\mathscr{K}$ is bounded 
by a constant depending only on $\mathscr{K}$. 


(II) Define 
\[N(\Sf_{C} \setminus E):= \max \{\llrrV{\eta(\bu)}_{\rf}| \bu \in \Uu\Sf_{C} \setminus E\}.\] 
We have 
\begin{equation}\label{eqn:intest0}
\bigg|\bigg| \int_{[0, t_{g_i}] \setminus \bigcup_{j} I_{i, j}} \bbD\Psi((x_{i}, \bu_{i}), t)^{-1}\left(\tilde \eta_{-}\left(\frac{d\Psi((x_{i}, \bu_{i}), t)}{dt}\right)\right) dt \bigg|\bigg|_{\rf} < C_{1}
\end{equation} 
for $C_1 < \infty$ 
by the second part of \eqref{eqn:PhitC} applied to 
$\bbD\Psi((x_{i}, \bu_{i}), t)^{-1}$ and the integrability of the exponential function. 
Here, $C_1= C_1(N(\Sf_{C} \setminus E))$ depends only on $N(\Sf_{C} \setminus E)$. 

Since these integrals have values in the fibers over $\mathscr{K}$, 
and $\llrrV{\cdot}_{\rf}$ and $\llrrV{\cdot}_E$ are 
uniformly compatible over $\mathscr{K}$, we have 
\begin{equation}\label{eqn:intest1}
\bigg|\bigg| \int_{[0, t_{g_i}]\setminus \bigcup_{j} I_{i, j}} \bbD\Psi((x_{i}, \bu_{i}), t)^{-1}\left(\tilde \eta_{-}\left(\frac{d\Psi((x_{i}, \bu_{i}), t)}{dt}\right)\right) dt \bigg|\bigg|_{E} < C_{2}
	\end{equation} 
	for $C_2 < \infty$. (See Remark \ref{rem:dualpict}.) 
	Hence, $C_2$ depends only on $\mathscr{K}$ and 
	$N(\Sf_{C} \setminus E)$.  We write $C_2 = C_2(\mathscr{K}, N(\Sf_{C} \setminus E))$. 

 (III)  For each $I_{i,j}$, we define for 
the maximal geodesic segment in $l_{g_{i}} \cap {{\mathscr{H}}}_{i, j}$ 
 \begin{align} \label{eqn-beta}
 \eta_{i,j} &:=  \Pi_{\Ss_{+}}\circ  \Psi((x_{i}, \bu_{i}), I_{i,j}) \subset {{\mathscr{H}}}_{i,j}\, \hbox{ and }  \nonumber \\
 \vb_{g_{i},- }(\eta_{i,j}) &:=  \int_{I_{i,j}} \bbD\Psi((x_{i}, \bu_{i}), t)^{-1}\left(\tilde \eta_{-}\left(\frac{d\Psi((x_{i}, \bu_{i}), t)}{dt}\right)\right) dt.
 \end{align} 
We now estimate $\vb_{g_i, -}$ contributed by $I_{i,j}$ by looking at the situation of  \eqref{eqn:cd}.
\begin{figure}[h]
	
	\begin{center}
		\includegraphics[height=7cm, trim={0.5cm 2.0cm 0.1cm 1.0cm}, clip]{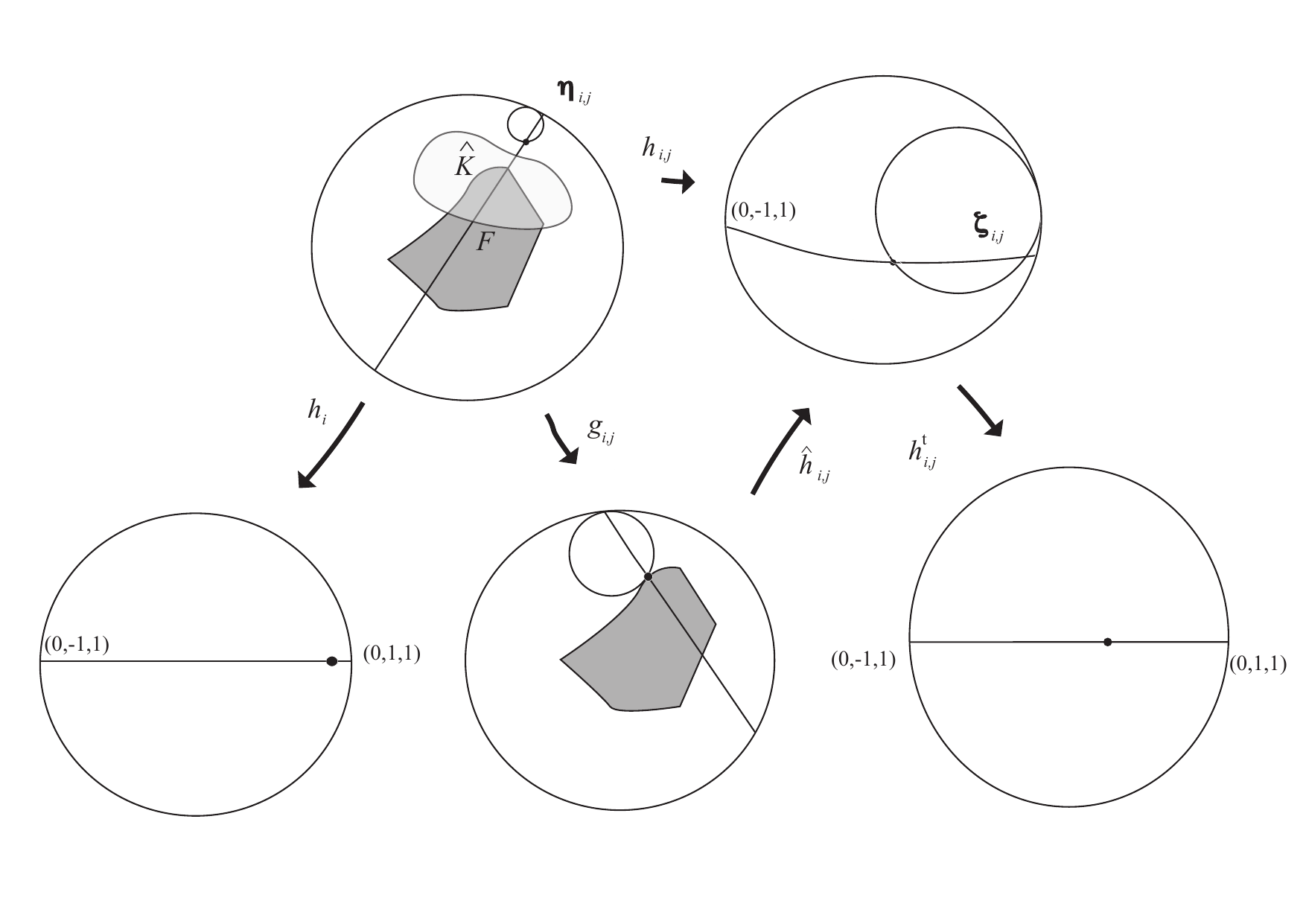}
	\end{center} 
	\caption{ $g_{i,j} \in \Gamma$ moves $q_{i, j}$ to a point of $F$. 
		$\hat h_{i, j}$ sends ${{\mathscr{H}}}_{i, j}$ to the standard horodisk $D$, 
		$h_i$ is a normalization map of $l_{g_i}$, 
		$h^t_{i, j}$ the normalization map for $h_{i, j}(l_{g_i})$, 
		{where} $h_{ij} = \hat h_{ij}\circ g_{ij}$. 
		See Definitions \ref{defn:threem} and \ref{defn:twom}. 
		The black dots indicate the images of $q_{i, j}$.}
	\label{fig:fived2}
\end{figure}

Recall  the fundamental domain $F$ of $\CH(\Lambda_{\Gamma, \Ss_+})\setminus {{\mathscr{H}}}$ covering $\Sf_{C}\setminus E$. 
Let $p_{i}$ denote the beginning point in $\partial \Ss_{+}$ of  $l_{g_{i}}$ in $\Ss_{+}$, and $p'_{i}$ denote the forward endpoint of $l_{g_{i}}$ in $\partial \Ss_{+}$.
Let $q_{i, j}$ denote the beginning point of $\eta_{i, j}$ itself
and $\vu_{i,j}$ the unit tangent vector to $l_{g_i}$ at the point $x_i$ in $\mathscr{K}$.

\begin{definition} \label{defn:threem} 
We define three maps and two others slightly later. 
\begin{description}
	\item[$g_{i,j}$:] There is an element $g_{i, j} \in \Gamma$ so that 
	$g_{i,j}(q_{i,j}) \in F$, and 
	\[g_{i, j}({{\mathscr{H}}}_{i, j}) = {{\mathscr{H}}}_{k} \hbox{ for } k = 1,\dots, m_{0}
	\hbox{ and } g_{i,j}(q_{i,j}) \in F \cap \clo({{\mathscr{H}}}_{k}).\]
	\item[$\hat h_{i,j}$:] 
	Since $\{ {{\mathscr{H}}}_{1}, \dots, {{\mathscr{H}}}_{m_{0}}\}$ is finite, we can put ${{\mathscr{H}}}_{k}$ to the standard horodisk $D$ by a uniformly bounded sequence 
	$h'_{i, j}$ of elements 
	of $\SO(2, 1)^o$. 
	Since $g_{i, j}(q_{i, j})$ is in a compact set
	$F \cap  \clo({{\mathscr{H}}}_{k})$, 
	$h'_{i, j}(g_{i, j}(q_{i, j}))$ is in a uniformly bounded subset of 
	$\Uu \partial_h D$. Hence, we can put
	$h'_{i, j}(g_{i,j}(p_{i}))$ to be $\llrrparen{0,-1,1}$ by a bounded sequence $h''_{i, j}$
	of parabolic elements fixing  $\llrrparen{0,1,1}$. 
	Let $\hat h_{i, j} = h''_{i, j}\circ h'_{i, j}$. Then 
	\[ \hat h_{i, j}({\mathscr{H}}_{i,j}) = D, 
	\hat h_{i, j}(g_{i, j}(p_i)) = \llrrparen{0,-1,1},\] and 
	$\hat h_{i,j}$ in a uniformly bounded set of elements of $\SO(2, 1)^{o}$ 
	not necessarily in $\Gamma$. 
	This is called a {\em normalization map.}
	(There is a bound on the size of $\hat h_{i, j}$ depending only on $F$.)
	\item[$h_{i,j}$:] Let $h_{i,j}= \hat h_{i,j}\circ g_{i,j}$. 
\end{description} 
\end{definition} 


The image \[\zeta_{i,j} = h_{i,j}(\eta_{i,j})\] 
satisfies the premise of Lemma \ref{lem:estimation}.
(See Figure \ref{fig:fived2}.)


(III)(a) We define
\begin{multline}\label{eqn-bzeta}
 \vb_{g_{i},- }(\zeta_{i,j}) :=  \\
 \int_{I_{i,j}} \bbD\Psi(h_{i,j}(q_{i,j}), t - t_{i,j})^{-1}
 \left(h_{i, j}^{-1\ast}\tilde \eta_{-}\left(\frac{d\Psi(h_{i,j}(q_{i,j}), t - t_{i,j})}{dt}\right)\right) dt.
 \end{multline} 
\purple{
Proposition \ref{prop:mintegrals} 
implies that 
\begin{equation}\label{eqn:zetai-pre}
\llrrV{\vb_{g_{i},- }(\zeta_{i,j}) }_{E} \leq \mu_k   r(\zeta_{i, j}). 
\end{equation}
Since there are only finitely many values of $\mu_k$s, 
\begin{equation}\label{eqn:zetai}
\llrrV{\vb_{g_{i},- }(\zeta_{i,j}) }_{E} \leq  \mu r(\zeta_{i, j}). 
\end{equation}
}



(III)(b) We compute the actual contribution for $\eta_i$.
We diagram the flow of the point $w \in \Uu \Ss_+$ and the action of 
the isometry	$g$ not necessarily in $\Gamma$:
\begin{equation} \label{eqn:cd}
\begin{CD} 
	w @>{-t}>> \Psi(w, -t) \\ 
	 @VV{g}V       @VV{g}V \\
	g(w) @>{-t}>> \Psi(g(w), -t).
\end{CD}
\end{equation}

\begin{lemma} \label{lem:etazeta} 
\begin{equation}\label{eqn-etazeta}
\Pi_{\bR^{2,1}}\left( \bbD h_{i, j}^{-1}( \vb_{g_{i},- }(\zeta_{i, j}) )\right) 
= \Pi_{\bR^{2,1}}\left(\vb_{g_{i},- }(\eta_{i, j})\right).
\end{equation}
\end{lemma} 
\begin{proof} 
 Since the flow commutes with isometry group action on $\Uu \Ss_{+}$, 
 we have by considering \eqref{eqn:cd} and the triviality of actions in the fibers
  \begin{multline} \label{eqn:commDg} 
  \bbD g (\bbD \Psi(w, -t)(\vv))= (\bbD g \circ \bbD\Psi(w, -t) \circ \bbD g^{-1})\circ \bbD g(\vv)= \\ \bbD\Psi(g(w), -t)\circ \bbD g(\vv) 
  \hbox{ for } w \in \Uu \mathscr{K}, \vv \in \bR^{2,1}, 
  g \in \SO(2, 1)^{o}.
  \end{multline} 
We apply $\bbD h^{-1}_{i, j}$ to \eqref{eqn-bzeta}.
 Since  $\Psi(x, t)^{-1} = \Psi(x, -t)$,
 we obtain by  \eqref{eqn:commDg} 
  \begin{multline} \label{eqn:res1} 
 \bbD h_{i, j}^{-1}\left(\bbD\Psi(h_{i, j}(q_{i, j}, \vu_{i,j}), t - t_{i, j})^{-1}
 \left(h_{i, j}^{-1\ast}\tilde \eta_{-}\left(\frac{d\Psi(h_{i,j}(q_{i, j}, \vu_{i, j}), t - t_{i, j})}{dt}\right)\right) \right)  \\
=	 \bbD\Psi((q_{i, j}, \vu_{i,j}), t - t_{i, j})^{-1} 
	 \bbD h_{i, j}^{-1} \left(h_{i, j}^{-1\ast}\tilde \eta_{-}\left(\frac{d\Psi(h_{i,j}(q_{i, j}, \vu_{i, j}), t - t_{i, j})}{dt}\right)\right).
	\end{multline} 
	The above \eqref{eqn:res1} equals by \eqref{eqn:Action}
	\begin{multline} \label{eqn:res2} 
\bbD\Psi((q_{i, j}, \vu_{i,j}), t - t_{i, j})^{-1} 
\bbD h_{i, j}^{-1}\left(h_{i, j}^{-1\ast}\tilde \eta_{-}\left(\frac{d\Psi(h_{i,j}(q_{i, j}, \vu_{i, j}), t - t_{i, j})}{dt}\right)\right) \\
=	\bbD\Psi((q_{i, j}, \vu_{i,j}), t - t_{i, j})^{-1} 
\left(\tilde \eta_{-}\left(Dh_{i, j}^{-1}\left(\frac{d\Psi(h_{i,j}(q_{i, j}, \vu_{i, j}), t - t_{i, j})}{dt}\right)\right)\right).
\end{multline}
By the definition of differentials and \eqref{eqn:cd}, we obtain
\begin{multline} \label{eqn:diff} 
Dh_{i, j}^{-1}\left(\frac{d\Psi(h_{i,j}(q_{i, j}, \vu_{i, j}), t - t_{i, j})}{dt}\right) \\
= \frac{d(h_{i,j}^{-1}\circ \Psi)((h_{i,j}(q_{i, j}, \vu_{i, j}), t - t_{i, j})}{dt} = \frac{d \Psi((q_{i, j}, \vu_{i, j}), t - t_{i, j})}{dt}. 
\end{multline}
Above \eqref{eqn:res2} equals by \eqref{eqn:diff} 
\begin{equation}\label{eqn:res3} 
	\bbD\Psi((q_{i, j}, \vu_{i,j}), t - t_{i, j})^{-1} 
\left(\tilde \eta_{-}\left(\frac{d\Psi((q_{i, j}, \vu_{i, j}), t - t_{i, j})}{dt}\right)\right).
\end{equation} 
	Since	$\Psi((x_{i}, \vu_{i}), t_{i,j}) = (q_{i,j}, \vu_{i, j})$,
	 \eqref{eqn:res3} equals 
\begin{multline} 	\label{eqn:res4} 
 \bbD\Psi(\Psi((x_i, \vu_i), t_{i, j}), t- t_{i, j})^{-1}\left(\tilde \eta_{-}\left(\frac{d\Psi(\Psi((x_i, \vu_i), t_{i, j}), t-t_{i, j})}{dt}\right)\right) \\
  = \bbD\Psi((x_i, \vu_i), t)^{-1}\left(\tilde \eta_{-}
 \left(\frac{d\Psi((x_i, \vu_i), t)}{dt}\right)\right) \\
 \hbox{ for every }  t \in [t_{i, j}, \hat t_{i, j}], (x_i, \vu_i) \in \Uu \mathscr{K}, 
 \end{multline}
where we multiplied by $\bbD\Psi((x_i, \vu_{i}), t_{i, j})^{-1}$ which 
is $\Idd$ on the fibers
to the left side. 
Integrating \eqref{eqn:res1} and the last line of \eqref{eqn:res4} for $[t_{i, j}, \hat t_{i, j}]$, we 
proved \eqref{eqn-etazeta}. 
\end{proof} 



(III)(c) Now, we compare the contributions of these arcs. 
Now, $h_{i, j}(q_{i, j}) \in \Uu \partial_h D$ is in a uniformly bounded subset 
$F', \Uu F \subset F'$, independent of $i, j$,
of $\Uu \Ss_{+}$ since $h_{i, j}(p_{i}) = \llrrparen{0, -1, 1}$ and the complete geodesic containing $h_{i, j}(\eta_{i, j})$ 
passes the standard horodisk $D$. 

Thus, $h_{i, j}(l_{g_{i}})$ is uniformly bounded from the line $\kappa$ 
in $\Ss_{+}$ connecting $\llrrparen{0, -1, 1}$ to $\llrrparen{\vj + \vk}$, 
oriented towards $\llrrparen{0,1,1}$. 
Let $\hat \kappa$ denote the lift of $\kappa$ to 
$\Uu \Ss_+$ taking the direction towards $\llrrparen{\vj + \vk}$. 

\begin{definition}\label{defn:twom} 
We define two additional normalization maps: 
\begin{description}
	\item[$h^{\dagger}_{i,j}$:] We take a uniformly bounded element $h^{\dagger}_{i,j}$ of  
	$\SO(2, 1)^{o}$ so that $h^{\dagger}_{i,j}(h_{i, j}(l_{g_{i}})) = \kappa$
	and $h^{\dagger}_{i,j}(h_{i, j}(q_{i, j})) = \llrrparen{0,0,1}$. 
	\item[$h_{i}$:] Since $l_{g_{i}}$ is a geodesic passing $\mathscr{K}$, we take a uniformly bounded element $h_{i}$ of $\SO(2, 1)^{o}$
	so that $h_{i}(l_{g_{i}}) = \kappa$ and $h_i(x_i) = \llrrparen{0,0,1}$ 
	without changing the orientation. 
	(The bound only depends on $\mathscr{K}$.)
\end{description} 
\end{definition}

Then 
\[h_{i} \circ h_{i, j}^{-1} \circ h^{\dagger,-1}_{i,j}(h^{\dagger}_{i,j}(\zeta_{i, j})) = h_{i}(\eta_{i, j})\] 
and $h_{i} \circ h_{i, j}^{-1}\circ h^{\dagger,-1}_{i,j}$ acts on $\kappa$.  
\begin{itemize}
\item Under $h_{i} \circ h_{i, j}^{-1} \circ h^{\dagger, -1}_{i,j}$, $h^{\dagger}_{i,j} \circ h_{i, j}(q_{i, j})$ goes to a point $h_{i}(q_{i, j})$. 
\item \begin{equation} \label{eqn:htij} 
d_{\Ss_+}(h^{\dagger}_{i,j} \circ h_{i, j}(q_{i, j}), h_{i}(q_{i, j})) = t_{i, j} 
\end{equation}
since	$h_i(x_i) = \llrrparen{0,0,1} = 
h^{\dagger}_{i,j} \circ h_{i, j}(q_{i, j})$ and 
the $d_{\Ss_+}$-length of the arc from 
$x_i$ to $q_{i, j}$ is $t_{i, j}$ 
which is also the $d_{\Ss_+}$-length of 
the arc from $h_i(x_i)$ to $h_i(q_{i, j})$. 
\end{itemize} 

By \eqref{eqn:length} and \eqref{eqn:htij}, the eigenvalue of $\mathcal{L}(h_{i} \circ h_{i, j}^{-1} \circ h^{\dagger, -1}_{i,j})$ at the eigenvector $(0, 1, -1)$ 
is $\exp(-t_{i, j}/2)$.
Since 
\[ \Pi_{\bR^{2,1}}(\vb_{g_{i},- }(\eta_{i, j}))  = \Pi_{\bR^{2,1}}(h_{i, j}^{-1 \ast}(\vb_{g_{i},- }(\zeta_{i, j}))),\] 
$\mathcal{L}(h_{i} \circ h_{i, j}^{-1} \circ h^{\dagger, -1}_{i,j})$ sends the $\bR^{2,1}$-vector 
\[\Pi_{\bR^{2,1}}(\mathcal{L}(h^{\dagger}_{i,j})(\vb_{g_{i},- }(\zeta_{i, j})))\in 
\langle (0,-1, 1) \rangle \hbox{ to } 
\Pi_{\bR^{2,1}}(\mathcal{L}(h_{i})(\vb_{g_{i},- }(\eta_{i, j})))\in \langle (0,-1, 1)\rangle\]
by multiplying by $\exp(-t_{i, j}/2)$.  
Since $h^{\dagger}_{i,j}$ and $h_{i}$ are uniformly bounded
depending only on $\mathscr{K}$ and $F$, 
we obtain
\begin{equation}\label{eqn:intest2}
\tilde C(F, \mathscr{K})\exp(-t_{i, j}/2)\llrrV{\vb_{g_{i},- }(\zeta_{i, j}) }_{E} \geq \llrrV{ \vb_{g_{i}, -}(\eta_{i, j})}_{E}.
\end{equation} 
for a constant $\tilde C(F, \mathscr{K}) > 0$ depending only on 
$\mathscr{K}$ and $F$. 


(IV) We sum up the contributions. 
Hence, $\frac{1}{R(\zeta_{i, j})} < 1$. 
By \eqref{eqn:lastterm}, \eqref{eqn:intest1}, \eqref{eqn:zetai}, 
 \eqref{eqn:intest2} 	and Proposition \ref{prop:mintegrals},
  we estimate the upper bound 
 depending only on $E, \mathscr{K}, \eta|\Sf_{C} \setminus E$:
\begin{multline}\label{eqn:intest3p}
\llrrV{\vb_{g_{i}, -} }_{E}\leq \tilde C(F, \mathscr{K})\sum^{m_i}_{j} \exp(-t_{i, j}/2) 
\purple{\left( \mu \frac{r(\zeta_{i,j})(1 + 4 R(\zeta_{i,j})^2)}{2 \sqrt{2} R(\zeta_{i,j})^2} \right) }\\ 
 +C_{2}(\mathscr{K}, N(\Sf_{C} \setminus E)) + C_{\mathscr{K}} \\
 \leq  \tilde C(F, \mathscr{K})\sum_{j}^{m_i} \exp(-t_{i, j}/2)
 \purple{\left( 4\mu r(\zeta_{i, j})\right)}\\ 
 +C_{2}(\mathscr{K}, N(\Sf_{C} \setminus E)) + C_{\mathscr{K}} 
\end{multline} 
\purple{since $R(\zeta_{ij}) \geq 1$.}

\subsubsection{\purple{$\alpha(g_i) \ra \infty$ and 
$\frac{\alpha(g_i)}{\llrrV{\vb_{g_i, -}}} \ra \infty$.}} \label{subsub:ratiobound} 

In Step (V), 
we will prove that $\alpha(g_i) \ra \infty$ and 
$\alpha(g_i)/\llrrV{\vb_{g_i, -}} \ra +\infty$ provided 
$l_{\Ss_+}(g_i) \ra \infty$ using the fact that we can absorb many negative 
uncertainties during perturbation into long edges in the cusps using Lemma 
\ref{lem:largecusp}. 

We can do this by showing that every subsequence has a subsequence converging to $+\infty$.  
\newch{
We give an outline of the step (V). 
\begin{itemize} 
\item[(i)] First, we will choose some constants such as $\eps, \delta, R_0$ 
sufficiently small or large. 
\item[(ii)] Let $g_i$ denote a closed geodesic.
We \newch{replace} the maximal segment $\zeta$ in a cusp neighborhood 
with $r(\zeta)> R_0+ h/2$ with one $\zeta'$ with the same endpoints but with 
$R_0  < r(\zeta') \leq R_0+h/2$.  We denote the result by $\tilde g_i$. 
\item[(iii)] Then we find a closed geodesic $\hat g_i$ freely homotopic to $\tilde g_i$. 
Then we estimate $|\alpha(\tilde g_i) - \alpha(\hat g_i)|$ in terms of the constant times 
the number of components of the above arcs in \eqref{eqn:difference}. 
This constant is bounded since $R_0 \delta = 2$ by our choice below. 
\item[(iv)] This is the final step. 
$\alpha(g_i)$ is bounded below by $\alpha(\hat g_i)$ plus constant times
the sum of $r(\zeta)^2$. Then we use the standard Schwartz inequalities.
\end{itemize} 
}

\begin{definition}\label{defn:arcs} 
Let $g_i$ also denote the arclength-parameterized 
closed geodesic in $\Sf$ whose lift $l_{g_i}$ passes
a fixed compact set $\mathcal{K}$ in $\Ss_+$. 
\newch{
Let $J$ be  \newch{the index set} of mutually disjoint subintervals $I_i \subset I$ and 
 $\alpha_i  := g_i| I_i$, 
By $g_i \setminus \bigcup_{i\in J}\alpha_i$,  we mean the map 
$g_i| I \setminus \bigcup_{i\in I} I_i$. }
\end{definition} 


We denote by $E_\eps$ the set obtained by decreasing $E$ inward by $\eps$ when 
$\eps > 0$ and the $(-\eps)$-neighborhood of $E$ when $\eps < 0$. 
We will assume that 
\newch{$E_{-1/2}$ is still a cusp-neighborhood, and}  
$\eta|E_{-1/2}$  is still standard cusp $1$-form for each component 
by taking sufficiently smaller $E$ if necessary. 

We denote by $\mu_{i, j}$ the cusp coefficient for the cusp neighborhood 
that $\zeta_{i, j}$ goes into. There are only finitely many values.
We  assume that the horospherical lengths of all cusp neighborhood components of $E$ equal $h$. 
Let $C_{\Sf \setminus E}$ denote the neutral factor of
the compact set $\Sf \setminus E$.  

\newch{We remark that following constants depend only on the two constants 
$h$ and $C_{\Sf\setminus E_1}$.
There is no obstruction for the following choices}. 

(i) The first step is to decide on constants to be used later: 
\blue{
\begin{itemize} 
\item Choose $\delta > 0$ so that $0< \delta < 1/40$ 
by Lemma \ref{lem:perturb} and let \newch{$\eps = 7\delta$}. 
\item We also require \newch{$\delta < \mu/(7C_{\Sf \setminus E_1})$}.
\item \newch{Also assume $6h \eps < 1 $, $\eps < 1/8$, and $R_0 > 10$. }
\item  We require $\delta$ to be given by 
$\delta := 2/R_0$ 
by taking $R_0$ sufficiently large and $\delta$ sufficiently small. 
By Lemma \ref{lem:vertical}, the angle that $\zeta_{i, j}$ with $r(\zeta_{i, j})\geq R_0$ 
makes with the vertical line is $< \delta$ in the upper half-space model. 
\item $R_0$ is a constant satisfying all conclusions for the variable $R_1$ in   
 Lemma \ref{lem:largecusp} for $C > 222 \frac{\mu}{\mu_{\min}}$. 
For simplicity, we assume $R_0 > 10$. 
\end{itemize} 
}


(ii) We will replace very long $\zeta_{i, j}$ in $g_i$ with ones that are outside some 
cusp neighborhood: 
We denote by $\zeta_{i, j}$ the sequence of maximal geodesics in $g_i$ 
 going into $E$. 
\newch{
We denote by $J_{i, t}$ the set of $\zeta_{i, j}$ with $r(\zeta_{i, j}) > t$ 
for $t \geq 0$.
}

For each $\zeta_{i, j}$ in $J_{i, R_0+ h/2}$, we take a maximal geodesic 
$\hat \zeta_{i, j}$ with the same endpoints but with 
$R_0  \leq r(\hat \zeta_{i, j}) < R_0+ h/2$ 
since we can decrease the $r(\zeta)$-values by $h/2$ times integers by wrapping 
a smaller number of times around the cusps.  
Since the geodesics are unique up to homotopy classes relative to endpoints, 
the homotopy class of $\hat \zeta_{i, j}$ is, of course, different from 
$\zeta_{i, j}$ relative to the endpoints. 
Thus, we obtain for $\zeta_{i, j} \in J_{i, R_0+ h/2}$, 
\begin{multline} \label{eqn:replacezeta} 
\alpha(\zeta_{i, j}) - \alpha(\hat \zeta_{i, j}) \geq \mu_{i, j}\delta(R_0+ h/2),  \\
 \alpha(\zeta_{i, j}) - \alpha(\hat \zeta_{i, j}) - \mu_{i, j}\eta_0  \geq C^{(4.6)}_{R_0+ h/2, C'} \mu_{i, j} r(\zeta_{i, j})^2, C^{(4,6)}_{R_0+h/2, C'} >0
\end{multline} 
by Lemma \ref{lem:largecusp} where $\eta_0 < C'$. 

(iii) The third step is to estimate the relationship between
$\alpha$-values for $g_i$ \newch{and the closed curves $\hat g_i$ and $\tilde g_i$
to be constructed}\/:
Let $\hat g_i$ denote  the closed curve obtained by $g_i$ removing $\zeta_{i, j}$ and adding $\hat \zeta_{i, j}$ for each 
$\zeta_{i, j} \in J_{i, R_0+ h/2}$. 
\newch{
By Lemma \ref{lem:vertical}, $\hat g_i$ has turning angles $< \delta= 2/R_0$
at each endpoint of maximal geodesic segments by Lemma 
\ref{lem:vertical}. }
We define 
\[\hat \alpha(\hat g_i) :=
\alpha\left(g_i \setminus \bigcup_{\zeta \in J_{i, R_0+ h/2}} \zeta\right) + 
\sum_{\zeta \in J_{i, R_0+ h/2}} \alpha(\hat \zeta).\]

There exists a closed geodesic $\tilde g_i$ homotopic to $\hat g_i$
which is in the $\eps$-neighborhood of $\tilde g_i$
 \newch{for $\eps = 7\delta$}  by Lemma \ref{lem:perturb}. 
Let $E_{R_0/2+ h/4 + \eps}$ denote the cusp neighborhood obtained by moving $E$ inside by $R_0 /2 + h/4 +\eps$. 
Then both $\hat g_i$ and $\tilde g_i$ are in $\Sf\setminus E_{R_0/2 + h/4+ \eps}$. 

Define $\hat J_{i, 0}$ the subset of $J_{i, 0}$ of  consisting of arcs $\zeta_{i, j}$ where where $d_{\Ss_+}$-lengths are strictly bigger than \newch{$5/4$}. 
For every arc in $J_{i, 0} \setminus \hat J_{i, 0}$, the arcs are in $\Sf \setminus E_{5/8}$. 
We will not remove these from $g_i$ in the following because of this. 
By skipping these,  we have
\begin{equation} \label{eqn:skip} 
|\hat t_{i, j} - t_{i, j}|\geq 5/4, |t_{i, j+1} - \hat t_{i, j}| \geq C^{\text{\eqref{eqn:distH}}}_E\geq  5/4  \hbox{ for every } 
\zeta_{i, j} \in \hat J_{i, 0}
\end{equation}  
where $C^{\text{\eqref{eqn:distH}}}_E $ is from \eqref{eqn:distH}. 

Each maximal geodesic $\zeta_{i, j} \in \hat J_{i, 0}\setminus J_{i, R_0+ h/2}$ in $E$ of $g_i$ 
goes to a geodesic $\tilde \zeta_{i, j}$ in 
\newch{$E_{-1/8}$ of $\tilde g_i$
by the perpendicular projection which moves points by distances $< \eps <1/8$. }
We obtain two distances
\[d_{i, j, \pm} := d_{\Ss_+}(\partial_\pm \zeta_{i, j}, \partial_{\pm} \tilde \zeta_{i, j}) .\]
\newch{These are less than $\eps$ by Lemma \ref{lem:divergence}
since each endpoint of $\zeta_{i, j}\in J_{i, R_0+ h/2}$ moves less than $\eps$.}
The corresponding endpoints are at most distance 
$d_{i, j, \pm}$ apart, which are values of the divergence functions 
\newch{corresponding to} 
$\hat t_{i, j}$ and $t_{i, j}$ respectively.
Hence, their \newch{$x$-coordinate values} differ by less than $\newch{1.1}d_{i, j, \pm}$ respectively
using  \eqref{eqn:HH} as $0< \eps < 1/8$. 
\newch{By last parts of \cite{NewNormalComp} and \cite{NewNormalComp2} of 
the differences in the $\alpha$-values,}
we can estimate
for $\zeta_{i, j} \in \hat J_{i, 0} \setminus J_{i, R_0+ h/2}$, 
\begin{equation} \label{eqn:dij+-} 
|\alpha(\tilde \zeta_{i, j}) -\alpha(\zeta_{i, j})| \leq 5\mu_{i, j} (\newch{R_0+ h/2})(d_{i, j, +}/2+ d_{i, j,-}/2)
\end{equation} 
since we can put in the new $x$-coordinates and take differences in $E_{-1/2}$ 
where $\eta$ has the form of the standard cusp $1$-form.  
\newch{Here, we need to use the fact that $r > 10, \eps < 1/8, \eps < r/80$, 
$r \mapsto \sqrt{r^2+1}, r > 0,$ is distance decreasing,  and
estimates of differences of the inverses of radii of arcs using calculus.} 

We claim that 
the sum of $d_{i, j, +} + d_{i, j, -}$ for $\zeta_{i, j} \in \hat J_{i, 0}$ in 
$\tilde g_i \setminus \bigcup_{\zeta_{i, j} \in J_{i, R_0+ h/2}}  \tilde \zeta_{i, j}$ 
is less than $2$ times the sum of $d_{i, j, +}$ and $d_{i, j, -}$ over all
$\zeta_{i, j} \in J_{i, R_0+ h/2} $ which is less than $4\eps|J_{i, R_0+ h/2}|$:  
We move $g_i \setminus \bigcup_{\zeta_{i, j} \in J_{i, R_0+ h/2}}  \zeta_{i, j}$
to $\tilde g_i  \setminus \bigcup_{\zeta_{i, j} \in J_{i, R_0+ h/2}}  \tilde \zeta_{i, j}$
by perpendicular projections, and hence, the endpoints of 
$\zeta_{i, j}$ for $\zeta_{i, j}\in J_{i, R_0+ h/2}$ moving to 
$\tilde \zeta_{i, j}$ gives us the divergence functions. 
The sum of the values of the divergence functions at $t_{i, j}, \hat t_{i, j}$ for the endpoints of 
$\zeta_{i, j} \in \hat J_{i, 0}\setminus J_{i, R_0+ h/2}$ in a component of $g_i \setminus \bigcup_{\zeta \in J_{i, R_0+ h/2} }\zeta$, 
is less than \newch{$2$ times the sum of the values of} its endpoints by \eqref{eqn:skip} and Lemma \ref{lem:divergence}.
%

Since each endpoint of $\zeta_{i, j}\in J_{i, R_0+ h/2}$ moves less than $\eps$, 
we have by \eqref{eqn:dij+-}  
\begin{equation} \label{eqn:mid} 
\sum_{\zeta_{i, j}  \in \hat J_{i, 0} \setminus J_{i, R_0+ h/2} } |\alpha(\tilde \zeta_{i, j}) -\alpha(\zeta_{i, j})| 
\leq \newch{10}\mu (\newch{R_0+ h/2}) \eps|J_{i, R_0+ h/2}|.
\end{equation}


\newch{As in the third paragraph above}, 
for arcs $\hat \zeta_{i, j} \in J_{i, R_0+ h/2}$, we have 
\[ 
|\alpha(\tilde \zeta_{i, j}) - \alpha(\hat \zeta_{i, j})| \leq 5 \mu(\newch{R_0 + h/2})\eps.
\]
Hence, 
\begin{equation} \label{eqn:long} 
\sum_{\zeta_{i, j}  \in J_{i, R_0+ h/2} } |\alpha(\tilde \zeta_{i, j}) - \alpha(\hat \zeta_{i, j})| \leq 5 \mu(\newch{R_0 + h/2})\eps |J_{i, R_0+ h/2}|.
\end{equation}

For $\alpha$-values outside these, we integrate $\eta$ projected to the neutral bundle over 
\[g_i \setminus \bigcup_{\zeta_{i, j} \in \hat J_{i, 0}}  \zeta_{i, j},  
\hbox{ and } \tilde g_i \setminus  \bigcup_{\zeta_{i, j} \in \hat J_{i, 0}}  \tilde \zeta_{i, j},\] 
they all happen inside $\Sf \setminus E_{\newch{5/8}+\eps}$. 
\newch{
By Lemmas \ref{lem:divergence} and \ref{lem:perturb}, the absolute value of
the $\alpha$-value difference is 
bounded above by the neutral factor $C_{\Sf\setminus E_{\newch{5/8}+\eps}}$ times 
$2$ times the sum of 
perpendicular distances
at the end points of the corresponding arcs.
These  values are from endpoints of arcs in  $\hat J_{i,0}$ considered by 
a pararagraph above \eqref{eqn:dij+-}  or endpoints of 
arcs in $J_{R_0 + h/2}$.  
Hence the absolute value of 
the $\alpha$-value difference is bounded above by $4\eps C_{\Sf\setminus E_{5/8+\eps}}|J_{i, R_0 + h}|$.
}

\newch{
Hence, we obtain by 
\eqref{eqn:mid} and \eqref{eqn:long} and the assumptions in (i). 
\begin{multline} \label{eqn:difference} 
|\alpha(\tilde g_i)- \hat\alpha(\hat g_i)| \leq |J_{i, R_0+ h/2}|( 4C_{\Sf\setminus E_{5/8+\eps}} \eps 
+ 15\mu (R_0 + h/2 ) \eps), \\ 
15(R_0\eps + h\eps/2 ) = 15\times 2 \times 7  + 15 h \eps/2  < 218
\end{multline} 
}


\newch{
(iv) Lastly, we apply the above to complete the convergences to $\infty$. 
At (i),  
we chose above a 
sufficiently small $\eps$ so that $C_{\Sf\setminus E_1}\eps < \mu$. 
Since $C_{\Sf \setminus E_{1}} \geq C_{\Sf\setminus E_{5/8 + \eps}}$, 
we obtain
\begin{multline} \label{eqn:est1} 
\alpha(g_i) = \hat \alpha(\hat g_i) + \sum_{\zeta \in J_{i, R_0+ h/2}} (\alpha(\zeta) - \alpha(\hat \zeta)) \\ 
\geq \alpha(\tilde g_i) + \sum_{\zeta \in J_{i, R_0+ h/2}} \left( \alpha(\zeta) - \alpha(\hat \zeta) - 
(4C_{\Sf\setminus E_1}\eps + 218\mu )\right)\\
\geq \alpha(\tilde g_i) +\sum_{\zeta \in J_{i, R_0+ h/2}}  \mu_{\text{min}} C^{(4.6)}_{R_0+ h/2, C'} r(\zeta)^2 \hbox{ for } C':= 222 \mu/\mu_{\text{min}},   
\end{multline} 
by Lemma \ref{lem:largecusp} and \eqref{eqn:difference}. 
}

Now we can show that $\alpha(g_i) \ra \infty$ provided $l_{\Ss_+}(g_i) \ra \infty$: 
Suppose that $l_{\Ss_+}(g_i) \ra \infty$. 
If $\alpha(\tilde g_i) \ra \infty$, then $\alpha(g_i) \ra \infty$ by \eqref{eqn:est1},
and we are done.  
Suppose that $\alpha(\tilde g_i)$ is bounded above. 
Then $l_{\Ss_+}(\tilde g_i)$ is also bounded above by Lemma \ref{lem:outsidecusp}.
\newch{Since $r(\hat \zeta_{i, j}) \geq R_0$,
we have 
$l_{\Ss_+}(\tilde \zeta_{i, j}) \geq
l_{\Ss_+}(\hat \zeta_{i, j}) - 2\eps =
2 \text{arcsinh}(R_0 ) - 2\eps$ by \eqref{eqn:HH}.   }
\newch{Since $R_0 > 10, \text{arcsinh}(10) > 2.99,  1/8> \eps$ by assumptions in (i), 
it follows that $|J_{i, R_0+ h/2}|$ is bounded above. }
Only possibility is $r(\zeta_{i, j}) \ra \infty$ for some 
members $\zeta_{i, j}$ of $J_{i, R_0+ h/2}$ in order that $l_{\Ss_+}(g_i) \ra \infty$. 
This also implies 
$\alpha(g_i) \ra \infty$ by \eqref{eqn:est1}.

\newch{ 
Now we go to the ratio limit.
Notice that 
\begin{multline} \label{eqn:pasum} 
\sum_{\zeta_{i, j} \in J_{i, 0}} (\exp(-t_{i, j}/2) r(\zeta_{i, j}))  
\leq \sum_{\zeta_{i, j} \in J_{i, R_0+ h/2}} (\exp(-t_{i, j}/2) 
r(\zeta_{i, j}) + \\
\sum_{\zeta_{i, j} \in J_{i, 0}\setminus J_{i, R_0+ h/2}} (\exp(-t_{i, j}/2) r(\zeta_{i, j}).
\end{multline}
The second term is bounded above by a constant since 
each term is bounded above. This term can be absorbed into 
$C_{\mathscr{K}}$ in \eqref{eqn:av3}. 
}

We obtain by \eqref{eqn:intest3p},
\newch{ \eqref{eqn:est1}, and \eqref{eqn:pasum}} that 
\begin{equation} \label{eqn:av3} 
\frac{\alpha(g_i)}{\vb_{g_i, -}} 
\geq \frac{ \alpha(\tilde g_i) +\sum_{\zeta \in J_{i, R_0 \newch{+ h/2}}} \mu_{\text{min}} C^{(4.6)}_{R_0+ h/2, C'} r(\zeta)^2        }{  \tilde C(F, \mathscr{K}) \sum_{\zeta_{i, j} \in J_{i, R_0+ h/2}} (\exp(-t_{i, j}/2) 
r(\zeta_{i, j}))  +C_{2}(\mathscr{K}, N(\Sf_{C} \setminus E)) + C_{\mathscr{K}}}.
\end{equation} 
If $J_{i, R_0+ h/2} = \emp$ for infinitely many $i$, 
then $\alpha(g_i) \ra \infty$ up to a choice of a subsequence  
by Lemma \ref{lem:outsidecusp}. 
\newch{Since the nominator is a sum of bounded constants, we are done.}
Suppose not and that we have a sequence such that 
$\sum_{\zeta_{i, j} \in J_{i, R_0+ h/2}} \exp(-t_{i, j}/2) r(\zeta_{i, j})) \ra 0$ as $i\ra \infty$.
Define $\hat t_i$ to be the first $t_{i, j}$ where $r(\zeta_{i, j}) > R_0+h/2$. 
This means that $\hat t_i \ra \infty$ and 
$l_{\Ss_+}(\tilde g_i) \ra \infty$ and $\tilde g_i \subset \Sf \setminus E_{R_0+ h/2 +\eps}$.
By Lemma \ref{lem:outsidecusp},  $\alpha(\tilde g_i) \ra \infty$, and we are done
for the purpose of Section \ref{subsub:ratiobound}. 

Since we need to show the result for subsequences only, 
we may assume that 
\[  C_0 \tilde C(F, \mathscr{K}) \sum_{\zeta_{i, j} \in J_{i, R_0+ h/2}} \exp(-t_{i, j}/2) r(\zeta_{i, j})) \geq 
(C_{2}(\mathscr{K}, N(\Sf_{C} \setminus E)) + C_{\mathscr{K}})\]
for a constant  $C_0 > 0$. 
Hence, we obtain
\begin{multline} \label{eqn:av4}  
\frac{\alpha(g_i)}{\llrrV{\vb_{g_i, -}}}
\geq \frac{\alpha(\tilde g_i)}{  \tilde C(F, \mathscr{K})(1+C_0) 
\sum_{\zeta_{i, j} \in J_{i, R_0+ h/2}} (\exp(-t_{i, j}/2)r(\zeta_{i, j})) }  \\ 
+ \frac{ \mu_{\text{min}} C^{(4.6)}_{R_0+ h/2, b, C'} \sum_{\zeta \in J_{i, R_0+ h/2}}  r(\zeta)^2   }{ 
 \tilde C(F, \mathscr{K}) (1+C_0) \sum_{\zeta_{i, j} \in J_{i, R_0+ h/2}} (\exp(-t_{i, j}/2)r(\zeta_{i, j})) } .
\end{multline} 

We define 
\begin{multline} 
\newch{\vec{e}_i} := (\exp(t_{i, j}/2))_{\zeta_{i, j} \in J_{i, R_0+ h/2}},  \\
\newch{\vec{r}_i} := (r(\zeta_{i, j}))_{\zeta_{i, j} \in J_{i, R_0+ h/2}} \in \bR^{|J_{i, R_0+ h/2}|}, 
\hbox{ and } \\
\llrrV{\vec{v}}_{i, R_0 + h/2} := \sqrt{ \vec{v} \cdot \vec{v} }, \vec{v} \in \bR^{|J_{i, R_0+ h/2}|}. 
\end{multline} 
Using the Schwartz inequality 
\[ \left|\sum_{\zeta_{i, j} \in J_{i, R_0+ h/2}} (\exp(-t_{i, j}/2)r(\zeta_{i, j}))  \right| 
\leq \llrrV{\newch{\vec{e}_i}}_{i, R_0+ h/2}  \llrrV{\newch{\vec r_i}}_{i, R_0+ h/2},\] 
we obtain that \eqref{eqn:av3} is bigger than equal to  $\frac{1}{\tilde C(F, \mathscr{K})}$ times 
\[ 
\frac{\alpha(\tilde g_i)}{(1+C_0) \llrrV{\vec{e}_i}_{i, R_0+ h/2}  \llrrV{\vec r_i}_{i, R_0+ h/2}} 
+ \frac{\mu_{\text{min}}  C^{(4.6)}_{R_0+ h/2, b, C'} \llrrV{\vec r_i}_{i, R_0+ h/2} }{(1+C_0)  \llrrV{\vec{e}_i}_{i, R_0+ h/2} }.
\] 
For each arc $\zeta$ in $J_{i, R_0+ h/2}$, there is a corresponding maximal geodesic 
$\zeta_f$ in $\tilde g_i$ given by the perpendicular projection and extending 
to a maximal geodesic arc in $E$. 

Using the  perpendicular projection paths at the end points and the triangle inequalities, we obtain
\begin{multline} 
l_{\Ss_+}(\tilde g_i) \geq l_{\Sf}\left(\tilde g_i
\setminus \bigcup_{\zeta \in J_{i, R_0+ h/2}} \tilde \zeta\right)
 + \sum_{\zeta \in J_{i, R_0+ h/2}} l_{\Ss_+}(\tilde \zeta)  \\
\geq  l_{\Sf\setminus E}(g_i)  + \sum_{\zeta \in J_{i, R_0+ h/2}} 
(l_{\Ss_+}(\hat \zeta) - 2\eps_+(\zeta) - 2\eps_{-}(\zeta))
\end{multline} 
where
\newch{
 $\eps_{+}(\zeta)$ amd $\eps_{-}(\zeta)$ 
respectively are the vertical projection path lengths from 
the forward and backward endponts of $\zeta \in J_{i, R_0+ h/2}$ to the corresponding ones of the arc $\tilde \zeta$ in $\tilde g_i$. 
}
\newch{
We obtain $l_{\Ss_+}(\hat \zeta) \geq  2\text{arcsinh}(R_0) > 5.8$
by \eqref{eqn:HH} as $R_0 >10$ by the assumption in (i). 
}
\newch{
Since $\eps_{\pm}(\zeta) \leq \eps < 1/8$, 
the positivity of the later terms follows. 
}



By Lemma \ref{lem:outsidecusp}, 
$\alpha(\tilde g_i) \geq c^{(1.4)}_{\Sf\setminus E_{R_0/2+ h/2 + \eps}} l_{\Ss_+}(\tilde g_i)$. 
We obtain \eqref{eqn:av3} is bigger than equal to $\frac{1}{\tilde C(F, \mathscr{K})}$ times 
\[ 
\frac{ c^{(1.4)}_{\Sf\setminus E_{R_0/2+ h/2 + \eps}} l_{\Sf\setminus E}(g_i)}{(1+C_0) \llrrV{\vec{e}_i}_{i, R_0+ h/2}  \llrrV{\vec r_i}_{i, R_0+ h/2}} 
+ \frac{\mu_{\text{min}}  C^{(4.6)}_{R_0+ h/2, b, C'} \llrrV{\vec r_i}_{i, R_0+ h/2} }{(1+C_0)  \llrrV{\vec{e}_i}_{i, R_0+ h/2} }.
\] 
Now, this is a function converging to $\infty$ 
as 
 \[\max\{l_{\Sf\setminus E}(g_i), \llrrV{\vec r_i}_{i, R_0+ h/2}\}\ra \infty.\] 

\newch{
Suppose that $l_{\Ss_+}(g_i) \ra \infty$. 
Then we claim that 
$\max\{l_{\Sf\setminus E}(g_i),  \llrrV{\vec r_i}_{i, R_0+ h/2}\}\ra \infty$: }

\newch{
Suppose that $l_{\Sf\setminus E}(g_i)$ is bounded. Then the number of 
maximal geodesic arcs of $g_i$ going into $\Sf \setminus E$ is finite 
by \eqref{eqn:distH}.
Then $r(\zeta_{i, j}) \ra \infty$ for some index $(i, j)$ as $i \ra \infty$ 
since otherwise we will have $l_{\Ss_+}(g_i)$ bounded by \eqref{eqn:HH}.
Hence, $\llrrV{\vec r_i}_{i, R_0+ h/2} \ra \infty$.  
}

\newch{
Conversely, suppose that
$\{ \llrrV{\vec r_i}_{i, R_0+ h/2} \}$ is bounded above. 
If $|J_{i, R_0+h/2} | \ra \infty$, then 
 $l_{\Sf\setminus E}(g_i) \ra \infty$ by \eqref{eqn:skip}.  
Otherwise, 
if $|J_{i, R_0+h/2} | $ is bounded, there is an upper bound to the absolute values of
the coordinates of $\vec r_i$ and 
$l_{\Ss_+}(\zeta)$ for $\zeta \in J_{i, R_0+h/2}$, 
implying the absurdity that $l_{\Ss_+}(g_i)$ is bounded above. 
}

 We are done proving the main aim of Section \ref{subsub:ratiobound}.

\subsubsection{The direction result} \label{subsub:direction} 

\purple{(VI)  We come to the last step.} 


\begin{theorem}  \label{thm:unifest} 
Assume Criterion \ref{cr:positive} and $\mathcal{L}(\Gamma) \subset \SO(2,1)^{o}$. 
%
Let $\eta$ be a $\mathcal{V}$-valued $1$-form corresponding to 
the boundary cocycle for $\Gamma$.  
Let $\mathscr{K}$ be a compact subset of 
\newch{$\CH(\Lambda_{\Gamma, \Ss_+})\setminus {\mathscr{H}}$.}
For every sequence $\{g_{i}\}$ with \oldch{$\{l_{\Ss_+}(g_{i})\} \ra \infty$} of elements of 
\hyperlink{term-GK}{$\Gamma_{\mathscr{K}}$}, 
the following hold\,{\rm :} 
\begin{itemize}
\item  
$\{\llrrV{ \vb_{g_{i}}}_{E}\} \ra \infty $.
\item \purple{$\alpha(g_i) \ra \infty$ and $\alpha(g_i)/\llrrV{\vb_{g_{i},-}}_E \ra \infty$.}
\item 
$\{\hyperlink{term-bdd}{\bdd}(\llrrparen{ \vb_{g_{i}} }, \clo(\zeta_{a_{g_{i}}}))\} \ra 0$. 
\end{itemize}
\end{theorem}
\begin{proof}
\purple{
The first item follows since otherwise $g_{i}(O)$ is in a bounded set contradicting the properness of the $\Gamma$-action.
}
\purple{
By Lemma \ref{lem:metricbound},
we may also assume that 
\begin{equation} \label{eqn:vecx+} 
\{\vv_{+, (x_i, \vu_i)}\} \ra \vv_{+}, \bnu_{g_i} \ra \bnu,
\hbox{ and } \{\vv_{-, (x_i, \vu_i)}\} \ra \vv_-
\end{equation} 
for an independent set of 
vectors $\vv_+, \bnu, \vv_- $ by choosing subsequences if necessary.
These are all positively oriented in $\Lspace$. 
Let $\mathscr{C}_{\infty}$ denote the matrix with columns $\vv_+,  \bnu,$ 
and $\vv_-$. 
}

\purple{
We showed  in Section \ref{subsub:ratiobound} that 
\begin{equation} \label{eqn:alphabv-} 
\alpha(g_i) \ra \infty \hbox{ and } 
\frac{\alpha(g_i )}{ \llrrV{\vb_{g_i, -} }}  \ra \infty \hbox{ as } l_{\Ss_+}(g_i) \ra \infty. 
\end{equation} 
Hence, 
 for $i \ra \infty$
\begin{equation}\label{eqn:ratio} 
\{\llrrparen{ \llrrV{\vb_{g_{i},+}}_{E}: \llrrV{\vb_{g_{i},0}}_{E}: \llrrV{\vb_{g_{i},-}}_{E}}\}\,\,   \ra  
\llrrparen{ \pm 1:0: 0 }\,\,  \hbox{ or } 
\llrrparen{ \ast_{1}: \ast_{2}: 0 }\,\,
\end{equation} 
where $\ast_{2} \geq 0$ since $\alpha(g_{i}) > 0$ {by Criterion \ref{cr:positive}}. 
\eqref{eqn:vecx+} implies
\begin{equation}
\{\hyperlink{term-bdd}{\bdd}\left( \llrrparen{ \vb_{g_{i}} },  \clo(\zeta_{a_{g_{i}}})\right)\}\ra 0 \hbox{ as } l_{\Ss_+}(g_{i})\ra \infty   
\end{equation} 
by the above conclusion. 
}

%
\end{proof}

\subsection{Accumulation points of $\Gamma$-orbits} \label{sub:accumulate} 



Recall $N_{\bdd, \eps}(\cdot)$ from Section \ref{sub:Hlimit}. 
We again use $\bdd_H$ in $\SI^3$. 
We say that $\gamma_i(K)$ for a compact set  $K$ and a sequence 
$\gamma_i$ \hypertarget{term-acc}{{\em accumulates only to}} a set $A$ if 
$\gamma_i(z_i), z_i\in K$ has accumulation points only in $A$. 
Of course, the same definition extends to the case when $K$ is a point. 
It is easy to see that 
this condition is equivalent to the condition that 
\[
\hbox{ for every } \eps > 0, \hbox{ there is } I \hbox{ so that }  
\gamma_i(K)\subset N_{\bdd, \eps}(A) \hbox{ for } i > I.
\]  
(For the point case, we need to change the symbol $\subset$ to the symbol $\in$.)


\begin{corollary} \label{cor:conv1}
	Assume Criterion \ref{cr:positive} and $\mathcal{L}(\Gamma) \subset \SO(2,1)^{o}$. 
%
	Let $K \subset \Lspace$ be a compact subset. 
	Let $y \in \Ss_+$, and 
	let $\gamma_i \in \Gamma$ be a sequence such that 
	$\{\gamma_i(y)\}\ra y_\infty$ for $y_\infty \in \partial \Ss_+$. 
	Then 
	for every $\eps > 0$, there exists $I_0$ such that 
	\begin{equation}\label{eqn:zetainfty}  
	\gamma_i(K) \subset N_{\bdd, \eps}(\clo(\zeta_{y_\infty})) \hbox{ for } i > I_0.
	\end{equation} 
	Equivalently, any sequence $\{\gamma_i(z_i)| z_i \in K\}$ accumulates 
	only to $\clo(\zeta_{y_\infty})$.
	\end{corollary} 
\begin{proof} 
%
\newch{It is enough to prove for subsequences of every subsequence 
that the conclusion holds}.
To obtain all limit points of $\{\gamma_{i}(K)\}$, we will use the fact that $\Gamma$ acts as a convergence group on $\partial \Ss_+$ from Section \ref{sub:mar}.
Up to choosing subsequences, we assume that $\{\gamma_{i}\}$ is a \hyperlink{term-cfg}{convergence sequence} with 
the attracting point $a$ and the repelling point $r$. 

%

We first consider the case $a \ne r$. 
Then $\gamma_{i}$ acts on a geodesic $l_{i}$ in $\Ss_{+}$ passing a compact set $\mathscr{K}$ for sufficiently large $i$. 
Let $x_i \in \mathscr{K} \cap l_i$ where 
$l_i$ is given the direction $\vu_i$ so that $\gamma_i$ acts in the forward direction.   
Using the notation of the proof of Theorem \ref{thm:unifest}, 
we have \[a(\gamma_i) = \llrrparen{\vv_{+, (x_i, \vu_i)}}, 
r(\gamma_i) = \llrrparen{\vv_{-, (x_i, \vu_i)}}.\] 
{We only need to consider subsequences $\{\gamma_i\}, \gamma_i \in \Gamma$, where the sequence  
$a(\gamma_i) \in \partial \Ss_+$ of attracting fixed points and the sequence $r(\gamma_i) \in\partial \Ss_+$ of repelling fixed points are both 
convergent. 
Here, \[ \{a(\gamma_{i})\} \ra a \hbox{ and } \{r(\gamma_{i})\} \ra r \,  {\hbox{ in } \partial \Ss_+}.\]
}

Since $\{\gamma_{i}(y)\} \ra y_{\infty} \in \partial \Ss_{+}$, 
$\gamma_{i}$ is unbounded in $\Gamma$ and hence \oldch{$\{l_{\Ss_+}(\gamma_{i})\} \ra \infty$}, and 
$\{\lambda(\gamma_{i})\} \ra \infty$ for the largest eigenvalue $\lambda(\gamma_{i})$ of $\gamma_{i}$.

The convergences are uniform on the compact set $K\subset \Lspace$. 
To explain, we recall \eqref{eqn:vecx+}. We introduce 
the $(x^{(i)}, y^{(i)}, z^{(i)})$-coordinate system where 
\[\vv_{+, (x_i, \vu_i)}, \bnu_{\gamma_i}, \hbox{ and } \vv_{-,(x_i, \vu_i)}\] 
\newch{form} a coordinate basis 
parallel to the $x^{(i)}$-, $y^{(i)}$-, and $z^{(i)}$-axes respectively. 
We let $x, y, z$ denote the coordinate functions, where
$(\vv_+, \nu, \vv_-)$ forms a coordinate basis. 

$K$ is in a region $R_i$ given by 
\[[-C_1, C_1] \times [-C_2, C_2] \times [-C_3, C_3]\]
in the  $(x^{(i)}, y^{(i)}, z^{(i)})$-coordinate system. 
We may assume $C_1, C_2, C_3$ are independent of $i$ since 
the coordinate functions $x^{(i)}$, $y^{(i)}$, and $z^{(i)}$ 
converge respectively to coordinate functions $x, y$, and $z$ 
on $\Lspace$. 
We write $\gamma_{i}(x) = A_{\gamma_{i}} x + \vb_{\gamma_{i}}$. 
Since the sequence of largest 
eigenvalues of the linear parts of $\gamma_i$ goes to $+\infty$, 
$A_{\gamma_i}(R_i)$ is given under the $(x^{(i)}, y^{(i)}, z^{(i)})$-coordinate
system
by
\[[-D_i, D_i] \times [-E_i, E_i] \times [-F_i, F_i]\] 
where $\{D_i \}\ra \infty$, $E_i =C_2$, $\{F_i\} \ra 0$ for $F_i> 0$. 
By Definition \ref{defn:bg-}, $\gamma_i(R_i)$ is in
\begin{multline} \label{eqn:gammaC} 
S_{i} := [-\infty, \infty] \times [-E_i+ \alpha(\gamma_i), E_i+\alpha(\gamma_i)] \times  \\
\left[-F_i - \frac{\llrrV{\vb_{\gamma_{i},-}}_E}{\llrrV{\vv_{-, (x_i, \vu_i)}}_E}, F_i+ \frac{\llrrV{\vb_{\gamma_{i},-}}_E}{\llrrV{\vv_{-, (x_i, \vu_i)}}_E}\right]
\end{multline}
in the  $(x^{(i)}, y^{(i)}, z^{(i)})$-coordinate system. 

Recall coordinate change maps $\mathscr{C}_i$ and $\mathscr{C}_\infty$ 
near \eqref{eqn:vecx+}.  
For sufficiently large $i$, we deduce that $\gamma_i(R_i)$ is
 a subset of $N_{\bdd, \eps}({\clo(\zeta_a)})$ as follows:
There is a sequence of coordinate change maps 
$h_i:\Lspace \ra \Lspace$ with a uniformly bounded 
matrix $\mathscr{C}_{\infty}\mathscr{C}_{i}^{-1}$ such that 
\[x\circ h_i = x^{(i)}, y\circ h_i = y^{(i)}, z\circ h_i = z^{(i)}.\] 
Since \[x^{(i)} \ra x, y^{(i)} \ra y, z^{(i)} \ra z\] by \eqref{eqn:vecx+}, 
we obtain $h_i \ra \Idd_{\SI^3}$ as $i \ra \infty$.
What $h_i$ does is to send a box in the $(x^{(i)}, y^{(i)}, z^{(i)})$-coordinate system to 
the box of the same coordinates in the $(x, y, z)$-coordinate system. 

\purple{ 
Since $\alpha(\gamma_i) \ra \infty$ and 
$\alpha(\gamma_i)/\llrrV{\vb_{\gamma_{i},-}}_E \ra \infty$ 
by Theorem \ref{thm:unifest},} 
\eqref{eqn:gammaC} implies that 
$h_i(S_i) \ra \clo(\zeta_a)$ geometrically. 
Since $h_i \ra \Idd_{\SI^3}$, we deduce that 
$S_i \ra  \clo(\zeta_a)$ by Corollary \ref{cor:geoc}. 
Hence, for every $\epsilon> 0$, 
we have \[\gamma_{i}(R_{i}) \subset S_i \subset N_{\bdd, \eps}(\clo(\zeta_a))\] for sufficiently large $i$, \oldch{ and \eqref{eqn:zetainfty}}  holds. 

Finally, suppose that $a = r$. 
We choose $\gamma$ so that 
\[a(\lim_{i\ra \infty}\gamma\gamma_i)= 
\gamma(a) 
\ne r = \lim_{i \ra \infty }r(\gamma_{i})\]
and use the sequence $\gamma\gamma_{i}$ as our convergence sequence. 
Then $\{\gamma \gamma_i(K)\}$ accumulates only on $\clo(\zeta_{\gamma(a)}) = \gamma(\clo(\zeta_a))$. 
Therefore, $\{\gamma_i(K)\}$ accumulates only to $\clo(\zeta_a)$. 

\end{proof}

%
	%

We end with the \newch{following}: 
\begin{proof}[Converse part of Theorem \ref{thm:equivalence}] 
Suppose that $\Gamma \subset \SO(2,1)^o$. 
To show the proper action of $\Gamma$, we  
show that for any sequence $\{g_i\}$ of infinite elements, 
$g_i(K) \cap K \newch{\neq} \emp$ for only finitely many elements. 
Suppose not. Then 
by taking a subsequence, we may assume that $\gamma_i(y) \ra y_\infty$ for 
$y_\infty\in \partial \Ss_+$.  
By Corollary \ref{cor:conv1}, we showed that this cannot happen. 

If $\Gamma$ is not in $\SO(2, 1)^o$, 
then we use the index $2$ subgroup $\Gamma' \subset \SO(2, 1)^o$ and it acts properly on 
$\Lspace$ and so does $\Gamma$. 
\end{proof}

\section{The topology of Margulis space-times with parabolics} \label{sec:MP}


We first give an outline of this long section. 
We discuss the classical theory of Scott and Tucker \cite{ST89} on open $3$-manifolds homotopy equivalent to compact ones. 
Next, we will construct \hyperlink{term-pbr}{parabolic regions} in $\tilde M$. 

%




In Section \ref{sub:fund}, we will find a fundamental region for $\Gamma$ in $\Lspace$ using the work of Epstein-Petronio \cite{EP94}.
{By Proposition \ref{prop:exhaust},} we obtain an exhausting sequence 
\[M_{(1)}\subset M_{(2)} \subset M_{(3)} \subset \cdots \subset \Lspace/\Gamma.\]
In Section \ref{subsub:bounded}, we discuss some boundedness properties of {the inverse image $\tilde M_{(J)}$ of $M_{(J)}$} for some $J$ 
meeting with disks and topological polytopes. 
First, we construct the candidate disks to bound a candidate fundamental domain.
The key step is Proposition \ref{prop:boundedness} 
that the universal cover $\tilde M_{(J)}$ of 
an element $M_{(J)}$ of the exhausting sequence meets the candidate disks and 
parabolic regions in bounded sets.
This implies 
Corollary \ref{cor:FmMi}  that $\tilde M_{(J)}$ meets a candidate
fundamental domain $\mathbf{F}$ in a compact submanifold
and hence $\mathbf{F}\setminus \tilde M_{(J)}$ is a compact finite-sided topological polytope. 
In Section \ref{subsub:chooseF}, we choose our candidate disks $\mathcal{D}_{j}$, $j=1, \dots, {\mathbf{g}}$, 
and the candidate fundamental domain $\mathbf{F}$. 
Then we divide $\tilde M$ into $\tilde M_{(J)}$ and $\tilde M\setminus \tilde M_{(J)}$.
We show $\mathbf{F}\setminus \tilde M_{(J)}$ for sufficiently large $J$ is the fundamental 
domain of $\tilde M\setminus \tilde M_{(J)}$ 
using Proposition \ref{prop:Poincare} (the Poincar\'{e} fundamental domain theorem).   
Candidate disks in $\tilde M$ are replaced by ones mapping to embedded disks in $M$ by 
replacing the parts in $\tilde M_{(J)}$ by {Theorem \ref{thm:Dehn}, i.e., Dehn's lemma}. 
We obtain the fundamental domain of $\tilde M_{(J)}$, 
proving the tameness of $M$, and the first part of Theorem \ref{thm:main}. 

In Section \ref{sub:parab}, we will show that for a choice of parabolic regions sufficiently far from $\tilde M_{(J)}$, their images under $\Gamma$ are mutually disjoint.
To show this, we use the tessellations by the images of a fundamental domain, and
we explain how they intersect with the parabolic regions. Then we can account for 
every image by its relationship with the images of the fundamental domain. 

In Section \ref{sub:compactification}, we will discuss the relative compactification of $\tilde M$. We will prove the final part of 
Theorem \ref{thm:main} and Corollary \ref{cor:main2}.  (See Marden \cite{Marden74} \cite{Marden07}, and \cite{Marden16} 
for many aspects of ideas in this section.)




\subsection{Handlebody exhaustion of the Margulis space-times} \label{sub:handle}

The ends of $\Ss_+/\mathcal{L}(\Gamma)$ are finitely many, and some of these are cusps.  
A \hypertarget{term-peri}{{\em peripheral}} element of $\Gamma$ is an element corresponding to 
a closed loop in the complete hyperbolic surface freely homotopic to 
one in an end neighborhood homeomorphic to an annulus. 
Let $\mathcal{I}'$ denote the collection of the maximal peripheral cyclic subgroups of $\Gamma$,
and let $\mathcal{I}$ denote the ones with hyperbolic holonomy. 
Each peripheral element of $\Gamma$ acts on 
a point of $\partial \Ss_+$ as a parabolic element 
or on a connected arc $a_i \subset \partial \Ss_+, i\in {\mathcal I}$ with the hyperbolic cyclic group
$\langle \vartheta_i \rangle$ acting on it. 
Here 
\[\Sigma_{+}:= (\Ss_+ \cup \bigcup_{i\in {\mathcal I}} a_i)/\Gamma\] is a finite-type surface with  finitely many punctures and 
boundary components covered by arcs of the form $a_i$. 

We define $A_i := \bigcup_{x \in a_i} \zeta_{x}$, $i\in {\mathcal I}$, 
an open domain
where $\zeta_{x}$ is the \hyperlink{term-gs}{accordant} great segment for $x$. 
We define 
\begin{equation} \label{eqn:tSigma} 
\tilde \Sigma := \Ss_+ \cup \Ss_- \cup \bigcup_{i \in {\mathcal I}} (A_i \cup a_i \cup \mathcal{A}(a_i)).
\end{equation} 
Then $\Gamma$ acts properly  on $\tilde \Sigma$, and 
$\Sigma:= \tilde \Sigma/\Gamma$ is a \hyperlink{term-rps}{real projective surface}. 
This follows by the same proof as Theorem 5.3 of \cite{CG17} without change. 
Again, $\Sigma$ has twice the number of punctures as $\Sigma_{+}$
and $\chi(\Sigma) = 2\chi(\Sigma_{+})$. 

We define $\tilde N:= \Lspace \cup \tilde \Sigma$. 
{\em We will show below that $\Gamma$ acts properly 
 on $\tilde N$ 
to give us a manifold quotient $\tilde N/\Gamma$}\,: 


Let $N$ be a manifold.
A sequence $N_i$ of submanifolds of $N$ is {\em exhausting} if 
$N_i \subset N_{i+1}$ for all $i$ 
and every compact subset of $N$ is a subset of $N_i$ for some $i$.
We obtain $N = \bigcup_{i=1}^\infty N_i$ necessarily. 

The following is essentially due to Scott and Tucker \cite{ST89},
{which we learned from some talks by Ohshika \cite{Ohshika}} in this form
(See also page 5 of Canary and Minsky \cite{CM96}).

\begin{proposition} \label{prop:exhaust}
Let $\Lspace/\Gamma$ be a Margulis space-time with parabolics. 
Then $\Lspace/\Gamma$ has a sequence of handlebodies 
\[M_{(1)} \subset M_{(2)} \subset \dots \subset M_{(i)} \subset M_{(i+1)} \subset \dots\]
so that $M_0 = \bigcup_{i=1}^\infty M_{(i)}$. They have the following properties\,{\rm :} 
\begin{itemize}
\item $\pi_{1}(M_{(1)}) \ra \pi_{1}(M)$ is an isomorphism. 
\item The inverse image $\tilde M_{(i)}$ of $M_{(i)}$ in $\tilde M$ is connected. 
\item $\pi_{1}(M_{(i)}) \ra \pi_{1}(M)$ is surjective. 
\item For each compact subset $K \subset \Lspace/\Gamma$, there exists an integer $I$ 
so that for $i > I$, $K \subset M_{(i)}$. 
\end{itemize} 
\end{proposition} 
\begin{proof}
	The existence of exhaustion is clear. 
	We choose $M_{(1)}$ by using the $1$-complex homotopy equivalent to $M$. 
	$\pi_1(M_{(i)}) \ra \pi_1(M)$ is surjective since 
	$\pi_1(M_{(1)}) \ra \pi_1(M)$ factors into this map and 
	$\pi_1(M_{(1)}) \ra \pi_1(M_{(i)})$. 
	Choose a base point $x_0$ of $M_{(1)}$. 
	Any closed loop in $M$ with a basepoint in $M_1$ 
	is homotopic to a closed loop in $M_{(i)}$.  
	Hence, any two points of the inverse image  $x_0$ in
	$\tilde M$ is connected by a path in $\tilde M_{(i)}$ by the homotopy path-lifting 
	theorem of Poincar\'e. 	Thus, $\tilde M_{(i)}$ is connected. 
	\end{proof}

\subsubsection{Parabolic solid-torus regions} \label{subsub:pararegion} 
Let $\Sf := \Ss_+/\mathcal{L}(\Gamma)$. It has finitely many ends. 
Some of these are cusp ends, and some are hyperbolic ends. 
$\Gamma$ has parabolics \[\mathbf{g}_1, \dots, \mathbf{g}_{m_0},\] 
each of which represents a generator of the fundamental group of  
a cusp neighborhood of $\Sf$.
We let each of \[\mathbf{g}_{m_0+1}, \dots, \mathbf{g}_{m_0+h_0}\] 
represent the generator of each of the fundamental groups 
of the hyperbolic end neighborhoods of $\Sf$. 
We choose the generators along the boundary orientation of $\Sf$. 

{Recall the notations from Section \ref{sub:thin}.}
We take components ${\mathscr{H}}_i \subset \Ss_+$, 
$i \in \mathcal{I}'\setminus \mathcal{I}$ in $\Ss_+$ of ${\mathscr{H}}$. 
A parabolic primitive element $\mathbf{g}_i$ conjugate to $\mathbf{g}_{j}$ for some $j$ acts on ${\mathscr{H}}_i$.  
We also note for every $g \in \Gamma$, 
\begin{itemize} 
\item either $g({{\mathscr{H}}}_i) = {{\mathscr{H}}}_i$ and $g = \mathbf{g}_i^n$ for $n \in \bZ$, or else 
\item $g({{\mathscr{H}}}_i) \cap {{\mathscr{H}}}_i = \emp$. 
\end{itemize} 
We define ${\mathscr{H}}_{i, -} =\mathcal{A}({\mathscr{H}}_{i}).$ 
There is a fixed point $p_i$ of $\mathbf{g}_i$ 
in $\Bd_{\Ss} {{\mathscr{H}}}_i \cap \partial \Ss_+$ for each $i\in \mathcal{I}' \setminus \mathcal{I}$. 
For each $i \in 1, \dots, m_{0}$, 
Theorem \ref{thm:Sr}  gives us a properly embedded ruled surface
$S_{i}:= S_{f_i, r_{0}} \subset \Lspace$ for some fixed function 
$f_i: (0, 1) \ra \bR$ {and}     
\[\clo(S_{i})\setminus S_i = \clo(\zeta_{a(\mathbf{g}_{i})})\cup 
\partial_h {{\mathscr{H}}}_{i} \cup \partial_h {{\mathscr{H}}}_{i, -}  \]
where $a(\mathbf{g}_{i})$ 
is the parabolic fixed point of $\mathbf{g}_{i}$ in $\partial \Ss_{+}$.  
$S_{i}$ is called a {\em parabolic ruled surface}. 
The component of $\Lspace \setminus S_{i}$ whose closure 
contains ${{\mathscr{H}}}_{i}^o$ is called 
a \hyperlink{term-pbr}{{{\em parabolic region}}}, denoted by ${\mathcal{P}}_i$,
{which is homeomorphic to a $3$-cell by Theorem \ref{thm:ruled}.} 
These are distinct from \hyperlink{term-pc}{parabolic cylinders}. 
Here, $f_i$ is fixed for each conjugacy class of parabolic elements. 
{(See Section \ref{sub:parabolic} for detail.)}

For each $i \in \mathcal{I}'\setminus\mathcal{I}$, we define 
$S_{i} = \gamma(S_{j})$ and ${\mathcal{P}}_{i} = \gamma({\mathcal{P}}_{j}) $ for any $j, j=1, \dots, m_0$, and $\gamma$ 
so that $\gamma({\mathscr{H}}_{j}) = {\mathscr{H}}_{i}$. This surface $S_i$ 
is well-defined since 
any element acting on ${\mathscr{H}}_{j}$ acts on $S_{i}$ and ${\mathcal{P}}_{i}$. 
We have the \hypertarget{term-eqc}{{\em {$\Gamma$-}equivariant choice}} of parabolic ruled surfaces and parabolic regions.

Theorem \ref{thm:Sr} gives us a foliation $\mathcal{S}_{f_i, r_i}$ {with leaves that are parabolic ruled surfaces}
and a transversal foliation $\mathcal{D}_{f_i, r_i}$ 
for each ${\mathcal{P}}_i$ for each $i=1, \dots, m_0$. For other
${\mathcal{P}}_i$, we use the induced ones from ${\mathcal{P}}_j$ such that 
${{\mathcal{P}}_i=}\gamma({\mathcal{P}}_j)$ for $j=1, \dots, m_0$. 

Finally, we will make these $S_{i}$ and ${\mathcal{P}}_{i}$ \hyperlink{term-far}{sufficiently far} whenever it is necessary to do so {in} this paper. 
(See Definition \ref{defn:para}.) We may do so without acknowledging. 







\subsection{Finding the fundamental domain} \label{sub:fund}

A {\em topological polytope} in $\Lspace$ is a $3$-manifold closed as a subset 
of $\Lspace$ and whose closure in
$\clo(\Lspace)$ is a compact manifold with boundary 
that is a union of finitely many smoothly and properly embedded compact submanifold.
In \cite{CG17}, we defined a \hypertarget{term-crc}{{\em crooked circle}} to 
be a simple closed curve in $\Ss$ of {the} form 
\[ d \cup \mathcal{A}(d) \cup \bigcup_{x\in \partial d} \clo(\zeta_{x})\]
for a complete geodesic $d$ in $\Ss_{+}$ with boundary in a parabolic fixed point 
or in a boundary component of $\tilde \Sigma_{+}$. 
We may refer to them as being
\newch{{\em positively oriented}} since the definition depends on the orientations of $\Lspace$. 

{Recall parabolic regions from Section \ref{subsub:pararegion}.} 
\begin{definition}\label{defn:cbd} 
A \hypertarget{term-cbd}{{\em crooked-circle disk}} $D$ is a properly embedded open disk in $\Lspace$ whose 
boundary $\partial D$ is a crooked circle
satisfying the condition: 
If $x$ is a parabolic fixed point in $\partial d$ and
${\mathcal{P}}_i$ is a sufficiently far away 
parabolic region for $x$, 
${\mathcal{P}}_{i} \cap D$ is a ruled surface in a  leaf of the transversal foliation 
$\mathcal{D}_{f_i, r_i}$
obtained as in Theorem \ref{thm:Sr}. 
\end{definition}

A disk $D$ in $\Lspace$ is \hypertarget{term-sep}{{\em separating}} if it is properly embedded 
and $\Lspace\setminus D$ has two components. 
{Crooked-circle} disks and parabolic ruled surfaces are separating.


\subsubsection{The simple case of the properly acting parabolic cyclic group}
Theorem \ref{thm:FP} is a much easier version of that of Theorem \ref{thm:main}
presented {analogously}. 

\begin{theorem}[Small tameness]\label{thm:FP}  
	{Assume as in Theorem \ref{thm:main}.}
Suppose that $D$ is a crooked-circle disk in $\Lspace$ with a point $p \in \partial D$ fixed by a parabolic element 
$\gamma$ with a positive Charette-Drumm invariant. Then 
we can modify $D$ inside a compact set in $\Lspace$ so that $D \cap \gamma(D) =\emp$. 
If we denote $F_{P}$ to be the connected domain in $\Lspace$ bounded by $D$ and $\gamma(D)$, 
then $F_{P}$ is a fundamental domain of $\langle \gamma \rangle$ in $\Lspace$. 
Furthermore, $\Lspace/\langle \gamma \rangle$ is homeomorphic to a solid torus. 
\end{theorem}
\begin{proof} 
We take an arbitrary compact set $K$ in $\Lspace$. 
Then there exists a sufficiently far away parabolic region $R'_{p}$ where $\gamma$ acts
so that $K \cap R'_{p} = \emp$. We have 
\begin{equation}\label{eqn:Rp} 
\bigcup_{n\in \bZ} \gamma^{n}(K) \cap R'_{p} = \emp
\end{equation}
since $R'_p$ is $\gamma$-invariant. 

By taking sufficiently large $K$, we may assume that $\tilde T := \bigcup_{n\in \bZ} \gamma^{n}(K)$ is connected. 
By the proper discontinuity of the action of $\langle \gamma \rangle$, 
$K$ meets only finitely many $\gamma^{n}(K)$. 
Choose $K$ as a generic $3$-ball so that $T:= \tilde T/\langle \gamma \rangle$ is a compact manifold. 

We take a sequence of generic compact $3$-balls $K_{i}$ exhausting $\Lspace$. 
Then the corresponding $T_{i}$, $i=1, 2, \dots$, form 
an exhausting sequence of compact $3$-manifolds of $\Lspace/\langle \gamma \rangle$. 
We denote $\tilde T_{i} := \bigcup_{n\in \bZ} \gamma^{n}(K_i) $. 

(I) We first show that $\tilde T_i$ meet with $D$ in a compact set 
and find a candidate fundamental domain $F$ bounded by two disks in a compact set. 

By \purple{Theorem \ref{thm:equivalence} and }
Corollary \ref{cor:conv1}, $\gamma^{n}(K_i)$ as $n \ra \pm \infty$ 
can have accumulation points only in $\clo(\zeta_{p})$. 
$D \cap R'_{p} \cap \tilde T_{i }=\emp$ by \eqref{eqn:Rp}
for sufficiently far choice of $R'_p$.
Since $\{\gamma^{n}(K_i)|n\in \bZ\}$ is a locally finite collection of sets in $\Lspace$
\hyperlink{term-acc}{accumulating only} to $\clo(\zeta_p)$  by Corollary \ref{cor:conv1}, 
and $D\setminus R^{\prime}_{p}$ is $\bdd$-bounded 
away from $\clo(\zeta_{p})$, it follows that $(D\setminus R^{\prime}_{p}) \cap \tilde T_{i}$ is compact. 
Hence, $D \cap \tilde T_{i}$ is compact for each $i$. 
Similarly, so is $\gamma(D) \cap \tilde T_{i}$. 

By construction in Definition \ref{defn:cbd} and Theorem \ref{thm:Sr}, 
\[ D \cap \gamma(D) \cap R'_{p} = \emp
\hbox{ and } (\partial D \cap \gamma(\partial D))\setminus \clo(R'_{p}) = \emp.\] 
Since $D \cap R'_{p}$ is a ruled disk so that $\gamma(D \cap R'_{p}) \cap D \cap R'_{p} = \emp$, 
we can find a thin tubular neighborhood $T''$  in $\clo(D\setminus R'_p)$ 
of $\partial \clo(D\setminus R'_p)$ so that $T'' \cap \gamma(T'') = \emp$. 
We add the disk $D\cap R'_{p}$ to $T''$ to obtain $T'$. 
Hence, $\gamma(T') \cap T' =\emp$. 

We modify the disk $\gamma(D)\setminus \gamma(T')$ to another disk $D_{1}$
to be disjoint from $D$.
Then $D$ and $D_{1}$ bound a topological polytope  $F$ closed in $\Lspace$. 

Choose a sufficiently large $i$ so that 
\[D\setminus T', D_1 \setminus \gamma(T'), D \cap D_1  \subset \tilde T_{i},\]
and we choose sufficiently far $R'_p$ so that $R'_p \cap \gamma^n(K_i) = \emp$
for every $n\in \bZ$. 
We obtain that 
\[T'\setminus \tilde T_{i} = D \setminus \tilde T_i \hbox{ and } \gamma(T')\setminus \tilde T_{i}= \gamma(D)\setminus \tilde T_i\]
is a matching set under $\{\gamma, \gamma^{-1}\}$. 
They are also in $\Bd F \cap \Lspace$. 

Also, $(F \setminus R'_{p}) \cap \tilde T_{i}$ is again compact in 
$\Lspace$ 
since $F\setminus R'_{p}$ is $\bdd$-bounded away from $\clo(\zeta_{p})$ and 
$\gamma^n(K_i)$ accumulates only to $\clo(\zeta_p)$ by Corollary \ref{cor:conv1}. 
Since $R'_{p}\cap \tilde T_{i} = \emp$,  $F \cap \tilde T_{i}$ is compact, 
$F\setminus \tilde T_{i}^{o}$ is a topological polytope. 

(II) We find a fundamental domain that is a topological polytope. 

Since $\{T_{j}\setminus T_{i}^{o}|j > i\}$ is an exhausting sequence of $\Lspace/\langle \gamma \rangle\setminus T_{i}^{o}$, 
Proposition \ref{prop:Poincare} implies that 
\[T' \cap ( F\setminus \tilde T_{i}^{o} ) \hbox{ and } \gamma(T') \cap ( F\setminus \tilde T_{i}^{o} )\] 
bound a topological polytope $F\setminus \tilde T_{i}^{o}$ that is 
a fundamental domain of $\Lspace\setminus \tilde T_{i}^{o}$ under $\langle \gamma\rangle$. 

We choose {a generic set denoted by} $T_{i}$ so that $D \cap \tilde T_{i}$ is a union of simple closed curves.
The image in $\Lspace/\langle \gamma \rangle $ of the bounded component of $D\setminus \tilde T_{i}^{o}$ is 
embedded since $F\setminus \tilde T_{i}^{o}$ is a fundamental domain of $\Lspace\setminus \tilde T_{i}^{o}$
under $\langle \gamma \rangle $. We take mutually disjoint tubular neighborhoods of the images of these bounded components
in $\Lspace/\langle \gamma \rangle\setminus T_{i}^{o}$ whose lifts in $\Lspace\setminus \tilde T_{i}^{o}$ are disjoint. 
We add these tubular neighborhoods 
to $\tilde T_{i}$, and now each component of $D \cap \tilde T_{i}$ is a disk. 

{
\begin{theorem}[Dehn's lemma. See Hempel \cite{Hempel04}]\label{thm:Dehn} 
	Let $M'$ be a $3$-manifold  $M$ and $f: B \ra M$ be  a map from a disk $B$ such that for some neighborhood of $A$ of 
	the boundary $\partial B$ in $B$. 
	If $f|A$ is an embedding and $f^{-1}(f(A)) = A$, then $f|\partial B$ extends to an embedding $g:B \ra M$. 
\end{theorem}
}
 
{By Theorem \ref{thm:Dehn},} we replace the images in $T_{i}$ of disk components of $D \cap \tilde T_{i}$
by embedded disks in $T_{i}$. We lift these disks to $\tilde T_{i}$ and attach the adjacent ones to $D\setminus\tilde T_{i}^{o}$. 
We obtain a disk $D''$, and it is clear that $D'' \cap \gamma(D'')=\emp$. 

We rename $D''$ by $D$. 
Let $F_{P}$ denote the region in $\Lspace$ bounded by $D$ and $\gamma(D)$. 
Since $\clo(F_{P})\setminus \clo(R'_{p})$ is bounded away from $\clo(\zeta_{p})$ under $\bdd$, 
and $\{\gamma^{n}(K)|n\in \bZ\}$ is a locally finite collection of sets in $\Lspace$
\hyperlink{term-acc}{accumulating only} to $\clo(\zeta_p)$  by \purple{Theorem \ref{thm:equivalence} and }  
Corollary \ref{cor:conv1}, we obtain that 
$(F_{P}\setminus R'_{p}) \cap \tilde T_{i}$ is a compact set. 
Since $\tilde T_{i} \cap R'_{p} = \emp$, $F_{P} \cap \tilde T_{i}$ is {also compact}.

Also, $F_{P}\cap \tilde T_{i}$ is compact for each $i$. 
By Proposition \ref{prop:Poincare},
$F_{P}$ is a fundamental domain in $\Lspace$ of $\langle \gamma \rangle$. 
The existence of the fundamental domain tells us that $\Lspace/\langle \gamma \rangle $ is tame and
hence is homeomorphic to a solid torus. 
\end{proof} 

\begin{remark}\label{rem:compact} 
Using the closure of the fundamental domain $F_P$ and identifying 
$D$ and $\gamma(D)$, we deduce that 
\[\left(\Lspace \cup \Ss_{+}\cup \Ss_{-} \cup \bigcup_{x\in \partial \Ss_{+}\setminus\{p\}} \clo(\zeta_{x})\right)\Bigg/\langle \gamma \rangle \]
is homeomorphic to $A \times [0, 1)$ for a compact annulus 
$A = \bigcup_{x\in \partial \Ss_{+}\setminus\{p\}} \clo(\zeta_{x})/\langle \gamma \rangle$, forming a relative compactification. 

As an alternative proof of Theorem  \ref{thm:FP}, 
we may use a $\gamma$-invariant foliation of $\Lspace$ by crooked planes from
the results of Charette-Kim \cite{CK14}
to prove the relative compactification. A fairly simple computation 
shows that there exists such a foliation in $\Lspace/\langle \gamma \rangle$. 
\end{remark} 

\subsubsection{The boundedness of $\tilde M_{(J)}\cap F$ for  some polytope $F$}\label{subsub:bounded} 

{  We choose an exhausting sequence $M_{(J)}, J =1, 2, 3, \dots,$  by Proposition \ref{prop:exhaust}. 
We aim to prove Corollary \ref{cor:FmMi} showing that the $\tilde M_{(J)}$ meets a ``candidate'' fundamental domain in a bounded set. } 

\begin{lemma}\label{lem:finiteR}
	Let $R$ be a conical region in $\Ss_{+}$ {that is} a fundamental domain of a parabolic element $\gamma$ with $p$ as the fixed point in $\partial \Ss_{+}$, 
	and let $F_{P}$ be a fundamental domain in $\Lspace$ of $\gamma$ 
	{bounded by two embedded disjoint crooked-circle disks $D_{1}$ and $\gamma(D_{1})$ in $\Lspace$ where
	\[\clo(R)\cap \Ss_{+} = \clo(F_{P}) \cap \Ss_{+} \hbox{ and } \clo(D_{1}) \cap \gamma(\clo(D_{1})) = \clo(\zeta_{p}).\]}    
	Let $L$ be a fundamental domain of $\tilde M_{(J)}$. 
	{Suppose that
	\begin{itemize} 
	\item	 the sequence $\{\eta_j\}$, $\eta_j \in \Gamma$, 
		takes infinitely many values, and 
	\item $\{\eta_{j}(y)\}$, $y\in \Ss_{+}$, accumulates only to 
	\[\clo(R) \cap \partial \Ss_{+}\setminus \{p\}.\] 
	\end{itemize}} 
	Then 
	\[\bigcup_{j=1}^{\infty} \eta_{j}(L) 
	\subset \bigcup_{i=-m_{0}}^{m_{0}}\gamma^{i}(F_{P}) \hbox{ for some finite } m_{0}.\]
\end{lemma} 
\begin{proof} 
	%
	Since $\clo(F_{P}) \cap \Ss$ is bounded by two crooked circles  $\clo(D_{1}) \cap \Ss$ and  $\clo(D_{2}) \cap \Ss$, we obtain 
	\[ (\clo(F_P)\setminus \clo(\zeta_p))\cap \Ss_{0}= \bigcup_{z \in \clo(R) \cap \partial \Ss_{+}\setminus \{p\} } \clo(\zeta_{z}). \] 
	Since $\eta_j(y)$ accumulates only to $\clo(R) \cap \partial \Ss_{+}\setminus \{p\}$, it follows that 
	$\eta_j(L)$ \hyperlink{term-acc}{accumulates only} to
	\begin{equation}\label{eqn:accum} 
	(\clo(F_P)\setminus \clo(\zeta_p))\cap \Ss_{0}= \bigcup_{z \in \clo(R) \cap \partial \Ss_{+}\setminus \{p\} } \clo(\zeta_{z})  
	\end{equation}
	by  \purple{Theorem \ref{thm:equivalence} and} Corollary \ref{cor:conv1}.
	The relative boundary $\Bd_{\mathcal{H}} \clo(F_P)$ in 
	the $3$-hemisphere $\mathcal{H}$
	is a union of two disks $\clo(D_{1})$ and $\gamma(\clo(D_{1}))$ with boundary in $\Ss$.
	
	{Since $\Bd_{\mathcal{H}} F_P$ has two components $\clo(D_1)$ and $\gamma(\clo(D_1)$ which coincide with a component of 
	$\Bd_{\mathcal{H}}\gamma^{-1}(F_P)$ and one of $\Bd_{\mathcal{H}} \gamma(F_P)$ respectively, 
	 \[F'':= \clo(F_P) \cup \gamma(\clo(F_P)) \cup \gamma^{-1}(\clo(F_P)) \] has the boundary set
	\[\Bd_{\mathcal{H}} F''= \gamma^{2}(\clo(D_{1}))\cup \gamma^{-1}(\clo(D_{1})) \subset \mathcal{H},\]} 
	and it follows that $F''-  \clo(\zeta_p)$ contains a neighborhood 
	of $(\clo(F_P)\setminus \clo(\zeta_p)) \cap \Ss_0$ in $\mathcal{H}$.
	Hence, we obtain by \eqref{eqn:accum}  that except for finitely many $\eta_j(L)$, 
	\[\eta_j(L) \subset (\clo(F_P) \cup \gamma(\clo(F_P)) \cup \gamma^{-1}(\clo(F_P)))\setminus 
	\clo(\zeta_p).\]
	Since $F_{P}$ is a fundamental domain of $\gamma$,
	we obtain $\Lspace \subset \bigcup_{i\in \bZ} \gamma^{i}(F_{P})$. 
	By the paragraph above, we obtain
	\[\bigcup_{j=1}^{\infty} \eta_{j}(L) 
	\subset \bigcup_{i=-m_{0}}^{m_{0}}\gamma^{i}(F_{P}) \hbox{ for some finite } m_{0}.\]

	\end{proof}

The following is a crucial step in this paper: 

\begin{proposition}[Boundedness of {$\tilde M_{(J)}$} in disks] \label{prop:boundedness} 
	Let $J$ be an arbitrary positive integer. 
	For any crooked-circle disk $D$, 
	$D  \cap \tilde M_{(J)}$ is compact, i.e., bounded, and has only finitely many components. 
\end{proposition} 
\begin{proof} 
	Suppose not. Then we can find a compact fundamental domain $L$ of $\tilde M_{(J)}$ and an unbounded sequence $g_{j}\in \Gamma$, 
	$g_{j}(L) \cap D \ne \emp $ for infinitely many $j$. 
	Again, we may assume without loss of generality that $g_{j}$ is a \hyperlink{term-cfg}{convergence sequence} acting on 
	$\partial \Ss_{+}$ with $a$ as an attractor and $r$ as a repeller.
	(See Section \ref{sub:mar}.)
	Hence, we can find a sequence $x_{j} \in L$ with $g_{j}(x_{j}) \in D$,
	and $\{g_{j}(x_{j})\}$ accumulates to a point  $x$ of 
	\[\Ss \cap \partial D.\]
	
	If $x \in \Ss_{+}\cup \Ss_{-}$, then 
\purple{Theorem \ref{thm:equivalence} and Corollary \ref{cor:conv1} contradict this}. 
	In fact, we have $x \in \clo(\zeta_y)$ for some $y \in \Lambda_{\Gamma, \Ss_{+}}$. 
	
	
	If $\clo(D)$ is disjoint from $\Lambda_{\Gamma, \Ss_{+}}$, 
	then $D \cap \tilde M_{(J)}$ is compact by the above paragraph. We are finished in this case. 
	
	Now assume  
	$D \cap \clo(\Ss_{+})\cap \Lambda_{\Gamma, \Ss_{+}}$ is a finite set of parabolic fixed points or is empty. 
	Suppose that there exists a sequence 
	\begin{equation} \label{eqn:zetap}
	\{ g_j(x_j)\in D, x_j \in L\} \ra x \in \clo(\zeta_{p})
	\end{equation}
	for a fixed point $p$, $p\in \partial D$, 
	of a parabolic element $\gamma \in \Gamma$
	(see Definition \ref{defn:semicircle}). 
	Let $y$ be a point of $\Ss_+$.
	If $\{g_{j}(y)\}$ converges to $q \ne p$, then 
	\[x \in \clo(\zeta_q) \ne \clo(\zeta_p) 
	\hbox{ with } \clo(\zeta_q) \cap \clo(\zeta_p) = \emp\]
	by \purple{Theorem \ref{thm:equivalence} and} Corollary \ref{cor:conv1}.
	Since this is a contradiction, we obtain $g_{j}(y) \ra p$.

	We obtain $g_{j}(y) \ra p$ for a point $y \in \Ss_{+}$. 
	We can choose a sequence $\gamma^{k(j)} \in \Gamma, k(j)\in \bZ$, so that 
	$\gamma^{k(j)} g_{j}(y)$ is in a conical region $R$ closed in $\Ss_{+}$ 
	bounded by two complete geodesics $l, \gamma(l)$ with the common endpoint $p$ in $\partial \Ss_{+}$. 
	
	Since $p$ is not a conical limit point by Tukia \cite{Tukia98}, 
	$\gamma^{k(j)} g_{j}(y)$ is bounded away from $p$ in $R$. 
	Therefore, $\eta_j:= \gamma^{k(j)}g_{j}$ is a sequence
	so that $\eta_j(y) = \gamma^{k(j)}g_j(y)$ has accumulation points only in  
	$(\clo(R) \cap \partial \Ss_+)\setminus\{p\}$. 


	Here, $k(j)$ is an unbounded sequence since  
	$\gamma^{k(j)} g_j(y)$ still converges to $p$ otherwise. 
	By choosing a subsequence and the choice of $\gamma$, we may assume without loss of generality that $k(j) \ra \infty$. 
	
	
	
	
	We now modify the disk $D$ in a compact set in $\Lspace$ by Theorem \ref{thm:FP}. 
	Hence,  the new  disk $D$ does not violate the existence of  a sequence as in \eqref{eqn:zetap}.

	By Theorem \ref{thm:FP}, we find a fundamental domain $F_{P}$ closed in $\Lspace$ of $\langle \gamma \rangle$
	bounded by a crooked-circle disk $D$ and its image $\gamma(D)$ disjoint from $D$. 
	Here, $\clo(F_{P}) \cap \Ss_{+} = R$. 
	
	
	
		Suppose that $\eta_j$ takes infinitely many values. 
	Since $\partial D$ is a crooked circle, $\clo(D)$ and 
	$\gamma(\clo(D))$ meet only in $\clo(\zeta_{p})$, 
	Lemma \ref{lem:finiteR} shows that
	\[\bigcup_{j=1}^{\infty} \eta_{j}(L) 
	\subset \bigcup_{i=-m_{0}}^{m_{0}}\gamma^{i}(F_{P}) \hbox{ for some finite } m_{0}.\]
	When $\eta_j$ takes only finitely many values, 
	this is also obvious. 

	Since $k(j) \ra \infty$, the finiteness of $m_1$ and 
	the nature of the parabolic action of $\gamma^{-k(j)}$ show
	\[g_{j}(x_{j}) = \gamma^{-k(j)}\eta_{j}(x_{j}), x_{j} \in L,\] 
	cannot lie on  
	the fixed disk $D$ containing $\zeta_p$.

\end{proof}

\begin{proposition} \label{prop:parabound} 
	Let $\eta\in \Gamma$ be a parabolic element acting on 
	a parabolic region $R_\eta$. 
	Let $p_\eta$ denote the parabolic fixed point of $\eta$ in 
	$\partial \Ss_+$. 
Let $\hat R_{\eta}$ denote the closure in 
$R_{\eta}$  of a component of 
$R_{\eta} - D_1 - D_2$ for two crooked-circle disks $D_{1}$ and $D_{2}$ 
whose closures contain $\clo(\zeta_{p_{\eta}})$.
Assume that $D_{i}\cap R_{\eta}$$, i=1, 2$, is a ruled disk of the form of Theorem \ref{thm:Sr}. 
Suppose that $D_{1} \cap R_{\eta}$ and $\eta^{\delta}(D_{1})\cap R_{\eta}$ for $\delta = 1$ or $-1$ 
bound a region in $R_{\eta}$ 
containing $\hat R_{\eta}$. 
Then $\hat R_{\eta}\cap \tilde M_{(J)}$ is compact for each {$J$}. 
Furthermore, 
we may assume that 
\[\tilde M_{(J)} \cap R_{\eta} =\emp \hbox{ for } j=1, \dots, m_0,\] 
by choosing $R_{\eta}$ sufficiently far away. 
{\em (}See Definition \ref{defn:para}{\em .)} 
\end{proposition} 
\begin{proof} 
Suppose that $\hat R_{\eta}\cap \tilde M_{(J)}$ is not compact. 
Then again, we can find a compact fundamental domain $L$ of $M_{(J)}$ so that 
$g_{k}(L)$ meets $\hat R_{\eta}$ for infinitely many $k$. 
Then $\{g_{k}(L)\}$ has limit points in 
$\clo(\zeta_x)$ for $x \in \Lambda_{\Gamma, \Ss_{+}}$
by \purple{Theorem \ref{thm:equivalence} and} Corollary \ref{cor:conv1}. 
Since we have a sequence
\begin{gather} 
\{x_k\}, x_k \in \clo(\hat R_{\eta})\cap g_{k}(L) \hbox{, and } \nonumber \\
\clo(\hat R_{\eta}) \cap \Ss_0 \subset \clo(\zeta_{p_{\eta}}) \nonumber
\end{gather} 
for the parabolic  fixed point $p_{\eta}$ on $\partial \Ss_{+}$ fixed by $\eta$, 
it follows that $\{g_{k}(L)\}$ has limit points in 
$\clo(\zeta_{p_{\eta}})$ 

Let $y\in \Ss_+$. 
We again write $\eta_{i} = \gamma^{k(i)} g_{i}$ so that $\eta_{i}(y)$ is in a conical region $R$ as in the proof of Proposition \ref{prop:boundedness}. 
The sequence
$\{\eta_i(y)\}$ accumulates only to $(\clo(R)\cap \partial \Ss_+) \setminus \{p_{\eta}\}$.
Again $k(i) \ra \pm \infty$ since $g_{i}(y) \ra p_{\eta}$. 
Now, 
\[g_{i}(x_{i}) = \gamma^{-k(i)}\eta_{i}(x_{i}), x_{i} \in L,\] 
cannot lie on  $\hat R_{\eta}$ by Lemma \ref{lem:finiteR} since $k(i) \ra \pm \infty$.

For the final item, we can choose a new parabolic region
$R_{\eta}'$ sufficiently far away 
so that $R_{\eta}' \cap \hat R_{\eta} \cap \tilde M_{(J)} =\emp$. 
Then $R_{\eta}' \cap \tilde M_{(J)} = \emp$
by the parabolic action of $\langle \eta \rangle$. 
\end{proof}  


Recall $\tilde \Sigma$ from \eqref{eqn:tSigma}. 
\begin{corollary}[Finiteness]\label{cor:FmMi} 
Let $F$ be a topological polytope in $\Lspace$ bounded by finitely many crooked-circle disks.	
Suppose that every pair of these disks the closures of which
contain $\zeta_p$ for a parabolic fixed point $p$ satisfy the properties of 
$D_{1}$ and $D_{2}$ in Proposition \ref{prop:parabound}. 
Assume 
\[\clo(F) \cap \Ss \subset \clo(F) \cap \left(\tilde \Sigma \cup \bigcup_{k\in \mathcal{I}_F}\clo(\zeta_{p_{k}})\right)\]
for a finite subset $\mathcal{I}_F \subset {\mathcal I}'\setminus {\mathcal I}$. 
Then the subspaces $F \cap \tilde M_{(J)}$ and
	 $\clo(F)\setminus \tilde M_{(J)}^o$ are both compact 
	 topological polytopes for each $J$.
\end{corollary}
\begin{proof} 
	The premise says that 
	$F$ is disjoint from $\Lambda_{\Gamma}$ except at $\bigcup_{k\in \mathcal{I}_F}\clo(\zeta_{p_{k}})$. 
	Propositions \ref{prop:boundedness} and \ref{prop:parabound} imply
	that $\bigcup_{\gamma \in \Gamma} \gamma(L) \cap F = \tilde M_{(J)} \cap F$ can have accumulation points outside 
	itself only
	in the compact surface 
	\[(\tilde \Sigma \cap \clo(F))\setminus \bigcup_{k \in {\mathcal{I}}_F}	{{\mathscr{H}}}_{k} \subset \clo(F) \cap \left( \Ss_+\cup \Ss_- \cup \bigcup_{i \in {\mathcal I}} (A_i \cup a_i \cup \mathcal{A}(a_i))\right)\] by \eqref{eqn:tSigma}.
	This set is disjoint from 
	$\bigcup_{x\in \Lambda_{\Gamma, \Ss_+}} \clo(\zeta_x)$. 
	The existence of the accumulation points in here 
	contradicts  \purple{Theorem \ref{thm:equivalence} and}  Corollary \ref{cor:conv1}. 
	Hence, $\tilde M_{(J)} \cap F$ is a bounded subset of $F$. 
	
	Also, $\clo(F)\setminus M_{(J)}^o$ is bounded by a union of finitely many smooth finite-type surfaces. 
	Hence, it is a compact topological polytope. 
\end{proof}


\begin{figure}[h]

\begin{center}
\adjincludegraphics[height=7cm, trim={0 {.4\width} 0 {.14\width}},clip]{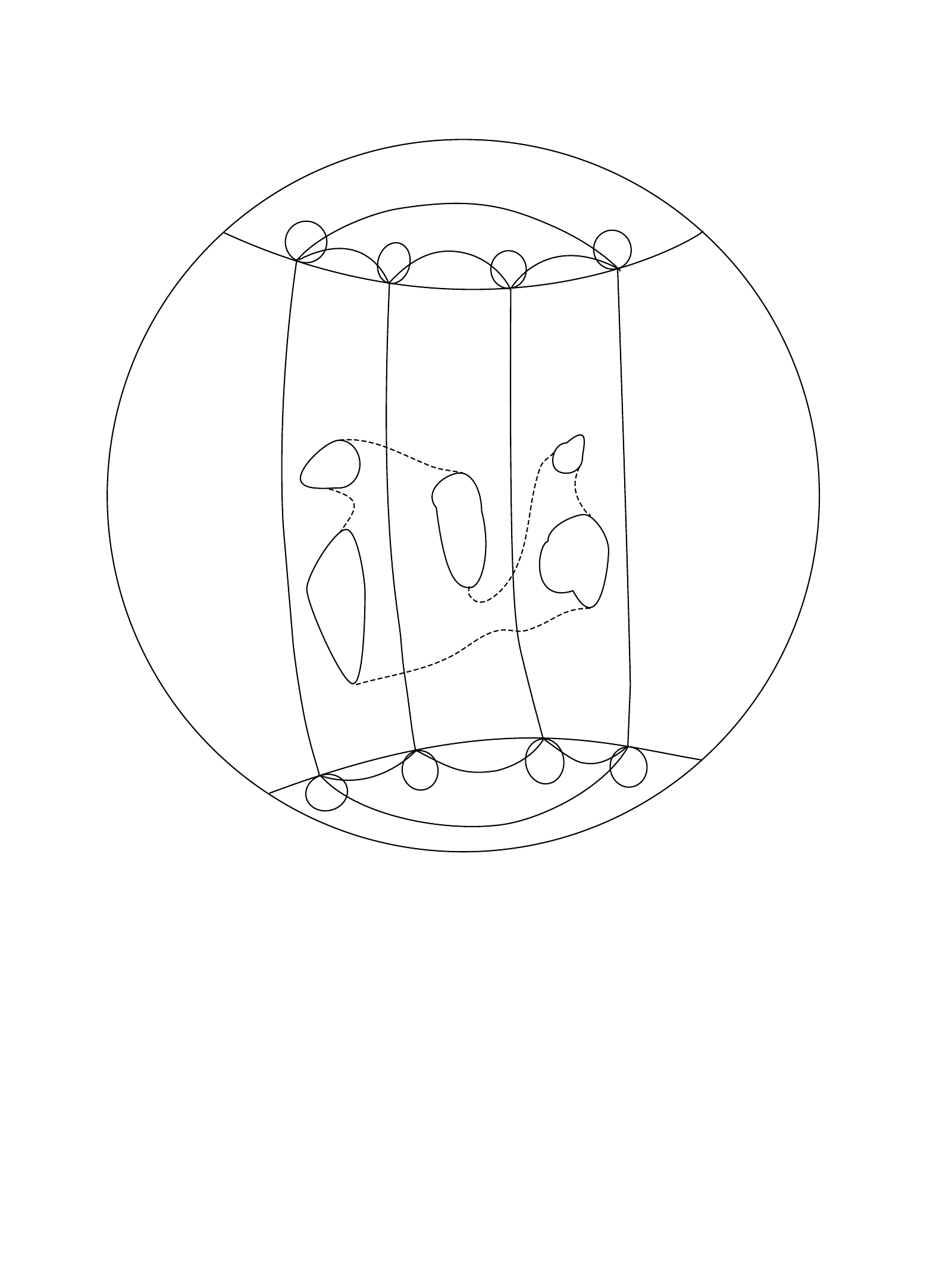}
\end{center} 
\caption{The fundamental domain bounded by disks $\mathcal{D}_{j}$, $j=1, \dots, 2{\mathbf{g}}$, and some horodisks 
drawn topologically.} 
\label{fig:decompM}
\end{figure}




\subsubsection{Choosing the candidate fundamental domain and side-pairing disks} \label{subsub:chooseF} 


\begin{figure}[h]

\begin{center}
\includegraphics[height=6cm, trim={0.5cm 1.5cm 0.5cm 1.5cm}]{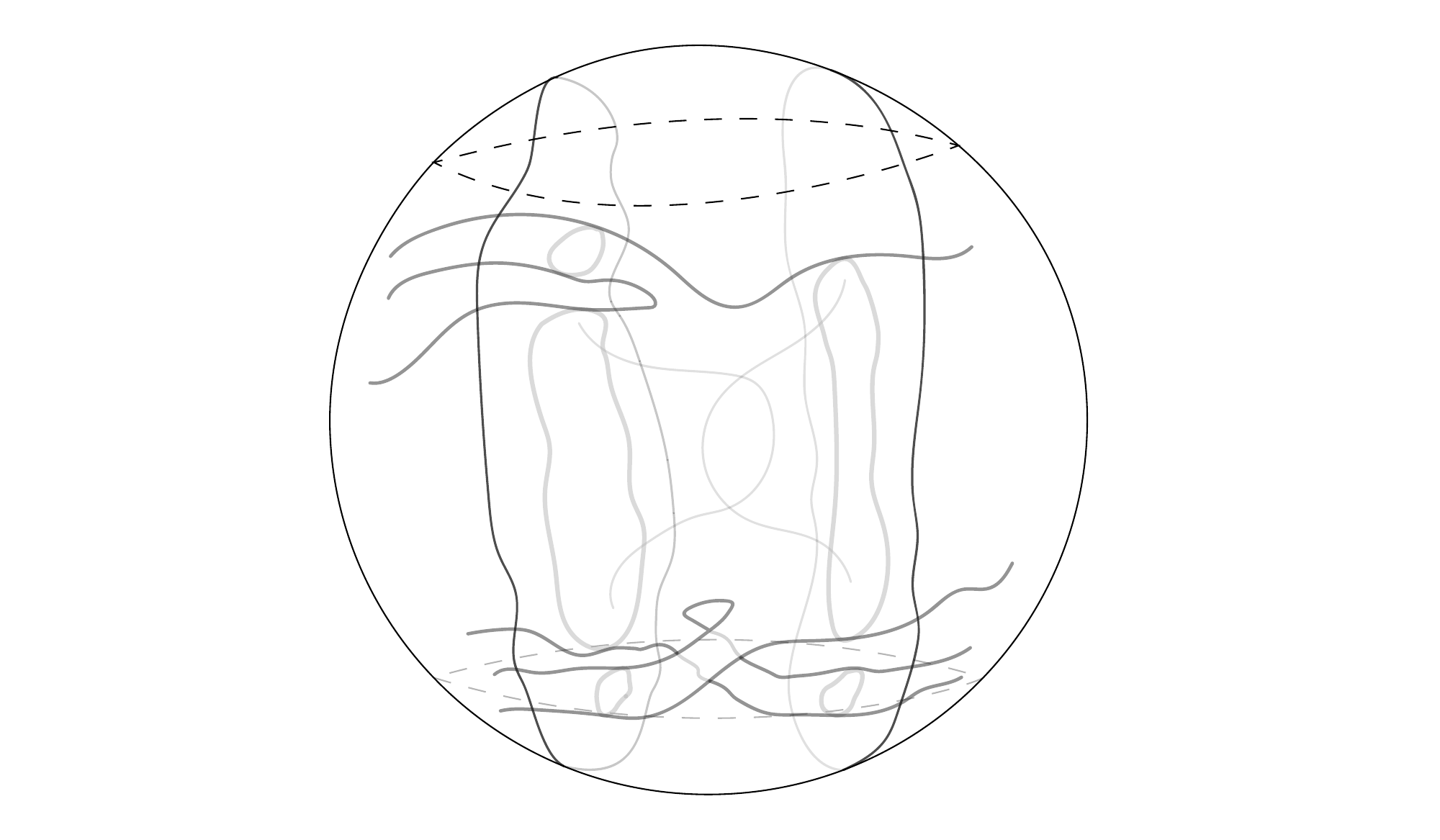}
\end{center} 
\caption{ $\tilde M_{(J)}$ meeting with disks.} 
\label{fig:FigDT}
\end{figure}





Suppose that $\mathbf{g}$ is the rank of $\Gamma$. 
We recall from Section 7 of \cite{CG17}. 
Now $\Sigma_{+}$ denote the surface
\[((\Ss_{+} \cup \partial \Ss_{+})\setminus \Lambda_{\Gamma, \Ss_{+}})/\Gamma,\]
where $\Sf$ is a dense subset of $\Sigma_{+}$ with $\chi(\Sf) = 1 - \mathbf{g}$. 
Also, $\Sigma_{+}= (\Ss_{+} \cup \bigcup_{i \in {\mathcal I}} a_i)/\Gamma $. 
We add to $(\Ss_+\cup \partial \Ss_{+})\setminus \Lambda_{\Gamma, \Ss_{+}}$ 
the set of ideal parabolic fixed points $a_i, i\in \mathcal{I}'\setminus \mathcal{I}$. 
The topology is given by a basis consisting of horodisks with fixed points added
or the open disks in $(\Ss_+\cup \partial \Ss_{+})\setminus \Lambda_{\Gamma, \Ss_{+}}$. 
We obtain a new surface 
$\hat \Sigma_{+} :=  \Ss_{+} \cup \bigcup_{i \in {\mathcal I}'} a_i/\Gamma $. 

We choose a collection $\{ \hat d_{j}| j=1, \dots, m_{0}\}$ of disjoint geodesics ending at one of 
the ideal vertices or the boundary arc of $\hat \Sigma_{+}$
so that the complement of their union is the union of mutually disjoint open regions, each of which is {homeomorphic to one 
of the following:} 
\begin{itemize} 
\item a hexagon where three alternate edges are arcs in $\partial \hat \Sigma_{+}$, 
\item a pentagon with one ideal vertex (collapsed from a boundary component) and two alternate edges in
$\partial \hat \Sigma_{+}$,
\item a quadrilateral with two ideal vertices (collapsed from two boundary components) 
and one edge in $\partial \hat \Sigma_{+}$, or 
\item a triangle with three ideal vertices.
\end{itemize} 
We may choose a set $\hat d_{i_{j}}$, $j=1, \dots, 2{\mathbf{g}}$, where
the complement of their union is a connected cell. 
We relabel these to be $\hat d_{1}, \dots, \hat d_{2{\mathbf{g}}}$. 

The lifts of the geodesics are geodesics in $\Ss_+$
ending at points of $\bigcup_{i \in {\mathcal I}} a_i$. 





\begin{lemma} \label{lem:sidetube} 
We can choose the mutually disjoint collection ${\mathcal D}_{j} \subset \Lspace$ of properly embedded open disks
and a tubular neighborhood $T_{j} \subset \clo({\mathcal D}_{j})$ of $\partial {\mathcal D}_{j}$ for each $j$, 
$j=1, \dots, 2{\mathbf g}$, 
that form a matching set $\{T_{j}|j=1, \dots, 2{\mathbf{g}}\}$ for a collection 
$\mathcal{S}_{0}$ of generators of $\Gamma$. Finally, 
$\partial \mathcal{D}_{j} = d_{j} \cup \mathcal{A}(d_{j}) \cup \bigcup_{x\in \partial d_{j}} \clo(\zeta_{x})$
for a lift $d_{j}$ of $\hat d_{j}$. 
\end{lemma}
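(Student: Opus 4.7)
The plan is to build each $\mathcal{D}_{j}$ in three pieces -- one for each parabolic end of the geodesic $\hat d_{j}$, one for each hyperbolic boundary end, and a compact ``core'' piece -- and then match them across $\Gamma$ to produce the side-pairing data. First, I would choose lifts $d_{j} \subset \Ss_{+}$ of $\hat d_{j}$ so that $d_{1}, \dots, d_{2\mathbf{g}}$ bound a single lifted fundamental cell $F_{\infty}$ in $\Ss_{+}$ (possible because the complement of $\bigcup_{j}\hat d_{j}$ in $\hat \Sigma_{+}$ was chosen to be a single cell). The crooked circles
\[
C_{j} := d_{j} \cup \mathcal{A}(d_{j}) \cup \bigcup_{x\in \partial d_{j}} \clo(\zeta_{x})
\]
are then automatically pairwise disjoint on $\Ss$: the $d_{j}$ and $\mathcal{A}(d_{j})$ are disjoint in $\Ss_{+} \cup \Ss_{-}$, and the great segments $\clo(\zeta_{x})$ depend only on their ideal endpoint $x\in \partial\Ss_{+}$, so they can only coincide at common endpoints of distinct $d_{j}$'s, which do not occur once the geodesic system is taken in minimal position.

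Second, I would fill each $C_{j}$ by a properly embedded disk $\mathcal{D}_{j} \subset \Lspace$ built in pieces according to the type of endpoint of $d_{j}$. At each parabolic endpoint $x_{k}\in \partial d_{j}$ choose an equivariant sufficiently-far parabolic region $R_{k}$ as in Section~\ref{subsub:pararegion}, and set $\mathcal{D}_{j}\cap R_{k}$ to be the leaf of the transversal foliation $\mathcal{D}_{f_{k},r_{k}}$ from Theorem~\ref{thm:Sr} whose boundary limits on $\clo(\zeta_{x_{k}})$; this is exactly the Definition~\ref{defn:cbd} condition. At each hyperbolic endpoint, take $\mathcal{D}_{j}$ to agree asymptotically with the half crooked plane determined by the repelling geodesic of the boundary arc, as constructed in \cite{CG17}. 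Interpolate between these asymptotic pieces by a smooth disk spanning the remaining compact part of $C_{j}$ inside $\Lspace$; any two such spanning disks are isotopic rel boundary by a general-position argument because $\Lspace$ is simply connected.

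Third, mutual disjointness: inside each parabolic region $R_{k}$ the pieces $\mathcal{D}_{j}\cap R_{k}$ for different $j$ lie on distinct leaves of the transversal foliation and hence are disjoint; in the hyperbolic ends the asymptotic crooked-plane pieces are controlled by disjoint boundary data and can be made disjoint; on the remaining compact part of $\Lspace$ one applies transversality (two $2$-disks in a $3$-manifold can be made disjoint by a small perturbation rel their disjoint boundaries, since the linking obstruction vanishes as $\mathcal{D}_{i}\cap C_{j}=\emptyset$ for $i\ne j$). To arrange the matching, take $\mathcal{S}_{0}=\{\gamma_{1},\dots,\gamma_{2\mathbf{g}}\}$ with $\gamma_{j+\mathbf{g}}=\gamma_{j}^{-1}$ to be the side-pairings of $F_{\infty}$ coming from the cell decomposition of $\hat\Sigma_{+}$, so that $\gamma_{j}(d_{j}) = d_{j+\mathbf{g}}$ and hence $\gamma_{j}(C_{j})=C_{j+\mathbf{g}}$. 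Choose a thin collar $T_{j}\subset \clo(\mathcal{D}_{j})$ of $\partial\mathcal{D}_{j}=C_{j}$ in the parabolic regions and hyperbolic ends where $\mathcal{D}_{j}$ is canonically determined by Theorem~\ref{thm:Sr} and the crooked-plane construction; by the equivariance of these canonical choices, $\gamma_{j}(T_{j})=T_{j+\mathbf{g}}$. Orientations agree with Definition~\ref{defn:matching} because each $\gamma_{j}\in\SO(2,1)^{o}\ltimes\bR^{2,1}$ preserves the orientation of $\Lspace$ and reverses the coboundary orientation of $C_{j}$ versus $C_{j+\mathbf{g}}$.

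The main obstacle is the interpolation step in the middle paragraph: producing the compact-core piece of $\mathcal{D}_{j}$ with the prescribed asymptotic behavior on both ends and keeping all $2\mathbf{g}$ disks mutually disjoint simultaneously. The saving point is that the asymptotic pieces are prescribed only on the lifted cell side, and the three-dimensionality of $\Lspace$ together with the disjointness already established near infinity lets a general-position push resolve any transverse double curves of $\mathcal{D}_{i}\cap \mathcal{D}_{j}$ by innermost-disk surgery without altering the boundary or the canonical asymptotic pieces.
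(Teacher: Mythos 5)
Your overall strategy matches the paper's---choose lifts $d_j$ bounding a lifted cell, prescribe the behavior of $\mathcal{D}_j$ in the parabolic regions by leaves of the transversal foliation from Theorem~\ref{thm:Sr} and near the hyperbolic ends by crooked-plane data, then fill in the compact part and equivariantly choose tubular neighborhoods. But there is a genuine gap in the step where you make the $2\mathbf{g}$ disks mutually disjoint, and this is precisely the step the paper handles with more care.

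You assert that ``two $2$-disks in a $3$-manifold can be made disjoint by a small perturbation rel their disjoint boundaries, since the linking obstruction vanishes.'' This is false. Two properly embedded disks in $\Lspace\cong\bR^3$ with disjoint (and unlinked) boundary curves will generically intersect in a nonempty union of circles, and no perturbation, however small, removes a circle of essential intersection: you must perform a cut-and-paste disk exchange along an innermost intersection circle, which alters the disk globally, not locally. You do mention ``innermost-disk surgery'' as a saving device in the final paragraph, but you present it as if it were merely a cosmetic fix; in fact it is the main content, and applying it to a noncompact, properly embedded disk with prescribed asymptotics requires verifying that the surgered result is still a single properly embedded disk with the same boundary (extra sphere components can appear and must be discarded), and that the surgery can be iterated compatibly across all $2\mathbf{g}$ disks while preserving the equivariant, foliation-leaf structure near the parabolic regions. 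The paper avoids these difficulties by a different mechanism: it first removes the interiors of the parabolic regions, augments the complement with the boundary disks $Z_x$ and a spherical part to obtain a $3$-manifold with boundary in which the curves $\tilde d_j$ lie on the boundary, and then applies Dehn's lemma twice---once to fill the $\tilde d_j$ by embedded disks, and once (disk exchanges) to make the collection mutually disjoint. The parabolic-region pieces, which are leaves of the transversal foliation, are glued back only afterward. This order of operations is structurally important: it yields the disjoint embedded disks directly rather than by an a posteriori surgery argument on ambient noncompact disks.

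A second, smaller point: your claim that ``any two such spanning disks are isotopic rel boundary by a general-position argument because $\Lspace$ is simply connected'' is not a general-position fact (isotopy classification of properly embedded disks rel boundary is not settled by transversality) and in any case is not what is needed; what is needed is the existence of at least one embedded filling, which is exactly what Dehn's lemma provides.
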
 
\begin{proof} 
We choose lifts $d_{1}, \dots, d_{2{\mathbf{g}}}$ of $\hat d_{1}, \dots, \hat d_{\mathbf{g}}$ 
bounding a connected fundamental domain in $\clo(\Ss_{+})$. 
Since a component $L$ of 
\[\Ss_+\setminus \bigcup_{g \in \Gamma}\bigcup_{i=1}^{2{\mathbf{g}}} g(d_j)\] 
is the fundamental domain of the $\Gamma$-action on $\Ss_+$, 
we obtain $\gamma_1, \dots, \gamma_{\mathbf{g}}$ generating $\Gamma$ forming 
a matching collection $\mathcal{S}_0$
by adding $\gamma_1^{-1}, \dots, \gamma_{\mathbf{g}}^{-1}$. 
Label $\gamma_j^{-1} = \gamma_{{\mathbf{g}}+j}$ for $j=1, \dots, {\mathbf{g}}$. 
Hence, we may assume that
$\gamma_j(d_j) = d_{{\mathbf{g}}+j}$ for $j=1, \dots, 2\mathbf{g}, \hbox{ mod } 2{\mathbf{g}}$
and $\{d_1, \dots, d_{2{\mathbf{g}}} \}$ is a matching set for $\mathcal{S}_0$. 

Let $p_{1}, \dots, p_{m_1}$ denote the set of parabolic fixed points on any of $d_{j}$. 
By choosing the parabolic regions $R_{p_{1}}, \dots, R_{p_{m_1}}$  sufficiently far away, we may assume that these are mutually disjoint. 
(See Section \ref{subsec:parabolic}.  {Temporarily, we are not using the terminology of Section \ref{subsub:pararegion}.} )
	
We remove the interior of $R_{p_{j}}, j=1, \dots, m_1$ from $\Lspace$. 
Let $S_{p_{j}}$ denote the ruled surface boundary in $\Lspace$ of $R_{p_{j}}$
where $\mathbf{g}_{j}^{t}, t \in \bR$ acts on.
$R_{p_{j}}$ meets $\Ss_{+}$ in a closed horodisk ${\mathscr{H}}_{p_{j}}$. 
Then we define
\begin{equation}\label{eqn:newsurf}  
\tilde \Sigma^{\ast} := 
\left(\tilde \Sigma \cup \bigcup_{j=1}^{m_1} S_{p_{j}}\right) 
\setminus \bigcup_{j=1}^{m_1} {\mathscr{H}}^{o}_{p_{j}}  \setminus \bigcup_{j=1}^{m_1} {\mathscr{H}}^{o}_{p_{j},-}. 
\end{equation} 
We assume that $d_{j}$ to be disjoint from ${\mathscr{H}}_{p_{k}}$ if $p_{k}$ is not an endpoint of $d_{j}$
by taking the cusp neighborhood sufficiently small. 

\begin{itemize} 
\item For each geodesic segment $d_{j}$ passing ${\mathscr{H}}_{p_{k}}$ for some $k$, 
we let $d'_{j}:= d_{j}\cap \tilde \Sigma^{\ast}$. 
For each point of $x\in \partial d'_{j} \cap \Ss_+$, we obtain a line $L_{x}$ in the ruled surface $S_{p_{j}}$. (See Appendix \ref{app:A}.)
We denote it by $\zeta_{x}$.
\item For the endpoint of $d'_j$ in $\partial \Ss_+$, i.e., 
in $\bigcup_{i\in \mathcal{I}} a_i$,  
we already defined $\zeta_x$ in Definition \ref{defn:semicircle}. 
\end{itemize} 

We define \[\tilde d_{j} = d'_{j} \cup \mathcal{A}(d'_{j}) \cup 
\bigcup_{x \in \partial d'_{j}} \clo(\zeta_{x}).\] 
Then $\tilde d_{j}\cap \tilde d_{k} = \emp$ for $j\ne k$, $j, k =1, \dots, 2{\mathbf{g}}$ since
$\{d'_{j}|j = 1, \dots, 2{\mathbf{g}}\}$ is a mutually disjoint collection of simple closed curves. 
Since $d_{j}\cap {\mathscr{H}}_{p_{k}}$ is a geodesic ending in $p_{k}$ or is empty for all $j, k$ by our choice, 
$d_{j}\cap \partial_h {\mathscr{H}}_{p_{k}}$ is {the} unique point or is empty. 
Also, \[ d_{j}\cap \partial_h {\mathscr{H}}_{p_{k}}, j=1, \dots, 2{\mathbf{g}},\] are distinct for a fixed $k$
as $d_{j}$ are mutually disjoint. Thus, $\{\clo(\zeta_{x}), x \in \partial d'_{j}\}$ is a mutually disjoint collection.

Furthermore, $\tilde d_{1}, \dots, \tilde d_{2{\mathbf{g}}}$ form a matching set for $\mathcal{S}_{0}$.

For each $x \in \partial d'_{j}$, $j=1, \dots, {\mathbf{g}}$, we take
\begin{itemize} 
\item for $x \in \partial \Ss_+$, a disk $Z_x$ of the form \[\bigcup_{y \in b} \clo(\zeta_{y}) \subset A_{k} = \bigcup_{y\in a_k} \clo(\zeta_{y})\] 
where $b$ is a small open interval in $a_k$ 
and $x \in a_k \cap \partial d'_{j}$, and
\item for $x \in \partial {\mathscr{H}}_{p_j}$, a ruled tubular open neighborhood $Z_{x}$ of $\clo(\zeta_{x})$ in the ruled surface $\clo(S_{p_{j}})$. 
\end{itemize} 
Here, each $Z_{x}$, $x \in \partial d'_j$, $j=1, \dots, \mathbf{g}$, 
is chosen sufficiently thin so that under elements of $\mathcal{S}_{0}$, the collection of $Z_{x}$ 
and their images
is a collection of mutually disjoint  sets. 
We take a union of all of these disks with  
\[(\Ss_{+} \cup \Ss_{-})\setminus \bigcup_{j=1}^{m} {{\mathscr{H}}}_{p_{j}}\setminus \bigcup_{j=1}^{m} {{\mathscr{H}}}_{p_{j},-} \]
 to $\Lspace\setminus \bigcup_{j=1}^{2{\mathbf{g}}} R_{p_{j}}$ to obtain a $3$-manifold with boundary. 
Then we can apply {Theorem \ref{thm:Dehn}}  for the simple closed curve $\tilde d_{j}$ to obtain open disks $\mathcal{D}'_{j}$ 
so that $\tilde d_{j} = \partial \mathcal{D}'_{j}$ 
for each $j=1, \dots, {\mathbf{g}}$. These are chosen to be mutually disjoint by the same theorem. 

Then we obtain $\mathcal{D}'_{j+{\mathbf{g}}}$ as the image 
$\gamma_{j}(\mathcal{D}'_{j} )$ for $\gamma_{j}\in \mathcal{S}_{0}$. 
Since the boundary components of $\mathcal{D'}_{j}$, $j=1, \dots, 2{\mathbf{g}}$, are mutually 
disjoint, using {Theorem \ref{thm:Dehn}} again, we may do disk exchanges to obtain 
mutually disjoint disks $\mathcal{D}''_{j}$, $j=1, \dots, 2{\mathbf{g}}$.

Let $p_{k}$ be a parabolic fixed point where $d_{j}$ ends. 
Now for each $\zeta_{x}, x\in \partial d'_{j}$, is in the boundary of a leaf of the foliation
$\mathcal{D}_{f, r}$ in $R_{p_{k}}$ obtained by Theorem \ref{thm:Sr}.
For each such $p_{k}$ and $d_{j}$, 
we add the disk to $\mathcal{D}'_{j}$ by joining them at each $\zeta_{x}, x\in \partial d'_{j}$. 
We call the results $\mathcal{D}_{j}$, $j=1, \dots, 2{\mathbf{g}}$. These are mutually disjoint. 

Now, 
\[\clo(\mathcal{D}_j)\cap \clo(R_{p_{k}}) \cap \Ss_{+} = {\mathscr{H}}_{p_{k}} \cap d_{j} = {\mathscr{H}}_{p_{k}}\cap \tilde d_{j}.\] 
Hence, by adding these arcs back, we obtain 
\[\partial \mathcal{D}_{j} = d_{j} \cup \mathcal{A}(d_{j}) \cup \bigcup_{x\in \partial d_{j}} \clo(\zeta_{x}).\] 

Since we do not change the sufficiently thin tubular neighborhoods of $\partial \mathcal{D}_{j}$ 
under the above disk exchanges, there exists 
a matching collection of tubular neighborhoods $\{ T_{j}| j=1, \dots, 2{\mathbf{g}} \}$ of $\{\partial \mathcal{D}_{j}|j=1, \dots, 2{\mathbf{g}}\}$  
under $\mathcal{S}_{0}$.

\end{proof} 




Here, of course, the disk collection is not yet a matching set
under $\mathcal{S}_0$. 
By Lemma \ref{lem:sidetube},  
the collection $\{\clo({\mathcal D}_{j})\}$ are mutually disjoint 
in $\clo(\Lspace)$. 
The collection  ${\mathcal D}_{j}$, $j=1, 2, \dots, 2{\mathbf{g}}$, 
bound a region $\mathbf{F}$ closed in $\Lspace$ 
with a compact closure in $\clo(\Lspace)$, 
a finite-sided polytope in the topological sense.  

Now we consider  
$K_0$ {to} be the set 
\[ \bigcup_{j=1}^{2{\mathbf{g}}} ({\mathcal D}_{j}\setminus T_{j}^{o}) 
\cup \bigcup_{1\leq j < k \leq 2{\mathbf{g}} } ({\mathcal D}_{j} \cap {\mathcal D}_k).\]
By Proposition \ref{prop:exhaust},
we choose $M_{(J)}$ in our exhaustion sequence of $M$ so that 
\begin{equation}\label{eqn:MJchoice}
\tilde M_{(J)} \supset N_{\bdd, \eps}\left(K_{0}\right)
\end{equation} 
for an $\eps$-neighborhood, $\eps > 0$. 




\subsubsection{Outside tameness} \label{subsub:outt}

The following is enough to prove tameness.  
\begin{proposition}[Outside Tameness] \label{prop:outfund} 
Let $M$ denote a Margulis space-time $\Lspace/\Gamma$ where
$\Gamma$ is an isometry group with $\mathcal{L}(\Gamma) \subset \SO(2, 1)^{o}$. 
	Let $\mathbf{F}$ be the domain bounded by $\bigcup_{i=1}^{2{\mathbf{g}}}{\mathcal D}_{i}$.   Suppose that $M_{(J)}$ satisfies \eqref{eqn:MJchoice}. 
	Then $\mathbf{F}\setminus \tilde M_{(J)}$ is a fundamental domain of 
	$M\setminus M_{(J)}$, and $M$ is tame. 
	Furthermore, $\bigsqcup_{i=1}^{2{\mathbf{g}}}{\mathcal D}_{i}\setminus \tilde M_{(J)}$ 
	embeds to a union of mutually disjoint properly embedded surfaces in $M$. 
	\end{proposition}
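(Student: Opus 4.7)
The plan is to apply the Poincar\'e polyhedron theorem (Proposition \ref{prop:Poincare}) to the complement $\tilde N := \Lspace \setminus \tilde M_{(J)}^o$ with candidate fundamental domain $F := \mathbf{F} \setminus \tilde M_{(J)}^o$ and candidate side pairings coming from the generating set $\mathcal{S}_0 = \{\gamma_1, \dots, \gamma_{2\mathbf g}\}$ of Lemma \ref{lem:sidetube}. The deck transformation group $\Gamma_N$ acts freely and properly on $\tilde N$ because $\tilde M_{(J)}$ is $\Gamma$-invariant, and the quotient is $M \setminus M_{(J)}$.

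First I would record the side structure. By Lemma \ref{lem:sidetube}, the tubular neighborhoods $\{T_j\}_{j=1}^{2{\mathbf g}}$ of $\partial \mathcal{D}_j$ form a matching collection under $\mathcal{S}_0$, with $\gamma_j(T_j) = T_{j+{\mathbf g}}$, $\gamma_j(\mathcal{D}_j) = \mathcal{D}_{j+{\mathbf g}}$, and all other intersections empty. I would set
\[ A_j := T_j \setminus \tilde M_{(J)}^o \quad (j=1,\dots,2{\mathbf g}). \]
Because $\tilde M_{(J)} \supset N_{\bdd,\eps}(K_0)$ and $K_0$ contains every $\mathcal{D}_j \setminus T_j^o$ together with every pairwise intersection $\mathcal{D}_j \cap \mathcal{D}_k$, the entire boundary $\mathrm{Bd}_{\tilde N}(F \cap \tilde N)$ inside $\tilde N$ lies in $\bigcup_j A_j$, and the $A_j$ are pairwise disjoint except where identified by $\mathcal{S}_0$. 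Thus $\{A_1,\dots,A_{2{\mathbf g}}\}$ is matched under $\mathcal{S}_0$ in the sense of Definition \ref{defn:matching}.

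Next I would produce the required exhaustion. Fix the handlebody exhaustion $M_{(J)} \subset M_{(J+1)} \subset \dots$ of $M$ from Proposition \ref{prop:exhaust}, whose lifts $\tilde M_{(i)}$ are connected, and set $N_i := (M_{(J+i)} \setminus M_{(J)}^o) \cap (M \setminus M_{(J)})$ with lifts $\tilde N_i \subset \tilde N$. To apply Proposition \ref{prop:Poincare} I must check that $F \cap \tilde N_i$ is compact and that $F \cap \tilde N_i \cap A_j \ne \emptyset$ for all sufficiently large $i$. The first is exactly Corollary \ref{cor:FmMi}: since each $\mathcal{D}_j$ is a crooked-circle disk, any two disks sharing an accordant segment $\zeta_p$ at a parabolic fixed point are, by construction in Lemma \ref{lem:sidetube}, ruled inside the parabolic region on leaves of $\mathcal{D}_{f_i, r_i}$; hence the hypotheses of Corollary \ref{cor:FmMi} are fulfilled and $\mathbf{F} \cap \tilde M_{(J+i)}$ is a compact topological polytope for each $i$, whence $F \cap \tilde N_i$ is compact. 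The non-triviality of $F \cap \tilde N_i \cap A_j$ for large $i$ follows because $\mathcal{D}_j$, and therefore $T_j$, is an unbounded properly embedded surface in $\Lspace$, so it meets arbitrarily large compact pieces of $\tilde N$.

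With these hypotheses verified, Proposition \ref{prop:Poincare} gives that $F = \mathbf{F} \setminus \tilde M_{(J)}^o$ is a fundamental domain for the $\Gamma$-action on $\tilde N = \Lspace \setminus \tilde M_{(J)}^o$, proving the first claim. Tameness then follows: the quotient $M \setminus M_{(J)}$ is obtained from the compact polytope $F$ by the side pairings in $\mathcal{S}_0$, so $M \setminus M_{(J)}$ has a compact closure in a topological manifold with boundary, and hence $M = M_{(J)} \cup (M \setminus M_{(J)})$ is the interior of a compact $3$-manifold. Since $M_{(J)}$ is already a handlebody of genus $\mathbf g$ and attaching the collar-like outside piece does not change the homotopy type, $M$ is tame. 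Finally, the embedding assertion is immediate from $F$ being a fundamental domain: if two points of $\mathcal{D}_j \setminus \tilde M_{(J)}$ were $\Gamma$-identified in the quotient, they would lie in distinct $\Gamma$-translates of $F$, but the only identifications between translates of $F$ along $\mathcal{D}_j$ are those prescribed by $\gamma_j \in \mathcal{S}_0$, which take $\mathcal{D}_j$ to the disjoint disk $\mathcal{D}_{j+{\mathbf g}}$, not to itself.

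The main obstacle I expect is the bookkeeping around the parabolic regions: one must be sure that the ruled pieces $\mathcal{D}_j \cap R_{p_k}$ obtained from Theorem \ref{thm:Sr} and the tubular neighborhoods $T_j$ are compatible with the matching (so that $\gamma_j$ really carries the ruled piece of $\mathcal{D}_j$ onto that of $\mathcal{D}_{j+{\mathbf g}}$), and that after applying Dehn's lemma the disks still satisfy the hypothesis of Corollary \ref{cor:FmMi}. The equivariant choice of parabolic regions (Section \ref{subsub:pararegion}) and the control in Lemma \ref{lem:sidetube} should handle this, but it is the delicate technical point of the argument.
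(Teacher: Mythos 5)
Your proposal follows essentially the same route as the paper: identify the matched collection $\{\mathcal{D}_j\setminus\tilde M_{(J)}^o\}$ (your $A_j$, which equals this since $\mathcal{D}_j\setminus T_j^o\subset K_0\subset\tilde M_{(J)}$), invoke Corollary \ref{cor:FmMi} for compactness of $F\cap\tilde N_i$, apply Proposition \ref{prop:Poincare} with the exhaustion $M_{(J+i)}\setminus M_{(J)}^o$, and deduce tameness and the embedding of the disks. The only minor imprecisions are calling $F$ a ``compact polytope'' (it is a topological polytope with compact closure in $\mathcal{H}$, not compact in $\Lspace$) and the gratuitous claim that $M_{(J)}$ has genus $\mathbf{g}$; neither affects the argument, since tameness of $M$ only needs tameness of $M_{(J)}$ and of $M\setminus M_{(J)}^o$, as the paper states.
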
 
\begin{proof} 

By Corollary \ref{cor:FmMi},   
$\mathbf{F}\setminus \tilde M_{(J)}^o$ is a tame $3$-manifold. 
Let $X$ denote  $\mathbf{F}\setminus \tilde M_{(J)}^o$, a tame $3$-manifold bounded 
by a union of finitely many compact surfaces. 
\[M_{(J+1)}\setminus M_{(J)}^{o} \subset M_{(J+2)}\setminus M_{(J)}^{o} \subset \cdots \] is an exhausting 
sequence of compact submanifolds in $M\setminus M_{(J)}^{o}$. 
	Since ${\mathcal D}_{i}\setminus \tilde M_{(J)}^o \subset T_i$, 
	\[\{{\mathcal D}_{i}\setminus \tilde M_{(J)}^o|i=1, \dots, 2{\mathbf{g}}\}
	\subset \Bd \mathbf{F}\cap \Lspace \] 
	is a matching collection under $\mathcal{S}_{0}$
	by Lemma \ref{lem:sidetube}. 
	Also, $\mathbf{F} \cap (\tilde M_{(J+n)}\setminus \tilde M_{(J)}^{o})$ for each $n$ 
	is a compact topological polytope by Corollary \ref{cor:FmMi}. 
	By Proposition \ref{prop:Poincare}, $X$ is the fundamental domain of 
	$\Lspace\setminus \tilde M_{(J)}^o$. 
	Hence, $M\setminus M_{(J)}^o$ is tame. 
%
	
	The tameness of $M$ follows since $M\setminus M_{(J)}^o$ and $M_{(J)}$  
	are tame. 
 The last statement follows since $\bigsqcup_{i=1}^{2{\mathbf{g}}}{\mathcal D}_{i}\setminus \tilde M_{(J)}^{o}$ 
 is the boundary of a fundamental domain in $\Lspace\setminus \tilde M_{(J)}^o$. 
	\end{proof}

\subsubsection{Considering the whole disks ${\mathcal D}_{i} \cap \tilde M_{(J)}$} \label{subsub:fund} 

We consider bounded components of ${\mathcal D}_{i}\setminus \tilde M_{(J)}^{o}$ for $i=1, \dots, 2{\mathbf{g}}$. 
By Proposition \ref{prop:outfund}, the union of these planar surfaces embeds to 
the union of disjoint ones in $M$. We take the mutually disjoint
 thin tubular neighborhoods of the images of compact planar components 
 of ${\mathcal D}_{i}\setminus \tilde M_{(J)}^{o}$ and take the inverse image to $\Lspace$. 
 We add these to $\tilde M_{(J)}$. 
Let us call the result $\tilde M_{(J)} \subset \tilde M$ again. 
Since $\Gamma$ acts on $\tilde M_{(J)}$, 
we obtain a compact submanifold $M_{(J)}$ in $M$. 



Thus, by {Theorem \ref{thm:Dehn}} applied to $M_{(J)}$, each component of ${\mathcal D}_{i} \cap \partial \tilde M_{(J)}$
bounds a disk mapping to a mutually disjoint collection of embedded disks in $M_{(J)}$. 
We modify $\mathcal{D}_{i}$ by replacing each component of  ${\mathcal D}_{i} \cap \tilde M_{(J)}$ with
lifts of these disks.
(See \cite{GL84} and \cite{Hempel04} for some details.)

The results are still embedded in $\tilde M$ since we modify only inside $\tilde M_{(J)}$
where the disks are {also disjoint}.

Hence, we conclude 
\begin{align} \label{eqn:matchD} 
 \gamma_{j}({\mathcal D}_{j}) = {\mathcal D}_{j+{\mathbf{g}}}  
&\hbox{ for } \gamma_{j}\in \mathcal{S}, \hbox{ and } \nonumber\\
  \gamma_{j}({\mathcal D}_{l}) \cap {\mathcal D}_{m}  =\emp
& \hbox{ for } (j, l, m) \ne (j, j, j+{\mathbf{g}}) \mod 2{\mathbf{g}}.
 \end{align}

We summarize 
\begin{proposition} \label{prop:FundD} 
	{Let $M$ denote a Margulis space-time $\Lspace/\Gamma$ where
	$\Gamma$ is an isometry group with $\mathcal{L}(\Gamma) \subset \SO(2, 1)^{o}$.} 
Then there exists a fundamental domain $\mathcal{R}$ closed in $\Lspace$ bounded by finitely many crooked-circle disks
$\mathcal{D}_{j}$, $j=1, \dots, 2{\mathbf{g}}$. 
Moreover, $\clo(\mathcal{R}) \cap (\Lspace \cup \tilde \Sigma)$ is the fundamental domain of 
a manifold $(\Lspace \cup \tilde \Sigma)/\Gamma$ with boundary $\Sigma$.
Here, {$\mathcal{R}^o$} and $\clo(\mathcal R)$ are $3$-cells, and 
$\Lspace/\Gamma$ is homeomorphic to the interior of a handlebody of genus $\mathbf{g}$.
\end{proposition} 
\begin{proof} 
	

		Let $\mathcal{R}$ be the region in $\Lspace$ with boundary equal to 
	$\bigcup_{j=1}^{2{\mathbf{g}}}\mathcal{D}_{j}$. 
	\oldch{Since $\mathcal{D}_{j}$ is a properly embedded separating {disk} in a cell, 
		repeated applications of Lemma 1.12 of \cite{Marden74} imply that $\mathcal{R}^o$ is a cell. 
Since $\clo(\mathcal{R})$ is a polyhedral manifold whose interior is a $3$-cell, 
it is a $3$-cell. }
	
Since by \eqref{eqn:matchD}, 
\[\{\mathcal{D}_{j}| j = 1, \dots, 2{\mathbf{g}}\}\] is a matched set under $\mathcal{S}_{0}$, 
$\mathcal{R}$ is the fundamental domain by Proposition \ref{prop:Poincare}. 
The quotient space is homeomorphic to the interior of a handlebody
since we can find a homeomorphism of $\mathcal R$ to the standard $3$-ball 
where $\mathcal{D}_{j}$, $i=1, \dots, 2{\mathbf{g}}$, correspond to disjoint open disks with piecewise smooth boundary. 

Also, 
\[\{\clo(\mathcal{D}_{j}) \cap (\Lspace \cup \tilde \Sigma)| j = 1, \dots, 2{\mathbf{g}}\}\]
is a matched set under $\mathcal{S}_{0}$. 
Also, every point in $\tilde N:=\Lspace\cup \tilde\Sigma$
is equivalent to a point of $\mathcal{R}':= \clo(\mathcal{R}) \cap \tilde N$
by the action of $\Gamma$. 
Hence, $\mathcal{R}'$ is a fundamental domain
 giving us the properness of the action of $\Gamma$ on $\tilde N:= \Lspace\cup \tilde\Sigma$. 
Thus, $\tilde N/\Gamma$ is a manifold with boundary $\Sigma$. 

\end{proof}


This proves the first part of Theorem \ref{thm:main}. 
{The remaining part of Theorem \ref{thm:main}} 
will be completed in Section \ref{sub:compactification}.


\subsection{Parabolic regions and the intersection properties} \label{sub:parab}

{We will now choose the parabolic regions so that their images under the deck transformation groups are mutually disjoint. 
We will need this in Proposition \ref{prop:Pdisj}.} 

The basic idea used is that disks are separating 
$\Lspace$ into two components. 
We choose the parabolic regions for each parabolic point in the closure of the fundamental domain so that 
they meet the fundamental domain $\mathcal{R}$ is in a nice manner.
Using this, we can show that each image of a parabolic region meets an image of the fundamental domain in finitely many 
manners. This will essentially give us the needed intersection properties. 

\KFAR{
To explain the ideas of our proof, we go to the $2$-dimensional case: 
we can show without using the thick and thin decomposition theory that the horodisks of parabolics of a Fuchsian groups can be made 
mutually disjoint if  we choose them sufficiently small. 
We use the tessellations by an ideal fundamental polygonal disk $D$ 
in the hyperbolic plane with a Fuchsian group action. 
$D$ can intersect a horodisk in ``sectors'' of 
the horodisk.
We control the horodisks meeting $D$ so that they only meet in
a sector and they are disjoint. Any image of these horodisks 
must meet with a corresponding image of $D$ in a sector. For a horodisk $B$,
we cover $B$ by a connected union $U_1$ finitely many images of $D$ 
and the images of $U_1$ by
the cyclic group action of parabolic elements $\langle \gamma_B\rangle$ 
acting on $B$.  
We take the union $U_B$. 
Then the boundary of the union $U_B$ is separating other images of the horodisks with 
parabolic fixed points outside the closure of $U_B$ with $B$. 
Thus, we can control the intersections of $B$ with the images of $B$ 
by considering what happens inside $U_B$
and hence inside $U_1$ up to the  $\langle \gamma_B \rangle$-action.
}

The above fundamental domain 
$\mathcal{R}$ is bounded by a union of disks
$\mathcal{D}_{i}, i=1, \dots, 2{\mathbf{g}}$, in the boundary of $\mathcal{R}$. 
We call $\mathcal{D}_{i}$ the \hypertarget{term-fd}{{\em facial disks}} of $\mathcal{R}$. 
By construction, the closure of $\mathcal{D}_{i}$ is disjoint from $\Lambda_{\Gamma, \Ss_+} \subset \partial \Ss_{+}$
except for parabolic points. The set of parabolic points meeting at least one $\clo(\mathcal{D}_{i})$ is 
a finite set $\{p_{1}, \dots, p_{m_{1}}\}$.  (Possibly two or more of $p_{i}$s may be in the same orbit of $\Gamma$.) 



{We use the notations of Section \ref{subsub:pararegion}.} 
For each $p_{i}$, $i=1, \dots, m_1$,  we have a \hyperlink{term-pbr}{parabolic region} 
of the form $\gamma(\mathcal{P}_{j})$ for some $j$, $j=1, \dots, m_0$, and $\gamma \in \Gamma$
whose closure contains $p_i$. {For each $i$, we} denote by $\mathcal{P}_{i}$ this region $\gamma(\mathcal{P}_{j})$.
We denote by $\eta_i$ the parabolic element fixing $p_i$, following the boundary orientation if we remove $E$,  
and hence $\eta_i = \gamma \eta_j \gamma^{-1}$ for some $j=1, \dots, m_0$.


%

Now, we choose $\mathcal{P}_j$, $j=1, \dots, m_0$, so that 
$\clo(\mathcal{P}_i) \cap \Ss_+ $ for each $i$
equals a component of the inverse image of $E$.
{These $R_j$, $j=1, \dots, m_0$, form a mutually disjoint  collection of} closed cusp neighborhoods of $\Sf$ as given in
Section \ref{sub:thin}. 
$\clo(\mathcal{P}_i) \cap \Ss_+ $ for each $i$
equals a component of the inverse image of $E$. 
Also, by our construction in Theorem \ref{thm:ruled}, we have
$\clo(\mathcal{P}_i) \cap \Ss_-  = \mathcal{A}(\clo(\mathcal{P}_i) \cap \Ss_+).$


\begin{definition} \label{defn:nice} 
Let $p_{i}$ and $\mathcal{P}_{i}$ be as above for $i \in {\mathcal I}'\setminus {\mathcal I}$, and 
let $\eta_i$ be the parabolic primitive element fixing $p_i$. 
{Let $D_{f_i, r_i,t}$  denote the {\em canonically defined} properly embedded disks by Theorem \ref{thm:Sr}.}
We say that an image $\gamma(\mathcal{R})$, $\gamma \in \Gamma$, of the fundamental domain $\mathcal{R}$ {bounded by crooked-circle disks} 
meets {\em nicely} with $\eta(\mathcal{P}_{i})$, $\eta \in \Gamma$, if 
\begin{equation} \label{eqn:far} 
\eta(\mathcal{P}_{i}) \cap \gamma(\mathcal{R}) = \bigcup_{t\in [t_{1}, t_{2}]} D_{f_i, r_i, t} \hbox{ for some } t_{1}, t_{2}\in \bR, t_{1}< t_{2} 
\end{equation}
and
\begin{align} \label{eqn-etap} 
\clo(\zeta_{\eta(p_{i})}) & \subset \clo(\gamma(\mathcal{R})), 
\hbox{ and }\\
\eta(\mathcal{P}_{i}) & \subset  \Paren{\bigcup_{k \in \bZ} 
 \gamma \eta_{i}^{k}\left(\bigcup_{j=1}^{k_{0}}\kappa_{j}(\mathcal{R} )\right)}^{o}
\end{align} 
for a finite collection of $\{\kappa_{j} \in \Gamma\}$ 
where $\clo(\zeta_{\eta(p_{i})}) \subset \clo(\gamma(\kappa_{j}(\mathcal{R})))$. 
\end{definition}
Of course, by the definition $\gamma \circ \eta_i^k \circ \kappa_j(\mathcal{R})$, {$k\in \bZ$,}
meets with $\eta(\mathcal{P}_j)$ nicely as well. 


\begin{lemma} \label{lem:paraf} 
Let $\Gamma$ satisfy Criterion \ref{cr:positive}. 
{Let $\mathcal{R}$ be the fundamental domain of $\Lspace/\Gamma$ bounded by crooked-cricle disks
as constructed by Proposition \ref{prop:FundD}.}
Let $q$ be a parabolic fixed point in $\clo(\mathcal{R})$. Then 
$q = p_{i}$ for some $i$, $i =1, \dots, m_1$. 
 Moreover, the following hold\/{\rm :} 
\begin{itemize}
\item $\clo(\zeta_{p_{i}}) \subset \clo(\mathcal{R})$,  
\item $\clo(\zeta_{p_{i}}) $ is a subset of  the closures of exactly two 
facial disks $\mathcal{D}_{l}$ and $\mathcal{D}_{m}$ among the facial disks of
$\mathcal{R}$, and
\item 
the corresponding parabolic region $\mathcal{P}_{i}$ meets nicely with $\mathcal{R}$ provided we choose $\mathcal{P}_{i}$, $i=1, \dots, m_0$, sufficiently \hyperlink{term-far}{far away}.
\end{itemize} 
\end{lemma} 
\begin{proof} 
Since $q \in \clo(\mathcal{R})$, $q = p_i$ by the construction 
in Lemma \ref{lem:sidetube}  of 
$\partial \mathcal{D}_{j}$, $j=1, \dots, 2{\mathbf{g}}$.
Since the closure of $\mathcal{D}_{j}$ is compact,
either $\mathcal{D}_{j}$ contains $\zeta_{p_{i}}$ in its boundary, or 
there is an $\eps$-$\bdd$-neighborhood of {$\clo(\zeta_{p_{i}})$ disjoint} from it for some $\eps > 0$. 
We can choose the boundary ruled surface of 
 $\mathcal{P}_{i}$ sufficiently far  
so that {\eqref{eqn:far} holds and hence} only facial disks of $\mathcal{R}$ that meet $\mathcal{P}_{i}$ are the two 
\hyperlink{term-fd}{facial disks} whose closures contain $\zeta_{p_{i}}$.  (See Definition \ref{defn:para}.)
Let us call these $\mathcal{D}_{j}$ and $\mathcal{D}_{k}$. 



Now, $\partial \mathcal{P}_{i}\cap \Lspace = S_{i}$  is an open disk \hyperlink{term-sep}{separating} $\mathcal{P}_{i}$ from $\Lspace\setminus \clo(\mathcal{P}_{i})$. 
$\partial \mathcal{P}_{i} \cap \mathcal{R}$ has  {the boundary formed by} two lines respectively in $\mathcal{D}_{j}$ and $\mathcal{D}_{k}$.
We take { finitely many} images $\kappa_{l}({\mathcal{R}}), l=1, \dots, k_{0}$ of $\mathcal{R}$ with $\kappa_1 = \Idd$ so that 
$\kappa_{l}(\mathcal{R}) \cap \kappa_{l+1}(\mathcal{R})$ is a copy of $\mathcal{D}_{i_{0}}$ for some $i_{0}$ whose closure contains 
$\zeta_{p_{i}}$.  
Since the collection $\{\gamma(\mathcal{R}), \gamma \in \Gamma\}$ tessellates $\Lspace$, 
we can choose enough of $\kappa_{j}$ so that 
$\kappa_{k_{0}+1}(\mathcal{R}) = \eta_{i}^{\pm 1}(\kappa_{1}(\mathcal{R}))$ for 
either $+$ or $-$ sign. 

Except for the closures of facial disks of $\{\kappa_{j}(\mathcal{R})|j=1, \dots, k_{0}\}$ containing $\clo(\zeta_{p_{i}})$, 
the closures of other facial disks contained in the boundary of 
\[\{\kappa_{j}(\mathcal{R})|j=1, \dots, k_{0}\}\]
  are disjoint from $\clo(\zeta_{p_{i}})$. Let $\hat K$ denote the union of the closures of these images of facial disks 
  of $\{\kappa_{j}(\mathcal{R})|j=1, \dots, k_{0}\}$ disjoint from  $\clo(\zeta_{p_{i}})$. 
Since these are separating disks in $\mathcal H$, we may choose $\mathcal{P}_{i}$ sufficiently far so that 
$\clo(\mathcal{P}_{i}) \cap \hat K = \emp$. 

Since $\clo(\mathcal{P}_{i})$ is $\eta_{i}$-invariant, 
$\clo(\mathcal{P}_{i})$ is disjoint from $\bigcup_{m\in \bZ}\eta_{i}^{m}(\hat K)$. 
Now, $\bigcup_{m\in \bZ}\eta_{i}^{m}(\hat K)$ is a separating set in $\Lspace$.
A component of $\Lspace\setminus \bigcup_{m\in \bZ}\eta_{i}^{m}(\hat K)$ equals 
\[\Paren{\bigcup_{k \in \bZ}  \eta_{i}^{k}\left(\bigcup_{j=1}^{k_{0}}\kappa_{j}(\mathcal{R} )\right)}^{o}.\]
Since $\mathcal{P}_{i}$ is a connected set, we obtain
\begin{equation} \label{eqn:kj} 
\mathcal{P}_{i} \subset \Paren{\bigcup_{k \in \bZ}  \eta_{i}^{k}\left(\bigcup_{j=1}^{k_{0}}\kappa_{j}(\mathcal{R} )\right)}^{o}.
\end{equation} 
\end{proof} 
{Notice that \eqref{eqn:kj} gives us the conditions of Definition \ref{defn:nice}.}


\begin{proposition}\label{prop:intnice} 
Let $\mathcal{P}_{i}$ be a parabolic region for 
the parabolic fixed point 
$p_{i}$, $i=1, \dots, m_{0}$, 
as we chose at the beginning of Section \ref{sub:parab}.
We can always choose $\mathcal{P}_{i}, i=1, \dots, m_{0}$, so that for every pair $\eta, \gamma \in \Gamma$, 
so that exclusively one of the following holds\,{\rm :}
\begin{itemize}
\item $\eta(p_i) \not\in \gamma(\clo(\mathcal{R}))$, 
or  else
\item 
 $\gamma(\mathcal{R})$ meets $\eta(\mathcal{P}_{i})$ nicely, 
 and $\gamma(\mathcal{P}_j) = \eta(\mathcal{P}_i)$ for some 
 $j= 1, \dots, m_1$. 
\end{itemize} 
\end{proposition}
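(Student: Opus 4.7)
The plan is to reduce the proposition to Lemma~\ref{lem:paraf} using $\Gamma$-equivariance of the chosen parabolic regions. Recall from Section~\ref{subsub:pararegion} that we specified base regions $R_1,\dots,R_{m_0}$, one per $\Gamma$-conjugacy class of parabolics, and that the regions $\mathcal{P}_i$ at every parabolic fixed point are defined by the \hyperlink{term-eqc}{equivariant choice}. Thus every $\mathcal{P}_i$ is of the form $\gamma(R_{k(i)})$, and the map \emph{parabolic fixed point}~$\mapsto$~\emph{parabolic region} is well-defined on the entire $\Gamma$-orbit. The parabolic fixed points lying in $\clo(\mathcal R)$ form a finite set $\{p_1,\dots,p_{m_1}\}$, and Lemma~\ref{lem:paraf} gives, for each such $p_i$, a choice of $R_{k(i)}$ sufficiently far so that $\mathcal{P}_i$ meets $\mathcal{R}$ nicely.

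First I would choose each $R_k$, $k=1,\dots,m_0$, simultaneously far enough that the finitely many nice-intersection conditions from Lemma~\ref{lem:paraf} applied to each $p_i$ ($i=1,\dots,m_1$) are satisfied at once, together with the mutual disjointness used in the proof of Lemma~\ref{lem:sidetube}. For each $i$ this produces a finite collection $\{\kappa_{i,1},\dots,\kappa_{i,k_0(i)}\}\subset\Gamma$ with $\kappa_{i,1}=\Idd$ realizing the containment in the second bullet of Definition~\ref{defn:nice} for the pair $(\eta,\gamma)=(\Idd,\Idd)$ and the parabolic fixed point~$p_i$.

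The verification of the dichotomy is then essentially formal. Fix $\eta,\gamma\in\Gamma$ and assume the first alternative fails, so $\eta(p_i)\in\gamma(\clo(\mathcal R))$. Then $\gamma^{-1}\eta(p_i)$ is a parabolic fixed point of $\Gamma$ lying in $\clo(\mathcal R)$, hence equals some $p_j$ with $j\in\{1,\dots,m_1\}$. By the well-definedness of the equivariant choice in Section~\ref{subsub:pararegion} (any element of $\Gamma$ carrying one horodisk to another carries the corresponding parabolic region to the corresponding parabolic region), $\gamma^{-1}\eta(\mathcal{P}_i)=\mathcal{P}_j$, i.e.\ $\eta(\mathcal{P}_i)=\gamma(\mathcal{P}_j)$, giving the second clause of the conclusion. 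Applying $\gamma$ to the nice intersection of $\mathcal{R}$ with $\mathcal{P}_j$ established in the previous step yields
\[
\eta(\mathcal{P}_i)\cap \gamma(\mathcal{R}) \;=\; \gamma\!\left(\mathcal{P}_j\cap\mathcal{R}\right) \;=\; \gamma\!\left(\bigcup_{t\in [t_1,t_2]} D_{f_j,r_j,t}\right),
\]
which is again a union of leaves of the (translated) transversal foliation since $\gamma$ is a projective automorphism preserving the foliation structure on $\gamma(\mathcal{P}_j)$; and the collection $\{\gamma\kappa_{j,l}\}$ plays the role of the $\kappa_l$ required by Definition~\ref{defn:nice} for the new pair.

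The main obstacle I expect is purely bookkeeping: making sure that the equivariant choice genuinely is well-defined, so that the identity $\eta(\mathcal{P}_i)=\gamma(\mathcal{P}_j)$ does not depend on how the same orbit point is reached from the base set $\{p_1,\dots,p_{m_1}\}$. This is where the stabilizer argument from Section~\ref{subsub:pararegion} --- that any $\gamma\in\Gamma$ stabilizing $\mathbb{H}_i$ stabilizes $R_i$ --- is essential. The rest of the argument simply transports the local picture of Lemma~\ref{lem:paraf} around the orbit.
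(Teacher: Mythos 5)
Your proof is correct and takes essentially the same route as the paper's, which reads in full: ``We may assume $\gamma=\Idd$ since we can change $\eta$ to $\gamma^{-1}\eta$. Then the result follows by Lemma \ref{lem:paraf}.'' You have simply unpacked the $\Gamma$-equivariance of the choice of parabolic regions and of the ``meets nicely'' relation that the paper's one-line reduction tacitly relies on.
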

\begin{proof}
We may assume $\gamma=\Idd$ since we can change $\eta$ to $\gamma^{-1}\eta$. 
Then the result follows by Lemma \ref{lem:paraf}. 
\end{proof}

We choose $\mathcal{P}_{j}$ \hyperlink{term-far}{far away} for $j=1, \dots, m_{0}$ so that the conclusions of Proposition \ref{prop:intnice} 
are satisfied. 

Let $P = \bigcup_{\gamma \in \Gamma} \bigcup_{i=1,\dots, m_{0}} \gamma(\mathcal{P}_i)$, and 
let $\mathcal{P}_{\mathcal{R}} := (\mathcal{P}_{1}\cup \cdots \cup \mathcal{P}_{m_{1}}) \cap \mathcal{R}$. 


\begin{proposition} \label{prop:Pdisj}
We can choose the sufficiently \hyperlink{term-far}{far away}  parabolic regions 
\[ \mathcal{P}_{1}, \dots, \mathcal{P}_{m_{0}}\]
meeting $\mathcal{R}$ nicely 
so that they are disjoint in $\Lspace$. 
Then the following hold\,{\rm :}
\begin{itemize}
\item The following are equivalent\/{\rm :}  
\begin{enumerate}
	\item[(1)] $\gamma(\mathcal{P}_{i})$ meets $\mathcal{R}$ nicely. 
	\item[(2)] $\gamma(\mathcal{P}_{i}) = \mathcal{P}_{j}$ for some $j$, $j=1, \dots, m_1$. 
	\item[(3)] $\gamma(\mathcal{P}_{i}) \cap \mathcal{R} \ne \emp$.
\end{enumerate}
\item $\mathcal{R}$ meets only $\mathcal{P}_{1}, \dots, \mathcal{P}_{m_{1}}$
among all images $\gamma(\mathcal{P}_r)$ for $\gamma \in \Gamma, r=1, \dots, m_0$. 
\item Moreover,  for every pair $\gamma, \eta \in \Gamma$,
\[\gamma(\mathcal{P}_{j}) \cap \eta(\mathcal{P}_{k}) = \emp \hbox{ or } 
\gamma(\mathcal{P}_{j}) = \eta(\mathcal{P}_{k}),\, j, k =1, \dots, m_{0}. \]
\end{itemize}
\end{proposition}
\begin{proof}
We first choose $\mathcal{P}_{i}, i=1, \dots, m_{0}$, sufficiently far so that
 $\mathcal{P}_{i} \cap \mathcal{P}_{j} \cap \mathcal{R} = \emp$ for $i\ne j$, 
 $i, j=1, \dots, m_1$, 
 and every $\mathcal{P}_{j}$, $j=1, \dots, m_1$, meets $\mathcal{R}$ nicely by Proposition \ref{prop:intnice}.  


{Obviously (2) implies (1).} 
For the first item, we show that (1) implies (2): 
Suppose $\gamma(\mathcal{P}_{j})$ meets $\mathcal{R}$ nicely. 
Then $\gamma(p_{j})$ is in $\clo(\mathcal{R})$.
Since $\gamma(p_{j})$ is a parabolic fixed point,  
it equals $p_{l}$ for some  $l=1, \dots, m_{1}$.
Only elements of $\Gamma$ fixing $p_{l}$ are 
of the form $\eta_{l}^{m}$ for some integer $m\in \bZ$. 
\[\gamma(\clo(\mathcal{P}_j)) \cap \Ss_+ = \gamma({\mathscr{H}}_k) \cup \gamma(\partial_h {{\mathscr{H}}_k})\] for 
a horodisk ${\mathscr{H}}_k$. Now, $\gamma({\mathscr{H}}_k) = {\mathscr{H}}_l$. 
Hence, the parabolic group acting on 
$\gamma(\mathcal{P}_j)$ is the same one acting on 
$\mathcal{P}_l$. 
By our choice of $\mathcal{P}_j$ in Section \ref{sub:parab} 
from choosing orbit representatives of parabolic fixed points, 
we obtain 
\[\gamma(\mathcal{P}_{j}) = \mathcal{P}_{l} \hbox{ for some } l=1, \dots, m_{1}.\] 
Clearly, (2) implies (3) by Lemma \ref{lem:paraf}.

Now we show that (3) implies (2):  
Suppose that $\gamma(\mathcal{P}_j) \cap \mathcal{R} \ne \emp$.
Now, $\gamma(p_{j}) \in \eta(\clo(\mathcal{R}))$ for some $\eta \in \Gamma$, 
and $\gamma(\mathcal{P}_{j})$ meets $\eta(\mathcal{R})$ nicely
and $\gamma(\mathcal{P}_{j}) = \eta(\mathcal{P}_{k})$ for some $k$
by Proposition \ref{prop:intnice}.  
Moreover, 
\begin{equation}\label{eqn:etaPj}  
\eta(\mathcal{P}_k) \subset 
\Paren{\bigcup_{l \in \bZ} \eta \eta_{k}^{l}
	\left(\bigcup_{r=1}^{k_0}\kappa_r(\mathcal{R})\right)}^{o}
\end{equation}
by Proposition \ref{prop:intnice}.  
Hence, $\gamma(\mathcal{P}_{j})$ meets with only the images of $\mathcal{R}$ of the form $\eta\eta_{k}^{l}\kappa_j(\mathcal{R})$. 
If 
\[\eta\eta_{k}^{l}\left(\kappa_j(\mathcal{R})\right) = \mathcal{R} \hbox{ for some } k, l,j, \]
then 
$\gamma(\mathcal{P}_{j})$ meets $\mathcal{R}$ nicely since 
we can check Definition \ref{defn:nice}. 
If  $\eta\eta_{k}^{l}\left(\kappa_j(\mathcal{R})\right) \ne \mathcal{R}$ for all $k, l, j$, 
then $\gamma(\mathcal{P}_{j})$ does not meet $\mathcal{R}$
by \eqref{eqn:etaPj}. 
 (2) implies (1) by Lemma \ref{lem:paraf}.
We proved the first item.  

The second item follows from it. 


Suppose that two respective images $P'_{i}$ and $P'_{j}$  of some $\mathcal{P}_{k}$ and $\mathcal{P}_{l}$ 
for $k, l=1, \dots, m_{0}$ meets in a nonempty set. 
Hence, they meet in $\gamma(\mathcal{R})$ for some $\gamma \in \Gamma$. 
Thus, \[\gamma^{-1}(P'_{i}) \cap \gamma^{-1}({P}'_{j}) \cap \mathcal{R} \ne \emp.\] 
The first item implies that 
\[\mathcal{P}_{j}=\gamma^{-1}(P'_{i}) \hbox{ and } \mathcal{P}_{l}=\gamma^{-1}({P}'_{j}) \hbox{  for some } l, m = 1, \dots, m_1\] 
However, $\mathcal{P}_{j}\cap \mathcal{P}_{l} \cap \mathcal{R} = \emp$
or $P'_i= P'_j$ by our construction of parabolic regions.

\end{proof} 


\subsection{Relative compactification}\label{sub:compactification}

\subsubsection{Proof of Theorem \ref{thm:main}} \label{subsub:thmmain}
{Proposition \ref{prop:FundD} proves the first part of the theorem.}
First, we recall our bordifying surface as defined by \eqref{eqn:tSigma}:
\begin{equation*}
\tilde \Sigma_0 := \Ss_+ \cup \Ss_- \cup \bigcup_{i \in {\mathcal I}} (A_{i} \cup a_{i} \cup \mathcal{A}(a_{i})).
\end{equation*} 
$\Sigma := \tilde \Sigma_{0}/\Gamma$ and $N := (\Lspace\cup \tilde \Sigma)/\Gamma $, which is a manifold by
Proposition \ref{prop:FundD}. 






By Proposition \ref{prop:Pdisj}, we define $P$ to be a union of mutually disjoint parabolic regions of the form $\gamma(\mathcal{P}_{i})$ 
for $\gamma \in \Gamma$, $i=1, \dots, m_{0}$. 
Since the boundary of their union in $\Ss$ is the union of mutually disjoint 
closed horodisks, their closures in $\mathcal{H} = \clo(\Lspace)$ are mutually disjoint. 
Now, we take the closure $\clo(P)$ of $P$ and take the relative interior $P'$ in the closed hemisphere $\mathcal{H}$. 
Let $\partial_{\Lspace} P'$ denote $\Bd P' \cap \Lspace$. 
Then define $\tilde N':= (\Lspace \cup \tilde \Sigma )\setminus P'$. 
$\Gamma$ acts properly discontinuously on $\tilde N'$ since $\tilde N'$ is a  $\Gamma$-invariant proper subspace of $\tilde N$. We note that $\partial_{\Lspace} P'$ is 
transversal to $\Ss$. 
Thus, $N':= \tilde N'/\Gamma$ is a manifold. 

The manifold boundary $\partial N'$ of $N'$ is
 \[((\tilde \Sigma\setminus P' )\cup \partial_{\Lspace } P')/\Gamma.\] 
Define $P'' = P'/\Gamma$.
Also, $(\partial_{\Lspace} P')/\Gamma$ is a union of a finite number of disjoint annuli.
$\partial N'$ is homeomorphic to $(\Sigma\setminus P'') \cup (\partial_{\Lspace} P')/\Gamma$.

Recall that the union of facial disks $\mathcal{D}_{i}$, $i=1, \dots, 2{\mathbf{g}}$, 
bounds the fundamental domain $\mathcal{R}$ in $\mathcal{H}$. 
Then \[\bigcup_{i=1}^{2{\mathbf{g}}}\clo(\mathcal{D}_{i}) \cap  ((\Lspace \cup \tilde \Sigma)\setminus P')\]
bounds a fundamental domain 
\[\clo(\mathcal{R}) \cap ((\Lspace \cup \tilde \Sigma)\setminus P').\] 
The boundary is homeomorphic to a $2$-sphere and, hence, 
the fundamental domain is homeomorphic to a compact $3$-cell.
Since this fundamental domain is compact, $N'$ is compact. 

Since we pasted disjoint disks on a cell,  $N'$ is homotopy equivalent to a bouquet of circles. 
Now, $N'$ has no fake-cell since $\tilde N'$ is a subset of $\Lspace$. 
It follows that $N'$ is homeomorphic to a compact handlebody of genus $\bg$ by Theorem 5.2 of \cite{Hempel04}.

Let $\hat P$ be the closure of $P'$ in $\tilde N$. 
We realize that 
$N'$ is a deformation retract of $N$ by collapsing $\hat P/\Gamma$, homeomorphic to a disjoint union of copies of 
$A^{2}\times [0, 1)$,  to its boundary in $N$ 
homeomorphic to a disjoint union of embedded images of $A^{2}$
for a compact annulus $A^2$ with boundary.  
This completes the proof of Theorem \ref{thm:main}.

\subsubsection{Proof of Corollary \ref{cor:main2}}
If $\mathcal{L}(\Gamma) \subset \SO(2,1)^o$, we are done
by Theorem \ref{thm:main}. 

Suppose not.
We have an index-two subgroup $\Gamma'$ of $\Gamma$ acting on 
$\Ss_+$ with $\mathcal{L}(\Gamma') \subset \SO(2,1)^o$.  
Then
$\Gamma'$ acts on 
$(\Lspace\cup \tilde \Sigma)\setminus P'$ 
where we construct $\tilde \Sigma$ and $P'$ as above for $\Gamma'$. 
There exists an element $\phi$ of $\Gamma-\Gamma'$ so that 
$\phi(\Ss_+) = \Ss_-$ and 
$\phi^2 \in \Gamma'$ and $\phi$ normalizes $\Gamma'$. 
Since $\phi$ acts as an orientation-preserving map of $\Ss$, 
and \[\mathcal{L}(\phi) \circ \mathcal{L}(\Gamma') \circ 
\mathcal{L}(\phi)^{-1} =  \mathcal{L}(\Gamma'),\] 
it follows that 
$\phi$ induces a diffeomorphism 
$\Ss_+/\Gamma'$ with $\Ss_-/\Gamma'$ preserving orientations. 
Since $\Ss_-$ is a Klein model also, we can define 
a limit set $\Lambda_{\Gamma', \Ss_{-}}$.
Hence, for the limit sets, we have 
\[\phi(\Lambda_{\Gamma', \Ss_{+}}) 
= \Lambda_{\Gamma', \Ss_{-}}, 
\hbox{ and } \phi(\partial \Ss_+ \setminus \Lambda_{\Gamma', \Ss_{+}}) =
\partial \Ss_-\setminus \Lambda_{\Gamma', \Ss_{-}}.\] 
Since each element of $\mathcal{L}(\Gamma')$ 
commutes with $\mathcal{A}$, we obtain 
\[\mathcal{A}(\Lambda_{\Gamma', \Ss_{+}}) = 
\Lambda_{\Gamma', \Ss_{-}} \hbox{ and }
\mathcal{A}(\partial \Ss_+\setminus \Lambda_{\Gamma', \Ss_{+}}) = 
\partial \Ss_-\setminus \Lambda_{\Gamma', \Ss_{-}}.\]
Let $\mathcal{I}$ denote the collection of open intervals of 
$\partial \Ss_+\setminus \Lambda_{\Gamma', \Ss_{+}}$. 
We define $\tilde \Sigma$ for $\Gamma'$ as in \eqref{eqn:tSigma}, 
\[\Ss_+ \cup \Ss_- \cup 
\bigcup_{a \in \mathcal{I}} \left(a\cup \mathcal{A}(a) \cup \bigcup_{x \in a} \zeta_x \right).\] 
Since $\phi$ is orientation-preserving, 
$\phi$ sends the disk $A_{a} = \bigcup_{x\in a} \zeta_{x}$, 
$a\in \mathcal{I}$,   to $A_{\mathcal{A}(\phi(a))}$. 
Since $a \mapsto \mathcal{A} \phi(a)$ 
gives us an automorphism of $\mathcal{I}$, 
$\phi$ acts on $\tilde \Sigma$. 

Given a component $P_1$ of $P'$, there is a parabolic primitive element $\gamma_1$ 
acting on it. Then $\gamma_2:= \phi \circ \gamma_1 \circ \phi^{-1}$ 
acts on $\phi(P_1)$. Since $\gamma_2 \in \Gamma'$ also, 
$\gamma_2$ acts on a component $P_2$ of $P'$. 
We denote $\gamma_2^\ast (P_1) = P_2$ where 
we may not yet have $\gamma(P_1) = P_2$. 

Let $\widetilde{\mathcal{P}}$ denote the set of parabolic fixed points of 
$\partial \Ss_+$. Then let a finite $\hat{\mathcal{P}}$ denote the collection of 
the $\Gamma'$-orbit classes of $\widetilde{\mathcal{P}}$.  
The above action of $\phi$ induces an automorphism of $\hat{\mathcal{P}}$. 

\begin{lemma} 
There is no fixed point in $\hat P$ under this action of $\phi$ on $\hat{\mathcal{P}}$. 
\end{lemma}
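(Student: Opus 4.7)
The plan is to argue by contradiction, exploiting three ingredients: $\Gamma$ is free (so subgroups are free, and centralizers of nontrivial elements are cyclic), $\Gamma$ is torsion-free, and every parabolic element of $\Gamma$ lies in $\Gamma'$ (its linear part is a parabolic in $\SO(2,1)$ and therefore belongs to the identity component $\SO(2,1)^o$). First I would pin down the $\phi$-action on $\hat{\mathcal{P}}$ explicitly. If $\gamma_1\in\Gamma'$ is a primitive parabolic with fixed point $p\in\partial\Ss_+$, then $\phi\gamma_1\phi^{-1}\in\Gamma'$ is again a primitive parabolic, and since $\mathcal{L}(\phi)$ swaps the two sheets of the hyperboloid, the null eigenline of $\phi\gamma_1\phi^{-1}$ meets $\partial\Ss_+$ at $\mathcal{A}\phi(p)$. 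Hence the action of $\phi$ on $\hat{\mathcal{P}}$ is $[p]\mapsto[\mathcal{A}\phi(p)]$.

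Assume for contradiction that $[p]$ is fixed. Then there is $\sigma\in\Gamma'$ with $\sigma(\mathcal{A}\phi(p))=p$. Conjugation by $\sigma\phi$ takes the primitive parabolic at $\mathcal{A}\phi(p)$, namely $\phi\gamma_1\phi^{-1}$, to a primitive parabolic fixing $p$, which must be $\gamma_1^{\epsilon}$ for some $\epsilon\in\{\pm 1\}$. This yields the relation
\[
(\sigma\phi)\,\gamma_1\,(\sigma\phi)^{-1}=\gamma_1^{\epsilon},
\]
and $\sigma\phi\in\Gamma\setminus\Gamma'$ since $\sigma\in\Gamma'$ while $\phi\notin\Gamma'$.

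The word $aba^{-1}b^{-\epsilon}$ is nontrivial in the free group on $\{a,b\}$, so the relation above prevents $\sigma\phi$ and $\gamma_1$ from generating a free subgroup of rank two. Since subgroups of $\Gamma$ are free, $\langle\sigma\phi,\gamma_1\rangle$ is cyclic; write $\sigma\phi=\delta^{a}$ and $\gamma_1=\delta^{b}$ with $b\ne 0$. If $\epsilon=-1$, the relation reduces to $\gamma_1=\gamma_1^{-1}$, hence $\gamma_1^2=1$, contradicting torsion-freeness. If $\epsilon=+1$, then $\mathcal{L}(\delta)$ centralizes the parabolic $\mathcal{L}(\gamma_1)$; but the centralizer of a nontrivial parabolic in $\SO(2,1)$ is the one-parameter parabolic subgroup through it, which lies entirely in $\SO(2,1)^o$, so $\mathcal{L}(\delta)\in\SO(2,1)^o$ and $\delta\in\Gamma'$. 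Then $\sigma\phi=\delta^{a}\in\Gamma'$, contradicting $\sigma\phi\notin\Gamma'$.

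The main subtlety is the bookkeeping in the first paragraph: correctly identifying the induced $\phi$-action on $\hat{\mathcal{P}}$ via the $\mathcal{A}$-correspondence between parabolic fixed points in $\partial\Ss_+$ and $\partial\Ss_-$, and identifying the primitive parabolic of $\Gamma'$ at $\mathcal{A}\phi(p)$ as $\phi\gamma_1\phi^{-1}$. Once this is set up, the contradiction is a short application of the rigidity of centralizers of parabolics in $\SO(2,1)$ together with the freeness of $\Gamma$.
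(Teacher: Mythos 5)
Your proof is correct, but it takes a genuinely different route from the paper's. The paper sets $\psi:=\sigma\phi$ and works geometrically: it observes that $\mathcal{A}\circ\mathcal{L}(\psi)$ is an orientation-reversing isometry of $\Ss_+$ fixing the parabolic point $q$ and preserving a horodisk there; this forces it to be a reflection fixing a complete geodesic $l_q$ pointwise, so $\mathcal{L}(\psi)$ acts as $-\Idd$ on the time-like plane $P_{l_q}$ and as $+\Idd$ on the orthogonal space-like line. Hence $\mathcal{L}(\psi)^2=\Idd$, so $\psi^2$ is a translation, necessarily non-trivial since $\Gamma$ acts freely, yet $\Gamma'$ contains no translations as $\mathcal{L}|\Gamma'$ is faithful --- a contradiction. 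Your argument instead extracts the group relation $(\sigma\phi)\gamma_1(\sigma\phi)^{-1}=\gamma_1^{\epsilon}$ directly from the orbit-class identification, then uses that a two-generator subgroup of the free group $\Gamma$ satisfying a non-trivial relation must be cyclic, and disposes of the two signs: $\epsilon=-1$ gives torsion, and $\epsilon=+1$ forces $\mathcal{L}(\sigma\phi)$ into the centralizer of the parabolic $\mathcal{L}(\gamma_1)$ in $\SO(2,1)$, which a short calculation shows is exactly the one-parameter unipotent subgroup lying in $\SO(2,1)^o$, contradicting $\sigma\phi\notin\Gamma'$. Your route is purely group-theoretic and has the advantage of sidestepping the paper's claim that $\mathcal{A}\circ\mathcal{L}(\psi)$ preserves the chosen horodisk $\mathbb{H}_i$ (a point that in the paper's version tacitly relies on the equivariance of the horodisk choice under the full normalizer, not just $\Gamma'$); on the other hand, the paper's route yields the sharper structural conclusion that $\mathcal{L}(\psi)$ is an involutive reflection in a time-like plane.
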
 
\begin{proof} 
Suppose not. Then using orbit equivalence under $\Gamma$,  
there exists an isometry 
$\psi \in \Gamma\setminus \Gamma'$ so that 
$\mathcal{A}\circ \mathcal{L}(\psi)(q) = q$ for a parabolic fixed point $q$. 
$\mathcal{A} \circ \mathcal{L}(\psi)$ acts on 
$\Ss_+$ acting on a component ${\mathscr{H}}_i$ for some $i$. 
Since $\mathcal{A} \circ \mathcal{L}(\psi)$ acts as an orientation reversing isometry on $\Ss_+$, 
$\mathcal{A} \circ \mathcal{L}(\psi)$ acts on 
a complete geodesic $l_q$ ending at $q$. 
Since it {must fix} the point 
$\partial_h {\mathscr{H}}_i \cap l_q$, 
it fixes each point of $l_q$. 
Hence, $\mathcal{L}(\psi)$ acts as $-\Idd$ on a time-like vector subspace $P_{l_q}$ corresponding to $l_q$,
and is the identity on a space-like vector subspace. 
Since $\psi$ cannot have a fixed point on $\Lspace$, 
$\psi^2$ cannot be the identity on $\Lspace$, and 
it is a Lorentzian translation on a space-like 
geodesic $l$ orthogonal to $P_{l_q}$, 
and $\psi^2 \in \Gamma'$ since $[\Gamma:\Gamma']= 2$. 
However, $\Gamma'$ does not have a translation element 
as it is an affine deformation of $\mathcal{L}(\Gamma')$. 
\end{proof} 



{Since there is no} fixed point of the action, 
we divide the collection $\mathcal{\hat P}$ of components of $P'$ into 
equivalence classes of orbits under $\Gamma'$. This is a finite set
$\hat P_1, \dots, \hat P_{2m}$.
Now $\phi$ acts on this set. We may assume that 
$\phi$ sends $\hat P_i$ to $\hat P_{m+i}$. 

We replace each element of $\hat P_{m+i}$ with 
$\phi(P'')$ for the corresponding 
element $P''$ of $\hat P_i$ for $i= 1, \dots, m$. 
We obtain a new set $P'$. 
Here, for the parabolic element $\gamma'''$ 
corresponding to $\phi(P'')$, we have 
$\gamma''' = \phi \circ \gamma '' \circ \phi^{-1}$ 
for a parabolic element $\gamma''$ acting on $P''$. 
Since $\gamma'''$ is in {the} unique one-parameter subgroup $\gamma^{\prime \prime \prime t}, t\in \bR,$ of 
parabolic isometries, $\gamma^{\prime \prime \prime t}, t\in \bR,$ acts on 
$\phi(P'')$. Therefore, the boundary $\partial \phi(P'') \cap \Lspace$ 
is a parabolic ruled surface for $\gamma'''$ as defined by 
Definition \ref{defn:para}.

Obviously, $\Gamma$ acts on $P'$. 
Also, we may assume that 
elements of $P'$ are mutually disjoint:
{we} take a finite set of components of $P'$ that 
meets the fundamental domain $\mathcal{R}$. 
We can make these disjoint by taking them sufficiently 
far away. Proposition \ref{prop:Pdisj} shows that 
these are mutually disjoint.

Therefore, 
$N':= ((\Lspace \cup \tilde \Sigma)\setminus P')/\Gamma$ is compact and is homeomorphic 
to a handlebody of genus $\bg$ by Theorem 5.2 of \cite{Hempel04}
as in Section \ref{subsub:thmmain}. 
Since $\phi$ does not act on any component of $P'$, we can show that 
$N$ deformation retracts to $N'$ as {above.}



\appendix 

\gdef\thesection{\Alph{section}} 
\makeatletter
\renewcommand\@seccntformat[1]{Appendix \csname the#1\endcsname.\hspace{0.5em}}
\makeatother

\section{Parabolic ruled surfaces}\label{app:A}

We will be using the \hyperlink{term-pcs}{parabolic coordinate system} obtained in Section \ref{sub:parabolic}. 
These constructions are canonical except for the ambiguity in the $x$-coordinates up to translations.
(See Remark \ref{rem:canonical}.)

\subsection{Proper embedding of ruled surfaces}

Only prerequisites are Sections \ref{sec:prelim} and \ref{sub:parabolic}.
{Our purpose is to prove Theorem \ref{thm:ruled} using Lemma \ref{lem:ufix}, Proposition  \ref{prop:HR+}, and Lemma \ref{lem:orbitout}.}

\begin{lemma}\label{lem:ufix} 
	{Assume as in Theorem \ref{thm:ruled}.} 
	Every $g^t$-orbit in $\mathcal{H}_{\kappa_{1}, \kappa_{2}}$ 
	starts and ends at ${\zeta_{\llrrparen{ 1,0,0,0}}}$. 
\end{lemma}
\begin{proof} 
	Let $l$ be a segment so that 
	$\clo(l) \in \mathcal{H}_{s_0, \kappa_{1}, \kappa_{2}}$.
	An endpoint of $l_{\infty}$ must be $\llrrparen{ 1, 0, 0, 0}\,\, $ since $g^{t_{i}}(q) \ra \llrrparen{ 1, 0, 0, 0}$ 
	for each point $q\in l \cap P_{T}$. 
	Since $\clo(l)$ has a pair of antipodal points, the other endpoint of $\clo(l_{\infty})$ is $\llrrparen{ -1,0,0,0}$. 
	We compute the intersection of an arbitrary image of 
	$g^t(l)$ at the plane given by $x= 0$
	\begin{multline}\label{eqn:ufix} 
		\llrrparen{0,-\frac{c t \left(\mu  t^3+6 t y_{0}\right)}{6 a+3 c t^2}
			+\frac{\mu  t^2}{2}+ y_{0}, \mu  t-\frac{c \left(\mu  t^3+6 t y_{0}\right)}{6 a+3 c t^2}, 1} =\\
		\llrrparen{0,-\frac{ \mu  t^2 + 6 y_0}{ \frac{6a}{ct^2} + 3}
			+\frac{\mu  t^2}{2} + y_0, \mu  t-\frac{\mu  t + 6y_0/t}{\frac{6a}{ct^2} +3}, 1}
		\ra \llrrparen{ 0, 1, 0, 0}
		\in \SI^{3}
	\end{multline}
	as $t \ra \infty$ or $t \ra -\infty$.  See \cite{convergencesII}.
	Since this point is in $\clo(l_{\infty})$, we showed that 
	$l_{\infty} = \zeta_{\llrrparen{ 1, 0, 0, 0}}$.
	
\end{proof} 


\begin{figure}[h]

	\subfloat[]{\includegraphics[height=8.5cm, trim={1.0cm 0.3cm 1.0cm 1.0cm}]{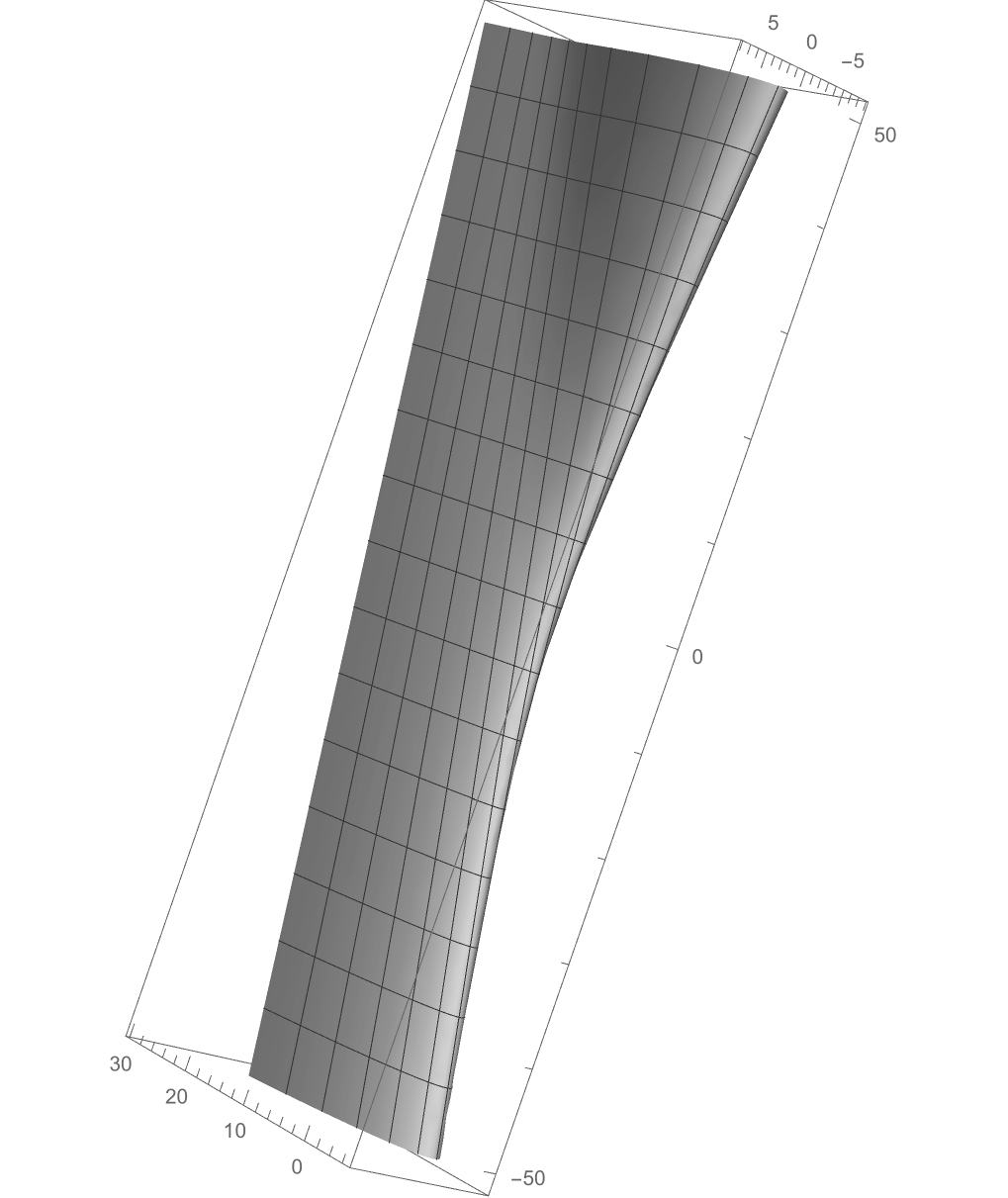}}
	\hfill
	\subfloat[]{\includegraphics[height=8.5cm, trim={1.0cm 0.3cm 1.0cm 1.0cm}]{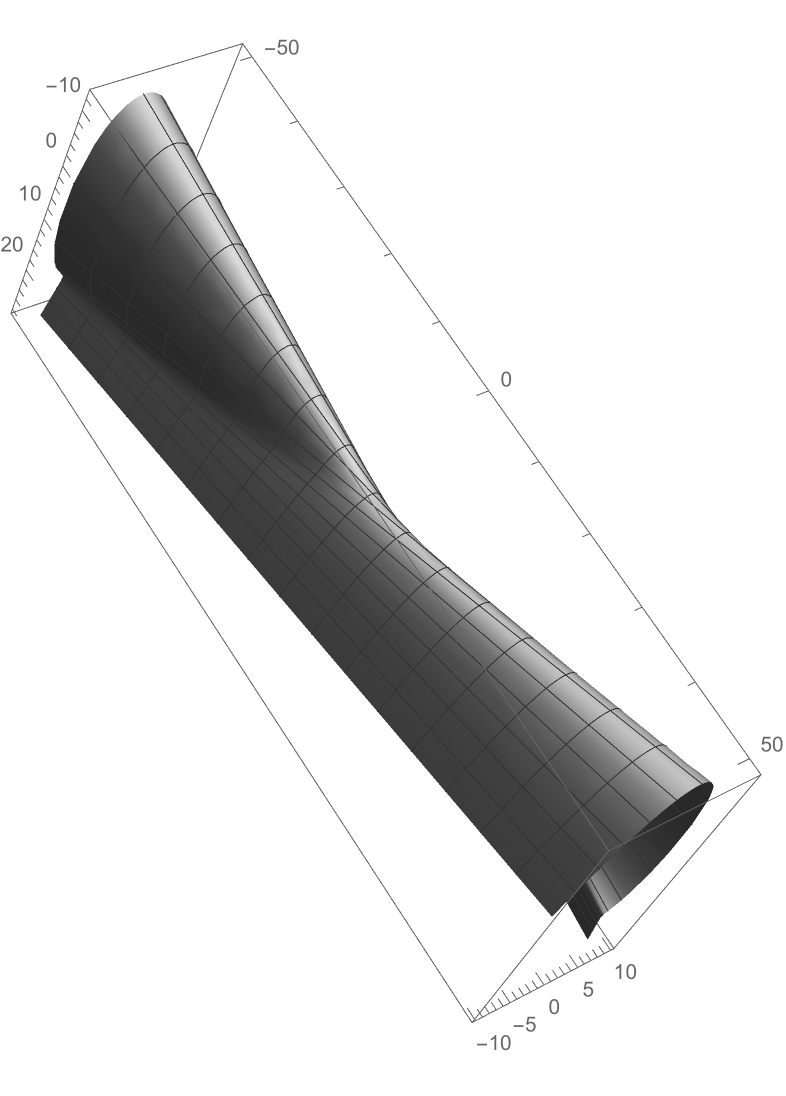}}

	\caption{Two parabolic ruled surfaces. See \cite{convergencesII}.}   
	\label{fig:Ruled2}
\end{figure}

\begin{proposition} \label{prop:HR+} 
	{Assume as in Theorem \ref{thm:ruled}.}
	Choose $\kappa_{1}$ and $\kappa_{2}$ satisfying
	$0 < \kappa_{1}\leq \kappa_{2} < 1$.
	The closure of $\mathcal{H}_{s_0, \kappa_{1}, \kappa_{2}}$ under $\bdd_H$ 
	is a compact set
	$\mathcal{H}_{s_0, \kappa_{1}, \kappa_{2}} \cup 
	\{\zeta_{\llrrparen{ 1,0,0,0}}\}$.
\end{proposition} 
\begin{proof} 
	{The space of open geodesic segments of $\bdd$-length $\pi$ in the $3$-hemisphere $\mathcal{H}$ forms a compact metric space under the Hausdorff metric $\bdd_H$.} 
	We show this by showing that every sequence of elements of 
	$\mathcal{H}_{s_0, \kappa_1, \kappa_2}$ has an accumulation point in $\mathcal{H}_{s_0, \kappa_1, \kappa_2}$ or 
	accumulates to $\clo(\zeta_{\llrrparen{ 1,0,0,0}})$. 
	
	Given a sequence of segments $\{u_i\}$ in $\mathcal{H}_{s_0, \kappa_1, \kappa_2}$,
	$u_i = g^{t_i}(l_i),$ where $l_i \cap \Lspace$ is given by 
	\begin{multline} 
		l_i(s) = (sa_i, y_{0, i}, sc_i)
		\hbox{ for } y_{0, i}\geq s_0, a_{i}, c_{i}> 0, \\ \frac{\kappa_{1}a_i}{c_i} \leq \frac{y_{0, i}}{\mu} \leq \frac{\kappa_{2}a_{i}}{c_{i}}, 
		a_i^2 + c_i^2 = 1.
	\end{multline}
	The boundedness of one of $y_{0, i}$ or $\frac{a_i}{c_i}$ implies that of 
	the other. 
	If $y_{0, i}$ or $\frac{a_i}{c_i}$ is
	bounded above, then $l_i$ geometrically converges to 
	an element of $\mathcal{H}_{s_0, \kappa_1, \kappa_2}$ up to a choice of a subsequence.
	If $t_i \ra \pm \infty$, 
	then $u_i \ra {\zeta_{\llrrparen{ 1,0,0,0}}}$ 
	since the estimates in \eqref{eqn:ufix} in 
	the proof of Lemma \ref{lem:ufix} hold in this case.  
	If $t_i$ is bounded, then $u_i \ra u_0\in \mathcal{H}_{s_0, \kappa_1, \kappa_2}$. 
	
	Hence, we are left with the case where 
	\[ y_{0, i} \ra \infty, \frac{a_i}{c_i} \ra \infty,
	\hbox{ and } t_i \ra \pm\infty.\]
	
	We will show that {$u_i \ra \zeta_{\llrrparen{ 1,0,0,0}}$}:
  { Suppose not. Then 
	$u_i$ converges to a line $u_\infty$ passing $\Lspace$ under the metric $\bdd_H$.} 
	Then $u_\infty$ has the direction $(1, 0,0)$ since
	$\llrrparen{\mathcal{L}(\Phi_t)(\vv)} \ra 
	\llrrparen{1, 0, 0}$ for a generic vector $\vv$. 
	
	{By applying an element of $g^t$ to $u_\infty$ and the sequence $u_i$,} 
	we may assume that $u_\infty\cap \Lspace$ is given as the line 
	\[x = s, y = C, z=0, s\in \bR:\]
	Since $u_i$ geometrically converges to $u_\infty$, 
	$u_i$ intersected with $x = 0$ is near $(0, C, 0)$. 
	By changing $u_i$ by a bounded $g^{s_i}$ with $s_{i} \ra 0$, we may assume without loss of generality that 
	$u_i$ passes $(0, C_i, 0)$ while we still have $u_i \ra u$ under $\bdd_H$. 
	Here $C_{i} \ra C$. 
	
	By our construction, $u_i$ is contained in a hyperplane $P_i$ tangent 
	to {a} parabolic cylinder $S_i$ given by the equation $2\mu y= z^2 + 2 \mu C_{1,i}$ { for some $C_{1, i}\in \bR$.}
	The line $u_i$ meets $S_i$ at {the} unique point $(x^*_i, y^*_i, z^*_i)$. 
	Project $P_{i}$ and $S_{i}$ to the $yz$-plane. Then the image of $P_{i}$ passes $(C_{i}, 0)$ and 
	tangent to the parabola $2\mu y = z^{2} + 2\mu C_{1, i}$.
	We compute by elementary geometry 
	\[ z^*_i = \pm \sqrt{2\mu C_{1, i} - 2\mu C_i}
	\hbox{ and } y^*_i = 2C_{1, i} -  C_i.\] 
	
	Now, we wish to compute {$t_i$ so that $g^{t_i}(l)=u_{i}$} as in \eqref{eqn:ls}    
	where $l(s)$ passes $(0, C_{1, i}, 0)$. 
	We compute $t$ satisfying 
	\[\Phi_t(0, C_{1, i}, 0) = \left(tC_{1, i} + \frac{\mu t^3}{3}, C_{1, i} + \frac{\mu t^2}{2}, \mu t\right)
	= (x^*_i, y^*_i, z^*_i) \]
	recalling \eqref{eqn:Phit}.
	We let $t_i$ denote the answer
	\begin{equation} \label{eqn:tiyi} 
		y^*_i  = 2C_{1, i}  - C_i = C_{1, i} + \frac{\mu t_i^2}{2} \hbox{ and } t_i = \pm \sqrt{\frac{2(C_{1, i} - C_i)}{\mu}}.
	\end{equation}  
	The vector $\alpha_i$ tangent to $u_i$ is given by 
	\begin{align*} 
		\left( t_i C_{1, i} + \frac{\mu t_i^3}{6}, C_{1, i} + \frac{\mu t_i^2}{2}, \mu t_i \right) - (0, C_i, 0)\\ 
		= \left( t_i C_{1, i} + \frac{\mu t_i^3}{6}, C_{1, i} - C_i + \frac{\mu t_i^2}{2}, \mu t_i \right).
	\end{align*} 
	Since the sequence of the directions of the vectors converges to 
	$(1,0,0)$ by our assumption on $u_{i}$,  we obtain $t_{i} \ra \pm\infty$. 
	
	Recall
	\[ 
	\mathcal{L}(\Phi_{t_i})^{-1} = 
	\begin{pmatrix} 
		1 & -t_i & \frac{t_i^2}{2} \\ 
		0 & 1    & - t_i \\ 
		0  & 0  & 1 
	\end{pmatrix}.
	\]
	We compute $\mathcal{L}(\Phi_{t_i}^{-1})(\alpha_i)$ to be 
	\begin{multline*} 
		\left(t_i C_{1, i} + \frac{\mu t_i^3}{6} - t_i(C_{1, i} - C_i) - \frac{\mu t_i^3}{2}+ \frac{\mu t_i^3}{2} , 
		C_{1, i} - C_i + \frac{\mu t_i^2}{2} - \mu t_i^2, \mu t_i\right) \\ 
		=\left(\frac{\mu t_i^3}{6} + t_i C_i, 0, \mu t_i\right).
	\end{multline*} 
	
	Recall the condition  \eqref{eqn:ls} to {$g^{-t_i}(u_i) = l$}, which yields 
	$  C_{1, i}/\mu \leq \kappa_{2} (t_i^2/6 + C_i /\mu )$,
	and we obtain 
	\[C_{1, i} \leq \kappa_{2}\frac{1}{3} {|C_{1,i}- C_i|} + \kappa_{2} C_i  \hbox{ and } \kappa_{2} < 1 \]
	by \eqref{eqn:tiyi}. 
	Since $t_i \ra \pm \infty$, we obtain $C_{1, i} \ra +\infty$ and $C_{i} \ra C$. 
	This contradicts the above inequality. 
	
	We conclude that $u_i$ can converge only to points of 
	$\mathcal{H}_{s_0, \kappa_1, \kappa_2}$ or 
	${\zeta_{\llrrparen{ 1,0,0,0}}}$. 
	This gives us a sequential convergence property. 
	The closure of $\mathcal{H}_{s_0, \kappa_1, \kappa_2}$ is a compact metric space 
	$\mathcal{H}_{s_0, \kappa_1, \kappa_2} \cup \{{\zeta_{\llrrparen{ 1,0,0,0}}} \}$. 
	
\end{proof} 


\begin{lemma}\label{lem:orbitout} 
	Let $M$ be a compact metric space. {Suppose that} there exists a one-dimensional 
	flow $\phi_t: M \ra M$, $t \in \bR$,	with a fixed point $p$. 
	Suppose that the orbit of every point starts and 
	ends at $p$, and the orbit space $(M\setminus \{p\})/\sim$ is not compact. 
	Then every $\epsilon$-ball of $p$ contains an orbit starting and 
	ending at $p$. 
\end{lemma}
\begin{proof} 
	Choose a compact set $K = M\setminus B_\epsilon(p)$ for an open $\epsilon$-ball
	of $p$.  Since \[(\bigcup_{t\in \bR} \phi_t(K))/\sim\, \, \,  = K/\sim\] is compact, 
	and $(M\setminus \{p\})/\sim$ is not compact, 
	it follows that 	
	\[M\setminus \bigcup_{t\in \bR} \phi_t(K) = \bigcap_{t\in \bR} \phi_{t}(B_{\epsilon}(p))
	\subset B_\epsilon(p)\]
	is not empty.
	Then a point here gives us an example of the closed orbit.
\end{proof}

\begin{proof}[Proof of Theorem \ref{thm:ruled}]

	We will first show that $\Psi: \bR^{2} \ra \Lspace$ is a proper injective map. 
	
	Since $g^{t}$ acts on  $P_{T'}$ for each $T'$, 
	we have a self-intersection of $\Psi$ if 
	\[g^{t}(l(s) \cap P_{T'}) = l(s') \cap P_{T'} \hbox{ for some } t> 0, s, s'\in \bR \hbox{ and } T' \in \bR.\] 
	{The following hold:}
	\begin{itemize} 
		\item $l(s) \cap P_{T'}$ is a pair of points provided $T' > - 2\mu y_{0}$, or 
		\item $T'= -2\mu y_{0}$ and $l(s) \cap P_{T'}$  is $(0, y_{0}, 0)$, or else 
		\item $l(s) \cap P_{T'}$  is empty for $T' < -2\mu y_{0}$. 
	\end{itemize} 
	Thus, only in the first case, we can have a self-intersection of the image of $\Psi$ under the quotient space $\Lspace/\langle g \rangle$. 
	Now $l(s) \cap P_{T'}$ can be computed as follows\,: 
	\[(sc)^{2} - 2\mu y_{0} = T' \hbox{ and } s_{0} = \sqrt{T'+ 2\mu y_{0}}/c\] 
	and the points are $(\pm s_{0}a, y_{0}, \pm s_{0}c)$. 
	And we obtain 
	\[F_{3}(\pm s_{0}a, y_{0}, \pm s_{0}c)  = \pm (s_{0}^{3}c^{3} - 3 \mu y_{0} s_{0} c + 3 \mu^{2} s_{0}a).\]
	These are distinct unless the value is $0$. 
	Since $F_{3}$ is invariant under $g$, it follows that if $F_{3}$-values of two points are distinct, 
	then they cannot be in the same orbit of $\langle g \rangle$. 
	If $F_{3} = 0$, we must have 
	\[\frac{a}{c} = -\frac{T'}{3\mu^{2}} + \frac{ y_{0}}{3 \mu }.\]
	Since $-T' < 2\mu y_{0}$, we obtain 
	\begin{equation}\label{eqn:acy0} 
	\frac{a}{c} <  \frac{2\mu y_{0}}{3\mu^{2}} + \frac{ y_{0}}{3 \mu}= \frac{y_{0}}{\mu}.
	\end{equation} 
	Thus, if we choose $ y_{0}<  \mu \frac{a}{c}$, the self-intersection of $\Psi$ never happens.
	For example, choosing $y_{0}$ sufficiently small or choosing $\frac{a}{c}$ sufficiently large would satisfy the condition. 
	This proves the injectivity of $\Psi$. 
	
	By \eqref{eqn:Phicoor}, 
	$g^{t}$ acts properly on each parabolic cylinder $P_{T}$ since $F_{1}$ and $F_{2}$ are invariants of 
	the vector field on $\phi$, and each intersection of $F_{1} \cap F_{2}$ is a complete flow line. 
	
	We now prove the properness of $\Psi$. 
	Suppose that there is a compact set $K \subset \Lspace$ and 
	$g^{t_{i}}(l) \cap K$ is not empty for a sequence $\{t_{i}\}$ of real numbers such that $t_{i} \ra  \infty$. 
	(The case when $t_{i} \ra -\infty$ is entirely similar.) 
	However, $K$ is in the region $B$ in $\Lspace$ bounded by 
	two parabolic cylinders $P_{T_{1}}$ and $P_{T_{2}}$ for some pair $T_{1}$ and $T_{2}$. 
	Then $l$ meets $P_{T_{j}}, j=1$ at most two points. 
	If $l$ does not meet {$B$, then 
	$g^t(l)$, $t\in \bR$ is disjoint from $K$ since the region bounded by $P_{T_{j}}$ is $g^t$-invariant.} 
	If $l \cap B \ne \emp$, 
	\[g^{t_{i}}(l\cap B) \ra  \{\llrrparen{ 1, 0, 0, 0}\}\,\,  \hbox{ or } \{\llrrparen{ -1, 0, 0, 0}\}\,\, \hbox{ as } t_{i}  \ra \pm \infty\] 
	by convexity since the endpoints of $l \cap B$ do this.
	This proves the properness of $\Psi: \bR^{2}\ra \Lspace$ and 
	that $g^{t_{i}}(l)$ can have limit points only in $\Ss$. 
	
	
	
{The first  item is proved by Lemma \ref{lem:ufix}. }
	
	Choose $\kappa_{1}$ and $\kappa_{2}$ satisfying
	$0 < \kappa_{1} \leq \kappa_{2} < 1$.
	There is a continuous map 
	$\iota_R: \mathcal{H}_{s_0, \kappa_1, \kappa_2} \ra \Ss_+$ by taking 
	the endpoints in $\Ss_+$. The image is a horodisk $\mathcal{E}$. 
	Since $\mathcal{E}/\sim$ is not compact, 
	$\mathcal{H}_{s_0, \kappa_1, \kappa_2}/\sim$
	is not compact under the orbit equivalence relation under $g^t, t\in \bR$ 
	
	By Proposition \ref{prop:HR+}, $\mathcal{H}_{s_0, \kappa_1, \kappa_2}\cup \{ {\zeta_{\llrrparen{ 1,0,0,0}}}\}$ is compact. 
	In any $\eps$-$\bdd_H$-neighborhood $\hat N$, $\eps > 0$, of 
	$\clo(\zeta_{\llrrparen{ 1,0,0,0}})$
	in $\mathcal{H}_{s_0, \kappa_1, \kappa_2}$, we can find a $g^{t}$-orbit in $\hat N$ by  Lemma \ref{lem:orbitout}.
	Take any neighborhood $N$ of $\clo(\zeta_{\llrrparen{ 1,0,0,0}})$
	in $\SI^3$. Since we are using the Hausdorff metric, 
	we can find an $\eps$-$\bdd_H$-neighborhood $\hat N$ in 
	$\mathcal{H}_{s_0, \kappa_1, \kappa_2}\cup 
	\{ {\zeta_{\llrrparen{ 1,0,0,0}}}\}$ 
	so that 
	any segment in $\hat N$ is a segment in $N$.
	Then the $g^{t}$-orbit as above will give us the desired ruled surface in $N$. 
	This proves the {second} item. 
	
{The first and second items imply the fact on the boundary of $S_{f, r}$.
	Clearly,$ S_{f, r}$ bounds a domain in $\Lspace$ with boundary $\clo(S_{f, r})$. 
This domain is homeomorphic to a $3$-cell by Lemma 1.12 of \cite{Marden74}.
Also, $g$ sends the disk leaves of the foliation $\mathcal{D}_{f, r_0}$ of the domain to a disjoint disk leaf in 
Theorem \ref{thm:Sr}. Hence, the quotient space is homeomorphic to a solid torus. }
\end{proof}

\subsection{Two transversal foliations}

\begin{proof}[Proof  of Theorem \ref{thm:Sr}]
The fact that $S_{f, r}$ is a properly embedded surface is proved in Theorem \ref{thm:ruled}. 
We defined
$l_{f, r}(s) = (sr, f(\rho), s\sqrt{1-r^2})$. We define
\[l_f: [r_0,1)\times \bR  \ra \Lspace \hbox{ given by } l_f(r, s) =(sr, f(\rho), s\sqrt{1-r^2}).\] 
Let $u_{l_{f, r}}$ denote the vector field $(r, 0, \sqrt{1-r^2})$ tangent to $l_{f,r}(s)$. Also, 
the vector field $\phi$ generating $g^{t}$ is given by $(y, z, \mu)$. 
\[\frac{\partial l_f}{\partial r} = Y_f=\left(s, f'(\rho), \frac{-sr}{\sqrt{1-r^2}}\right)\] is 
tangent to $D_{f, r_0, 0}$ obtained by taking a tangent vector along the direction
of $\frac{\partial }{\partial r}$. 
{A {\em triple product of three vectors} is the volume of the span of three vectors in $\Lspace$.} 
We compute the triple product on the line $l_{f, r}$
\begin{equation}\label{eqn:triple} 
(u_{l_{f,r}}, Y_f, \phi) = \sqrt{1-r^{2}} \left( \frac{\mu r}{\sqrt{1-r^{2}}} - f(\rho) \right) f'(\rho) + s^2 > 0,
\end{equation}
which follows by our condition on $f$ and $r$. 
It follows that
$u_{l_{f,r}}, Y_f, \phi$ form always an independent frame in the standard orientation on $l_{f,r}$, 
and so are their images under $g^{t}$ since $g^t$ is volume-preserving.
Thus, \[Dg^t(u_{l_{f,r}}), Dg^t(Y_f), Dg^t(\phi)\] form an independent frame
at each point of $S_{f, r}$. 

We claim that $S_{f, r}$ is disjoint from $S_{f, r'}$ for $r_{0}\leq r < r' < 1$: 
By \eqref{eqn:triple}, $Y_f$ is transversal to $S_{f, r}$ on 
$l_{f, r}$. 
We define the vector field $Y_f$ on $S_{f, r}$ 
so that 
\[Y_f(g^t(sr, f(\rho), s\sqrt{1-r^2})) = 
Dg^t(Y_f(sr, f(\rho), s\sqrt{1-r^2})).\] 
The extended $Y_f$ is transversal to $S_{f, r}$ since the triple product is 
invariant under the Lorentzian isometries. 
Define $\Xi_{f}(r, t, s) = g^t(l_f(r, s)))$, {which gives us a parametrization of $S_{f, r}$.}
{We obtain the partial derivative with respect to $r$ by chain-rules:} 
\[  \frac{\partial{\Xi_{f}(r, t, s)}}{\partial r} = Dg^t(Y_f(l_f(r, s))) 
= Y_f(\Xi_f(r, t, s)). \]

Solving the {following ordinary differential equation with respect to the variable $r$}
\[\frac{\partial \Xi_f(r, t, s)}{\partial r} = Y_f(\Xi_f(r, t, s))\]
gives us a flow $\Xi_f(r, t, s)$ for $r$ in some interval with fixed $t, s$.
Using the {quasi-linear Cauchy theorem (Theorem 9.52 of \cite{Lee})} and the transversality, we obtain the disjointness. 

Also,  {for each point $x$ of $R_{f, r_{0}}$,} there is a leaf $S_{f, r'}$ containing it:  
Let $x_{i}$ be a sequence converging to $x$ and $x_{i} \in S_{f, r_{i}}$, $r_{i} > r_{0}$. 
Then let $L_{i}$ be the line in $S_{f, r_{i}}$ containing $x_{i}$. 
Since we showed that $\mathcal{H}_{s_0, \kappa_1, \kappa_2} \cup \clo(\zeta_{\llrrparen{ 1,0,0,0}})$ is compact by Proposition \ref{prop:HR+},
$\clo(L_{i})$ geometrically converges to an element of $\mathcal{H}_{s_0, \kappa_1, \kappa_2}$ or 
to $\clo(\zeta_{\llrrparen{ 1,0,0,0}})$ by choosing a subsequence if necessary. 
Proposition \ref{prop:HR+} shows that $L_i = g^{t_i}(l_f(r_i))$ for bounded $t_i$
in the first case. 
Hence, $x$ is in $S_{f, \lim_{i} r_{i}}$. 
In the other case, $x_{i}$ does not have $x$ as a limit.   
This proves the closedness of the foliated subset in $R_{f, r_{0}}$. 


Using the flows, we can prove the openness of 
the set $\bigcup_{r_{0} \leq r < 1} S_{f, r'}$.   
Hence, $R_{f, r_{0}}$ is foliated by leaves $S_{f, r}$, $r \geq r_{0}$.



Since each line in $D_{f, r_0, 0}$ lies on {a} different plane {given by equations of the form} 
$y = c$, $D_{f, r_0, 0}$ is an embedded surface, and so are 
$D_{f, r_0, t}$. Proposition \ref{prop:HR+} implies that $D_{f, r_0, 0}$ is properly embedded since 
$\clo(l_{f, r_{i}})$ geometrically converges to $\clo(\zeta_{\llrrparen{ 1,0,0,0}})$ as $r_{i} \ra 1$. 
Hence, $D_{f, r_0, t}$ is properly embedded for all $t$. 

Since $g^{t_{0}}$ is generated by a vector field $\phi$ transversal to $D_{f, r_0, t}$ for every $t$ by the above paragraph, 
the images under the flows of $D_{f, r_0, t}$ are disjoint from $D_{f, r_0, t}$.  
Also, $g^{t_{0}}(D_{f, r_0, t}) = D_{f, r_0, t+t_{0}}$ follows by our above definition of $D_{f, r_0, t}$. 

Also, $R_{f, r_{0}}$ is foliated by leaves of the form $D_{f, r_0, t}$ as follows: 
$\bigcup_{t\in \bR} D_{f, r_{0}, t}$ is open since we can use the flow generated 
by $g^t$. 
The closedness follows by Proposition \ref{prop:HR+} again as above. 

%
Now, we have a foliation by leaves of the form $S_{f, r}$ for $r \in [r_{0}, 1)$. 
Then $D_{f, r_0, t} \cap S_{f, r}$ contains a geodesic given by $g^{t}(l(s)), s \in \bR$. 
At $t=0$, the tangent space of $D_{f, r_0, 0}$ is generated by $u_{l}, Y_f$, and that of $S_{f, r}$ is generated by $u_{l}, \phi$. The independence above implies the transversality
of $D_{f, r_0, 0}$ and $S_{f, r}$. Thus, the transversality of $D_{f, r_0, t}$ and $S_{f, r}$ follows. 


\end{proof} 

%

\begin{remark} 
There seems to be a vast literature on ruled surfaces on which a one-parameter Lorentzian isometry group acts
but there seems to be no article on the topological properties. 
See Dillen-K\"uhlen \cite{DK99} for a survey of geometric aspects. 
\end{remark}

\section{The flat $\bR^{2, 1}$-bundle valued $1$-forms on a cusp neighborhood} \label{app:1form} 

Only prerequisites are Sections \ref{sec:prelim} and \ref{sub:parabolic} and the notation in Section \ref{sec:orbit}{,} in particular Definition \ref{defn:compt}.

\subsection{Replacing forms by standard cusp $1$-forms in the cusp neighborhoods}  \label{sub:replace} 
Suppose that $\Gamma$ is a discrete Lorentzian isometry group so that 
$\Gamma$ is a Fuchsian group acting on $\Ss_{+}$ with 
a parabolic element $g$ fixing $p \in \partial \Ss_{+}$.
Let $\Sf:=\Ss_{+}/\Gamma$ be a complete hyperbolic surface with  a cusp neighborhood $E$. 
$E$ is covered by a horodisk $P\subset \Ss_{+}$ with $p \in \Bd_{\Ss} P$. 
Then $P/\langle g \rangle$ is isometric to $E$. 

We recall the vector bundle $\mathscr{V}$ given as the quotient of 
$\widetilde{\mathscr{V}} = \Ss_+ \times \bR^{2,1}$ with 
action given by 
\[ \gamma(x,\vv) = (\gamma(x), \mathcal{L}(\gamma)(\vv))
\hbox{ for } \gamma \in \Gamma, \vv \in \bR^{2,1}.\]  
Recall \eqref{eqn:Action} that 
for $\widetilde{\mathscr{V}}$-valued $1$-forms on $\Ss_+$, 
the action is given by 
\[ \gamma^*(\vv \otimes dx) = \mathcal{L}(\gamma)^{-1}(\vv) \otimes dx \circ \gamma.     \]


\begin{proposition}\label{prop:replace} 
Let $\Sf$, $\Gamma$, $P$, $E$, and $\gamma$ be as above in Section \ref{sub:replace}. 
Let $\eta$ be a closed $\mathscr{V}$-valued $1$-form representing a class in $H^{1}(\Sf, \mathscr{V})$. 
Let $\zeta$ be a closed $\mathscr{V}$-valued $1$-form in $E$
so that $\zeta $ is cohomologous to $\eta|E$ in $H^{1}(E, \mathscr{V})$. 
Then we can find a closed $\mathscr{V}$-valued $1$-form $\eta'$ on $\Sf$ cohomologous to 
$\eta$ and a cusp neighborhood $E' \subset E$ 
so that \newch{$\eta'|E' = \zeta|E'$}.
\end{proposition}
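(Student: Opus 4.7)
The plan is a standard bump-function interpolation supported in a collar of the cusp. The hypothesis $[\zeta] = [\eta|E]$ in $H^1(E,\mathscr{V})$ says that $\zeta - \eta|_E$ is an exact $\mathscr{V}$-valued $1$-form on $E$, so there exists a smooth section $s$ of $\mathscr{V}|_E$ with $ds = \zeta - \eta|_E$. I would produce $s$ concretely by passing to the universal cover $P$ of $E$: since $P$ is contractible, both lifts $\tilde\eta|_P$ and $\tilde\zeta|_P$ are exact $\bR^{2,1}$-valued $1$-forms, say $\tilde\eta|_P = d\tilde{s}_\eta$ and $\tilde\zeta|_P = d\tilde{s}_\zeta$ for smooth maps $\tilde{s}_\eta, \tilde{s}_\zeta \colon P \to \bR^{2,1}$. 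Each $\tilde{s}_\bullet$ fails to be $\langle g\rangle$-equivariant only by a constant $b_\bullet \in \bR^{2,1}$, namely the group cocycle of $\langle g\rangle$ in $\bR^{2,1}$ representing the de Rham class under the identification $H^1(E,\mathscr{V}) \cong H^1(\langle g\rangle, \bR^{2,1})$.

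The cohomological hypothesis forces $b_\eta - b_\zeta$ to be a coboundary, so after replacing $\tilde{s}_\zeta$ by $\tilde{s}_\zeta + v$ for a suitable $v \in \bR^{2,1}$, the difference $\tilde{s}_\zeta - \tilde{s}_\eta$ becomes $\langle g\rangle$-equivariant and descends to the desired section $s$ of $\mathscr{V}|_E$ with $ds = \zeta - \eta|_E$.

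Next, choose a strictly smaller cusp neighborhood $E' \subset E$ with $\clo(E') \subset E$, together with a smooth cutoff $\phi \colon \Sf \to [0,1]$ that is identically $1$ on $E'$ and has compact support contained in $E$. Extending $\phi s$ by zero yields a global smooth section of $\mathscr{V}$ on $\Sf$. Define
\[ \eta' := \eta + d(\phi s). \]
Then $\eta'$ is a smooth closed $\mathscr{V}$-valued $1$-form on $\Sf$, and it is cohomologous to $\eta$ since they differ by an exact form. On $E'$ we have $\phi \equiv 1$, so
\[ \eta'|_{E'} = \eta|_{E'} + ds|_{E'} = \eta|_{E'} + (\zeta - \eta|_E)|_{E'} = \zeta|_{E'}, \]
while outside the support of $\phi$ the form $\eta'$ equals $\eta$ verbatim, so nothing away from the cusp is disturbed.

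The only substantive point is the existence of the primitive $s$, which is exactly the identification of $H^1(E,\mathscr{V})$ with the group cohomology of the parabolic cyclic subgroup, combined with the hypothesis that the two classes agree; once $s$ is in hand, the bump-function interpolation is entirely routine. I do not anticipate a serious obstacle beyond carefully tracking the $\langle g \rangle$-equivariance when constructing $s$ on the covering horodisk.
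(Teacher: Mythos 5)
Your proposal is correct and follows essentially the same approach as the paper's proof: both take a primitive of the exact form $\zeta - \eta|_E$ on the cusp neighborhood and use a bump function supported in $E$ and identically $1$ on the smaller cusp neighborhood $E'$ to interpolate, setting $\eta' = \eta + d(\phi s)$. Your write-up is somewhat more careful about constructing the primitive via the universal cover and $\langle g\rangle$-equivariance, but that is the only elaboration.
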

\begin{proof} 
Let $E' \subset E$ be a smaller cusp neighborhood so that $\clo(E') \subset E$. 
Consider $\eta - \zeta$ on $E'$. Then $\eta - \zeta = d f$ for a section $f: E' \ra \mathscr{V}$. 
We can extend $f$ to a smooth section $f: \Sf \ra \mathscr{V}$ by a partition of unity 
so that $f = 0$ on $\Sf\setminus E$. 
Then define $\eta' = \eta$ on $\Sf \setminus E$ 
and $\eta' = \zeta$ on $E'$ and $\eta' = \eta - df$ on $E \setminus E'$. 
\end{proof} 

\begin{proposition}\label{prop:cohomology} 
$H^{1}(E, \mathscr{V}) = \bR$. 
\end{proposition}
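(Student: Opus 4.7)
The plan is to reduce the statement to group cohomology of the infinite cyclic group and then to a single linear-algebra computation. Since the cusp neighborhood $E$ deformation retracts onto a core circle (covered by a horocycle in $P$), it is a $K(\bZ,1)$ with $\pi_1(E) = \langle g \rangle$. The flat bundle $\mathscr{V}|_E$ is realized as the quotient of $P \times \bR^{2,1}$ by the diagonal $\langle g \rangle$-action $(x,\vv) \mapsto (g(x), \mathcal{L}(g)\vv)$, so its holonomy representation is $g \mapsto \mathcal{L}(g)$ on $\bR^{2,1}$. Because $E$ is aspherical, cohomology with local coefficients coincides with group cohomology:
\[ H^1(E, \mathscr{V}) \;\cong\; H^1(\langle g \rangle,\, \bR^{2,1}_{\mathcal{L}(g)}). \]

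For the infinite cyclic group $\bZ = \langle g \rangle$ acting on a module $M$, the two-term free resolution $0 \to \bZ[\bZ] \xrightarrow{g-1} \bZ[\bZ] \to \bZ \to 0$ gives
\[ H^1(\bZ, M) \;=\; \operatorname{coker}\!\bigl(\mathcal{L}(g) - \mathrm{id}\colon M \to M\bigr). \]
So the entire proposition reduces to computing the rank of $\mathcal{L}(g) - \mathrm{id}$ on $\bR^{2,1}$.

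Working in the parabolic coordinate system of Corollary \ref{cor:parac}, we write $\mathcal{L}(g) = \exp(tN)$ for some $t \neq 0$, where $N$ is the nilpotent skew-adjoint endomorphism. Since $N^3 = 0$,
\[ \mathcal{L}(g) - \mathrm{id} \;=\; tN + \tfrac{t^2}{2}\,N^2 \;=\; tN\!\left(\mathrm{id} + \tfrac{t}{2}\,N\right), \]
and $\mathrm{id} + \tfrac{t}{2}N$ is invertible because $N$ is nilpotent. Thus $\operatorname{image}(\mathcal{L}(g) - \mathrm{id}) = N(\bR^{2,1})$, which by Lemma \ref{lem:lem2} has dimension $2$. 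Hence $\operatorname{coker}(\mathcal{L}(g) - \mathrm{id})$ is one-dimensional, giving $H^1(E,\mathscr{V}) \cong \bR$.

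The only step with any subtlety is the identification of de Rham cohomology of the flat bundle $\mathscr{V}|_E$ with the group cohomology $H^1(\pi_1(E), \bR^{2,1}_{\mathcal{L}(g)})$, but since $E$ is an open annulus this is a standard application of the de Rham isomorphism for flat bundles (invoked earlier in the paper near \eqref{eqn:bg-pre}). Everything else is the explicit nilpotent calculation, which is direct; I anticipate no genuine obstacle.
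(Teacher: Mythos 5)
Your proposal is correct and follows essentially the same route as the paper: both reduce to group cohomology of $\pi_1(E)\cong\bZ$ acting by $\mathcal{L}(g)$, and both identify the image of $\mathcal{L}(g)-\Idd$ with $N(\bR^{2,1})$ (of dimension~$2$) by factoring out the invertible $\Idd + \tfrac{1}{2}N$, concluding that the cokernel is one-dimensional. The only cosmetic difference is that the paper writes out $Z^1$ and $B^1$ explicitly rather than quoting the two-term free resolution for $\bZ$, but the computation is the same.
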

\begin{proof} 
Recall that $E$ is homotopy equivalent to $\SI^{1}$. Thus, $\pi_{1}(E)$ is an infinite cyclic group.
$\mathscr{V}|E$ is $P\times \bR^{2,1}/\langle g \rangle$.
Recall that $\mathcal{L}(g) = \Idd + N(g) + N(g)^{2}/2$ 
for a nilpotent matrix $N(g)$ of rank 2 from \eqref{eqn:Phit}.
We conclude using the knowledge of Section 3 of \cite{GLM09}:
\begin{align}\label{eqn:v}
Z^{1}(\langle g \rangle, \mathscr{V})  &=  \{  \vv: \bZ \ra \bR^{2, 1}| \mathcal{L}(g^{i}) \vv(g^{j}) - \vv(g^{i+j}) =0 \, \forall i, j \in \bZ \} \cong \bR^{2, 1}, \nonumber \\
B^{1}(\langle g \rangle, \mathscr{V})  &=  \{ \vv: \bZ \ra \bR^{2, 1}| \vv(g) = \mathcal{L}(g) \vv_{0} - \vv_{0}, \vv_{0} \in \bR^{2, 1} \},  \nonumber \\
& = N(g)\left(\left(\Idd+ \frac{1}{2}N(g)\right)(\bR^{2, 1})\right) = N(g)(\bR^{2, 1}), \nonumber\\
 H^{1}(\langle g \rangle, \mathscr{V})  &=  \bR^{2, 1}/N(g)(\bR^{2, 1}) \cong \bR. 
\end{align}
The second to last equation follows since $I+N(g)/2$ is invertible. 
The last follows since $N(g)$ is of rank two by Section \ref{sub:parabolic}.  
\end{proof}

\subsection{The integral of the standard cusp $1$-form} \label{appsub:integral}

We will use the notation of Section \ref{sub:tranv}. 
Let $P$ be the standard horodisk in $\Ss_{+}$ with $\vp$ as 
a null vector in the direction of 
$\clo(P) \cap \partial \Ss_{+}$. 
\oldch{The standard cusp $1$-form for $P$ is given 
where $P$ is given by $y > 1$ in the upper half-space model $\Us^2$ of the hyperbolic plane}. 
A geodesic in $P$ is given by equation $(x\pm R)^2 + y^{2} = R^{2}$ in $\Us^2$ and  parameterized by $\zeta(\theta) := (R \cos \theta \mp R  , R\sin \theta)$. 
The starting point and the endpoint are given by
$R \sin \theta  = 1$. Thus, the beginning $\theta_{0}$ and ending $\theta_{1} = \pi-\theta_{0}$ is one of the values of 
$\sin^{-1}(\frac{1}{R})$.

\oldch{We assume that a complete geodesic $l$ passes a cusp region with 
the cusp point $p= \llrrparen{\vp}$ and the standard cusp $1$-form.} 
We assume that by choice of the coordinates of $\Us^{2}$, 
$p = \infty$ and the geodesic starts at $(0, 0)$ 
and ends at $(2R, 0)$ or at $(-2R, 0)$.
We say that $l$ and any horizontal translation of $l$ in the upper half model
have {\em radius} $R$. 


There is {an isometry} $H: \Us^2 \ra \Ss_+$ 
{ to } the Klein model $\Ss_+$:  
\[ H(x, y) := \llrrparen{\overrightarrow{H(x, y)}} \hbox{ where }
\overrightarrow{H(x, y)}:= 
\left(\frac{2x}{x^{2}+ y^{2} +1 }, 1 - \frac{2}{x^{2} + y^{2} + 1}, 1\right),
 \]  
 (See {Theorem 7.1 of Hongchan Kim}  \cite{HKim06}.)
This extends to the boundary $y= 0$ 
{and induces a homeomorphism from $\Us^2 \cup \{\infty\}$ to the closure of the unit disk
where $\infty$ goes to $\llrrparen{\vj + \vk}$.} 

The standard horodisk in $\Us^{2}$ is given by $y > 1$. The image of this under $H$ is the \hyperlink{term-shd}{standard horodisk} $Q$ of the Klein model.  
The standard horodisk has the point $\llrrparen{\vj + \vk}$ of $\Ss_{+}$ in the boundary
and $\partial_h Q \ni \llrrparen{\vk}$.

This makes things simpler.
\begin{lemma} \label{lem:estimation} 
	Let $g$, $l$, and $\llrrV{\cdot}_{E}$ be as above. 
Let $D$ be the standard horodisk. 
\purple{Set 
	\[\vv(x) =\left( x, -\frac{ x^2}{2\sqrt{2}}+\frac{1}{\sqrt{2}} ,
\frac{x^2}{2}+\frac{1}{\sqrt{2}} \right) \hbox{ for } x \in \bR. \]
}
\purple{
Let $l$ be a complete geodesic passing $D$ of radius $R$ and starts at $H(0,0)$ and ends at $H(\pm 2R,0)$. Assume $R>1$. }
\purple{
Suppose that $l$ corresponds in $\Ss_+$
to the geodesic passing  a point of $\partial_h D$ in the direction of a unit vector $\bu$ away from $H(0, 0) \in {\partial \Us^2}$. 
Then the following hold\/{\rm :}}
\begin{itemize}
\item \purple{for any point $z$ on $l$ with coordinate $x$ in the upper half-space model,}
\newch{
\begin{equation} 
\llrrV{\Pi_{\tilde{\mathbf{V}}_{0}(z, \bu)}(\mu \vv(x))}_E=
\left| {\mu}\left(  x-\frac{\pm \sqrt{2}}{R}\right) \right|
 \end{equation} 
}
\purple{for the cusp coefficient $\mu$, and }
\item 
\newch{
\begin{multline} 
\llrrV{\Pi_{\tilde{\mathbf{V}}_{-}(z, \bu)}(\mu \vv)}_{E} = 
\left| \frac{\mu  \left(4 R^2+1\right)}{4\sqrt{2}R^2} \right|
\hbox{ and } \\
 \llrrV{\Pi_{\tilde{\mathbf{V}}_{+}(z, \bu)}(\mu\vv)}_{E}  =
\left|{\mu} \left(\frac{1 }{4\sqrt{2}R^2} -
\frac{\pm x}{2 R}+\frac{x^2}{2\sqrt{2}} \right) \right|
\end{multline}
}
\end{itemize}
\end{lemma} 
\begin{proof} 
	\purple{
These are simple computations using $H$, and the used frames there
form uniformly bounded matrices in $\GL(3, \bR)$. Hence, 
the estimations are uniformly compatible with the standard Euclidean metric results. 
 (See \cite{NewNormalComp} and \cite{NewNormalComp2}.) 
}
\end{proof}



Let $\zeta= l\cap D$ be a geodesic segment with both endpoints in $\partial_h D$.  
	Suppose that $l$ is in the form of Lemma \ref{lem:estimation}
	parameterized by the angle $\theta$ from the center of the semicircle in
	the upper-half-space model containing $l$. Also, recall
	the geodesic flow $\Psi_t$ acting on $\bR^{2,1}\times \Uu \Ss_+$ 
	from Section \ref{sub:propaffine}. We reparametrize $l$ by 
	$\Psi(z, \theta)$ for $z$ the beginning point of 
	$\zeta$ and $\theta \in (0,\pi)$ with $z = \Psi(z, \theta_0)$. 
	Let $\eta$ denote the standard $1$-form defined on $D$. 
We define 
\[ \vb(\zeta):= \,\int_{\theta_{0}}^{\pi-\theta_{0}} 
\bbD\Psi(z, \theta - \theta_0)^{-1} \left( \eta\left(\frac{d\Psi(z, \theta)}{d\theta}\right) \right) d\theta,\]
where $\theta_0$ and $\pi-\theta_0$ are the start and the end angles of 
the semicircle $l$ parameterized by angles.

We recall $\vb_{\pm}(\zeta)$ from \eqref{eqn-bzeta} as the $\tilde V_{\pm}$-component of $\vb(\zeta)$: that is, 
\[\vb_{\pm} (\zeta):=  \,\int_{\theta_{0}}^{\pi-\theta_{0}} \Pi_{\tilde V_{\pm}}\left(\bbD\Psi(z, \theta-\theta_0)^{-1} 
\left(\eta\left(\frac{d\Psi(z, \theta)}{d\theta}\right) \right)\right)d\theta. \]

We define
\begin{gather} 
\alpha(\zeta):= \int \Bs\left(\bnu, \eta\left(\frac{d\Psi(z, \theta)}{d\theta} \right)\right)d\theta.
\end{gather}

\begin{proposition}\label{prop:mintegrals}
	\purple{ 
	Let $g$, $l$, and $\llrrV{\cdot}_{E}$ be as above. 
	Let $D$ be the standard horodisk. Let 
	$\eta$  be a standard cusp $1$-form for a cusp constant $\mu \newch{> 0}$. 
{\em (}See \eqref{eqn:cuspform}. {\em )}
	Suppose that a complement geodesic $l$ of radius $R$ is in the form of Lemma \ref{lem:estimation}.
	Let $\zeta= l\cap D$ be a geodesic segment with both endpoints in $\partial_h D$.  
	Then we obtain
\newch{
	\begin{multline}\label{eqn:mintegrals} 
	\llrrV{\vb_{- }(\zeta) }_E  = \mu \frac{\sqrt{-1 + R^2}(1 + 4 R^2)}{2 \sqrt{2} R^2} 
	\leq \frac{5}{2\sqrt{2}} \mu R, \\ 
	\mu \left(-\sqrt{2}  + 2  R^2\right)\frac{\sqrt{-1 + R^2}}{R}\leq 
	\alpha(\zeta) = \mu \frac{\sqrt{-1 + R^2}}{R} (\pm \sqrt{2}  + 2  R^2) 
	\leq  \mu (\sqrt{2}  + 2 R^2)  
	\end{multline}
}
where $R \geq 1$. 
}
\end{proposition}
\begin{proof} 
\purple{ 
In this case, we may regard $\bbD\Psi(z, \theta)^{-1}$ as the identity 
since we will work directly over $\Ss_+$ 
(see Remark \ref{rem:dualpict}): 
Since the projection  $\Pi_{\tilde V_{-}}$ to $\tilde V_{-}$ commutes with 
$\bbD\Psi(z, \theta)^{-1}$,
\[\vb_{ -} (\zeta):=  \,\int_{\theta_{0}}^{\pi-\theta_{0}} \bbD\Psi(z, \theta)^{-1} \left(\Pi_{\tilde V_{-}}(\eta)\right) dx\left(
\frac{d\Psi(z, \theta)}{d\theta}\right) d\theta. \]
By computations in \cite{NewNormalComp} or \cite{NewNormalComp2}, we obtain 
\[\llrrV{\vb_{ -} (\zeta)}=\mu \sqrt{-1 + R^2} \frac{(1 + 4 R^2)}{2 \sqrt{2} R^2}.\]
And we evaluate the contribution of $l\cap P$ to $\vb_{0}$\,:
\[
\alpha(\zeta) =\mu\left(\sqrt{-1 + R^2})(\pm \sqrt{2}  + 2 R^2)\right)/R.
\]
}

%

\end{proof}

\bibliographystyle{plain} 
\bibliography{cdg}

\end{document}